\newcommand\void[1]       {}
\theoremstyle{definition}
\newtheorem{thm}{Theorem}[section]
\newtheorem{prop}[thm]{Proposition}
\newtheorem{cor}[thm]{Corollary}
\newtheorem{lem}[thm]{Lemma}
\newtheorem{conv}[thm]{Convention}
\newtheorem{defn}[thm]{Definition}
\newtheorem{expl}[thm]{Example}
\newtheorem{rem}[thm]{Remark}
\numberwithin{equation}{section}
\newcommand\be            {\begin{equation}}
\newcommand\ee            {\end{equation}}
\newcommand\bea           {\begin{eqnarray}}
\newcommand\eea         {\end{eqnarray}}
\newcommand\bnu          {\begin{enumerate}}
\newcommand\enu          {\end{enumerate}}
\newcommand\bit          {\begin{itemize}}
\newcommand\eit          {\end{itemize}}
\newcommand{\pf}{\begin{proof}}
\newcommand{\epf}{\qed\end{proof}}
\providecommand{\leftsquigarrow}{%
  \mathrel{\mathpalette\reflect@squig\relax}%
}
\newcommand{\reflect@squig}[2]{%
  \reflectbox{$\m@th#1\rightsquigarrow$}%
}
\DeclareMathAlphabet{\mathcal}{OMS}{cmsy}{m}{n}	
\DeclareMathAlphabet{\mathsf}{OT1}{cmss}{m}{n}	
\newcommand\Rb			{\mathbb{R}}
\newcommand\Zb			{\mathbb{Z}}
\newcommand\bk			{\mathbb{k}}
\newcommand\CA			{\EuScript{A}}
\newcommand\CB			{\EuScript{B}}
\newcommand\CC			{\EuScript{C}}
\newcommand\CD			{\EuScript{D}}
\newcommand\CE			{\EuScript{E}}
\newcommand\CL			{\EuScript{L}}
\newcommand\CM			{\EuScript{M}}
\newcommand\CN			{\EuScript{N}}
\newcommand\CP			{\EuScript{P}}
\newcommand{\FZ}			{\text{\usefont{U}{euf}{m}{n}Z}}
\newcommand\BC			{\mathbf{C}}
\newcommand\BD			{\mathbf{D}}
\newcommand\BM			{\mathbf{M}}
\newcommand\SX			{\mathsf{X}}
\DeclareMathOperator{\ev}{ev}
\DeclareMathOperator{\coev}{coev}
\DeclareMathOperator{\fun}{Fun}
\DeclareMathOperator{\Fun}{Fun}
\DeclareMathOperator{\Alg}{Alg}
\newcommand{\cat}{\mathbf{Cat}}
\newcommand{\fscat}{\mathbf{fsCat}}
\newcommand{\lmod}{\mathbf{LMod}}
\newcommand{\fslmod}{\mathbf{fsLMod}}
\newcommand{\ecat}{\mathbf{ECat}}
\newcommand{\fsecat}{\mathbf{fsECat}}
\newcommand{\op}			{\mathrm{op}}
\newcommand{\rev}			{\mathrm{rev}}
\newcommand{\one}			{\mathbb{1}}
\newcommand{\ob}          {\mathrm{ob}}
\newcommand{\oplax}{${}^{\mathrm{oplax}}$}
\newcommand{\lax}{${}^{\mathrm{lax}}$}
\newcommand{\hbcat}{\Gamma}
\DeclareMathOperator{\Lax}{lax}
\newcommand{\dtimes}{\mathbin{\dot{\times}}}
\newcommand{\hodot}{\mathbin{\hat{\odot}}}
\newcommand{\hotimes}{\mathbin{\hat{\otimes}}}
\newcommand\forget  {\text{\usefont{U}{euf}{m}{n}f}}
\newcommand\Set			{\EuScript{S}\mathrm{et}}
\newcommand\vect			{\mathrm{Vec}}
\newcommand\rep			{\mathrm{Rep}}
\newcommand{\Algn}[1][n]			{\mathrm{Alg}_{E_{#1}}}
\newcommand{\bscale}	{0.7}
\newcommand{\ec}[2][]	{{\@ec{#1 |}{{#2}}}}
\newcommand{\bc}[2][]	{{\@ec{#1}{{#2}}}}
\newcommand{\@ec}[2]	{\mathchoice
  {\displaystyle \raise.9ex\hbox{$\scaleobj{\bscale}{#1}$} #2}%
  {\textstyle \raise.9ex\hbox{$\scaleobj{\bscale}{#1}$} #2}%
  {\scriptstyle \raise.55ex\hbox{$\scriptstyle \scaleobj{\bscale}{#1}$} #2}%
  {\scriptscriptstyle \raise.38ex\hbox{$\scriptscriptstyle \scaleobj{\bscale}{#1}$} #2}%
}
\newcommand{\cf}[1]{{#1}^{\mathbf{c}}}
\begin{document}

\title{Enriched monoidal categories I: centers}
\author[a,b]{Liang Kong \thanks{Email: \href{mailto:kongl@sustech.edu.cn}{\tt kongl@sustech.edu.cn}}}
\author[c,d]{Wei Yuan \thanks{Email: \href{mailto:wyuan@math.ac.cn}{\tt wyuan@math.ac.cn}}}
\author[e,a]{Zhi-Hao Zhang \thanks{Email: \href{mailto:zzh31416@mail.ustc.edu.cn}{\tt zzh31416@mail.ustc.edu.cn}}}
\author[f,g,h,a,b]{Hao Zheng \thanks{Email: \href{mailto:haozheng@mail.tsinghua.edu.cn}{\tt haozheng@mail.tsinghua.edu.cn}}}
\affil[a]{Shenzhen Institute for Quantum Science and Engineering, \authorcr
Southern University of Science and Technology, Shenzhen, 518055, China}
\affil[b]{Guangdong Provincial Key Laboratory of Quantum Science and Engineering, \authorcr
Southern University of Science and Technology, Shenzhen, 518055, China}
\affil[c]{Academy of Mathematics and Systems Science, \authorcr
Chinese Academy of Sciences, Beijing, 100190, China}
\affil[d]{School of Mathematical Sciences, \authorcr
University of Chinese Academy of Sciences, Beijing, 100049, China}
\affil[e]{Wu Wen-Tsun Key Laboratory of Mathematics of Chinese Academy of Sciences, \authorcr
School of Mathematical Sciences, \authorcr
University of Science and Technology of China, Hefei, 230026, China}
\affil[f]{Institute of Applied Mathematics, Tsinghua University, Beijing, 100084, China}
\affil[g]{Beijing Institute of Mathematical Sciences and Applications, Beijing, 101408, China}
\affil[h]{Department of Mathematics, Peking University, Beijing, 100871, China}
\date{\vspace{-5ex}}

\maketitle

\begin{abstract}
This work is the first one in a series, in which we develop a mathematical theory of enriched (braided) monoidal categories and their representations. In this work, we introduce the notion of the $E_0$-center ($E_1$-center or $E_2$-center) of an enriched (monoidal or braided monoidal) category, and compute the centers explicitly when the  enriched (braided monoidal or monoidal) categories are obtained from the canonical constructions. These centers have important applications in the mathematical theory of gapless and gapped boundaries of topological phases. They also play very important roles in the higher representation theory, which is the focus of the second work in the series.
\end{abstract}

\tableofcontents


\section{Introduction}

Enriched categories were introduced long ago \cite{EK66}, and have been studied intensively (see a classical review \cite{Kel82}).  A category enriched in a symmetric monoidal category $\CA$ is also called an $\CA$-enriched category or simply an $\CA$-category, and is denoted by $\ec[\CA]{\CL}$. The category $\CA$ is called the base category of the enriched category \cite{Kel82}. In this work, we choose to call $\CA$ the \textit{background category} of $\ec[\CA]{\CL}$ due to its physical meaning. The $\CL$ in $\ec[\CA]{\CL}$ denotes the underlying category of $\ec[\CA]{\CL}$. 

\smallskip
The early study of enriched categories focused mainly on enriched categories with a fixed symmetric monoidal category $\CA$, or equivalently, on the 2-category of $\CA$-categories, $\CA$-functors and $\CA$-natural transformations \cite{Kel82}. 
Although it is quite obvious to consider the change of background category by a symmetric monoidal functor $\CA \to \CA'$, it is natural and sufficient for many purposes to focus on $\CA$-categories as Kelly wrote in his classical book \cite{Kel82}: 
\begin{quote}
Closely connected to this is our decision not to discuss the ``change of base category'' given by a symmetric monoidal functor $\mathcal{V} \to \mathcal{V}'$, \ldots The general change of base, important though it is, is yet logically secondary to the basic $\mathcal{V}$-category theory it acts on. 
\end{quote}
The change of base categories do appear in several early works (see for example \cite{EK66,Gra80,May72,May80}). However, it was not clear from these early works if the theory of enriched categories with arbitrary background categories becomes significantly richer.

In recently years, however, there are strong motivations from both mathematics and physics to study the 2-category of enriched categories with different background categories. First, mathematicians start to explore enriched monoidal categories with the background categories being only braided (instead of being symmetric)  \cite{JS93,For04,BM12,MP17,KZ18a,MPP18,JMPP21}. This development was partially influenced by the modern developments on higher algebras \cite{BD98,Kon99,Lur17}. Secondly, in physics, recent progress in the study of gapless boundaries of 2+1D topological orders \cite{KZ18b,KZ20,KZ21}, topological phase transitions \cite{CJKYZ20}, boundary-bulk relation in 1+1D CFT \cite{KYZ21} and in the theory of quantum liquids \cite{KZ22b} demands us to consider enriched categories, enriched monoidal categories and enriched braided (or symmetric) monoidal categories such that the background categories vary from monoidal categories to braided monoidal categories and to symmetric monoidal categories, and to consider functors between enriched categories with different background categories, and to develop the representation theory of an $\CA$-enriched (braided) monoidal category $\ec[\CA]{\CL}$ in the 2-category of enriched (monoidal) categories with different background categories \cite{Zhe17,KZ20,KZ21,KYZ21}. In other words, physics demands us to develop a generalized theory within the 2-category of enriched categories (with arbitrary background categories), generalized enriched functors that can change the background categories (see Definition\,\ref{def:1_morphism}) and generalized enriched natural transformations (see Definition\,\ref{def:2_morphism}). After the appearance of this paper in arXiv, new applications of enriched categories in physics emerge: (1) enriched (monoidal) categories were shown to give a rather complete characterization of 1+1D gapped quantum liquids realized in 1+1D lattice models (including Ising chain and Kitaev chain) and their boundaries \cite{KWZ22,XZ22}; (2) they appear in the study of gapped boundaries of 3+1D Walker-Wang models \cite{HBJP23,GHKPPS24}; (3) they were further confirmed in the recent study of topological phase transitions \cite{CW23a,LY23}. 


\medskip
This work is the first one in a series to develop this generalized theory. More precisely, in this work, we study enriched (braided) monoidal categories, and study the $E_0$-center of an enriched category, the $E_1$-center (or the Drinfeld center or the monoidal center) of an enriched monoidal category and the $E_2$-center (or the M\"{u}ger center) of an enriched braided monoidal category via their universal properties. We introduce the canonical constructions of enriched (braided/symmetric monoidal) categories, and compute their centers (see Theorem \ref{thm:E_0_center_in_ecat}, \ref{thm:Drinfeld_center=E1_center}, \ref{thm:Mueger_center=E2_center} and Corollary \ref{cor:B=rigid-E0-center}, \ref{cor:E1_center=Z2Z2}, \ref{cor:E2_center_of_canonical_construction}). These results generalize the main results in \cite{KZ18a}, where two of the authors introduced the notion of the Drinfeld center of an enriched monoidal category without studying its universal property. These centers play important roles in the (higher) representation theory of enriched (braided) monoidal categories. We will develop the representation theory in the second work in the series. 

\medskip
The layout of this paper is given as follows. In Section\,\ref{sec:2-cat}, we review some basic notions and set our notations, and for $i=0,1,2$, we review the notion of the $E_i$-center of an $E_i$-algebra in a symmetric monoidal 2-category. In Section\,\ref{sec:enriched_categories}, we review the notion of an enriched category, and introduce the notions of an enriched functor and an enriched natural transformation. We also study the canonical construction of enriched categories as a locally isomorphic 2-functor. In Section\,\ref{sec:enriched_monoidal_categories}, we study enriched monoidal categories and the canonical construction, and compute the $E_0$-centers of enriched categories. In Section\,\ref{sec:enriched_braided_monoidal_categories}, we study enriched braided monoidal categories and the canonical construction, and compute the $E_1$-centers of enriched monoidal categories. In Section\,\ref{sec:enriched_symmetric_monoidal_categories}, we study enriched symmetric monoidal categories and the canonical construction, and compute the $E_2$-centers of enriched braided monoidal categories.

\medskip
Throughout the paper, we choose a ground field $k$ of characteristic zero when we mention a finite semisimple (or a multi-fusion) category.  We use $\bk$ or $\vect$ to denote the symmetric monoidal category of finite-dimensional vector spaces over $k$.

\bigskip
\noindent {\bf Acknowledgement}: 
LK and HZ are supported by Guangdong Provincial Key Laboratory (Grant No.~2019B121203002). LK and ZHZ are supported by NSFC under Grant No.~11971219 and by Guangdong Basic and Applied Basic Research Foundation under Grant No.~2020B1515120100. WY is supported by the NSFC under grant No.~11971463, 11871303, 11871127. ZHZ is supported by Wu Wen-Tsun Key Laboratory of Mathematics at USTC of Chinese Academy of Sciences. HZ is also supported by NSFC under Grant No.~11871078. We would like to thank referees for many suggestions for improvement.


\section{2-categories} \label{sec:2-cat}

In this section, we recall some basic notions in 2-categories and examples, and set the notations along the way. We refer the reader to \cite{JY21} for a detailed introduction to 2-categories and bicategories.

\subsection{Examples of 2-categories} \label{sec:expl-2-cat}
In this subsection, we recall some basic notions, such as an (op)lax-monoidal functor, an (op)lax-monoidal natural transformation, a few versions of the 2-category of (braided) monoidal categories, a left \oplax module over a monoidal category, a lax $\CA$-\oplax module functor and lax $\CA$-\oplax module natural transformation. We also set our notations for 2-categories that are frequently used in this work. 

\medskip
A bicategory is called a 2-category if the associators and the unitors are identities. We introduce simple notations for objects, 1-morphisms and 2-morphisms in a 2-category $\BC$ as $x,y\in\BC$, $(f,g \colon x\to y)\in \BC$ and $(\xi \colon f\Rightarrow g) \in \BC$, respectively. The compositions of 1-morphisms are denoted by juxtaposition. For example, for $f: x\to y$ and $h: y\to z$, their composition is denoted by $hf$. For 2-morphisms, there are two types of compositions: vertical composition and horizontal composition. By abusing notation, we denote both of them by juxtaposition. Their precise meanings should be clear from the context.

\begin{defn}
A 2-functor $F: \BC \to \BD$ is called \emph{locally equivalent} (or fully faithful) if $F_{x,y}: \BC(x,y) \to \BD(F(x),F(y))$ is an equivalence for all $x,y\in\BC$; it is called \emph{locally isomorphic} if $F_{x,y}$ is an isomorphism for all $x,y\in\BC$. 
\end{defn}

\begin{expl}
We give a few examples of 2-categories.  
\begin{itemize}
\item $\cat$: the 2-category $\cat$ of categories, functors and natural transformations. It is a symmetric monoidal 2-category with the tensor product given by the Cartesian product $\times$ and the tensor unit given by $\ast$, which consists of a single object and a single morphism.
\item $\Alg_{E_1}(\cat)$: the 2-category of monoidal categories, monoidal functors and monoidal natural transformations. We choose this notation because a monoidal category can be viewed as an $E_1$-algebra in $\cat$ (see for example \cite{Lur17}). It is a symmetric monoidal 2-category with the tensor product $\times$ and the tensor unit $\ast$.
\item $\Alg_{E_2}(\cat)$: the 2-category of braided monoidal categories, braided monoidal functors and monoidal natural transformations. It is a symmetric monoidal 2-category with the tensor product $\times$ and the tensor unit $\ast$.
\item $\Alg_{E_3}(\cat)$: the 2-category of symmetric monoidal categories, braided monoidal functors and monoidal natural transformations. It is a symmetric monoidal 2-category with the tensor product $\times$ and the tensor unit $\ast$.
\end{itemize}
It is well-known that the notion of a (symmetric or braided) monoidal category is equivalent to that of an ($E_3$- or $E_2$-) $E_1$-algebra in $\cat$ \cite{SW03, Lur17, Fre17}. This explains our notations. 
\end{expl}

In the rest of this subsection, we recall some basic notions and give a few more examples of 2-categories that are useful in this work. Let $\CA$ and $\CB$ be monoidal categories. 
\begin{defn} \label{def:lax-monoidal-functor}
A \textit{lax-monoidal functor} $F:\CA \to \CB$ is a functor equipped with a morphism $\one_\CB \to F(\one_\CA)$ and a natural transformation $F(x) \otimes F(y) \to F(x \otimes y)$ for $x,y\in\CA$ satisfying the following conditions:
\begin{enumerate}
    \item (lax associativity): for all $x, y, z \in \CA$, the following diagram commutes:
        \be\label{diag:lax_monoidal_asso}
        \begin{array}{c}
\xymatrix @R=0.2in @C=0.4in{
                F(x) \otimes F(y) \otimes F(z) \ar[r] \ar[d] & F(x) \otimes F(y \otimes z) \ar[d]\\
                F(x \otimes y) \otimes F(z) \ar[r] & F(x \otimes y \otimes z)
            }
\end{array}  
        \ee
    \item (lax unitality): for all $x \in \CA$, the following diagrams commute:
    \small
        \be \label{diag:lax_monoidal_unital}
        \begin{array}{c}
\xymatrix @R=0.2in@C=0.15in{
                \one_\CB \otimes F(x) \ar[r] \ar[d]& F(\one_\CA) \otimes F(x) \ar[d]\\
                F(x) & F(\one_\CA \otimes x) \ar[l]
            }
\end{array} \quad 
\begin{array}{c}
\xymatrix @R=0.2in@C=0.15in{
                F(x) \otimes \one_\CB \ar[r] \ar[d] & F(x) \otimes F(\one_\CA) \ar[d]\\
                F(x) & F(x \otimes \one_\CA) \ar[l] 
            }
\end{array} \ee \normalsize
\end{enumerate}
An \textit{oplax-monoidal functor} $G: \CA \to \CB$ is a lax-monoidal functor $\CA^{\op} \to \CB^{\op}$. Here $\CA^\op$ is denotes category obtained by reversing 1-morphisms in $\CA$.
\end{defn}

\begin{expl} \label{expl:lax_monoidal_functor}
We some standard examples and set our notation along the way. 
\bnu
\item Let $\CC$ be a monoidal category. A lax-monoidal functor $F: \ast \to \CC$ is simply an algebra $F(\ast)$ in $\CC$; an oplax-monoidal functor $G: \ast \to \CC$ is a co-algebra in $G(\ast)$ in $\CC$. 

\item Let $\Set$ be the category of (small) sets and $M$ a monoid. Then the functor $M \times -: \Set \to \Set$ defined by $S \mapsto M \times S$ is lax-monoidal. 

\item Let 
$A$ be a $k$-linear algebra. Then the functor $A\otimes_k -: \vect \to \vect$ defined by $V \mapsto A\otimes_k V$ is lax-monoidal. 
\enu
\end{expl}

\begin{defn}
A \emph{lax-monoidal natural transformation} between two lax-monoidal functor $F, G \colon \CA \to \CB$ is a natural transformation $\eta_a \colon F(a) \to G(a)$ such that the following diagrams commute:
\be \label{diag:unit_mul_cond_nat_lax_mon_nat_tran}
\begin{array}{c}
\xymatrix @R=0.2in{
        \one_\CB \ar[r] \ar[rd] & F(\one_\CA) \ar[d]^{\eta_{\one}}\\
        & G(\one_\CA)
    }
\end{array} \qquad
\begin{array}{c}
\xymatrix @R=0.2in{
        F(a) \otimes F(b) \ar[r] \ar[d]^{\eta_{a} \otimes \eta_{b}} & F(a \otimes b) \ar[d]^{\eta_{a \otimes b}}\\
        G(a) \otimes G(b) \ar[r]  & G(a \otimes b) 
    }
\end{array}
\ee
An oplax-monoidal natural transformation between two oplax-monoidal functors is defined similarly (by flipping non-vertical arrows in (\ref{diag:unit_mul_cond_nat_lax_mon_nat_tran})). 
\end{defn}

\begin{expl}
We give some examples of lax-monoidal natural transformations based on Example\,\ref{expl:lax_monoidal_functor}. 
\bnu
\item Let $F, F': \ast \to \CC$ be lax-monoidal functors. A lax-monoidal natural transformation $\phi: F \to F'$ is simply an algebra homomorphism from $F(\ast)$ to $F'(\ast)$. 

\item Let $M$ and $M$ be two monoids and $f: M \to M'$ a homomorphism between monoids. Then the family of maps $\{ M \times x \xrightarrow{f \times 1_x} M'\times x\}_{x\in \Set}$ defines a lax-monoidal natural transformation $M \times - \to M'\times -$.  

\item Let $A$ and $A'$ be two $k$-linear algebras and $g: A \to A'$ an algebra homomorphism. Then the family of linear maps $\{ A \otimes_k V \xrightarrow{f \otimes_k 1_V} M'\otimes_k V\}_{V\in \bk}$ defines a lax-monoidal natural transformation $A\otimes_k - \to A'\otimes_k -$. 

\enu
\end{expl}

Using above two notions, we obtain some new symmetric monoidal 2-categories (all with the tensor product $\times$ and the tensor unit $\ast$): 
\begin{itemize}
\item $\Alg_{E_1}^{\mathrm{lax}}(\cat)$: the 2-category of monoidal categories, lax-monoidal functors and lax-monoidal natural transformations.
\item $\Alg_{E_1}^{\mathrm{oplax}}(\cat)$: the 2-category of monoidal categories, oplax-monoidal functors and oplax-monoidal natural transformations.
\item $\Alg_{E_2}^{\mathrm{oplax}}(\cat)$: the 2-category of braided monoidal categories, braided oplax-monoidal functors and oplax-monoidal natural transformations.
\end{itemize}

\begin{defn} \label{def:oplax-left-module}
A \emph{left $\CA$-\oplax module} is a category $\CM$ equipped with an oplax-monoidal functor $\CA \to \Fun(\CM,\CM)$, or equivalently, an action functor $\odot \colon \CA \times \CM \to \CM$ equipped with
\bit
\item an \emph{oplax-associator}: a natural transformation $(- \otimes -) \odot - \Rightarrow - \odot (- \odot -)$ rendering the following diagram commutative.  
\be \label{diag:oplex_pen}
\begin{array}{c}
\xymatrix@C=0.2in @R=0.2in{
    ((a\otimes b) \otimes c) \odot x \ar[rr] \ar[d] & & (a\otimes b) \odot (c\odot x) \ar[d]  \\
(a \otimes (b\otimes c)) \odot x \ar[r] & a \odot ((b\otimes c) \odot x) \ar[r]  & a\odot (b \odot (c\odot x))
}
\end{array}
\ee
\item an \emph{oplax-unitor}: a natural transformation $\one \odot - \Rightarrow 1_\CM$ rendering the following two diagrams commutative.
\small
\be\label{diag:oplex_module_unit_left_right}
\begin{array}{c}
\xymatrix@C=0.1in@R=0.2in{
    (\one \otimes b) \odot x \ar[rr] \ar[rd] & &  \one \odot ( b\odot x) \ar[ld] \\
& b \odot x &
}
\end{array} \quad
\begin{array}{c}
\xymatrix@C=0.1in@R=0.2in{
    (b \otimes \one) \odot x \ar[rr] \ar[rd] & &  b \odot (\one \odot x) \ar[ld] \\
& b \odot x &
}
\end{array}
\ee \normalsize
\eit
If the oplax-associator (resp.~the oplax-unitor) is a natural isomorphism, then the left $\CA$-\oplax module is called \textit{strongly associative} (resp.~\textit{strongly unital}), and is called a \emph{left $\CA$-module} if it is both strongly associative and strongly unital. 

\smallskip
A left $\CA$-\lax module is defined by flipping the arrows of the oplax-associator and the oplax-unitor to give the lax-associator and the lax-unitor. 
\end{defn}

\begin{defn} \label{defn:lax_module_functor}
For two left $\CA$-\oplax modules $\CM$ and $\CN$, a \emph{lax $\CA$-\oplax module functor} from $\CM$ to $\CN$ is a functor equipped with a natural transformation
    \begin{align*}
        \alpha_{b,m} \colon b \odot F(m) \to F(b \odot m) 
    \end{align*}
    such that the following diagrams commute for all $a, b \in \CA$ and $m \in \CM$.
\small
\be \label{diag:lax_module_functor_diag}
\begin{array}{c}
\xymatrix@R=0.8em@C=1.5em{
a\odot (b\odot F(m)) \ar[r]  & a\odot F(b\odot m) \ar[dd] \\
(a\otimes b) \odot F(m) \ar[d] \ar[u] & \\
F((a\otimes b) \odot m) \ar[r] & F(a\odot (b\odot m))
}
\end{array} \quad
\begin{array}{c}
\xymatrix@C=1em @R=3em{
\one \odot F(m) \ar[rr] \ar[dr] & & F(\one \odot m) \ar[dl] \\
& F(m) &
}
\end{array}
\ee \normalsize
If $\alpha$ is a natural isomorphism, then $F$ is called an \emph{$\CA$-module functor}. 

An oplax $\CA$-\oplax module functor is defined by flipping the arrow $\alpha_{b,m}$. Similarly, a lax $\CA$-\lax module functor and an oplax $\CA$-\lax module functor can both be defined by flipping certain arrows. 
\end{defn}

\begin{defn}
For two lax $\CA$-\oplax module functors $F, G \colon \CM \to \CN$, a \emph{lax $\CA$-\oplax module natural transformation} $\xi \colon F \Rightarrow G$ is a natural transformation rendering the following diagram 
\[
        \xymatrix @R=0.2in @C=0.5in{
            a \odot F(m) \ar[r]^{1 \odot \xi_m} \ar[d] & a \odot G(m) \ar[d] \\
            F(a \odot m) \ar[r]^{\xi_{a \odot m}} & G(a \odot m)
        }
\]
commutative for all $a \in \CA$ and $m \in \CM$. An oplax $\CA$-\oplax module (or oplax $\CA$-\lax module, lax $\CA$-\lax module) natural transformation can be defined similarly. 
\end{defn}


\subsection{Centers in 2-categories}\label{sec:app_centers_in_2_cat}
In this subsection, we recall the notions of $E_i$-centers, for $i=0,1,2$, in a (symmetric) monoidal bicategory following Lurie's book \cite{Lur17}. 

\medskip
A contractible groupoid is a non-empty category that has a unique morphism between every two objects. An object $x$ in a bicategory $\BC$ is called a \textit{terminal object} if the hom category $\BC(y,x)$ is a contractible groupoid for every $y \in \BC$. 

In the rest of this section, we use $\BC$ to denote a monoidal bicategory (i.e., a tricategory with one object) equipped with a tensor product $\dtimes$ and a tensor unit $\ast$. By the coherence theorem of tricategories (see \cite{GPS95, Gur13}), without loss of generality, we further assume that $\BC$ is a semistrict monoidal 2-category, which is also called a Gray monoid (see \cite{KV94, BN96, JY21} for the definition of Gray monoid).

\begin{defn}\label{def:E_0_algebra}
An \emph{$E_0$-algebra} in $\BC$ is a pair $(A, u)$, where $A$ is an object in $\BC$ and $u \colon \ast \to A$ is a 1-morphism. 
\end{defn}

\begin{defn} \label{def:left_unital_action}
A \emph{left unital $(A, u)$-action} on an object $x \in \BC$ is a 1-morphism $g \colon A \dtimes x \to x$, together with an invertible 2-morphism $\alpha$ as depicted in the following diagram:
\[
\begin{array}{c}
\xymatrix @R=0.2in{
 & A \dtimes x \ar[dr]^{g} \\
    \ast \dtimes x = x \ar[ur]^{u \dtimes 1_x} \ar[rr]_-{1_x} \rrtwocell<\omit>{<-2>\alpha} & & x.
}
\end{array}
\]
\end{defn}

Let $x \in \BC$. We define the 2-category of left unital actions on $x$ as follows:
\begin{itemize}
    \item Objects are left unital actions on $x$.
    \item For $i=1,2$, let $((A_i,u_i), g_i, \alpha_i)$ be a left unital $(A_i, u_i)$-action on $x$. A 1-morphism from $((A_1,u_1), g_1, \alpha_1)$ to $((A_2,u_2), g_2, \alpha_2)$ is a triple $(p, \sigma, \rho)$, where $p \colon A_1 \to A_2$ is a 1-morphism in $\BC$, $\sigma \colon u_2 \Rightarrow p \circ u_1$ and $\rho \colon g_2 \circ (p \dtimes 1_x) \Rightarrow g_1$ are two invertible 2-morphism in $\BC$ such that the following identity of 2-morphisms holds. 
    \begin{align*}
        \begin{array}{c}
\xymatrix @C=0.6in @R=0.25in{
 & A_2 \dtimes x \ar@/^3ex/[ddr]^{g_2} \\
 \rtwocell<\omit>{\quad \sigma \dtimes 1}& A_1 \dtimes x \ar[dr]^{g_1} \ar[u]|{p \dtimes 1_x} \rtwocell<\omit>{\rho} & \\
\ast \dtimes x \ar[rr] \ar@/^3ex/[uur]^{u_2 \dtimes 1_x} \ar[ur]^{u_1 \dtimes 1_x} \rrtwocell<\omit>{<-2.5> \quad \alpha_1}  & & x
}
\end{array}
=
\begin{array}{c}
\xymatrix{
 & A_2 \dtimes x \ar[dr]^{g_2} \\
\ast \dtimes x \ar[ur]^{u_2 \dtimes 1_x} \ar[rr] \rrtwocell<\omit>{<-3> \quad\alpha_2} & & x
}
\end{array} .
    \end{align*}
\item Let $(p, \sigma, \rho) , (q, \beta, \varphi) \colon ((A_1,u_1), g_1, \alpha_1) \to ((A_2,u_2), g_2, \alpha_2)$ be 1-morphisms, a 2-morphism $(p, \sigma, \rho) \Rightarrow (q, \beta, \varphi)$ is a 2-morphism $\xi \colon p \Rightarrow q$ such that  
\begin{gather}
    \begin{array}{c}
        \xymatrix @R=0.3in @C=0.5in{
        \rtwocell<\omit>{<4.5>\sigma}& A_2\\
        \ast \ar@/^2ex/[ur]^-{u_2} \ar[r]_-{u_1} & A_1  \utwocell^{p}_{q}{\xi}
        }
   \end{array}=
    \begin{array}{c}
    \xymatrix @R=0.3in @C=0.5in{
        \rtwocell<\omit>{<4.5>\beta}& A_2\\
        \ast \ar@/^2ex/[ur]^-{u_2} \ar[r]_-{u_1} & A_1 \ar[u]_-{q}
    }
    \end{array}, \label{diag:comp-2-morphisms} \\ 
    \begin{array}{c}
       \xymatrix @R=0.3in @C=0.6in{
        A_2 \dtimes x \ar@/^2ex/[dr]^{g_2} \rtwocell<\omit>{<4.5>\varphi} & \\
        A_1 \dtimes x \utwocell<4.5>^{p \dtimes 1_x \quad}_{\quad q \dtimes 1_x}{\xi \dtimes 1} \ar[r]_-{g_1}& x
    }
    \end{array} =
    \begin{array}{c}
       \xymatrix @R=0.3in @C=0.6in{
        A_2 \dtimes x \ar@/^2ex/[dr]^{g_2} \rtwocell<\omit>{<4.5>\rho} & \\
        A_1 \dtimes x \ar[u]^-{p \dtimes 1_x} \ar[r]_-{g_1}& x
    }
    \end{array}. \nonumber
\end{gather}
\end{itemize}

\begin{rem}
    We add a remark on how to read the composition of 2-morphisms in diagrams for readers who are not familiar with 2-categories. The identity \eqref{diag:comp-2-morphisms} should be read as an identity between the composed 2-morphisms $(u_2 \xRightarrow{\sigma} p \circ u_1 \xrightarrow{\xi \circ 1_{u_1}} q \circ u_1)$ and $\beta$ (see also Chapter 3 in \cite{JY21}). 
\end{rem}

\begin{defn} \label{defn:E_0-center}
Let $x$ be an object in $\BC$. An $E_0$-center $\FZ_0(x)$ of $x$ is a terminal object (if exists) in the 2-category of left unital actions on $x$.
\end{defn}

\begin{rem}
A terminal object in a 2-category $\BC$ is unique in the sense that for any two terminal objects $x,x' \in \BC$ the hom category $\BC(x,x')$ is a contractible groupoid. Hence an $E_0$-center, if exists, is unique.
\end{rem}

\begin{rem}
In general, the notion of an $E_0$-center can be defined for an $E_0$-algebra. Indeed, a left unital action of an $E_0$-algebra on another $E_0$-algebra $(x,* \to x)$ is a left unital action on the object $x$ ``preserving'' the $E_0$-algebra structures, and the $E_0$-center of $(x,* \to x)$ is terminal among all left unital actions. One can show that the $E_0$-center is independent of the $E_0$-algebra structure $* \to x$, thus in practice we only use the notion of the $E_0$-center of an object.
\end{rem}

\begin{rem}
An $E_0$-center does not always exist. For example, if we view the natural numbers with multiplication as a monoidal 2-category with only identity 1-morphisms and identity 2-morphisms, then an $E_0$-center of $0$ does not exist. 
\end{rem}

\begin{defn}\label{defn:E_0_algebra}
    An $E_1$-algebra (or a pseudomonoid, see Section 3 in \cite{DS97}) in $\BC$ is a sextuple $(A, u, m,\alpha, \lambda, \rho)$, where $A \in \ob(\BC)$, $u \colon \ast \to A$ and $m \colon A \dtimes A \to A$ are 1-morphisms, and $\alpha, \lambda, \rho$ are invertible 2-morphisms (i.e., the associator, the left unitor and the right unitor, respectively) as depicted in the following diagrams
    \begin{align*}
        \xymatrix @R=0.2in @C=0.5in{
            A \dtimes A \dtimes A \ar[r]^-{m \dtimes 1} \ar[d]^-{1 \dtimes m} & A \dtimes A \ar[d]^-{m}\\
             \rtwocell<\omit>{<-3>\alpha}  A \dtimes A \ar[r]^-{m} & A 
        }\quad
        \xymatrix @R=0.2in{
            \ast \dtimes A \ar[r]^-{u \dtimes 1} \ar@/^{-2ex}/[rd]_-{1} \drtwocell<\omit>{<-0.7>\lambda} & A \dtimes A \ar[d]^-{m} & A \dtimes \ast \ar[l]_-{1 \dtimes u} \ar@/^{2ex}/[ld]^-{1} \\
            & A \urtwocell<\omit>{<-0.7>\rho}&
        } 
    \end{align*}
such that the following equations of pasting diagrams hold:
\small
\begin{align}
\begin{array}{c}
    \xymatrix @R=0.17in @C=0.1in{
        A^{\dtimes 4}  \ar[rr]^-{1 \dtimes 1 \dtimes m} \ar[rd]|-{1 \dtimes m\dtimes 1} \ar[dd]_{m \dtimes 1 \dtimes 1} & & A^{\dtimes 3} \ar[rd]^-{1 \dtimes m}\\
        & A^{\dtimes 3} \utwocell<\omit>{<3> 1 \dtimes \alpha} \ar[dd]|-{m \dtimes 1} \ar[rr]_-{1 \dtimes m} && A^{\dtimes 2} \ar[dd]^-{m}\\
        A^{\dtimes 3} \ar[rd]_{m \dtimes 1} \utwocell<\omit>{<3>\alpha \dtimes 1}&&\\
        & A^{\dtimes 2} \ar[rr]_-{m} \uutwocell<\omit>{<6>\alpha} && A
    } 
\end{array}= 
\begin{array}{c}
    \xymatrix @R=0.17in @C=0.1in{
        A^{\dtimes 4} \ar[dd]_-{m \dtimes 1 \dtimes 1} \ar[rr]^-{1\dtimes 1 \dtimes m} && A^{\dtimes 3} \ar[dd]|-{m \dtimes 1} \ar[rd]^-{1 \dtimes m} & \\
        &&& A^{\dtimes 2} \ar[dd]^-{m} \\
        A^{\dtimes 3}  \uutwocell<\omit>{<7> \Sigma_{m,m}}\ar[rr]^-{1 \dtimes m} \ar[rd]_-{m \dtimes 1} && A^{\dtimes 2} \ar[rd]|-{m} \utwocell<\omit>{<3> \alpha} \\
        & A^{\dtimes 2} \utwocell<\omit>{<3> \alpha} \ar[rr]_-{m}  && A
    } 
\end{array} \label{eq:algebra_2-cat_cond_1} \\
        \begin{array}{c}
            \xymatrix @R=0.15in @C=0.1in{
                & A \dtimes \ast \dtimes A \ar[ld]_-{1 \dtimes u \dtimes 1} \ar[rd]^-{1}&\\
            A \dtimes A \dtimes A \utwocell<\omit>{<10> \lambda} \ar[rr]^-{1 \dtimes m} \ar[d]_-{m \dtimes 1} && A \dtimes A \ar[d]^-{m}\\
            A \dtimes A \utwocell<\omit>{<10> \alpha} \ar[rr]_-{m} && A \\
            }
        \end{array} =
        \begin{array}{c}
            \xymatrix @R=0.15in @C=0.1in{
                & A \dtimes \ast \dtimes A \ar[ld]_-{1 \dtimes u \dtimes 1} \ar[rd]^-{1} \ar@/^1.3pc/[ldd]^-{1}&\\
            A \dtimes A \dtimes A  \ar[d]_-{m \dtimes 1} && A \dtimes A \ar[d]^-{m}\\
            A \dtimes A  \ar[rr]_-{m} \uutwocell<\omit>{<6> \rho} && A \\
            }
        \end{array} \label{eq:algebra_2-cat_cond_2}
\end{align} \normalsize
where $A^{\dtimes 4}$ is an abbreviation for $A \dtimes A \dtimes A \dtimes A$, and $\Sigma_{m,m}$ is the interchanger of the Gray-monoid $\BC$ (see, for example, definition 1 in \cite{DS97} for the data and axioms for Gray monoids). We often abbreviate an $E_1$-algebra to $(A, u, m)$ or $A$.
\end{defn}


The following result is a special case of \cite[Corollary\ 5.3.1.15]{Lur17}. For the sake of completeness, we sketch a proof.

\begin{prop}\label{prop:E_0_center_is_E_1_algebra}
    The $E_0$-center $\FZ_0(x)$ of $x \in \BC$, if exists, admits an $E_1$-algebra structure. 
\end{prop}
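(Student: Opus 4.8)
The plan is to follow the two-step strategy signalled just above the statement: first upgrade the 2-category of left unital actions on $x$ to a Gray monoid sitting over $\BC$, and then invoke the general principle that a terminal object of a monoidal 2-category is canonically an $E_1$-algebra. Write $\mathcal{U}_x$ for the 2-category of left unital actions on $x$, and $\forget \colon \mathcal{U}_x \to \BC$ for the forgetful 2-functor sending $((A,u),g,\alpha)$ to $A$. By definition $\FZ_0(x)$ is a terminal object of $\mathcal{U}_x$, and its image $\forget(\FZ_0(x)) = \FZ_0(x) \in \BC$ is precisely the object whose $E_1$-algebra structure we must exhibit. So it suffices to produce a monoidal structure on $\mathcal{U}_x$ for which $\forget$ is a (strict) monoidal 2-functor, and then transport the pseudomonoid structure of the terminal object along $\forget$.

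First I would construct the tensor product on $\mathcal{U}_x$. Given actions $((A_1,u_1),g_1,\alpha_1)$ and $((A_2,u_2),g_2,\alpha_2)$, I set the underlying object to $A_1 \dtimes A_2$, the unit to $u_1 \dtimes u_2 \colon \ast \cong \ast \dtimes \ast \to A_1 \dtimes A_2$, and the action to the composite $A_1 \dtimes A_2 \dtimes x \xrightarrow{1 \dtimes g_2} A_1 \dtimes x \xrightarrow{g_1} x$, with the invertible structure 2-morphism obtained by pasting $\alpha_1$ with $1_{A_1} \dtimes \alpha_2$. The monoidal unit of $\mathcal{U}_x$ is the trivial action $((\ast, 1_\ast), 1_x, \id)$, and on 1- and 2-morphisms the tensor product is induced entrywise from $\dtimes$ in $\BC$. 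By construction $\forget$ strictly preserves this tensor product and unit, so it is a strict monoidal 2-functor, and the interchangers and coherence constraints making $\mathcal{U}_x$ a Gray monoid are all inherited from those of $\BC$.

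Second, I would record why a terminal object $T$ of any Gray monoid $\mathcal{M}$ carries a pseudomonoid structure. The multiplication $m \colon T \dtimes T \to T$ and the unit $\eta \colon \ast \to T$ exist because $\mathcal{M}(T \dtimes T, T)$ and $\mathcal{M}(\ast, T)$ are non-empty contractible groupoids; the associator and the two unitors, being invertible 2-morphisms between parallel 1-morphisms with target $T$, exist and are unique; and the pentagon and triangle identities of Definition \ref{defn:E_0_algebra} hold automatically, since any two parallel 2-morphisms with target $T$ coincide. Thus $T$ is an $E_1$-algebra in $\mathcal{M}$ in an essentially unique way. Applying this to $T = \FZ_0(x)$ in $\mathcal{M} = \mathcal{U}_x$ and pushing the resulting pseudomonoid structure along the strict monoidal 2-functor $\forget$ — which sends pseudomonoids to pseudomonoids — yields the desired $E_1$-algebra structure on $\forget(\FZ_0(x)) = \FZ_0(x)$ in $\BC$.

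The main obstacle is not conceptual but organizational. The one genuinely laborious point is verifying that the entrywise tensor product of left unital actions is well defined on 1- and 2-morphisms and assembles into a Gray monoid: one must check that the pasted action 2-morphism satisfies the coherence built into a left unital action, that the tensor product of a 1-morphism $(p,\sigma,\rho)$ is again such a triple, and that the interchange 2-cells of $\BC$ descend to $\mathcal{U}_x$ and satisfy the Gray-monoid axioms. This is exactly the content compressed into the phrase ``not hard to check.'' Once it is in place, the pseudomonoid structure on $\FZ_0(x)$ is forced by terminality and its transport along $\forget$ is purely formal.
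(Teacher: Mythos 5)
Your proposal is correct and takes essentially the same route as the paper: the paper likewise observes that the Gray monoid structure of $\BC$ induces one on the 2-category of left unital actions on $x$, and then invokes the fact that a terminal object of a monoidal 2-category admits an $E_1$-algebra (pseudomonoid) structure. The explicit entrywise tensor product, the contractibility argument forcing the associator, unitors and coherence, and the transport along the strict monoidal forgetful 2-functor that you spell out are exactly the details the paper compresses into ``it is not hard to check.''
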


\begin{proof}
First, we show that a terminal object $A$ in a Gray monoid $(\BD,\dtimes,\ast)$ admits an $E_1$-algebra structure. Since $A$ is terminal, we have:
\bit
\item The hom categories $\BD(A \dtimes A,A)$ and $\BD(\ast,A)$ are contractible groupoids. So we can choose two 1-morphisms $m : A \dtimes A \to A$ and $u : \ast \to A$.
\item The hom categories $\BD(A^{\dtimes 3},A)$, $\BD(\ast \dtimes A,A)$ and $\BD(A \dtimes \ast,A)$ are contractible groupoids. So there are unique choices for the 2-morphisms $\alpha : m \circ (m \dtimes 1) \Rightarrow m \circ (1 \dtimes m)$, $\lambda : m \circ u \dtimes 1 \Rightarrow 1$ and $\rho : m \circ 1 \dtimes u \Rightarrow 1$.
\item The hom categories $\BD(A^{\dtimes 4},A)$ and $\BD(A \dtimes \ast \dtimes A,A)$ are contractible groupoids, so the pasting diagrams \eqref{eq:algebra_2-cat_cond_1} and \eqref{eq:algebra_2-cat_cond_2} automatically hold. Therefore, $(A,m,u,\alpha,\lambda,\rho)$ is an $E_1$-algebra in $\BD$.
\eit
Moreover, it is easy to see that different $E_1$-algebra structures on $A$ (i.e., different choices of $m$ and $u$) are equivalent.

Then it suffices to show that the Gray monoid structure of $\BC$ induces a Gray monoid structure on the 2-category of left unital actions on $x \in \BC$. Note that $(\ast, 1_{\ast})$ is an $E_0$-algebra. The unit of 2-category of left unital actions on $x$ is $((\ast, 1_\ast), 1_x, 1_{1_x})$. The product $((A_1, u_1), g_1, \alpha_1) \dtimes ((A_2, u_2), g_2, \alpha_2)$ of two left unital actions $((A_1, u_1), g_1, \alpha_1)$ and $((A_2, u_2), g_2, \alpha_2)$ is the left unital action defined by the following diagram.
\begin{align*}
 \xymatrix @R=0.15in @C=0.1in{   
    & & A_1 \dtimes A_2 \dtimes x \ar[rd]^{1 \dtimes g_2} & & \\
    & A_2 \dtimes x \ar[ru]^{u_1 \dtimes 1} \ar[rd]^{g_2} \rrtwocell<\omit>{<-1> \qquad \Sigma_{u_1, g_2}} &  & A_1 \dtimes x \ar[rd]^{g_1} & \\
    \ast \dtimes x \ar[ru]^{u_2 \dtimes 1} \ar[rr]_{1} \rrtwocell<\omit>{<-2> \alpha_2} & & \ast \dtimes x \ar[ru]^{u_1 \dtimes 1} \ar[rr]_{1} \rrtwocell<\omit>{<-2> \alpha_1}& & x ,
    }
\end{align*}
    where $\Sigma_{u_1, g_2}$ is the interchanger. By the fact that $\Sigma_{1, f}$ and $\Sigma_{f, 1}$ are identity 2-morphisms, it is not hard to check that $((\ast, 1_\ast), 1_x, 1_{1_x}) \dtimes ((A_1, u_1), g_1, \alpha_1) = ((A_1, u_1), g_1, \alpha_1) = ((A_1, u_1), g_1, \alpha_1) \dtimes ((\ast, 1_\ast), 1_x, 1_{1_x})$.

Let $(p: A_1 \to A_2, \sigma: u_2 \Rightarrow p \circ u_1, \rho: g_2 \circ (p \dtimes 1_x) \Rightarrow g_1)$ be a $1$-morphism from $((A_1, u_1), g_1, \alpha_1)$ to $((A_2, u_2), g_2, \alpha_2)$. For every left action $((B, v), h, \gamma)$, let 
\small
\begin{multline*}
(p, \sigma, \rho) \dtimes ((B, v), h, \gamma) \coloneqq \\
\bigg(p \dtimes 1_B, 
\begin{array}{c}
\xymatrix @R=0.15in @C=0.2in{
        \ast \ar[r]^{v} & B \ar[rd]_{u_1 \dtimes 1} \ar[rr]^{u_2 \dtimes 1} \rrtwocell<\omit>{<2> \quad \sigma \dtimes 1} & & A_2 \dtimes B \\
        & & A_1 \dtimes B \ar[ru]_{p \dtimes 1} & 
    }
\end{array} ,
\begin{array}{c}
\xymatrix @R=0.15in @C=0.4in{
        A_2 \dtimes B \dtimes x \ar[r]^{1 \dtimes h} \rtwocell<\omit>{<2> \quad \Sigma_{p,h}}  & A_2 \dtimes x \ar[r]^{g_2} \rtwocell<\omit>{<2> \rho} & x\\
        A_1 \dtimes B \dtimes x \ar[r]_{1 \dtimes h} \ar[u]^{p \dtimes 1} & A_1 \dtimes x \ar[u]^{p \dtimes 1} \ar @/_0.5pc/[ru]_{g_1} &
    }
\end{array} \biggr) ,
\end{multline*} \vspace{-1cm}
\begin{multline*}
((B, v), h, \gamma) \dtimes (p, \sigma, \rho) \coloneqq \\
\biggl( 1_B \dtimes p, 
\begin{array}{c}
\xymatrix @R=0.15in @C=0.4in{
    \ast \rtwocell<\omit>{<2> \sigma} \ar[r]^{u_2} \ar @/_0.5pc/[rd]_{u_1} & A_2 \ar[r]^{v \dtimes 1} \rtwocell<\omit>{<2> \quad \Sigma_{v, p}^{-1}}  & B \dtimes A_2\\
     & A_1 \ar[r]_{v \dtimes 1} \ar[u]_{p} & B \dtimes A_1 \ar[u]_{1 \dtimes p}
}
\end{array} ,
\begin{array}{c}
\xymatrix @R=0.15in @C=0.5in{
    B \dtimes A_2 \dtimes x \ar[r]^{1 \dtimes g_2} \rtwocell<\omit>{<2> \quad 1 \dtimes \rho} & B \dtimes x \ar[r]^{h} & x\\
    B \dtimes A_1 \dtimes x \ar @/_0.8pc/[ru]_{1 \dtimes g_1} \ar[u]^{1 \dtimes p \dtimes 1} & &
    }
\end{array} \biggr).
\end{multline*} \normalsize
Using the axioms satisfied by the interchangers, it is routine to check that $(p, \sigma, \rho) \dtimes ((B, v), h, \gamma)$ and $((B, v), h, \gamma) \dtimes (p, \sigma, \rho)$ are $1$-morphisms in the $2$-category of left unital actions. Similarly, it is not hard to check that for every $2$-morphism $\xi: p_1 \Rightarrow p_2$ between 1-morphisms $(p_1, \sigma_1, \rho_1) , (p_2, \sigma_2, \rho_2) \colon ((A_1,u_1), g_1, \alpha_1) \to ((A_2,u_2), g_2, \alpha_2)$,  
\begin{gather*}
\xi \dtimes ((B, v), h, \gamma) \coloneqq (\xi \dtimes 1_B: p_1 \dtimes 1_B \Rightarrow p_2 \dtimes 1_B), \\
((B, v), h, \gamma) \dtimes \xi \coloneqq (1_B \dtimes \xi: 1_B \dtimes p_1 \Rightarrow 1_B \dtimes p_2). 
\end{gather*}
are $2$-morphisms in the $2$-category of left unital actions, and the assignments on morphisms $- \dtimes ((B, v), h, \gamma)$ and $((B, v), h, \gamma) \dtimes -$ are 2-functors. 

    Let $(q: B_1 \to B_2, \beta: v_2 \Rightarrow q \circ v_1, \varphi: h_2 \circ (q \dtimes 1_x) \Rightarrow h_1)$ be a $1$-morphism from $((B_1, v_1), h_1, \gamma_1)$ to $((B_2, v_2), h_2, \gamma_2)$. Then the interchanger 
\begin{multline*}
[((A_2, u_2), g_2, \alpha_2) \dtimes (q, \beta, \varphi)] [(p, \sigma, \rho) \dtimes ((B_1, v_1), h_1, \gamma_1)] \\
\Rightarrow [(p, \sigma, \rho) \dtimes ((B_2, v_2), h_2, \gamma_2)] [((A_1, u_1), g_1, \alpha_1) \dtimes (q, \beta, \varphi)]
\end{multline*}
is defined by the interchanger $\Sigma_{p, q}:(p \dtimes 1_{B_1}) (1_{A_2} \dtimes q) \Rightarrow (p \dtimes 1_{B_2})(1_{A_1} \dtimes q)$ of $\BC$. The remaining axioms that the $2$-category of left unital actions on $x$ needs to satisfy as a Gray monoid are not difficult to verify, and we leave it as an exercise.
\end{proof}


According to the theory of higher algebras by Lurie \cite{Lur17}, if $\BC$ is a symmetric monoidal higher category, the $E_n$-center of an $E_n$-algebra $A$ in $\BC$ can be defined as the $E_0$-center of $A$ in the symmetric monoidal higher category $\Algn(\BC)$ of $E_n$-algebras in $\BC$, and we have $\Algn[1](\Algn(\BC)) \simeq \Algn[n+1](\BC)$ (see \cite[Theorem\ 5.1.2.2]{Lur17}). Then an $E_n$-center is automatically an $E_{n+1}$-algebra. The $E_1$-center of an $E_1$-algebra in a braided monoidal bicategory is also studied in \cite{Str04}.

\begin{expl}\label{expl:centers_in_cat}
Consider the symmetric monoidal 2-category $(\cat, \times, \ast)$ of categories, functors and natural transformations. $E_1$-algebras in $\cat$ are monoidal categories, and the $E_2$-algebras in $\cat$ are braided monoidal categories \cite{SW03,Fre17}\cite[Example\,5.1.2.4]{Lur17}, and $E_n$-algebras in $\cat$ for $n \geq 3$ are symmetric monoidal categories \cite[Example\,5.1.2.3]{Lur17}. 
   \begin{enumerate}
       \item The following diagram represents
           \begin{align}\label{diag:left_unital_in_cat}
 \xymatrix @R=0.2in{
 & \CA \times \CL \ar[dr]^{\odot} \\
{*} \times \CL \ar[ur]^{\one_\CA \times 1} \ar[rr] \rrtwocell<\omit>{<-2>\;\alpha} & & \CL
}   
\end{align}
a left unital $(\CA, \one_\CA)$-action on $\CL$ in $\cat$, where $\odot: \CA \times \CL \to \CL$ is a functor and $\alpha$ is a natural isomorphism. The functor from $\CA$ to the category $\fun(\CL, \CL)$ of endofunctors defined by $a \mapsto a \odot -$ is an $E_0$-algebra morphism, i.e., maps $\one_\CA \in \CA$ to $1_\CL \in \fun(\CL,\CL)$. We see that $\FZ_0(\CL) = \fun(\CL, \CL)$.

       \item For $\CA, \CL \in \Alg_{E_1}(\cat)$, let the diagram \eqref{diag:left_unital_in_cat} depict a left unital $(\CA, \one_\CA)$-action on $\CL$ in $\Alg_{E_1}(\cat)$, i.e., $\odot$ is a monoidal functor and $\alpha$ is a monoidal natural isomorphism. Then $\varphi \coloneqq - \odot \one_\CL \colon \CA \to \FZ_1(\CL)$ is a well-defined monoidal functor, and the half-braiding of $\varphi(a)$ is defined by:   
\begin{multline*}
    x \otimes \varphi(a) \xrightarrow{\alpha^{-1} \otimes 1} (\one_\CA \odot x) \otimes \varphi(a) \xrightarrow{\sim} (\one_\CA \otimes a) \odot (x \otimes \one_\CL) \\
    \xrightarrow{\sim} (a \otimes \one_\CA) \odot (\one_\CL \otimes x) \xrightarrow{\sim} \varphi(a) \otimes (\one_\CA \odot x) \xrightarrow{1 \otimes \alpha} \varphi(a) \otimes x. 
\end{multline*}
The $E_0$-center of $\CL$ in $\Alg_{E_1}(\cat)$ is given by the Drinfeld center $\FZ_1(\CL)$ of $\CL$ \cite{Maj91,JS91}\cite[Example\ 5.3.1.18]{Lur17}.  
Recall that the objects of the Drinfeld center $\FZ_1(\CL)$ are pairs $(x, \gamma_{-,x})$, where $\gamma_{-,x} \colon - \otimes x \to x \otimes -$ is a half braiding satisfying certain compatibility conditions (see for example section 7.13 in \cite{EGNO15}).


    \item For $\CA,\CL\in\Alg_{E_2}(\cat)$, the diagram \eqref{diag:left_unital_in_cat} depicts a left unital $(\CA, \one_\CA)$-action on $\CL$ in $\Alg_{E_2}(\cat)$. Since $\one_\CA \odot x \simeq x$ and $\odot$ is a braided monoidal functor, $\varphi(-) \coloneqq - \odot \one_\CL$ defines a braided monoidal functor from $\CA$ to the M\"{u}ger center \cite{Mueg03a} $\FZ_2(\CL)$ of $\CL$. Then it is straightforward to check that the $E_0$-center of $\CL$ in $\Alg_{E_2}(\cat)$ is given by the M\"{u}ger center $\FZ_2(\CL)$ of $\CL$ \cite[Example\ 5.3.1.18]{Lur17}.

   \item For $n \geq 3$, let $\Algn[n](\cat) \coloneqq \Alg_{E_1}(\Alg_{E_{n-1}}(\cat))$ be the symmetric monoidal 2-category of $E_n$-algebras in $\cat$. It is known that $\Alg_{E_n}(\cat)$ is biequivalent to the symmetric monoidal 2-category of symmetric monoidal categories, symmetric monoidal functors and monoidal natural transformations \cite[Example\,5.1.2.3]{Lur17}. The $E_0$-center of a symmetric monoidal category $\CL$ in $\Algn[n](\cat)$ is given by $\CL$ itself.
   \end{enumerate} 
\end{expl}

\begin{expl}
Let $\CC$ be a monoidal category. We can view $\CC$ as a monoidal 2-category with only identity 2-morphisms and denote it by $\BC$.  Then the The $E_0$-center of $x$ in the monoidal category $\CC$ can be defined as the $E_0$-center of x in $\BC$ (recall Definition \ref{defn:E_0-center}). In this sense, the theory of centers in 2-categories can be viewed as a direct generalization of that in 1-categories \cite{Ost03,Dav10,Str12,KYZ21}.
\end{expl}

\begin{rem}
Suppose $\BC$ is a monoidal 2-category and $\BM$ is a $\BC$-module. We can similarly define the $E_0$-center of an object $x \in \BM$ in $\BC$. More generally, if $\BC$ is an $E_{n+1}$-monoidal higher category and $\BM$ is an $E_n$-module over $\BC$, then $\Algn(\BM)$ is an $\Algn(\BC)$-module, and the $E_n$-center of an $E_n$-algebra $A \in \Algn(\BM)$ in $\BC$ can be defined as the $E_0$-center of $A$ in $\Algn(\BC)$. 
\end{rem}


\section{Enriched categories} \label{sec:enriched_categories}

In this section, we introduce the basic notions of an enriched functor and an enriched natural transformation, thus obtain a few versions of the categories of enriched categories. In the end, we recall and further study the canonical construction of enriched categories. 

\subsection{Enriched categories, functors and natural transformations}

Let $(\CA, \otimes, \one_\CA)$ be a monoidal category. In the following, the associators and unitors of $\CA$ are suppressed for simplicity. We recall the notion of an $\CA$-enriched category (see for example \cite{Kel82}).
\begin{defn}
An \emph{$\CA$-enriched category} $\ec[\CA]{\CL}$,\footnote{See Remark\,\ref{rem:notation-enriched} for an explanation of the notation $\ec[\CA]{\CL}$.} or a category {\em enriched in $\CA$}, consists of the following data:
\bnu
\item A set of objects $\ob (\ec[\CA]{\CL})$. If $x \in \ob(\ec[\CA]{\CL})$, we also write $x \in \ec[\CA]{\CL}$.  
\item An object $\ec[\CA]{\CL}(x,y)$ in $\CA$ for every pair $x,y\in \ec[\CA]{\CL}$. 
\item A distinguished morphism $1_x \colon \one_\CA \to \ec[\CA]{\CL}(x,x)$ in $\CA$ for every $x\in \ec[\CA]{\CL}$. 
\item A composition morphism $\ec[\CA]{\CL}(y,z)\otimes \ec[\CA]{\CL}(x,y)\xrightarrow{\circ} \ec[\CA]{\CL}(x,z)$ in $\CA$ for every triple $x,y,z\in \ec[\CA]{\CL}$. 
\enu
They are required to make the following diagrams commutative for $x,y,z,w\in \ec[\CA]{\CL}$.  
\begin{align}
\begin{array}{c}
\xymatrix @C=0.5in @R=0.2in{
    \ec[\CA]{\CL}(y,z) \otimes \ec[\CA]{\CL}(x,y)\otimes \ec[\CA]{\CL}(w,x) \ar[r]^-{1\otimes\circ} \ar[d]_{\circ\otimes 1} & \ec[\CA]{\CL}(y,z)\otimes \ec[\CA]{\CL}(w,y) \ar[d]^\circ \\
  \ec[\CA]{\CL}(x,z)\otimes \ec[\CA]{\CL}(w,x) \ar[r]^-\circ & \ec[\CA]{\CL}(w,z)
}
\end{array} \label{diag:asso-circ} \\
\begin{array}{c}
\xymatrix @C=0.3in @R=0.2in{
    \one_\CA \otimes \ec[\CA]{\CL}(x,y) \ar[r]^\simeq \ar[d]_{1_y \otimes 1} 
    & \ec[\CA]{\CL}(x,y) 
    & \ec[\CA]{\CL}(x,y) \otimes \one_\CA \ar[l]_\simeq \ar[d]^{1 \otimes 1_x}\\
    \ec[\CA]{\CL}(y,y) \otimes \ec[\CA]{\CL}(x,y) \ar[ru]^-{\circ} &  & \ec[\CA]{\CL}(x,y) \otimes \ec[\CA]{\CL}(x,x) \ar[lu]_-{\circ}
}
\end{array} \label{diag:right-left-unit}
\end{align}
\end{defn}

In the rest of this work, the monoidal category $\CA$ is referred to as the \textit{background category}\footnote{The category $\CA$ is often called the base category or ambient category of $\ec[\CA]{\CL}$ in mathematical literature \cite{Kel82}. Our choice was influenced by its physical meaning \cite{KZ18b}, which was further clarified and extended in recent physical works under the name of ``categorical symmetry'' \cite{JW20,KLWZZ20a,KWZ22,KZ22,CW23,LZ24}.} of $\ec[\CA]{\CL}$. The \textit{underlying category} of $\ec[\CA]{\CL}$, denoted by $\CL$, is defined by 
\[
\ob(\CL) \coloneqq \ob(\ec[\CA]{\CL}) \quad \mbox{and} \quad \CL(x,y) \coloneqq \CA(\one_\CA, \ec[\CA]{\CL}(x,y)), \quad \forall x,y\in\ec[\CA]{\CL};
\] 
and the composition $g\circ f$ of morphisms $\one_\CA \xrightarrow{g} \ec[\CA]{\CL}(y,z)$ and $\one_\CA \xrightarrow{f} \ec[\CA]{\CL}(x,y)$ is defined by
\begin{align}\label{equ:def_underlying_cat_1}
    \one_\CA \to \one_\CA \otimes \one_\CA \xrightarrow{g \otimes f} \ec[\CA]{\CL}(y,z) \otimes \ec[\CA]{\CL}(x,y) \xrightarrow{\circ} \ec[\CA]{\CL}(x,z);  
\end{align}
the identity morphism of $x$ in $\CL$ is precisely $1_x \colon \one_\CA \to \ec[\CA]{\CL}(x,x)$.

\begin{rem} \label{rem:hom_functor}
Let $\ec[\CA]{\CL}$ be an enriched category. A morphism $f \in \CL(x,y)$ in the underlying category $\CL$ induces a morphism $\ec[\CA]{\CL}(w,f) \colon \ec[\CA]{\CL}(w,x) \to \ec[\CA]{\CL}(w,y)$ in $\CA$ for $w \in \ec[\CA]{\CL}$, defined by
\be
\ec[\CA]{\CL}(w,x) \simeq \one_\CA \otimes \ec[\CA]{\CL}(w,x) \xrightarrow{f \otimes 1} \ec[\CA]{\CL}(x,y) \otimes \ec[\CA]{\CL}(w,x) \xrightarrow{\circ} \ec[\CA]{\CL}(w,y) .
\ee
Similarly $f$ also induces a morphism $\ec[\CA]{\CL}(f,w) \colon \ec[\CA]{\CL}(y,w) \to \ec[\CA]{\CL}(x,w)$ for $w \in \ec[\CA]{\CL}$. Moreover, if $g \in \CL(y,z)$, one can verify that $\ec[\CA]{\CL}(w,g) \circ \ec[\CA]{\CL}(w,f) = \ec[\CA]{\CL}(w,g \circ f)$. Thus $\ec[\CA]{\CL}(w,-)$ defines an ordinary functor $\CL \to \CA$. Similarly, $\ec[\CA]{\CL}(-,w)$ is an ordinary functor $\CL^\op \to \CA$. Combining them together, we obtain a well-defined functor $\ec[\CA]{\CL}(-,-) \colon \CL^\op \times \CL \to \CA$ \cite{Kel82}. 
\end{rem}

\begin{expl}
A usual category is a category enriched in $\Set$; a usual finite $k$-linear category or a multi-fusion category \cite{EGNO15} is enriched in $\vect$. 
\end{expl}

\begin{expl}[\cite{Law73}] \label{expl:metric_space}
Consider the poset $\Rb_+ \coloneqq ([0,+\infty),\geq)$ of non-negative real numbers as a category. It can be equipped with a monoidal structure with the tensor product given by the addition $+$ and the tensor unit given by $0$. Then a metric space $X = (X,d)$ can be viewed as an $\Rb_+$-enriched category $\ec[\Rb_+]{X}$:
\bit
\item the set of objects in $\ec[\Rb_+]{X}$ is the underlying set of $X$;
\item the hom objects are given by $\ec[\Rb_+]{X}(x,y) \coloneqq d(x,y) \in \Rb_+$;
\item the compositions are given by the triangle inequality $d(y,z) + d(x,y) \geq d(x,z)$;
\item the identities are given by $0 \geq d(x,x)$ (indeed, $d(x,x) = 0$).
\eit
\end{expl}

\begin{expl}
For an $\CA$-enriched category $\ec[\CA]{\CL}$, we define an $\CA^\rev$-enriched category $\ec[\CA^\rev]{\CL^\op}$ as follows:
\bit
\item $\ob(\ec[\CA^\rev]{\CL^\op}) \coloneqq \ob(\ec[\CA]{\CL})$;
\item For $x,y \in \ec[\CA^\rev]{\CL^\op}$, $\ec[\CA^\rev]{\CL^\op}(x,y) \coloneqq \ec[\CA]{\CL}(y,x)$;
\item The identity is given by $1_x \colon \one_\CA \to \ec[\CA]{\CL}(x,x) = \ec[\CA^\rev]{\CL^\op}(x,x)$;
\item the composition is defined by
\[
\ec[\CA^\rev]{\CL^\op}(y,z) \otimes^\rev \ec[\CA^\rev]{\CL^\op}(x,y) = \ec[\CA]{\CL}(y,x) \otimes \ec[\CA]{\CL}(z,y) \xrightarrow{\circ} \ec[\CA]{\CL}(z,x) = \ec[\CA^\rev]{\CL^\op}(x,z) ,
\]
where $\otimes^\rev$ is the reversed tensor product of $\CA$.
\eit
We call $\ec[\CA^\rev]{\CL^\op}$ the opposite category of $\ec[\CA]{\CL}$, i.e., $(\ec[\CA]{\CL})^\op \coloneqq \ec[\CA^\rev]{\CL^\op}$. 
We have
\[
\CA^\rev(\one_\CA,\ec[\CA^\rev]{\CL^\op}(x,y)) = \CA(\one_\CA,\ec[\CA]{\CL}(y,x)) = \CL(y,x) = \CL^\op(x,y) .
\]
It means that the underlying category of $\ec[\CA^\rev]{\CL^\op}$ is $\CL^\op$. This explains our notation. 
\end{expl}


\begin{defn}\label{def:1_morphism}
    An \emph{enriched functor} $\ec{F} \colon \ec[\CA]{\CL} \to \ec[\CB]{\CM}$ between two enriched categories consists of the following data:
\begin{enumerate}
    \item A lax-monoidal functor $\hat{F} \colon \CA \to \CB$, which is called the \textit{background changing functor}; 
    \item a map $F \colon \ob(\CL) \to \ob(\CM)$; 
    \item a family of morphisms $\ec{F}_{x,y} \colon \hat{F}(\ec[\CA]{\CL}(x,y)) \to \ec[\CB]{\CM}(F(x), F(y))$ in $\CB$ for all objects $x,y\in \CL$; 
\end{enumerate}    
satisfying the following two axioms:     
\begin{enumerate}
    \item $1_{F(x)} = (\one_\CB \to \hat{F}(\one_\CA) \xrightarrow{\hat{F}(1_x)} \hat{F}(\ec[\CA]{\CL}(x,x)) \xrightarrow{\ec{F}_{x,x}} \ec[\CB]{\CM}(F(x), F(x)))$;
    \item The diagram 
\small
\be \label{diag:1_morphism_diag}
\begin{array}{c}
\xymatrix{
\hat{F}(\ec[\CA]{\CL}(y,z))\otimes \hat{F}(\ec[\CA]{\CL}(x,y)) \ar[r] \ar[d]_{\ec{F}_{y,z}\otimes \ec{F}_{x,y}} & \ec[\CB]{\CM}(F(y),F(z))\otimes \ec[\CB]{\CM}(F(x),F(y)) \ar[dd]^-\circ \\
\hat{F}(\ec[\CA]{\CL}(y,z)\otimes \ec[\CA]{\CL}(x,y)) \ar[d]^-{\hat{F}(\circ)} \\
\hat{F}(\ec[\CA]{\CL}(x,z)) \ar[r]^{\ec{F}_{x,z}} & \ec[\CB]{\CM}(F(x),F(z))
}
\end{array}
\ee \normalsize
in $\CB$ is commutative.
\end{enumerate}
\end{defn}

\begin{expl}
We want to emphasize that when $\CA=\Set$, the notion of an enriched functor does not coincide with an ordinary functor unless we require $\hat{F}=1_\CA$ (see also Definition\,\ref{def:A-functor}). For example, let $M$ be a non-trivial monoid and $t: M \to \ast$ be the unique map from $M$ to the terminal object $\ast$ in $\Set$. Then $\hat{F}=M\times -: \Set \to \Set$ (recall Example,\,\ref{expl:lax_monoidal_functor}) and $\ec{F}_{x,y}: M\times \mathrm{Map}(x,y) \xrightarrow{t\times 1} \ast \times \mathrm{Map}(x,y) = \mathrm{Map}(x,y)$ for $x,y\in \Set$ defines an enriched functor $\Set \to \Set$, which is not a functor in the usual sense. In other words, even in the case $\CA=\Set$, this work provides a non-trivial generalization of the usual category theory. 
\end{expl}

\begin{expl}
Recall Example\,\ref{expl:metric_space}. A metric space $X = (X,d)$ can be viewed as an $\Rb_+$-enriched category $\ec[\Rb_+]{X}$. It is not hard to see that a lax-monoidal functor $\hat F : \Rb_+ \to \Rb_+$ is a monotone map $\hat F : \Rb_+ \to \Rb_+$ such that $\hat F(x) + \hat F(y) \geq \hat F(x+y)$. Therefore, for any real number $K \geq 0$ the map $\hat F_K \coloneqq K \cdot -$ of multiplying by $K$ is a lax-monoidal functor from $\Rb_+$ to itself (indeed, $\hat F_K$ is a monoidal functor and any monoidal functor $\Rb_+ \to \Rb_+$ has this form). Given two metric spaces $(X,d_X),(Y,d_Y)$, an enriched functor $\ec{F} : \ec[\Rb_+]{X} \to \ec[\Rb_+]{Y}$ with the background changing functor given by $\hat F = \hat F_K$ is simply a Lipschitz map $F : X \to Y$ with a Lipschitz constant $K$, i.e., a map $F : X \to Y$ satisfying
\[
K \cdot d_X(x_1,x_2) \geq d_Y(F(x_1),F(x_2)) .
\]
\end{expl}

For every $f: \one_\CA \to \ec[\CA]{\CL}(x,y)$, we define 
\be \label{equ:def_underlying_cat_2}
        F(f) \coloneqq \big( \one_\CB \to \hat{F}(\one_\CA) \xrightarrow{\hat{F}(f)} \hat{F}(\ec[\CA]{\CL}(x,y)) \xrightarrow{\ec{F}_{x,y}} \ec[\CB]{\CM}(F(x), F(y)) \big). 
\ee
The commutativity of the diagram \eqref{diag:1_morphism_diag} implies that $F(gf) = F(g)F(f)$ for every $f \in \CL(x,y)$ and $g \in \CL(y,z)$. As a consequence, we obtain a functor $F: \CL \to \CM$, which is called the \textit{underlying functor} of $\ec{F}$. When $\hat{F}=1_\CA$, we recover the classical notion of a underlying functor (see \cite{Kel82}).

\begin{rem}
Given an enriched functor $\ec{F} : \ec[\CA]{\CL} \to \ec[\CB]{\CM}$, for $f \in \CL(x,y)$ in the underlying category $\CL$, the commutativity of the diagram \eqref{diag:1_morphism_diag} implies that of the following diagram. 
\[
\xymatrix{
\hat F(\ec[\CA]{\CL}(w,x)) \ar[d]_{\hat F(\ec[\CA]{\CL}(w,f))} \ar[rr]^{\ec{F}_{w,x}} & & \ec[\CB]{\CM}(F(w),F(x)) \ar[d]^{\ec[\CB]{\CM}(F(w),F(f))} \\
 \hat F(\ec[\CA]{\CL}(w,y)) \ar[rr]^{\ec{F}_{w,y}} & & \ec[\CB]{\CM}(F(w),F(y))
}
\]
In other words, $\ec{F}_{w,-} : \hat F(\ec[\CA]{\CL}(w,-)) \Rightarrow \ec[\CB]{\CM}(F(w),F(-))$ is a natural transformation. Similarly, $\ec{F}_{-,w}: \hat F(\ec[\CA]{\CL}(-,w)) \Rightarrow \ec[\CB]{\CM}(F(-),F(w))$ is also a natural transformation. Combining them together, we obtain a natural transformation $\ec{F}_{-,-}: \hat F(\ec[\CA]{\CL}(-,-)) \Rightarrow \ec[\CB]{\CM}(F(-),F(-))$, generalizing the classical result for $\hat{F}=1_\CA$ \cite{Kel82}. 
\end{rem}

\begin{defn}\label{def:2_morphism}
    An \emph{enriched natural transformation} $\ec{\xi}: \ec{F} \Rightarrow \ec{G}$ between two enriched functors $\ec{F}, \ec{G}: \ec[\CA]{\CL} \to \ec[\CB]{\CM}$ consists of a lax-monoidal natural transformation $\hat{\xi}: \hat{F} \to \hat{G}$, which is called the \emph{background changing natural transformation} of $\ec{\xi}$, and a family of morphisms 
\begin{align*}
    \xi_x: \one_\CB \to \ec[\CB]{\CM}(F(x), G(x)), \quad  \forall x \in \ec[\CA]{\CL},
\end{align*}
rendering the following diagram commutative. 
\small
\begin{equation} \label{diag:2_morphism_diag}
\begin{array}{c}
\xymatrix @R=0.2in @C=0.1in{
\hat{F}(\ec[\CA]{\CL}(x,y)) \ar[r] \ar[d]^{\hat{\xi}} & \one_\CB \otimes \hat{F}(\ec[\CA]{\CL}(x,y)) \ar[d]^-{\xi_y \otimes \ec{F}_{x,y}} & \\
\hat{G}(\ec[\CA]{\CL}(x,y)) \otimes \one_\CB \ar[d]^-{\ec{G}_{x,y} \otimes \xi_{x}} & \ec[\CB]{\CM}(F(y), G(y)) \otimes \ec[\CB]{\CM}(F(x), F(y)) \ar[d]^{\circ} \\
\ec[\CB]{\CM}(G(x), G(y)) \otimes \ec[\CB]{\CM}(F(x), G(x)) \ar[r]^-{\circ} & \ec[\CB]{\CM}(F(x), G(y))
    }
\end{array}
\end{equation} \normalsize
The family of morphisms $\{ \xi_x \}$ automatically defines a natural transformation $\xi: F \Rightarrow G$, which is called the \emph{underlying natural transformation} of $\ec{\xi}$. 
\end{defn}

\begin{rem}
    Two enriched natural transformations $\ec{\xi}$ and $\ec{\eta}$ are equal in $\ec[\Lax]{\ecat}$ (see Proposition \ref{prop:2_cat_enriched_def} for the definition of $\ec[\Lax]{\ecat}$) if and only if $\hat{\xi}=\hat{\eta}$ and $\xi=\eta$. 
\end{rem}

\begin{rem}\label{rem:2_morphism}
The diagram \eqref{diag:2_morphism_diag}( can be rewritten as follows: 
\be \label{diag:2_morphism_diag_2}
\begin{array}{c}
\xymatrix @R=0.2in @C=0.25in{
        \hat{G}(\ec[\CA]{\CL}(x,y)) \ar[d]_{\ec{G}_{x,y}} & \hat{F}(\ec[\CA]{\CL}(x,y)) \ar[r]^-{\ec{F}_{x,y}} \ar[l]_{\hat{\xi}} & \ec[\CB]{\CM}(F(x),F(y)) \ar[d]^{\ec[\CB]{\CM}(F(x),\xi_y)} \\
        \ec[\CB]{\CM}(G(x),G(y)) \ar[rr]^{\ec[\CB]{\CM}(\xi_x,G(y))} & & \ec[\CB]{\CM}(F(x),G(y))
    }
\end{array}
\ee  
The commutativity of \eqref{diag:2_morphism_diag_2} guarantees that $\{ \xi_x \}$ gives a well-defined natural transformation $\xi: F\Rightarrow G$.
\end{rem}

\begin{defn}[\cite{Kel82}] \label{def:A-functor}
An enriched functor $\ec{F} \colon \ec[\CA]{\CL} \to \ec[\CA]{\CM}$ is called an \textit{$\CA$-functor} if $\hat{F}=1_\CA$. 
An enriched natural transformation $\ec{\xi} \colon \ec{F} \Rightarrow \ec{G}$ between two $\CA$-functors is called an \textit{$\CA$-natural transformation} if $\hat{\xi}$ is the identity natural transformation of $1_\CA$, i.e., $\hat{\xi}=1_{1_\CA}$.
\end{defn}
 

\begin{expl}\label{expl:enrich_cat_with_one_obj}
An $\CA$-enriched category with only one object $\ast$ is determined by an algebra $(A, m_A:A \otimes A \to A, \iota_A: \one \to A)$ in $\CA$ with the identity morphism $1_\ast:\one \to A$ defined by $\iota_A$ and the composition morphism $\circ: A \otimes A \to A$ defined by $m$. We use $\ast^A$ to denote this enriched category. If $\CA$ is the monoidal category with only one object and one morphism, we simply denote $\ast^{\one}$ by $\ast$.

    Let $B$ be an algebra in a monoidal category $\CB$. Then an enriched functor $\ec{F}: \ast^A \to \ast^B$ is defined by a lax-monoidal functor $\hat{F}: \CA \to \CB$ and an algebra homomorphism $\ec{F}_{*,*}: \hat{F}(A) \to B$. 
    
    Given another enriched functor $\ec{G}: \ast^A \to \ast^B$, an enriched natural transformations $\ec{\xi}: \ec{F} \Rightarrow \ec{G}$ is defined by a lax-monoidal natural transformation $\hat{\xi}: \hat{F} \to \hat{G}$ and a morphism $\xi: \one_\CB \to B$ rendering the following diagram commutative.
    \begin{align}\label{diag:enrich_cat_with_one_obj_1}
\begin{array}{c}
\xymatrix @R=0.2in @C=0.4in{
            \hat{F}(A) \ar[d] \ar[r] & \one_\CB \otimes \hat{F}(A) \ar[r]^-{\xi \otimes \ec{F}_{*,*}} & B \otimes B \ar[d]^-{m_B} \\
            \hat{G}(A) \otimes \one_\CB \ar[r]^-{\ec{G}_{*,*} \otimes \xi} & B \otimes B \ar[r]^-{m_B} & B
        }
\end{array}
    \end{align}
For example, when $\xi=\iota_B$ and $\hat{\xi}$ satisfies the condition $\ec{G}_{*,*}\circ\hat{\xi}_A=\ec{F}_{*,*}$, they define an enriched natural transformation. 
\end{expl}

\begin{expl}\label{conv:morphism_to_x}
    Let $\ec[\CA]{\CL}$ be an $\CA$-enriched category. For every $x \in \ec[\CA]{\CL}$, we use $\ec{x}$ to denote the enriched functor $\ast \to \ec[\CA]{\CL}$ defined by $\hat{x}=\one_\CA$, $\ast \mapsto x$ and $\ec{x}_{*,*}=1_x$. The underlying functor of $\ec{x}$, i.e., $\ast \mapsto x$, is denoted by $x$. 
\end{expl}

\subsection{2-categories of enriched categories}


\begin{prop}\label{prop:2_cat_enriched_def}
    Enriched categories (as objects), enriched functors (as 1-morphisms) and enriched natural transformations (as 2-morphisms) form a 2-category $\ec[\Lax]{\ecat}$. 
    \begin{enumerate}
        \item The composition $\ec{G} \circ \ec{F}$ of 1-morphisms $\ec{F}: \ec[\CA]{\CL} \to \ec[\CB]{\CM}$ and $\ec{G}:\ec[\CB]{\CM} \to \ec[\CC]{\CN}$ is the enriched functor defined by the the composition of lax-monoidal functors $\hat{G}\hat{F}$ and the underlying functor $GF$ induced by the family of morphisms   
\small
\[
    \hat{G}\hat{F}(\ec[\CA]{\CL}(x,y)) \xrightarrow{\hat{G}(\ec{F}_{x,y})} \hat{G}(\ec[\CB]{\CM}(F(x), F(y)) \xrightarrow{\ec{G}_{F(x), F(y)}} \ec[\CC]{\CN}(GF(x), GF(y)) 
\] \normalsize
for all $x,y \in \ec[\CA]{\CL}$.
        \item The horizontal composition $\ec{\eta} \circ \ec{\xi}: \ec{H}\circ \ec{F} \Rightarrow \ec{L} \circ \ec{G}$ of 2-morphisms $\ec{\xi}: \ec{F} \Rightarrow \ec{G}$ and $\ec{\eta}:\ec{H} \Rightarrow \ec{L}$, where $\ec{F}, \ec{G}: \ec[\CA]{\CL} \to \ec[\CB]{\CM}$ and $\ec{H}, \ec{L}: \ec[\CB]{\CM} \to \ec[\CC]{\CN}$ are 1-morphisms, is the enriched natural transformation defined by the horizontal compositions of natural transformations $\hat{\eta} \circ \hat{\xi}: \hat{H} \circ \hat{F} \to \hat{L} \circ \hat{G}$ and $\eta \circ \xi: H \circ F \to L \circ G$. 
        \item  The vertical composition of 2-morphisms $\ec{\xi}: \ec{F} \Rightarrow \ec{G}$ and $\ec{\beta}: \ec{G} \Rightarrow \ec{K}$, where $\ec{F}, \ec{G}, \ec{K}: \ec[\CA]{\CL} \to \ec[\CB]{\CM}$ are 1-morphisms, is defined by the vertical composition of natural transformations $\hat{\beta} \hat{\xi}: \hat{F} \to \hat{K}$ and $\beta \xi: F \to K$, 
    \end{enumerate}
\end{prop}

We denote the sub-2-category of $\ec[\Lax]{\ecat}$ consisting of enriched functors $\ec{F}$ such that $\hat{F}$ is monoidal by $\ecat$. Fix a monoidal category $\CA$, all $\CA$-enriched categories, $\CA$-functors and $\CA$-natural transformations form a sub-2-category of $\ec[\Lax]{\ecat}$, denoted by $\ec[\CA]{\ecat}$.

\begin{rem}
An enriched natural transformation $\ec{\alpha}$ is invertible in $\ec[\Lax]{\ecat}$ (i.e., an enriched natural isomorphism) if and only if both $\hat \alpha$ and $\alpha$ are natural isomorphisms.
\end{rem}


\begin{defn}
Let $\ec[\CC]{\CM}$ and $\ec[\CD]{\CN}$ be enriched categories. We define their Cartesian product $\ec[\CC]{\CM} \times \ec[\CD]{\CN}$ as a $(\CC \times \CD)$-enriched category as follows:
\bit
\item The objects of $\ec[\CC]{\CM} \times \ec[\CD]{\CN}$ are the same as $\CM \times \CN$.
\item For any $m_1,m_2 \in \CM$ and $n_1,n_2 \in \CN$, 
\[
(\ec[\CC]{\CM} \times \ec[\CD]{\CN})((m_1,n_1),(m_2,n_2)) \coloneqq (\ec[\CC]{\CM}(m_1,m_2),\ec[\CD]{\CN}(n_1,n_2)) \in \CC \times \CD .
\]
\item The composition is defined by
\begin{align*}
& \hspace{-1cm}\phantom{=} (\ec[\CC]{\CM} \times \ec[\CD]{\CN})((m_2,n_2),(m_3,n_3)) \otimes (\ec[\CC]{\CM} \times \ec[\CD]{\CN})((m_1,n_1),(m_2,n_2)) \\
& = (\ec[\CC]{\CM}(m_2,m_3),\ec[\CD]{\CN}(n_2,n_3)) \otimes (\ec[\CC]{\CM}(m_1,m_2),\ec[\CD]{\CN}(n_1,n_2)) \\
& = (\ec[\CC]{\CM}(m_2,m_3) \otimes \ec[\CC]{\CM}(m_1,m_2) , \ec[\CD]{\CN}(n_2,n_3) \otimes \ec[\CD]{\CN}(n_1,n_2)) \\
& \xrightarrow{(\circ, \circ)} (\ec[\CC]{\CM}(m_1,m_3),\ec[\CD]{\CN}(n_1,n_3)) = (\ec[\CC]{\CM} \times \ec[\CD]{\CN})((m_1,n_1),(m_3,n_3)) .
\end{align*}
\item The identity morphism $1_{(m,n)}$ defined by $(1_m, 1_n)$: 
\small
\[
1_{(m,n)} \coloneqq \bigl( (\one_\CC,\one_\CD) \xrightarrow{(1_m, 1_n)} (\ec[\CC]{\CM}(m,m),\ec[\CD]{\CN}(n,n)) = (\ec[\CC]{\CM} \times \ec[\CD]{\CN})((m,n),(m,n)) \bigr).
\] \normalsize
\eit
(Note that the underlying category of $\ec[\CC]{\CM} \times \ec[\CD]{\CN}$ is $\CM \times \CN$.)
\end{defn}

For enriched functors $\ec{F}: \ec[\CC_1]{\CM_1} \to \ec[\CC_2]{\CM_2}$, $\ec{G}: \ec[\CD_1]{\CN_1} \to \ec[\CD_2]{\CN_2}$, the lax-monoidal functor $\hat{F} \times \hat{G}: \CC_1 \times \CD_1 \to \CC_2 \times \CD_2$, the map $F \times G: \ob(\CM_1) \times \ob(\CN_1) \to \ob(\CM_2) \times \ob(\CN_2)$, and the family of morphisms
\small
\[
    \bigl(\hat{F}(\ec[\CC_1]{\CM_1}(a, b)), \hat{G}(\ec[\CD_1]{\CN_1}(x, y)) \bigr) \xrightarrow{(\ec{F}_{a,b}, \ec{G}_{x,y})} \bigl(\ec[\CC_2]{\CM_2}(F(a), F(b)), \ec[\CD_2]{\CN_2}(G(x), G(y) \bigr) 
\] \normalsize
for all $(a, x), (b, y) \in \ob(\CM_1) \times \ob(\CN_1)$
define an enriched functor from $\ec[\CC_1]{\CM_1} \times \ec[\CD_1]{\CN_1}$ to $\ec[\CC_2]{\CM_2} \times \ec[\CD_2]{\CN_2}$. In the following, we use $\ec{F} \times \ec{G}$ to denote this enriched functor. Similarly, for enriched natural transformations $\ec{\xi}: \ec{F}_1 \Rightarrow \ec{F}_2$, $\ec{\eta}: \ec{G}_1 \Rightarrow \ec{G}_2$, where $\ec{F}_1, \ec{F}_2: \ec[\CC_1]{\CM_1} \to \ec[\CC_2]{\CM_2}$, $\ec{G}_1, \ec{G}_2: \ec[\CD_1]{\CN_1} \to \ec[\CD_2]{\CN_2}$, we use $\ec{\xi} \times \ec{\eta}$ to denote the enriched natural transformation $\ec{F}_1 \times \ec{G}_1 \Rightarrow \ec{F}_2 \times \ec{G}_2$ with background changing natural transformation $\hat{\xi} \times \hat{\eta}$, and underlying natural transformation $\xi \times \eta$.  


It can be shown that $\ec[\CC]{\CM} \times \ec[\CD]{\CN}$ is the binary product of $\ec[\CC]{\CM}$ and $\ec[\CD]{\CN}$ (in the bilimit sense) and $\ast$ is the terminal object of $\ec[\Lax]{\ecat}$. Then the following result is an easy corollary of Theorem 2.15 in \cite{CKWW08}. For the sake of completeness, we sketch a proof. 

\begin{prop}
$\ec[\Lax]{\ecat}$ is a symmetric monoidal 2-category with the tensor product given by the Cartesian product $\times$ and the tensor unit given by $\ast$. 
\end{prop}

\begin{proof}
    The tensor product of $\ec[\Lax]{\ecat}$ is defined by the 2-functor which maps $(\ec[\CC]{\CM}, \ec[\CD]{\CN})$ to $\ec[\CC]{\CM} \times \ec[\CD]{\CN}$, $(\ec{F}, \ec{G})$ to $\ec{F} \times \ec{G}$, and $(\ec{\xi}, \ec{\eta})$ to $\ec{\xi} \times \ec{\eta}$. And the tensor unit of $\ec[\Lax]{\ecat}$ is the enriched category $\ast$ whose background category and underlying category are both the trivial category with only one object and only its identity morphism. The associator and left/right unitor of $\ec[\Lax]{\ecat}$ are derived from those of symmetric monoidal 2-categories $\cat$ and $\Alg_{E_1}^{\mathrm{lax}}(\cat)$. In particular, the associator and left/right unitor are 2-natural isomorphisms, and the pentagonator and 2-unitors are all given by the identity 2-morphisms. The braiding $\ec{\Sigma} \colon \ec[\CC]{\CM} \times \ec[\CD]{\CN} \to \ec[\CD]{\CN} \times \ec[\CC]{\CM}$ is defined by the enriched functor which switches the two arguments of the background categories and underlying categories. The left/right hexagonator and syllepsis are identity modifications. All the pasting diagram axioms in the definition of symmetric monoidal 2-categories hold for $\ec[\Lax]{\ecat}$ since every $2$-morphism in the diagrams is an identity 2-morphism (see Chapter 12 in \cite{JY21} for a detailed description of pasting diagram axioms).  
\end{proof}

\begin{rem}
Note that $\ec[\CA]{\ecat}$ is not a monoidal sub-2-category of $\ec[\Lax]{\ecat}$ because the Cartesian product $\times$ is not defined in $\ec[\CA]{\ecat}$. We show in Example\,\ref{expl:pushforward} that if $\CA$ is braided, then $\ec[\CA]{\ecat}$ has a natural monoidal structure with a non-trivial tensor product. 
\end{rem}

\begin{rem}\label{rem:background_underlying_2-functors}
    There is a symmetric monoidal 2-functor from $\ec[\Lax]{\ecat}$ to $\cat$ defined by $\ec[\CA]{\CL}\mapsto \CL$, $\ec{F}\mapsto F$ and $\ec{\xi}\mapsto \xi$. There is a symmetric monoidal 2-functor from $\ec[\Lax]{\ecat}$ (resp.~$\ecat$) to $\Alg_{E_1}^{\mathrm{lax}}(\cat)$ (resp.~$\Alg_{E_1}(\cat)$) defined by $\ec[\CA]{\CL}\mapsto \CA$, $\ec{F}\to\hat{F}$ and $\ec{\xi}\mapsto \hat{\xi}$ (see \cite{Gur11, Gur13, GO13} for the definition of symmetric monoidal bifunctor). 
\end{rem}

\subsection{Pushforward 2-functors}
In this subsection, we review a push-forward 2-functor and use it to reformulate the notions of an enriched functor and an enriched natural transformation.

Let $\CA$ and $\CB$ be monoidal categories and $R: \CA \to \CB$ a lax-monoidal functor. Given an enriched category $\ec[\CA]{\CL}$, there is a $\CB$-enriched category $R_\ast(\ec[\CA]{\CL})$ \cite{EK66,Kel82,MP17} defined as follows: 
\bnu
\item $\ob(R_\ast(\ec[\CA]{\CL})) \coloneqq \ob(\ec[\CA]\CL)$;  
\item $R_\ast(\ec[\CA]{\CL})(x,y) \coloneqq R(\ec[\CA]{\CL}(x,y))$; 
\item the identity morphism is
\[
1_x \coloneqq \bigl( \one_\CB \to R(\one_\CA) \xrightarrow{R(1_x)} R(\ec[\CA]{\CL}(x,x))= R_\ast(\ec[\CA]{\CL})(x,x) \bigr) ;
\] 
\item the composition of morphisms is defined by the following composed morphisms in $\CB$: 
\[
R(\ec[\CA]{\CL}(y,z)) \otimes R(\ec[\CA]{\CL}(x,y)) \to R(\ec[\CA]{\CL}(y,z)\otimes \ec[\CA]{\CL}(x,y)) \xrightarrow{R(\circ)} R(\ec[\CA]{\CL}(x,z)). 
\]
\enu
The two defining conditions (\ref{diag:asso-circ}) and (\ref{diag:right-left-unit}) hold due to the lax-monoidality of $R$.  


\begin{rem} \label{rem:R_enriched-functor}
There is a canonical enriched functor $\ec{R}: \ec[\CA]{\CL} \to R_\ast(\ec[\CA]{\CL})$ defined by $\hat{R}:=R$, $\ob(\ec[\CA]{\CL}) \xrightarrow{1} \ob(\ec[\CA]{\CL})$ and $\ec{R}_{x,y}=1_{R(\ec[\CA]{\CL}(x,y))}$ for $x,y\in \ob(\ec[\CA]{\CL})$.
\end{rem}

Given an $\CA$-functor $F: \ec[\CA]{\CL} \to \ec[\CA]{\CM}$, i.e., a map $F: \ob(\CL) \to \ob(\CM)$ together with $F_{x,y}: \ec[\CA]{\CL}(x,y) \to \ec[\CA]{\CM}(F(x),F(y))$ for $x,y\in\CL$. Then the same map $F: \ob(\CL) \to \ob(\CM)$ together with 
\[
R(F_{x,y}): R(\ec[\CA]{\CL}(x,y)) \to R(\ec[\CA]{\CM}(F(x),F(y)))
\] 
for $x,y\in\CL$ defines a $\CB$-functor $R_\ast(F): R_\ast(\ec[\CA]{\CL}) \to R_\ast(\ec[\CA]{\CM})$. 

Given an $\CA$-natural transformation $\xi: F \Rightarrow G$ between two $\CA$-functors $F,G: 
\ec[\CA]{\CL} \to \ec[\CA]{\CM}$. For $x\in\CL$, we define a family of morphisms $R_\ast(\xi_x)$ in $\CB$: 
\[
R_\ast(\xi_x) \coloneqq \bigl( \one_\CB \to R(\one_\CA) \xrightarrow{R(\xi_x)} R(\ec[\CA]{\CM}(F(x),G(x))) \bigr).
\] 
It is straightforward to check that $R_\ast(\xi_x)$ defines an $\CA$-natural transformation $R_\ast(\xi)$. 

\medskip
The following result was essentially in \cite{EK66,Kel82} except we drop the assumption on the symmetric monoidality. 
\begin{thm}
Given a lax-monoidal functor $R: \CA \to \CB$, there is a well-defined 2-functor $R_\ast: \ec[\CA]{\ecat} \to \ec[\CB]{\ecat}$ (called the \emph{pushforward of $R$}) defined by $\ec[\CA]\CL \mapsto R_\ast(\ec[\CA]\CL)$, $F \mapsto R_\ast(F)$, $\xi \mapsto R_\ast(\xi)$. 
\end{thm}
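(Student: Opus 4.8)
The plan is to verify that the three assignments respect the relevant structures and that $R_\ast$ is strictly functorial with respect to identities and to all three kinds of composition in a $2$-category. Well-definedness on objects was already recorded above: the associativity diagram \eqref{diag:asso-circ} and the unit diagrams \eqref{diag:right-left-unit} for $R_\ast(\ec[\CA]{\CL})$ are obtained by applying $R$ to the corresponding diagrams for $\ec[\CA]{\CL}$ and inserting the lax structure maps of $R$, and they commute by the lax associativity \eqref{diag:lax_monoidal_asso} and lax unitality \eqref{diag:lax_monoidal_unital}. It therefore remains to check that $R_\ast(F)$ is a genuine $\CB$-functor, that $R_\ast(\xi)$ is a genuine $\CB$-natural transformation, and then the functoriality identities.

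For the $\CB$-functor axioms of $R_\ast(F)$ (the two conditions in Definition \ref{def:1_morphism}, with background changing functor $1_\CB$), the unit axiom is immediate: $R(F_{x,x}) \circ R(1_x) = R(F_{x,x}\circ 1_x) = R(1_{F(x)})$ by functoriality of $R$ together with the unit axiom for the $\CA$-functor $F$, which is exactly the identity morphism of $F(x)$ prescribed in $R_\ast(\ec[\CA]{\CM})$. For the composition axiom \eqref{diag:1_morphism_diag}, I would apply $R$ to the composition axiom of $F$ and then invoke the naturality of the lax tensor constraint $R(-)\otimes R(-) \Rightarrow R(-\otimes -)$ against the pair $F_{y,z}\otimes F_{x,y}$; since the composition morphism of $R_\ast$ is by definition this lax constraint followed by $R(\circ)$, the two routes around \eqref{diag:1_morphism_diag} coincide. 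The $\CB$-natural transformation axiom \eqref{diag:2_morphism_diag} for $R_\ast(\xi)$ is handled the same way: apply $R$ to the defining diagram of $\xi$ and reconcile the two tensor products through the lax constraint, using that each component $R_\ast(\xi)_x$ factors through $\one_\CB \to R(\one_\CA)$.

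Strict $2$-functoriality is then a sequence of short checks. The identity $\CA$-functor has $F=\id$ and $F_{x,y}=\id$, so $R(\id)=\id$ gives $R_\ast(1_{\ec[\CA]{\CL}}) = 1_{R_\ast(\ec[\CA]{\CL})}$, and the identity $2$-morphism is preserved for the same reason. For composition of $1$-morphisms, the formula \eqref{equ:2_cat_enriched_def_1} collapses (both backgrounds being identities) to $(G\circ F)_{x,y} = G_{F(x),F(y)}\circ F_{x,y}$, whence $R_\ast(G\circ F)_{x,y} = R(G_{F(x),F(y)}\circ F_{x,y}) = R(G_{F(x),F(y)})\circ R(F_{x,y}) = (R_\ast(G)\circ R_\ast(F))_{x,y}$ by functoriality of $R$, with matching object maps and trivial backgrounds. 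For vertical composition, the component $(\beta\xi)_x$ is built from the underlying composition \eqref{equ:def_underlying_cat_1} in $\ec[\CA]{\CM}$; applying $R$ and comparing with the vertical composite of $R_\ast(\beta)$ and $R_\ast(\xi)$, which is assembled using the composition morphism of $R_\ast(\ec[\CA]{\CM})$ (i.e.\ the lax tensor constraint followed by $R(\circ)$), the desired equality reduces to the naturality of the lax constraint together with the lax-unitality identity that $\one_\CB \to \one_\CB\otimes\one_\CB \to R(\one_\CA)\otimes R(\one_\CA) \to R(\one_\CA\otimes\one_\CA)$ equals $\one_\CB \to R(\one_\CA) \to R(\one_\CA\otimes\one_\CA)$. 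Horizontal composition is then obtained by combining these facts.

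The computations are all routine consequences of functoriality of $R$ and the coherence (naturality, associativity, unitality) of its lax-monoidal structure, so no single step is conceptually hard. The main obstacle is purely bookkeeping: in the compatibility with vertical and horizontal composition of $2$-morphisms one must correctly insert the lax unit map $\one_\CB \to R(\one_\CA)$ and the lax tensor constraint and verify that they interact with $R$ applied to the composition in $\ec[\CA]{\CM}$. I would isolate this into the single lax-unitality identity displayed above, after which all remaining equalities follow by naturality.
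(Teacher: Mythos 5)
Your proposal is correct and follows exactly the construction the paper itself gives: the paper states the theorem without proof, remarking only that the defining conditions hold "due to the lax-monoidalness of $R$" and that the remaining checks are straightforward, and your verification supplies precisely those routine details (functoriality of $R$, naturality of the lax constraint, and the unit coherence identity) in the same way. In particular, your reduction of the vertical-composition check to the identity $\bigl(\one_\CB \to \one_\CB\otimes\one_\CB \to R(\one_\CA)\otimes R(\one_\CA) \to R(\one_\CA\otimes\one_\CA)\bigr) = \bigl(\one_\CB \to R(\one_\CA) \to R(\one_\CA\otimes\one_\CA)\bigr)$ is valid, since it follows from the lax unitality diagrams together with naturality of the unitors.
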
 

\begin{expl} \label{expl:pushforward}
We give a few examples of the pushforward 2-functor $R_\ast$. 
\bnu[(1)]
\item Given a monoidal category $\CA$, the functor $\CA(\one_\CA,-)$ has a lax-monoidal structure defined by $\CA(\one_\CA,x) \times \CA(\one_\CA,y) \to \CA(\one_\CA \otimes\one_\CA, x\otimes y)\simeq \CA(\one_\CA, x\otimes y)$. Then the pushforward 2-functor $\CA(\one_\CA,-)_\ast$ maps an $\CA$-enriched category to its underlying category, an $\CA$-functor to its underlying functor, and an $\CA$-natural transformation to its underlying natural transformation. 

\item Let $\CA$ be a braided monoidal category. The tensor product $\otimes: \CA \times \CA \to \CA$ is monoidal. Then $\otimes_\ast: \ec[\CA\times \CA]{\ecat} \to \ec[\CA]{\ecat}$ is a well-defined 2-functor. The Cartesian product $\times$ defines a functor $\times: \ec[\CA]{\ecat} \times \ec[\CA]{\ecat} \to \ec[\CA\times \CA]{\ecat}$. Then we obtain a composed functor
\[
\dtimes: \ec[\CA]{\ecat} \times \ec[\CA]{\ecat} \xrightarrow{\times} \ec[\CA\times \CA]{\ecat} \xrightarrow{\otimes_\ast} \ec[\CA]{\ecat}. 
\]
This functor $\dtimes$, together with the tensor unit $\ast$, endows a monoidal structure on $\ec[\CA]{\ecat}$ \cite{For04}. If $\CA$ is a symmetric monoidal category, then $\ec[\CA]{\ecat}$ is a symmetric monoidal 2-category \cite{Kel82}. 

\item Given a braided monoidal category $\CA$, let $\CB$ be a left monoidal $\CA$-module \cite{Lur17,AFT16,KZ18,HPT16,BZBJ18a}, i.e., $\CB$ is equipped with a braided monoidal functor $\phi \colon \CA \to \FZ_1(\CB)$. Let $\odot$ be the composed functor $\CA \times \CB \xrightarrow{\phi\times 1} \FZ_1(\CB) \times \CB \to \CB$. It defines a left unital $\CA$-action on $\CB$. Moreover, $\odot$ is monoidal, thus the pushforward $\odot_\ast$ is well-defined. Therefore, we obtain a composed 2-functor 
\be \label{arrow:A-action-on-B}
\ec[\CA]{\ecat} \times \ec[\CB]{\ecat} \xrightarrow{\times} \ec[\CA\times \CB]{\ecat} \xrightarrow{\odot_\ast} \ec[\CB]{\ecat},
\ee
which endows $\ec[\CB]{\ecat}$ with a structure of a left $\ec[\CA]{\ecat}$-module. This clarifies 
Remark 3.21 in \cite{KZ21}. Notice that this $\ec[\CA]{\ecat}$-action on $\ec[\CB]{\ecat}$ factors through a canonical action of $\ec[\FZ_1(\CB)]{\ecat}$ on $\ec[\CB]{\ecat}$. If, in addition, $\CA$ is symmetric, $\CB$ is braided and equipped with a braided functor $\phi: \CA \to \FZ_2(\CB)$, then the functor $\odot$ is braided monoidal. Then the $\ec[\CA]{\ecat}$-action on $\ec[\CB]{\ecat}$ factors through the canonical action of $\ec[\FZ_2(\CB)]{\ecat}$ on $\ec[\CB]{\ecat}$. 
\enu
\end{expl}

We obtain a new characterization of an enriched functor. 
\begin{prop} \label{prop:e-functor-split}
An enriched functor $\ec{F}: \ec[\CA]{\CL} \to \ec[\CB]{\CM}$ is precisely a pair $(\hat{F}, \check{F})$, where $\hat{F}: \CA\to\CB$ is a lax-monoidal functor and $\check{F}: \hat{F}_\ast(\ec[\CA]{\CL}) \to \ec[\CB]{\CM}$ is a $\CB$-functor. Moreover, in the light of Remark\,\ref{rem:R_enriched-functor}, an enriched functor $\ec{F}$ always splits into a composition, i.e., $\ec{F}= \check{F} \circ \ec{\hat{F}}$. We call it the \emph{splitting property} of an enriched functor. 
\end{prop}
\pf
This is just a reformulation of Definition\,\ref{def:1_morphism} in terms of a $\CB$-functor (recall Definition\,\ref{def:A-functor}). 
\end{proof}

\begin{rem}
Recall Example\,\ref{expl:pushforward} (3). Let $A$ be an $E_1$-algebra in $\ec[\CA]{\ecat}$ and $M$ an $A$-module in $\ec[\CB]{\ecat}$. By the splitting property of an enriched functor, the $A$-action on $M$ factors through an $\phi_\ast(A)$-action on $M$. As a consequence, for an $E_0$-algebra in $\ec[\CB]{\ecat}$ (i.e., a $\CB$-enriched category together with a distinguished object), its $E_0$-center in $\ecat$, if exists, necessarily lives in $\ec[\FZ_1(\CB)]{\ecat}$. If, in addition, $\CB$ is braided, for an $E_1$-algebra in $\ec[\CB]{\ecat}$, its $E_1$-center in $\ecat$, if exists, lives in $\ec[\FZ_2(\CB)]{\ecat}$. Similarly, if $\CB$ is symmetric, for an $E_2$-algebra in $\ec[\CB]{\ecat}$, its $E_2$-center in $\ecat$ lives in $\ec[\CB]{\ecat}$. 
\end{rem}

Let $R,R': \CA \to \CB$ be two lax-monoidal functors and $\phi: R \Rightarrow R'$ be a lax-monoidal natural transformation. This $\phi$ induces an enriched functor $\phi_\ast: R_\ast(\ec[\CA]{\CL}) \to R_\ast'(\ec[\CA]{\CL})$, called the pushforward of $\phi$. More precisely, $\phi_\ast$ is a $\CB$-functor defined as follows: 
\bnu
\item $\phi_\ast$ is the identity map on $\ob(\CL)$; 
\item on morphisms: $(\phi_\ast)_{x,y}: R_\ast(\ec[\CA]{\CL})(x,y) \to R_\ast'(\ec[\CA]{\CL})(x,y)$ is defined by 
\[
(\phi_\ast)_{x,y} \coloneqq \bigl( R_\ast(\ec[\CA]{\CL}(x,y)) \xrightarrow{\phi_{\ec[\CA]{\CL}(x,y)}} R_\ast'(\ec[\CA]{\CL}(x,y)) \bigr).
\] 
\enu
The lax-monoidality and the naturalness of $\phi$ imply that such defined $\phi_\ast$ preserves the identities and the compositions. Therefore, $\phi_\ast$ is a well-defined $\CB$-functor. We obtain a new characterization of an enriched natural transformation. 
\begin{prop}
Let $\ec{F}, \ec{G}: \ec[\CA]{\CL} \to \ec[\CB]{\CM}$ be two enriched functors. An enriched natural transformation $\ec{\xi}:\ec{F} \to \ec{G}$ is precisely a pair $(\hat{\xi}, \check{\xi})$, where $\hat{\xi}: \hat{F} \Rightarrow \hat{G}$ is a lax-monoidal natural transformation and $\check{\xi}: \check{F} \Rightarrow \check{G}\circ \hat{\xi}_\ast$ is a $\CB$-natural transformation. 
\end{prop}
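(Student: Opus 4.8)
The plan is to exhibit a correspondence $\ec{\xi}\leftrightarrow(\hat{\xi},\check{\xi})$ that is the identity on all the underlying data, and then to verify that the two defining commutativity conditions literally coincide. First I would record the data on each side. By Definition\,\ref{def:2_morphism} an enriched natural transformation $\ec{\xi}$ amounts to the lax-monoidal natural transformation $\hat{\xi}\colon \hat F\to\hat G$ together with a family $\xi_x\colon \one_\CB\to\ec[\CB]{\CM}(F(x),G(x))$ satisfying \eqref{diag:2_morphism_diag}. On the other side, using the preceding characterization $\ec{F}=(\hat F,\check F)$ and $\ec{G}=(\hat G,\check G)$ with $\check F_{x,y}=\ec{F}_{x,y}$ and $\check G_{x,y}=\ec{G}_{x,y}$, the pushforward $\hat{\xi}_\ast\colon \hat F_\ast(\ec[\CA]{\CL})\to\hat G_\ast(\ec[\CA]{\CL})$ is the identity on objects with $(\hat{\xi}_\ast)_{x,y}=\hat{\xi}_{\ec[\CA]{\CL}(x,y)}$. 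Unwinding the composition formula \eqref{equ:2_cat_enriched_def_1} with identity background changing functor, the $\CB$-functor $\check G\circ\hat{\xi}_\ast$ then has underlying functor $G$ and structure morphisms $(\check G\circ\hat{\xi}_\ast)_{x,y}=\ec{G}_{x,y}\circ\hat{\xi}_{\ec[\CA]{\CL}(x,y)}$. Consequently a $\CB$-natural transformation $\check{\xi}\colon\check F\Rightarrow\check G\circ\hat{\xi}_\ast$ is precisely a family $\check{\xi}_x\colon\one_\CB\to\ec[\CB]{\CM}(F(x),G(x))$, which is the same type of datum as $\xi_x$; so I would set $\check{\xi}_x\coloneqq\xi_x$ and keep $\hat{\xi}$ unchanged.

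The heart of the argument is to match the two naturality squares. The condition for $\check{\xi}$ is the diagram \eqref{diag:2_morphism_diag} instantiated over the background category $\CB$, with source $\check F$, target $\check G\circ\hat{\xi}_\ast$, hom-object $\hat F(\ec[\CA]{\CL}(x,y))$, and left vertical leg the identity (the background changing transformation of a $\CB$-natural transformation being $1_{1_\CB}$). Both diagrams share the identical top-and-right leg $\hat F(\ec[\CA]{\CL}(x,y))\to\one_\CB\otimes\hat F(\ec[\CA]{\CL}(x,y))\xrightarrow{\xi_y\otimes\ec{F}_{x,y}}\ec[\CB]{\CM}(F(y),G(y))\otimes\ec[\CB]{\CM}(F(x),F(y))\xrightarrow{\circ}\ec[\CB]{\CM}(F(x),G(y))$, so everything reduces to comparing the bottom-left legs. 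In \eqref{diag:2_morphism_diag} this leg is $\hat F(\ec[\CA]{\CL}(x,y))\xrightarrow{\hat{\xi}}\hat G(\ec[\CA]{\CL}(x,y))\xrightarrow{r^{-1}}\hat G(\ec[\CA]{\CL}(x,y))\otimes\one_\CB\xrightarrow{\ec{G}_{x,y}\otimes\xi_x}\cdots\xrightarrow{\circ}\cdots$, whereas in the $\CB$-version it is $\hat F(\ec[\CA]{\CL}(x,y))\xrightarrow{r^{-1}}\hat F(\ec[\CA]{\CL}(x,y))\otimes\one_\CB\xrightarrow{(\ec{G}_{x,y}\circ\hat{\xi})\otimes\check{\xi}_x}\cdots\xrightarrow{\circ}\cdots$. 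By naturality of the right unitor of $\CB$ one has $r^{-1}\circ\hat{\xi}_{\ec[\CA]{\CL}(x,y)}=(\hat{\xi}_{\ec[\CA]{\CL}(x,y)}\otimes 1_{\one_\CB})\circ r^{-1}$, and post-composing with $\ec{G}_{x,y}\otimes\xi_x$ collapses the first leg to $(\ec{G}_{x,y}\circ\hat{\xi}_{\ec[\CA]{\CL}(x,y)})\otimes\xi_x$ applied after $r^{-1}$. With $\check{\xi}_x=\xi_x$ this is exactly the second leg, so the two squares are the same and one commutes if and only if the other does.

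Putting these together yields mutually inverse assignments $\ec{\xi}\mapsto(\hat{\xi},\check{\xi})$ and $(\hat{\xi},\check{\xi})\mapsto\ec{\xi}$ that are the identity on the two constituent pieces of data, which is the asserted bijection. The only point that needs care, and what I expect to be the main though still routine obstacle, is the bookkeeping in the second paragraph: one must correctly read off $(\check G\circ\hat{\xi}_\ast)_{x,y}$ from \eqref{equ:2_cat_enriched_def_1} and invoke naturality of the unitor to absorb $\hat{\xi}$ from the vertical leg into the $\ec{G}$-leg, so that the regrouping is recognized as a strict equality rather than merely an isomorphism. No deeper structural input is required.
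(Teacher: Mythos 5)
Your proposal is correct, and it matches what the paper intends: the paper states this proposition without proof, treating it as an immediate unwinding of Definition\,\ref{def:2_morphism}, the characterization $\ec{F}=(\hat F,\check F)$, and the pushforward construction $\hat{\xi}_\ast$ — which is exactly the verification you carry out. Your identification $(\check G\circ\hat{\xi}_\ast)_{x,y}=\ec{G}_{x,y}\circ\hat{\xi}_{\ec[\CA]{\CL}(x,y)}$ and the use of naturality of the right unitor to show the two instances of diagram \eqref{diag:2_morphism_diag} literally coincide (so $\check{\xi}_x=\xi_x$ gives mutually inverse assignments) is precisely the routine bookkeeping the paper omits.
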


\subsection{Canonical construction} \label{subsec:can_con}
In this subsection, we recall the canonical construction of enriched categories, and further study it in the new 2-category of enriched categories. The main results are Theorem\,\ref{thm:2-functor_from_A_modules_to_cat_A} and \ref{thm:fsLMod=lax_fsECat}. 
Throughout this subsection, $\CA$ and $\CB$ are monoidal categories, $\CL$ is a left $\CA$-\oplax module (recall Definition\,\ref{def:oplax-left-module}), and $\CM$ is a left $\CB$-\oplax module. 

\medskip
Recall that $\CL$ is called \textit{enriched} in $\CA$ if the internal hom $[x,y]_\CA$ exists in $\CA$ for all $x,y\in\CL$. 
We sometimes abbreviate $[x, y]_{\CA}$ to $[x,y]$ for simplicity. We use $\ev_x$ and $\coev_x$ to denote the counit $[x,y] \odot x \to y$ and unit $a \to [x, a\odot x]$ of the adjunction 
\begin{align*}
    \CL(a \odot x, y) \simeq \CA(a, [x,y]). 
\end{align*}
For any pair $(a,\phi)$ where $a \in \CA$ and $\phi : a \odot x \to y$ is a morphism in $\CL$, there exists a unique morphism $\bar \phi : a \to [x,y]$ such that the following diagram commutes:
\[
\xymatrix{
a \odot x \ar@{-->}[r]^-{\bar \phi \odot 1_x} \ar[dr]_{\phi} & [x,y] \odot x \ar[d]^{\ev_x} \\
 & y
}
\]
It follows that for any morphisms $f: x' \to x, g: y \to y'$, there exists a unique morphism $[f,g]: [x,y] \to [x', y']$ rendering the diagram commutative. 
\begin{align*}
    \xymatrix @R=0.1in{
        [x,y] \odot x' \ar[rr]^-{[f,g] \odot 1} \ar[d]^-{1 \odot f} && [x', y']\odot x' \ar[d]^-{\ev_{x'}}\\
        [x,y] \odot x \ar[r]^-{\ev_x} & y \ar[r]^-{g} & y'
    }
\end{align*}
As a consequence, the internal homs define a bifunctor $[-,-]:\CL^{\op} \times \CL \to \CA$. Indeed, $[f,-]$ is the mate of $(- \odot f)$ under the adjunction $(- \odot x) \dashv [x,-]$.

\begin{rem}
Let $(L: \CB \to \CA, R: \CA \to \CB, \eta: 1_\CB \Rightarrow RL, \varepsilon: LR \Rightarrow 1_\CA)$ be an adjunction such that $R$ is lax-monoidal. The functor $L$ is automatically oplax-monoidal \cite{Kel74}. The left $\CA$-\oplax module $\CL$ pulls back to a left $\CB$-\oplax module with the $\CB$-action defined by $L(-)\odot -: \CB \times \CL \to \CL$. If $\CL$ is enriched in $\CA$, then $\CL$ is enriched in $\CB$ with $[x,y]_\CB = R([x,y]_\CA)$. The evaluation $[x,y]_\CB\odot x \to y$ is defined by the composed morphism: $LR([x,y]_\CA)\odot x \xrightarrow{\varepsilon} [x,y]_\CA \odot x \xrightarrow{\ev_x} y$. 
\end{rem}

\begin{defn}[\cite{Lin81}] 
If $\CL$ is enriched in $\CA$, then $\CL$ can be promoted to an $\CA$-enriched category, denoted by $\bc[\CA]{\CL}$. More precisely, the enriched category $\bc[\CA]{\CL}$ has the same objects as $\CL$ and $\bc[\CA]{\CL}(x,y)=[x,y]$. The identity $1_x: \one\to [x,x]$ and composition $\circ: [y,z]\otimes[x,y]\to[x,z]$ are the unique morphisms rendering the following diagrams commutative:
\small
\[
\xymatrix @R=0.2in @C=0.25in{
    \one_\CA \odot x \ar[r]^{1_x \odot 1} \ar[rd]_{u_x} & [x,x]\odot x \ar[d]^{\ev_x}\\
    & x 
}\quad
    \xymatrix @R=0.2in @C=0.25in{
        ([y,z] \otimes [x,y]) \odot x \ar[r] \ar[d]^{\circ \odot 1} & [y,z] \odot ([x,y] \odot x) \ar[r]^-{1 \odot \ev_x} & [y,z] \odot y \ar[d]^{\ev_y}\\
        [x,z] \odot x \ar[rr]^{\ev_x} & & z 
    }
\]
\normalsize
This construction of $\bc[\CA]{\CL}$ is called the \textit{canonical construction}. 
\end{defn}

In the case of canonical construction $\bc[\CA]{\CL}$, if $\CL$ is a strongly unital, i.e., $u_x: \one_\CA \odot x \to x$ is invertible for every $x \in \CL$, then $\CL$ can be canonically identified with the underlying category of $\bc[\CA]{\CL}$ 
via an isomorphism of categories defined by the identity map on objects and the map $(f: y \to y') \mapsto (\underline{f}: \one_\CA \to [y,y'])$ on morphisms, where $\underline{f}$ is defined by the following commutative diagram. 
\begin{equation}\label{diag:can_con_identify_with_origin_rem_diag_I}
\begin{array}{c}
\xymatrix @R=0.15in @C=0.8in{
        \one_\CA \odot y \ar[d]_{u_y} \ar[r]^-{\underline{f} \odot 1} & [y,y'] \odot y \ar[d]^{\ev_y}\\
        y \ar[r]^{f} & y'
    }
\end{array}
\end{equation} 

\begin{rem} \label{rem:notation-enriched}
A brief remark to our notations $\ec[\CA]{\CL}$ and $\bc[\CA]{\CL}$. We use ``$|$'' in a generic enriched category $\ec[\CA]{\CL}$ to indicate that $\CA$ might not directly act on $\CL$. Once we remove the block ``$|$'', it suggests an $\CA$-action on $\CL$ and the canonical construction. 
\end{rem}


\medskip
An enriched functor between two enriched categories from the canonical construction has a nice characterization (see Proposition~\ref{prop:1_morphism_des}). We explain that now. In the rest of this subsection, $\CL$ and $\CM$ are assumed to be 
enriched in $\CA$ and $\CB$, respectively. 

\medskip
Let $L: \CB \to \CA$ be an oplax-monoidal functor. Then the left $\CA$-\oplax module $\CL$ pulls back along $L$ to a left $\CB$-\oplax module with the $\CB$-action defined by $L(-) \odot -:  \CB \times \CL \to \CL$. For a functor $F: \CL\to\CM$, a lax $\CB$-\oplax module functor structure on $F$ is defined by a natural transformation $\alpha = \{ \alpha_{b,x}: b\odot F(x) \to F(L(b)\odot x)\}_{a\in\CA,x\in\CL}$ satisfying some natural axioms (recall Definition~\ref{defn:lax_module_functor}). We also call this natural transformation $\alpha$ as an \emph{$L$-oplax structure} of $F$. We introduce a new structure on $F$ that is in some sense dual to the $L$-oplax structure of $F$.

\begin{defn} \label{def:R_lax_module_functor}
Let $R: \CA \to \CB$ be a lax-monoidal functor and $F: \CL \to \CM$ be a functor. An \emph{$R$-lax structure} on $F$ is a natural transformation $\beta = \{\beta_{a, x} \colon R(a) \odot F(x) \to F(a \odot x)\}_{a\in\CA, x\in\CL}$ rendering the following diagrams commutative.
\be \label{diag:R_lax_module_functor_con}
\begin{gathered}
\begin{array}{c}
\xymatrix @R=0.2in @C=0.3in{
(R(a) \otimes R(b)) \odot F(x) \ar[r] \ar[d] & R(a \otimes b) \odot F(x) \ar[r]^-{\beta_{a \otimes b,x}} & F((a \otimes b) \odot x) \ar[d] \\
R(a) \odot (R(b) \odot F(x)) \ar[r]^-{1 \odot \beta_{b,x}} & R(a) \odot F(b \odot x) \ar[r]^-{\beta_{a,b \odot x}} & F(a \odot (b \odot x))
}
\end{array} \\
\begin{array}{c}
\xymatrix @R=0.2in @C=0.2in{
\one_\CB \odot F(x) \ar[r] \ar[d] & R(\one_\CA) \odot F(x) \ar[d]^-{\beta_{\one_\CA,x}} \\
F(x) & F(\one_\CA \odot x) \ar[l]
}
\end{array}
\end{gathered}
\ee 
The functor $F$ equipped with an $R$-lax structure is called an \emph{$R$-lax functor}. 
\end{defn}

\begin{rem}
When $\CA=\CB, R=1_\CA$, an $R$-lax functor becomes a lax $\CB$-\oplax module functor. In general, an $R$-lax functor $F$ is not a ``module functor'' in any sense because $\CA \to \CB \to \fun(\CM,\CM)$ (as the composition of a lax-monoidal functor and an oplax-monoidal functor) is neither lax-monoidal nor oplax-monoidal. 
\end{rem}

\begin{rem}\label{rem:mot_def_R_lax_functor}
We explain the duality between an $L$-oplax structure and an $R$-lax structure. 
Let $(L: \CB \to \CA, R: \CA \to \CB, \eta: 1_\CB \Rightarrow RL, \varepsilon: LR \Rightarrow 1_\CA)$ be an adjunction such that $R$ is lax-monoidal. Then $L$ is automatically oplax-monoidal \cite{Kel74} with the oplax structure maps defined by 
\begin{gather*}
L(\one_\CB) \to LR(\one_\CA) \xrightarrow{\varepsilon} \one_\CA, \\
L(a \otimes b) \xrightarrow{L(\eta \otimes \eta)} L(RL(a) \otimes RL(b)) \to LR(L(a) \otimes L(b)) \xrightarrow{\varepsilon} L(a) \otimes L(b).
\end{gather*}
Let $\{\beta_{a, x}: R(a) \odot F(x) \to F(a \odot x)\}_{a\in\CA,x\in\CL}$ be an $R$-lax structure on $F$, then   
\begin{align*}
    \alpha_{b, x} \coloneqq \bigl(b \odot F(x) \xrightarrow{\eta \odot 1} RL(b) \odot F(x) \xrightarrow{\beta_{L(b),x}} F(L(b) \odot x) \bigr), \quad \forall b\in\CB, x\in\CL
\end{align*}   
define an $L$-oplax structure on $F$.

Conversely, let $\{\alpha_{b,x}: b \odot F(x) \to F(L(b) \odot x)\}_{b\in\CB,x\in\CL}$ be an $L$-oplax structure on $F$, then   
\[
    \beta_{a,x} \coloneqq \bigl(R(a) \odot F(x) \xrightarrow{\alpha_{R(a),x}} F(LR(a) \odot x) \xrightarrow{F(\varepsilon_a \odot 1)} F(a \odot x) \bigr),  \;\; \forall a\in\CA, x\in\CL
\]   
define an $R$-lax structure on $F$. These two constructions are mutually inverse. In other words, there is a bijection between the set of $R$-lax structures on $F$ and that of $L$-oplax structures on $F$. 
\end{rem}

The $R$-lax structure gives a characterization of an enriched functor between two enriched categories from the canonical construction as explained in the following proposition.

\begin{prop} \label{prop:1_morphism_des}
Suppose $\CL$ and $\CM$ as left \oplax modules are strongly unital. Let $\ec{F} : \bc[\CA]{\CL} \to \bc[\CB]{\CM}$ be an enriched functor. The underlying functor $F : \CL \to \CM$ with the natural transformation
\begin{equation} \label{eq:beta_ax}
\beta_{a, x} \coloneqq \bigl( \hat{F}(a) \odot F(x) \xrightarrow{\hat{F}(\coev_x)} \hat{F}([x, a \odot x]) \odot F(x) \xrightarrow{\ec{F}_{x, a\odot x}} [F(x), F(a \odot x)] \odot F(x) \xrightarrow{\ev_{F(x)}} F(a \odot x) \bigr)
\end{equation}
is an $\hat F$-lax functor. Conversely, given a lax-monoidal functor $\hat F : \CA \to \CB$ and an $\hat F$-lax functor $F : \CL \to \CM$ with the $\hat F$-lax structure $\beta_{a,x} : \hat F(a) \odot F(x) \to F(a \odot x)$, the morphisms
\begin{multline} \label{eq:1_morphism_des_2}
\ec{F}_{x,y} \coloneqq \bigl( \hat F([x,y]) \xrightarrow{\coev_{F(x)}} [F(x),\hat F([x,y]) \odot F(x)] \\
\xrightarrow{[F(x),\beta_{[x,y],x}]} [F(x),F([x,y] \odot x)] \xrightarrow{[F(x),F(\ev_x)]} [F(x),F(y)] \bigr),
\end{multline}
together with $\hat F$ and $F$, define an enriched functor $\ec{F} : \bc[\CA]{\CL} \to \bc[\CB]{\CM}$. Moreover, these two constructions are mutually inverse.
\end{prop}

\pf
Given an enriched functor $\ec{F} : \bc[\CA]{\CL} \to \bc[\CB]{\CM}$, we need to show the natural transformation $\beta_{a,x}$ defined in (\ref{eq:beta_ax}) satisfies two diagrams in \eqref{diag:R_lax_module_functor_con}. The first diagram is the outer diagram of the following commutative diagram
\scriptsize
\[
\xymatrix@C=1.8em{
\hat F(a) \hat F(b) F(x) \ar[r]^{\hat F^2 1} \ar[d]_{1 \hat F(\coev_x) 1} & \hat F(ab) F(x) \ar[r]^{\hat F(\coev_x) 1} \ar[dr]^-{\hat F(\coev_{bx} \coev_x)} & \hat F([x,abx]) F(x) \ar@/^12ex/[dd]^(0.25){\ec{F}_{x,abx} 1} \\
\hat F(a) \hat F([x,bx]) F(x) \ar[r]^-{\hat F(\coev_{bx}) 1 1} \ar[d]_{1 \ec{F}_{x,bx} 1} & \hat F([bx,abx]) \hat F([x,bx]) F(x) \ar[r]^{\hat F^2 1} \ar[d]^{\ec{F}_{bx,abx} \ec{F}_{x,bx} 1} & \hat F([bx,abx] [x,bx]) F(x) \ar[u]_{\hat F(\circ) 1} \\
\hat F(a) [F(x),F(bx)] F(x) \ar[d]_{1 \ev_{F(x)}} & [F(bx),F(abx)] [F(x),F(bx)] F(x) \ar[r]^-{\circ 1} \ar[dr]^{1 \ev_{F(x)}} & [F(x),F(abx)] F(x) \ar@/^12ex/[dd]^(0.3){\ev_{F(x)}} \ar@{}[ul]|{\bigstar} \\
\hat F(a) F(bx) \ar[r]_-{\hat F(\coev_{bx}) 1} & \hat F([bx,abx]) F(bx) \ar[r]_-{\ec{F}_{bx,abx} 1} & [F(bx),F(abx)] F(bx) \ar[d]_{\ev_{bx}} \\
 & & F(abx) 
}
\] \normalsize
where the pentagon $\star$ commutes because $\ec{F}$ is an enriched functor. The second diagram is the outer diagram of the following commutative diagram
\[
\xymatrix{
\one F(x) \ar[r]^{\hat F^0 1} \ar[ddd] \ar[ddr]_{1_{F(x)} 1} \ar@{}[dr]|{\bigstar} & \hat F(\one) F(x) \ar[d]^{\hat F(1_x) 1} \ar[dr]^{\hat F(\coev_x) 1} \\
 & \hat F([x,x]) F(x) \ar[d]^{\ec{F}_{x,x} 1} & \hat F([x,\one x]) F(x) \ar[d]^{\ec{F}_{x,\one x} 1} \ar[l] \\
 & [F(x),F(x)] F(x) \ar[dl]_{\ev_{F(x)}} & [F(x),F(\one x)] F(x) \ar[d]^{\ev_{F(x)}} \ar[l] \\
F(x) & & F(\one x) \ar[ll]
}
\]
where the subdiagram $\star$ commutes because $\ec{F}$ is an enriched functor.

Conversely, suppose $\beta_{a,x}$ is an $\hat F$-lax structure of $F$, we need to show that the morphisms $\ec{F}_{x,y}$, together with $\hat F$ and $F$, define an enriched functor $\ec{F} : \bc[\CA]{\CL} \to \bc[\CB]{\CM}$. That $\ec{F}$ preserves the identity morphisms follows from the commutativity of the outer diagram of the following commutative diagram
\[
\xymatrix@C=2em{
\one \ar[r]^{\hat F^0} \ar[d]^{\coev_{F(x)}} \ar@/_8ex/[dd]_{1_{F(x)}} & \hat F(\one) \ar[r]^{\hat F(1_x)} \ar[d]^{\coev_{F(x)}} & \hat F([x,x]) \ar[d]^{\coev_{F(x)}} \\
[F(x),\one \odot F(x)] \ar[r] \ar[d] & [F(x),\hat F(\one) \odot F(x)] \ar[r] & [F(x),\hat F([x,x]) \odot F(x)] \ar[d]^{[1,\beta_{[x,x],x}]} \\
[F(x),F(x)] & & [F(x),F([x,x] \odot x)]\, , \ar[ll] \ar@{}[ull]|{\bigstar}
}
\]
where $\star$ commutes by the definition of $\hat F$-lax structure. Using the adjunction $(- \odot x) \dashv [x,-]$, the condition \eqref{diag:1_morphism_diag} is equivalent to the commutativity of the outer square of the following diagram:
\[
\xymatrix{
\hat F([y,z]) \hat F([x,y]) F(x) \ar[r]^{\hat F^2 1} \ar[d]^{1 \beta_{[x,y],x}} \ar@{}[dr]|{\bigstar} & \hat F([y,z] [x,y]) F(x) \ar[r]^{\hat F(\circ) 1} \ar[d]^{\beta_{[y,z][x,y],x}} & \hat F([x,z]) F(x) \ar[d]^{\beta_{[x,z],x}} \\
\hat F([y,z]) F([x,y] x) \ar[r]^{\beta_{[y,z],[x,y] x}} \ar[d]^{1 F(\ev_x)} & F([y,z] [x,y] x) \ar[r]^{F(\circ 1)} \ar[d]^{F(1 \ev_x)} & F([x,z] x) \ar[d]^{F(\ev_x)} \\
\hat F([y,z]) F(y) \ar[r]^{\beta_{[y,z],y}} & F([y,z] y) \ar[r]^{F(\ev_y)} & F(z),
}
\]
where the square $\star$ commutes because $\beta$ is an $\hat F$-lax structure of $F$.

It is easy to verify that these two constructions are mutually inverse.
\end{proof}

\begin{rem}
The special case of Proposition \ref{prop:1_morphism_des} when $\CB = \CA$ and $\hat F = 1_\CA$ has been proved in \cite{Lin81,MPP18}.
\end{rem}

\begin{defn}\label{def:morphism_bet_R_lax_functors}
For $i=1,2$, let $R_i:\CA \to \CB$ be a lax-monoidal functor and $F_i: \CL \to \CM$ an $R_i$-lax functor. Given a lax-monoidal natural transformation $\hat{\xi}: R_1 \Rightarrow R_2$, a natural transformation $\xi:F_1 \Rightarrow F_2$ is \emph{$\hat{\xi}$-lax} or called a \emph{$\hat{\xi}$-lax natural transformation} if the following diagram commutes
\be \label{diag:morphism_bet_R_lax_functors_con}
\begin{array}{c}
\xymatrix @R=0.2in @C=0.4in{
R_1(a) \odot F_1(x) \ar[r] \ar[d]^{\hat{\xi}_a \odot \xi_x} & F_1(a \odot x) \ar[d]^{\xi_{a \odot x}} \\
R_2(a) \odot F_2(x) \ar[r] & F_2(a \odot x)\ ,
}
\end{array}
\ee
where the unlabeled arrows are given by the $R_i$-lax structure on $F_i$.
\end{defn}

\begin{rem}
 The definition \ref{def:morphism_bet_R_lax_functors} is motivated by the following fact. Let $(L: \CB \to \CA, R: \CA \to \CB, \eta: 1_\CB \Rightarrow RL, \varepsilon: LR \Rightarrow 1_\CA)$ be an adjunction such that $R$ is lax-monoidal and $F_1, F_2: \CL \to \CM$ be $R$-lax functors. By Remark \ref{rem:mot_def_R_lax_functor}, $F_i$ is equipped with an $L$-oplax structure induced by the $R$-lax structure on $F_i$. Then a natural transformation $\xi: F_1 \Rightarrow F_2$ is $1_R$-lax if and only if $\xi$ is a $\CB$-module natural transformation.  
\end{rem}

%

\begin{prop}\label{prop:2_morphism_des}
    For $i = 1,2$, let $\hat{F}_i:\CA \to \CB$ be a lax-monoidal functor and $F_i : \CL \to \CM$ an $\hat F_i$-lax functor. Then we have two enriched functors $\ec{F_1}, \ec{F_2} : \bc[\CA]{\CL} \to \bc[\CB]{\CM}$. Given a lax-monoidal natural transformation $\hat{\xi}:\hat{F}_1 \to \hat{F}_2$, a natural transformation $\xi:F_1 \to F_2$ is $\hat{\xi}$-lax if and only if $(\hat{\xi}, \xi)$ defines an enriched natural transformation $\ec{\xi}: \ec{F}_1 \to \ec{F}_2$.
\end{prop}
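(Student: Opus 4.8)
The plan is to transpose the defining condition of an enriched natural transformation across the internal-hom adjunction and match it with the $\hat\xi$-lax condition (\ref{diag:morphism_bet_R_lax_functors_con}). Write $\beta^i_{a,x} : \hat F_i(a) \odot F_i(x) \to F_i(a\odot x)$ for the $\hat F_i$-lax structures, so that $(\ec{F}_i)_{x,y}$ is the hom-component produced from $\beta^i$ in Proposition~\ref{prop:1_morphism_des}. Since $\CM$ is strongly unital, the family $\one_\CB \to [F_1(x),F_2(x)]$ needed to build $\ec{\xi}$ is exactly the datum of the natural transformation $\xi$ via the identification (\ref{diag:can_con_identify_with_origin_rem_diag_I}), and by Remark~\ref{rem:hom_functor} the hom-functor of $\bc[\CB]{\CM}$ is the internal-hom bifunctor; hence $(\hat\xi,\xi)$ is an enriched natural transformation iff the reformulated diagram (\ref{diag:2_morphism_diag_2}) commutes, i.e.
\[
[F_1(x),\xi_y] \circ (\ec{F}_1)_{x,y} = [\xi_x,F_2(y)] \circ (\ec{F}_2)_{x,y} \circ \hat\xi_{[x,y]}
\]
as morphisms $\hat F_1([x,y]) \to [F_1(x),F_2(y)]$ in $\CB$, for all $x,y\in\CL$.

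First I would transpose this equality across the adjunction $(-\odot F_1(x)) \dashv [F_1(x),-]$, obtaining an equality of morphisms $\hat F_1([x,y])\odot F_1(x) \to F_2(y)$ in $\CM$. Using the defining property of $[-,-]$ on morphisms (for post- and pre-composition) together with the key identity $\ev_{F_i(x)} \circ ((\ec{F}_i)_{x,y} \odot 1) = F_i(\ev_x) \circ \beta^i_{[x,y],x}$ --- which follows from the inverse formula of Proposition~\ref{prop:1_morphism_des} and the zigzag identity $\ev \circ (\coev\odot 1) = 1$ --- the transposed condition becomes
\[
\xi_y \circ F_1(\ev_x) \circ \beta^1_{[x,y],x} = F_2(\ev_x) \circ \beta^2_{[x,y],x} \circ (\hat\xi_{[x,y]}\odot \xi_x),
\]
which I denote $(\ast)$. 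Since both the transpose and the canonical identification are bijections, it suffices to prove that $(\ast)$ (for all $x,y$) is equivalent to the $\hat\xi$-lax condition (\ref{diag:morphism_bet_R_lax_functors_con}) (for all $a,x$).

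For the forward implication, assuming $\xi$ is $\hat\xi$-lax I would specialize (\ref{diag:morphism_bet_R_lax_functors_con}) to $a=[x,y]$, post-compose with $F_2(\ev_x)$, and apply naturality of $\xi$ at the morphism $\ev_x : [x,y]\odot x \to y$ in $\CL$, which converts $F_2(\ev_x)\circ \xi_{[x,y]\odot x}$ into $\xi_y\circ F_1(\ev_x)$; this reproduces $(\ast)$.

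The converse is the main obstacle, since $(\ast)$ only encodes the case $a=[x,y]$ and must be promoted to arbitrary $a$ through the universal property of the internal hom. Here I would set $y = a\odot x$ in $(\ast)$ and precompose with $\hat F_1(\coev_x)\odot 1_{F_1(x)}$. On the left, naturality of $\beta^1$ in its first variable at $\coev_x : a \to [x,a\odot x]$ followed by the zigzag identity collapses the left-hand side to $\xi_{a\odot x}\circ \beta^1_{a,x}$. On the right, naturality of $\hat\xi$ at $\coev_x$ turns $\hat\xi_{[x,a\odot x]}\circ \hat F_1(\coev_x)$ into $\hat F_2(\coev_x)\circ \hat\xi_a$, after which naturality of $\beta^2$ and the zigzag identity collapse the right-hand side to $\beta^2_{a,x}\circ(\hat\xi_a\odot\xi_x)$. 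Comparing the two sides yields the full $\hat\xi$-lax condition (\ref{diag:morphism_bet_R_lax_functors_con}), completing the equivalence.
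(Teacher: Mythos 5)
Your proof is correct and is essentially the same argument as the paper's: your intermediate condition $(\ast)$ is precisely the transpose, under the adjunction $(-\odot F_1(x))\dashv [F_1(x),-]$, of the enriched-naturality square \eqref{diag:2_morphism_diag_2} that the paper chases directly, and your two specializations --- at $a=[x,y]$ followed by post-composition with $F_2(\ev_x)$, and at $y=a\odot x$ followed by pre-composition with $\hat F_1(\coev_x)\odot 1$ and the zigzag identity --- are exactly the starred subdiagrams on which the paper's two pasting diagrams hinge. The only difference is organizational: you transpose everything into equations between morphisms of $\CM$, whereas the paper runs the equivalent diagram chases in $\CB$ using internal homs.
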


\begin{proof}
Given an enriched natural transformation $\ec{\xi} : \ec{F_1} \Rightarrow \ec{F_2}$, we need to show that $(\hat \xi,\xi)$ satisfies the diagram \eqref{diag:morphism_bet_R_lax_functors_con}. This is the outer diagram of the following commutative diagram
\scriptsize
\[
\xymatrix@C=3em{
\hat F_1(a) F_1(x) \ar[r]^-{\hat F_1(\coev_x) 1} \ar[dd]_{\hat \xi_a \xi_x} & \hat F_1([x,ax]) F_1(x) \ar[r]^-{(\ec{F_1})_{x,ax} 1} \ar[d]^{\hat \xi_{[x,ax]} 1} & [F_1(x),F_1(ax)] F_1(x) \ar[r]^-{\ev_{F_1(x)}} \ar[dr]^{[1,\xi_{ax}] 1} \ar@{}[d]|{\bigstar} & F_1(ax) \ar@/^11ex/[dd]^(0.3){\xi_{ax}} \\
 & \hat F_2([x,ax]) F_1(x) \ar[r]^-{(\ec{F_2})_{x,ax} 1} \ar[d]^{1 \xi_x} & [F_2(x),F_2(ax)] F_1(x) \ar[r]^{[\xi_x,1] 1} \ar[d]^{1 \xi_x} & [F_1(x),F_2(ax)] F_1(x) \ar[d]_{\ev_{F_1(x)}} \\
\hat F_2(a) F_2(x) \ar[r]^-{\hat F_2(\coev_x) 1} & \hat F_2([x,ax]) F_2(x) \ar[r]^-{(\ec{F_2})_{x,ax} 1} & [F_2(x),F_2(ax)] F_2(x) \ar[r]^-{\ev_{F_2(x)}} & F_2(ax) & 
}
\] \normalsize
where the pentagon $\star$ commutes because $\ec{\xi}$ is an enriched natural transformation.

Conversely, suppose the natural transformation $\xi: F_1 \Rightarrow F_2$ is $\hat{\xi}$-lax. Then the naturality of $\ec{\xi}$ is the outer diagram of the following commutative diagram
\scriptsize
\[
\xymatrix@C=1.8em{
\hat F_1([x,y]) \ar[r]^-{\coev_{F_1(x)}} \ar[d]^{\hat \xi_{[x,y]}} & [F_1(x),\hat F_1([x,y]) F_1(x)] \ar[r] \ar[d]^{[1,\hat \xi_{[x,y]} \xi_x]} \ar@{}[dr]|{\bigstar} & [F_1(x),F_1([x,y] x)] \ar[d]^{[1,\xi_{[x,y] x}]} \ar[dr]^{[1,\ev_x]} \\
\hat F_2([x,y]) \ar[d]^{\coev_{F_2(x)}} & [F_1(x),\hat F_2([x,y]) F_2(x)] \ar[r] & [F_1(x),F_2([x,y] x)] \ar[dr]^{[1,F_2(\ev_x)]} & [F_1(x),F_1(y)] \ar[d]^{[1,\xi_y]} \\
[F_2(x),\hat F_2([x,y]) F_2(x)] \ar[r] \ar[ur]^{[\xi_x,1]} & [F_2(x),F_2([x,y] x)] \ar[r]^{[1,F_2(\ev_x)]} \ar[ur]^{[\xi_x,1]} & [F_2(x),F_2(y)] \ar[r]^{[\xi_x,1]} & [F_1(x),F_2(y)]
}
\] \normalsize
where the square $\star$ commutes due to the diagram \eqref{diag:morphism_bet_R_lax_functors_con}.
\end{proof}

Let $\mathbf{LMod}$ be the 2-category defined as follows. 
\bit
    \item The objects are pairs $(\CA, \CL)$, where $\CA$ is a monoidal category and $\CL$ is a strongly unital left $\CA$-\oplax module that is enriched in $\CA$.

    \item A 1-morphism $(\CA, \CL) \to (\CB, \CM)$ is a pair $(\hat{F}, F)$, where $\hat{F}: \CA \to \CB$ is a lax-monoidal functor and $F:\CL \to \CM$ is an $\hat{F}$-lax functor. 

    \item A 2-morphism $(\hat{F}, F) \Rightarrow (\hat{G}, G)$ is a pair $(\hat{\xi},\xi)$, where
    $\hat{\xi}: \hat{F} \Rightarrow \hat{G}$ is a lax-monoidal natural transformation and $\xi:F\Rightarrow G$ is a $\hat{\xi}$-lax natural transformation. 
\eit
The horizontal/vertical composition is induced by the horizontal/vertical composition of functors and natural transformations. 
The 2-category $\mathbf{LMod}$ is symmetric monoidal with the tensor product defined by the Cartesian product and the tensor unit given by $(\ast, \ast)$. And the braiding $(\CA, \CL) \times (\CB, \CM) = (\CA \times \CB, \CL \times \CM) \to (\CB \times \CA, \CM \times \CL) = (\CB, \CM) \times (\CA, \CL)$ is defined by the functors $(a, b) \mapsto (b, a): \CA \times \CB \to \CB \times \CA$ and $(l, m) \mapsto (m, l): \CL \times \CM \to \CM \times \CL$. \\

By Proposition~\ref{prop:1_morphism_des} and Proposition~\ref{prop:2_morphism_des}, we immediately obtain the following result.


\begin{thm}\label{thm:2-functor_from_A_modules_to_cat_A}
The canonical construction can be promoted to a symmetric monoidal locally isomorphic 2-functor from $\mathbf{LMod}$ to $\ec[\Lax]{\ecat}$ defined as follows. 
    \bit
        \item The image of $(\CA, \CL)$ is the $\CA$-enriched category $\bc[\CA]{\CL}$ defined by the canonical construction. 

        \item The image of a 1-morphism $(\hat{F}, F):(\CA, \CL) \to (\CB, \CM)$ is the enriched functor $\bc[\CA]{\CL} \to \bc[\CB]{\CM}$ defined by Proposition \ref{prop:1_morphism_des}.

        \item The image of a 2-morphism $(\hat{\xi}, \xi)$ is the enriched natural transformation $\ec{\xi}$ defined by the background changing natural transformation $\hat \xi$ and the underlying natural transformation $\xi$.  
    \eit
\end{thm}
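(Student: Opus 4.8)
The plan is to define the desired 2-functor $\Phi : \mathbf{LMod} \to \ec[\Lax]{\ecat}$ by the canonical construction on objects, by Proposition~\ref{prop:1_morphism_des} on 1-morphisms, and by Proposition~\ref{prop:2_morphism_des} on 2-morphisms, and then to verify that $\Phi$ is a symmetric monoidal 2-functor that is locally isomorphic. Well-definedness of each of these assignments is exactly the content of the canonical construction together with Propositions~\ref{prop:1_morphism_des} and~\ref{prop:2_morphism_des}. Moreover, those two propositions assert that, with the background-changing functor $\hat F$ (resp.\ the background-changing natural transformation $\hat\xi$) fixed, the passage $(\hat F, F) \mapsto \ec{F}$ (resp.\ $(\hat\xi,\xi) \mapsto \ec{\xi}$) is a bijection; ranging over all $\hat F$ (resp.\ $\hat\xi$) this shows that the functor $\Phi_{(\CA,\CL),(\CB,\CM)}$ between hom-categories is bijective on objects and bijective on morphisms. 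Hence, once we check that $\Phi$ preserves identities and all compositions, local isomorphy is automatic.

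For the functoriality I would first exploit Remark~\ref{rem:background_underlying_2-functors}: there are projection 2-functors from $\ec[\Lax]{\ecat}$ to $\cat$ and to $\Alg_{E_1}^{\mathrm{lax}}(\cat)$ recording the underlying and the background data, and there are obvious analogous projections from $\mathbf{LMod}$, under which $\Phi$ is strictly compatible (it sends $(\hat F, F)$ to an enriched functor with background functor $\hat F$ and underlying functor $F$, using strong unitality to identify the underlying category of $\bc[\CA]{\CL}$ with $\CL$; similarly for 2-morphisms). Since an enriched functor is determined by its background functor, its underlying object assignment, and the family $\ec{F}_{x,y}$, while an enriched natural transformation is determined by its background and underlying transformations, the preservation of identities and of vertical and horizontal composition of 2-morphisms follows immediately from the corresponding statements on the underlying and background levels (Proposition~\ref{prop:2_cat_enriched_def}(2),(3)). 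The same reasoning reduces the preservation of composition of 1-morphisms to a single identity of hom-component families.

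Concretely, for composable 1-morphisms $(\hat F, F):(\CA,\CL)\to(\CB,\CM)$ and $(\hat G, G):(\CB,\CM)\to(\CC,\CN)$ with lax structures $\beta_{a,x}: \hat F(a)\odot F(x)\to F(a\odot x)$ and $\gamma_{b,y}: \hat G(b)\odot G(y)\to G(b\odot y)$, the composite in $\mathbf{LMod}$ is $(\hat G\hat F, GF)$ with composite lax structure $G(\beta_{a,x})\circ \gamma_{\hat F(a),F(x)}$. I must show that the family produced from this datum by the formula of Proposition~\ref{prop:1_morphism_des} agrees with the composite family $\ec{G}_{F(x),F(y)}\circ \hat G(\ec{F}_{x,y})$ prescribed by \eqref{equ:2_cat_enriched_def_1}. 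The natural move is to use the adjunction $(-\odot GF(x)) \dashv [GF(x),-]$: two morphisms into $[GF(x),GF(y)]$ coincide as soon as they agree after applying $-\odot GF(x)$ and composing with $\ev_{GF(x)}$. Under this transposition, using the identity $\ev_{F(x)}\circ(\ec{F}_{x,y}\odot 1)=F(\ev_x)\circ\beta_{[x,y],x}$ built into \eqref{eq:beta_ax} (once for $F$ and once for $G$), the naturality of the lax structure $\gamma$, and the triangle identities for $(\ev,\coev)$, both sides reduce to $GF(\ev_x)\circ G(\beta_{[x,y],x})\circ\gamma_{\hat F([x,y]),F(x)}$, i.e.\ to $GF(\ev_x)$ composed with the composite lax structure. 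The preservation of the identity 1-morphism is the degenerate case, where $\beta$ is the identity and the formula returns the identity enriched functor. I expect this diagram chase to be the main obstacle: it is routine but lengthy, being essentially the same manipulation already performed in the proof of Proposition~\ref{prop:1_morphism_des}.

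Finally, for the symmetric monoidal structure I would check that $\Phi$ is compatible with the Cartesian products. The tensor unit $(\ast,\ast)$ of $\mathbf{LMod}$ is sent to the trivial enriched category $\ast$, and for the product I use that internal homs in a product module are computed componentwise, $[(x_1,x_2),(y_1,y_2)]_{\CA\times\CB}=([x_1,y_1]_\CA,[x_2,y_2]_\CB)$, which follows at once from the factorization of the defining adjunction. Consequently $\bc[\CA\times\CB]{\CL\times\CM}$ coincides on the nose with the Cartesian product $\bc[\CA]{\CL}\times\bc[\CB]{\CM}$ of the previous subsection, the coherence isomorphisms are identities, and the symmetry is inherited from that of the Cartesian product. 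Thus $\Phi$ is a strong (in fact strict) symmetric monoidal 2-functor, and being locally isomorphic by the first paragraph, this completes the proof.
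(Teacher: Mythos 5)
Your proposal is correct and takes essentially the same approach as the paper: the paper's entire proof consists of the remark that the theorem follows immediately from Propositions~\ref{prop:1_morphism_des} and~\ref{prop:2_morphism_des}, which is precisely the skeleton of your argument. The extra work you supply (the adjunction-transposition diagram chase showing that composite lax structures correspond to composite enriched functors, and the componentwise computation of internal homs giving strict compatibility with Cartesian products) is exactly the routine verification the paper leaves implicit, and your execution of it is sound.
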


\begin{proof} 
    For every 1-morphism $(\hat{F}, F): (\CA, \CL) \to (\CB, \CM)$, we use $\ec{F}$ to denote the enriched functor from $\bc[\CA]{\CL} \to \bc[\CB]{\CM}$ defined by Proposition \ref{prop:1_morphism_des}. Note that the assignments $(\hat{F}, F) \mapsto \ec{F}$ and $(\hat{\xi}, \xi) \mapsto \ec{\xi}$ map the identity 1-morphisms and identity 2-morphisms to identity 1-morphisms and identity 2-morphisms, respectively. Let $(\hat{F}_1, F_1)$, $(\hat{F}_2, F_2)$, $(\hat{F}_3, F_3)$ be 1-morphisms from $(\CA, \CL)$ to $(\CB, \CM)$. For 2-morphisms $(\hat{\xi}_1, \xi_1): (\hat{F}_1, F_1) \Rightarrow (\hat{F}_2, F_2)$ and $(\hat{\xi}_2, \xi_2): (\hat{F}_2, F_2) \Rightarrow (\hat{F}_3, F_3)$, the 2-morphism $(\hat{\xi}_2 \hat{\xi}_1, \xi_2 \xi_1)$ is mapped to $\ec{\xi}_2\ec{\xi}_1$ since the vertical composition of $\ec{\xi}_1$ and $\ec{\xi}_2$ is defined by $\hat{\xi}_2 \hat{\xi}_1$ and $\xi_2 \xi_1$.

    Let $(\hat{G}, G): (\CB, \CM) \to (\CC, \CN)$ be a 1-morphism. By the definition of $\ec{F}_{x,y}$ and $\ec{G}_{F(x), F(y)}$ (see equation \eqref{eq:1_morphism_des_2}), we have
    \small
    \begin{align*}
        & \biggl( \hat{G}\hat{F}([x, y]) \odot GF(x) \xrightarrow{\left [ \ec{G}_{F(x), F(y)} \hat{G}(\ec{F}_{x,y}) \right] \odot 1} [GF(x), GF(y)] \odot GF(x) \xrightarrow{\ev_{GF(x)}} GF(y) \biggr) \\
        =& \biggl(\hat{G}\hat{F}([x, y]) \odot GF(x) \to G(\hat{F}([x, y]) \odot F(x)) \to GF([x,y]\odot x) \xrightarrow{GF(\ev_x)} GF(y) \biggr),  
    \end{align*} \normalsize
    where the unlabled arrows are induced by the $\hat{F}$-lax structure of $F$ and $\hat{G}$-lax structure of $G$. This implies that the image of $(\hat{G}\hat{F}, GF)$ equals $\ec{G} \ec{F}$. Recall that the horizontal compostions of 2-morphisms in $\mathbf{LMod}$ and $\ec[\Lax]{\ecat}$ are defined by the horozontal compositions of natural transformations. The assignments $(\CA, \CL) \mapsto \bc[\CA]{\CL}$, $(\hat{F}, F) \mapsto \ec{F}$ and $(\hat{\xi}, \xi) \mapsto \ec{\xi}$ define a 2-functor. By Proposition~\ref{prop:1_morphism_des} and Proposition~\ref{prop:2_morphism_des}, every local functor of the 2-functor is an isomorphism.
 
Next, we describe the braided monoidal structure of the canonical construction $2$-functor (we refer the reader to Definition 4.10 in \cite{Gur13} and Section 2.4 in \cite{Gur11} for the definition of braided monoidal bifunctors). Let $(\CA, \CL)$, $(\CB, \CM) \in \mathbf{LMod}$. Note that $\bc[\CA]{\CL} \times \bc[\CB]{\CM}$ and $\bc[(\CA \times \CB)]{(\CL \times \CM)}$ have the same background category and underlying category. For every $x_1, x_2 \in \ob(\CL)$ and $y_1, y_2 \in \ob(\CM)$, the following two hom objects 
\begin{align*}
    (\bc[\CA]{\CL} \times \bc[\CB]{\CM})((x_1, y_1), (x_2, y_2)) & = ([x_1, x_2], [y_1, y_2]), \\
    \bc[(\CA \times \CB)]{(\CL \times \CM)}((x_1, y_1), (x_2, y_2)) & = ([x_1, x_2]', [y_1, y_2]')
\end{align*}
may not be identical; nonetheless, as stipulated by the canonical construction's definition, they are both internal hom of the strongly unital left $(\CA \times \CB)$-\oplax module $\CL \times \CM$. Then the identity functor of $\CA \times \CB$, the identity map of $\ob(\CL) \times \ob(\CM)$ and the canonical isomorphisms $[x_1, x_2] \xrightarrow{\sim} [x_1, x_2]'$ and $[y_1, y_2] \xrightarrow{\sim} [y_1, y_2]'$ rendering the following diagrams commutative
\[
    \xymatrix @C=0.2in @R=0.1in{
        [x_1, x_2] \odot x_1 \ar[rr]^{\sim} \ar[rd]_{\ev_{x_1}} & & [x_1, x_2]' \odot x_1 \ar[ld]^{\ev'_{x_1}}\\
        & x_2 &
    } \quad 
    \xymatrix @C=0.2in @R=0.1in{
        [y_1, y_2] \odot y_1 \ar[rr]^{\sim} \ar[rd]_{\ev_{y_1}} & & [y_1, y_2]' \odot y_1 \ar[ld]^{\ev'_{y_1}}\\
        & y_2 &
    }
\]
define an enriched isomorphism, denote by $\ec{\chi_{\bc[\CA]{\CL}, \bc[\CB]{\CM}}}$, from $\bc[\CA]{\CL} \times \bc[\CB]{\CM}$ to $\bc[(\CA \times \CB)]{(\CL \times \CM)}$. Note that the background changing functor and the underlying functor of $\ec{\chi_{\bc[\CA]{\CL}, \bc[\CB]{\CM}}}$ are both identity functors. For $(\hat{F}, F): (\CA_1, \CL_1) \to (\CA_2, \CL_2)$ and $(\hat{G}, G): (\CB_1, \CM_1) \to (\CB_2, \CM_2)$, we claim that 
\begin{align*}
    \ec{\chi_{\bc[\CA_2]{\CL_2}, \bc[\CB_2]{\CM_2}}} \circ (\ec{F} \times \ec{G}) = \ec{(F \times G)} \circ \ec{\chi_{\bc[\CA_1]{\CL_1}, \bc[\CB_1]{\CM_1}}}.
\end{align*}
    Indeed, the background changing functor and underlying functor of the above two functors are $\hat{F} \times \hat{G}$ and $F \times G$, respectively. And the morphisms
\scriptsize
\[
    (\hat{F} \times \hat{G})\left[ (\bc[\CA_1]{\CL_1} \times \bc[\CB_1]{\CM_1})((x_1, y_1), (x_2, y_2))\right ] \to \bc[(\CA_2 \times \CB_2)]{(\CL_2 \times \CM_2)}\left[ (F(x_1), G(y_1)), (F(x_2), G(y_2)) \right] 
\] \normalsize
for both enriched functors are induced by the morphisms
\begin{gather*}
\hat{F}([x_1, x_2]) \odot F(x_1) \to F([x_1, x_2]\odot x_1) \xrightarrow{F(\ev_{x_1})} F(x_2), \\
\hat{G}([y_1, y_2]) \odot G(y_1) \to G([y_1, y_2]\odot y_1) \xrightarrow{G(\ev_{y_1})} G(y_2),
\end{gather*}
where the unlabeled arrows are induced by the $\hat{F}$-lax structure of $F$ and the $\hat{G}$-lax structure of $G$. For $(\hat{\xi}, \xi): (\hat{F}_1, F_1) \Rightarrow (\hat{F}_2, F_2)$ and $(\hat{\eta}, \eta): (\hat{G}_1, G_1) \Rightarrow (\hat{G}_2, G_2)$, we have
\begin{align*}
    &\xymatrix @C=0.3in @R=0.1in{
        \bc[\CA_1]{\CL_1} \times \bc[\CB_1]{\CM_1} \ar @/^1pc/[rr]^{\ec{F_1} \times \ec{G_1}} \ar @/_1pc/[rr]_{\ec{F_2} \times \ec{G_2}} \rrtwocell<\omit>{\quad \ec{\xi} \times \ec{\eta}} && \bc[\CA_2]{\CL_2} \times \bc[\CB_2]{\CM_2} \ar[r]^-{\ec{\chi}} & \bc[(\CA_2 \times \CB_2)]{(\CL_2 \times \CM_2)}
    } \\
    =& 
    \xymatrix @C=0.3in @R=0.1in{
        \bc[\CA_1]{\CL_1} \times \bc[\CB_1]{\CM_1} \ar[r]^-{\ec{\chi}} & \bc[(\CA_1 \times \CB_1)]{(\CL_1 \times \CM_1)} \ar @/^1pc/[rr]^{\ec{(F_1 \times G_1)}} \ar @/_1pc/[rr]_{\ec{(F_2 \times G_2}} \rrtwocell<\omit>{\quad \ec{(\xi \times \eta)}} && \bc[(\CA_2 \times \CB_2)]{(\CL_2 \times \CM_2)},
    }
\end{align*}
    since the background changing natural transformation and the underlying natural transformation of both enriched natural transformations are $\hat{\xi} \times \hat{\eta}$ and $\xi \times \eta$, respectively. By \cite[Proposition 4.2.11]{JY21}, the enriched isomorphisms $\ec{\chi_{\bc[\CA_2]{\CL_2}, \bc[\CB_2]{\CM_2}}}$ define a 2-natural isomorphism
\begin{align*}
    \xymatrix @C=0.5in @R=0.2in{
        \mathbf{LMod} \times \mathbf{LMod} \ar[r] \ar[d]^{\times} \drtwocell<\omit>{<-0.5>\ec{\chi}} & \ec[\Lax]{\ecat} \times \ec[\Lax]{\ecat} \ar[d]^{\times} \\
        \mathbf{LMod} \ar[r] & \ec[\Lax]{\ecat}.
    }
\end{align*}
Note that the image of $(\ast, \ast)$ is the tensor unit $\ast$ of $\ec[\Lax]{\ecat}$. It is routine to check that the following diagrams
\small
\begin{gather*}
    \xymatrix @C=0.6in @R=0.15in{
        (\bc[\CA]{\CL} \times \bc[\CB]{\CM}) \times \bc[\CC]{\CN} \ar[d] \ar[r]^-{\ec{\chi} \times 1} & \bc[(\CA \times \CB)]{(\CL \times \CM)} \times \bc[\CC]{\CN} \ar[r]^-{\ec{\chi}} & \bc[((\CA \times \CB) \times \CC)]{((\CL \times \CM) \times \CN)} \ar[d]\\
        \bc[\CA]{\CL} \times (\bc[\CB]{\CM} \times \bc[\CC]{\CN}) \ar[r]^-{1 \times \ec{\chi}} & \bc[\CA]{\CL} \times \bc[(\CB \times \CC)]{(\CM \times \CN)} \ar[r]^-{\ec{\chi}} & \bc[(\CA \times (\CB \times \CC))]{(\CL \times (\CM \times \CN))}
    } \\
   \xymatrix @C=0.25in @R=0.15in{
       \ast \times \bc[\CA]{\CL} \ar[rd] \ar[r]^-{\ec{\chi}} & \bc[(\ast \times \CA)]{(\ast \times \CL)}\ar[d]\\
       & \bc[\CA]{\CL}
   } \quad 
   \xymatrix @C=0.25in @R=0.15in{
       \bc[\CA]{\CL} \times \ast \ar[rd] \ar[r]^-{\ec{\chi}} & \bc[(\CA \times \ast)]{(\CL \times \ast)}\ar[d]\\
       & \bc[\CA]{\CL}
   }\quad
    \xymatrix @C=0.25in @R=0.15in{
        \bc[\CA]{\CL} \times \bc[\CB]{\CM} \ar[d] \ar[r]^-{\ec{\chi}} &  \bc[(\CA \times \CB)]{(\CL \times \CM)} \ar[d]\\
        \bc[\CB]{\CM} \times \bc[\CA]{\CL} \ar[r]^-{\ec{\chi}} & \bc[(\CB \times \CA)]{(\CM \times \CL)}
    }
\end{gather*} \normalsize
commute, where the unlabeled arrows are induced by the symmetric monoidal structures of $\mathbf{LMod}$ and $\ec[\Lax]{\ecat}$. Therefore, the invertible modifications required by the definition of a braided monoidal bifunctor can all be chosen as identity modifications. And all the pasting diagram axioms in the definition of symmetric monoidal bifunctors (see \cite[Definition 1.5]{GO13}) are fulfilled, as every 2-morphism in the diagrams is an identity 2-morphism. 
\end{proof}




In physical applications, $\CA$ and $\CL$ are often finite semisimple. In this case, we obtain a stronger result of the canonical construction. A finite category over a ground field $k$ is a $k$-linear category $\CC$ that is equivalent to the category of finite-dimensional modules over a finite-dimensional $k$-algebra $A$ (see \cite[Definition 1.8.6]{EGNO15} for an intrinsic definition). We say that $\CC$ is semisimple if $A$ is semisimple. 
We use $\fscat$ to denote the 2-category of finite semisimple categories, $k$-linear functors and natural transformations. 

\begin{rem}
  Let $\ec[\CA]{\CL}$ be an enriched category. If the background category $\CA$ is a $k$-linear category such that the tensor product $\otimes: \CA \times \CA \to \CA$ is $k$-bilinear, then underlying category $\CL$ is also a $k$-linear category.  
\end{rem}

\begin{defn}
An enriched category $\ec[\CA]{\CL}$ is called finite (semisimple) if $\CA$ and $\CL$ are both finite (semisimple) categories and the tensor product $\otimes: \CA \times \CA \to \CA$ is $k$-bilinear.  
\end{defn}

\begin{expl} \label{expl:cc-fusion-cat}
Let $\CA$ be a multi-fusion category and $\CL$ a finite (semisimple) left $\CA$-module. Then $\CL$ is enriched in $\CA$ and the canonical construction $\bc[\CA]{\CL}$ is a finite (semisimple) enriched category. 
\end{expl}

\begin{defn} \label{def:multi_linear_functor}
Let $\ec[\CB]{\CM}$ and $\ec[\CA_i]{\CL_i}$ be finite enriched categories, $i = 1, \ldots, n$. An enriched functor $\ec{F}: \ec[\CA_1]{\CL_1} \times \cdots \times \ec[\CA_n]{\CL_n} \to \ec[\CB]{\CM}$ is called \textit{multi-$k$-linear} (or \emph{$k$-linear} if $n=1$) if the background changing functor $\hat F: \CA_1 \times \cdots \times \CA_n \to \CB$ is multi-$k$-linear. 
\end{defn}

\begin{rem} 
Let $\ec{F}: \ec[\CA_1]{\CL_1} \times \cdots \times \ec[\CA_n]{\CL_n} \to \ec[\CB]{\CM}$ be a multi-$k$-linear enriched functor. It is easy to see from (\ref{equ:def_underlying_cat_2}) that the underlying functor $F: \CL_1 \times \cdots \times \CL_n \to \CM$ is also multi-$k$-linear.
\end{rem}

We denote the 2-category consisting of finite semisimple enriched categories, $k$-linear enriched functors and enriched natural transformations by $\ec[\Lax]{\fsecat}$. 

\medskip
Let $\fslmod$ be the 2-category defined as follows.
\bit
    \item An object is a pair $(\CA, \CL)$, where $\CA$ is a finite semisimple monoidal category and $\CL$ is a strongly unital finite semisimple left $\CA$-\oplax module that is enriched in $\CA$, such that the $\CA$-action functor $\odot \colon \CA \times \CL \to \CL$ is $k$-bilinear.

    \item A 1-morphism $(\CA, \CL) \to (\CB, \CM)$ is a pair $(\hat{F}, F)$, where $\hat{F} \colon \CA \to \CB$ is a lax-monoidal $k$-linear functor and $F \colon \CL \to \CM$ is an $\hat{F}$-lax $k$-linear functor.

    \item A 2-morphism $(\hat{F}, F) \Rightarrow (\hat{G}, G)$ is a pair $(\hat{\xi},\xi)$, where $\hat{\xi} \colon \hat{F} \Rightarrow \hat{G}$ is a lax-monoidal natural transformation and $\xi \colon F\Rightarrow G$ is a $\hat{\xi}$-lax natural transformation. 
     
\eit

\begin{thm} \label{thm:fsLMod=lax_fsECat}
The canonical construction defines a 2-equivalence from $\fslmod$ to $\ec[\Lax]{\fsecat}$. Moreover, this 2-equivalence is locally isomorphic. 
\end{thm}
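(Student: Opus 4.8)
The plan is to deduce the statement from Theorem~\ref{thm:2-functor_from_A_modules_to_cat_A} by first checking that the canonical construction restricts to a well-defined, locally isomorphic $2$-functor $\fslmod \to \ec[\Lax]{\fsecat}$, and then upgrading it to a $2$-equivalence via essential surjectivity on objects. Since a locally isomorphic $2$-functor is automatically locally fully faithful, it is a $2$-equivalence as soon as every object of the target is equivalent to one in the image; this essential surjectivity is the only genuinely new ingredient, everything else being inherited from the unrestricted statement.

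For well-definedness and local isomorphism, first note that for $(\CA,\CL)\in\fslmod$ the strong unitality identifies $\CL$ with the underlying category of $\bc[\CA]{\CL}$, so $\bc[\CA]{\CL}$ has finite semisimple background and underlying categories and a $k$-bilinear tensor product, hence is a finite semisimple enriched category (cf.\ Example~\ref{expl:cc-fusion-cat}). A $1$-morphism $(\hat F,F)$ of $\fslmod$ has $\hat F$ $k$-linear, so its image $\ec F$ is a $k$-linear enriched functor; conversely the underlying functor of a $k$-linear enriched functor is $k$-linear by the remark following Definition~\ref{def:multi_linear_functor}, so on both sides the $k$-linearity constraint amounts to the single condition that $\hat F$ be $k$-linear. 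Restricting the isomorphisms of hom-categories provided by Proposition~\ref{prop:1_morphism_des} and Proposition~\ref{prop:2_morphism_des} to these $k$-linear subcategories therefore yields isomorphisms $\fslmod((\CA,\CL),(\CB,\CM)) \xrightarrow{\sim} \ec[\Lax]{\fsecat}(\bc[\CA]{\CL},\bc[\CB]{\CM})$, which is exactly local isomorphism of the restricted $2$-functor.

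For essential surjectivity, let $\ec[\CA]{\CL}$ be a finite semisimple enriched category with underlying category $\CL$. I would promote $\CL$ to a left $\CA$-module by defining, for $a\in\CA$ and $x\in\CL$, the object $a\odot x$ to represent the $k$-linear functor $y\mapsto \CA(a,\ec[\CA]{\CL}(x,y))\colon\CL\to\vect$; finite semisimplicity of $\CL$ guarantees that this copower exists and is $k$-bilinear and functorial in $(a,x)$, and gives a natural isomorphism $\CL(a\odot x,y)\cong\CA(a,\ec[\CA]{\CL}(x,y))$. This is precisely the adjunction exhibiting $\CL$ as enriched in $\CA$ with internal hom $[x,y]=\ec[\CA]{\CL}(x,y)$; the unitor $\one_\CA\odot x\to x$ is invertible because $\one_\CA\odot x$ represents $y\mapsto\CA(\one_\CA,\ec[\CA]{\CL}(x,y))=\CL(x,y)$, and the associator and unitor isomorphisms are supplied by the universal property, so $(\CA,\CL)$ is a genuine (in particular strongly unital) object of $\fslmod$. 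It then remains to identify $\bc[\CA]{\CL}$ with $\ec[\CA]{\CL}$: they share the same objects and, by construction, the same hom-objects, and I would show that the prescribed identities and composition of $\ec[\CA]{\CL}$ satisfy the evaluation diagrams \eqref{diag:internal_hom_unit_mul_def} that uniquely characterize the canonical ones, so that the two enriched structures coincide. Concretely this is a Yoneda computation translating the axioms \eqref{diag:asso-circ} and \eqref{diag:right-left-unit} into the defining relations for the internal hom, producing an identity-on-objects isomorphism of $\CA$-enriched categories.

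The main obstacle I anticipate is the coherence bookkeeping in this last step: verifying that the representing objects $a\odot x$ assemble into an $\CA$-module satisfying the pentagon and triangle conditions of Definition~\ref{def:oplax-left-module}, and that the canonically constructed composition agrees with the given one. Conceptually the crux is simply that finite semisimplicity forces every such enriched category to be \emph{tensored}---the action exists by representability---which is exactly what fails in the general setting of Theorem~\ref{thm:2-functor_from_A_modules_to_cat_A} and prevents that $2$-functor from being essentially surjective; once tensoring is in hand, the remaining verifications are routine but lengthy diagram chases governed by the universal property of $[x,y]$. Combining essential surjectivity with the local isomorphism established above yields that the canonical construction is a locally isomorphic $2$-equivalence $\fslmod\to\ec[\Lax]{\fsecat}$.
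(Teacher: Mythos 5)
Your proposal is correct, and its overall skeleton (local isomorphism on hom-categories via Propositions~\ref{prop:1_morphism_des} and \ref{prop:2_morphism_des}, plus essential surjectivity on objects, plus the Whitehead theorem for 2-categories to conclude a 2-equivalence) matches the paper's. Where you genuinely diverge is at the essential-surjectivity step. The paper observes that any $k$-linear functor between finite semisimple categories is exact, so each hom functor $\ec[\CA]{\CL}(x,-):\CL\to\CA$ admits a left adjoint, and then simply cites \cite[Theorem\ 1.4]{MPP18} (see also \cite{Lin81}): an enriched category is equivalent to a canonical construction on a strongly unital module if and only if all such left adjoints exist. You instead inline a proof of that cited theorem in the finite semisimple case: you construct the action $a\odot x$ as the representing object of $y\mapsto\CA(a,\ec[\CA]{\CL}(x,y))$ (representability holds here exactly because all hom spaces are finite-dimensional and $\CL$ has finitely many simples), verify the module axioms and strong unitality from the universal property, and identify $\bc[\CA]{\CL}$ with $\ec[\CA]{\CL}$ by a Yoneda argument translating \eqref{diag:asso-circ} and \eqref{diag:right-left-unit} into \eqref{diag:internal_hom_unit_mul_def}. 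Both routes are sound: the paper's is shorter and delegates the tensoring-reconstruction to the literature, while yours is self-contained and makes the role of finite semisimplicity (forcing the enriched category to be tensored) explicit, at the cost of the coherence verifications you rightly flag as lengthy; these are routine, so the gap you acknowledge is one of labor, not of substance. One further small difference: the paper explicitly invokes Theorem 7.5.8 of \cite{JY20} to pass from ``essentially surjective and locally an isomorphism'' to ``2-equivalence,'' whereas you use this implication implicitly; you should cite it (or prove it) to make that step airtight.
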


\begin{proof}
    Let $\ec[\CA]{\CL} \in \ec[\Lax]{\fsecat}$. Since $k$-linear functor between two finite semisimple categories is exact, the $k$-linear functor $\ec[\CA]{\CL}(x, -): \CL \to \CA$ admits a left adjoint. By \cite[Theorem\ 1.4]{MPP18} (see also \cite{Lin81}), an enriched category $\ec[\CA]{\CL}$ is equivalent to an enriched category obtained by the canonical construction of a strongly unital oplax module if and only if every functor $\ec[\CA]{\CL}(x, -): \CL \to \CA$ admits a left adjoint. Therefore, the 2-functor induced by canonical construction is essentially surjective on objects. By Proposition \ref{prop:1_morphism_des}, Proposition \ref{prop:2_morphism_des}, it is also fully faithful on both 1-morphisms and 2-morphisms. Then the Whitehead theorem for 2-categories (see Theorem 7.5.8 in \cite{JY21}) implies that this 2-functor is a 2-equivalence. 
\end{proof}


\begin{rem} 
We use $\fsecat$ to denote the sub-2-category of $\ec[\Lax]{\fsecat}$ consisting of those $k$-linear enriched functors $F$ such that $\hat{F}$ is monoidal. We will show in the second work in our series that $\fsecat$ has a monoidal structure with the tensor product given by an analogue  of Deligne's tensor product.  
\end{rem}

%
%


\section{Enriched monoidal categories} \label{sec:enriched_monoidal_categories}

An enriched monoidal category can be defined as an algebra object in either $\ecat$ or $\ec[\Lax]{\ecat}$. In this section, we restrict ourselves to the $\ecat$ case only. Namely, the background changing functor is always monoidal in this section. For a study of the $\ec[\Lax]{\ecat}$ case, see \cite{JS93,For04,BM12}.

\subsection{Definitions and examples}

An enriched monoidal category should be an algebra (a pseudomonoid in \cite{DS97}) in $\ecat$. 
An algebra in $\ecat$ is a collection $(\ec[\CA]{\CL},\ec{\otimes},\ec{\one},\ec{\alpha},\ec{\lambda},\ec{\rho})$ such that (recall Remark\,\ref{rem:background_underlying_2-functors})
\bnu
\item the background category $(\CA,\hat \otimes,\hat \one,\hat \alpha,\hat \lambda,\hat \rho)$ is an algebra in the symmetric monoidal 2-category $\Algn[1](\cat)$ of monoidal categories;
\item the underlying category $(\CL,\otimes,\one,\alpha,\lambda,\rho)$ is an algebra in the symmetric monoidal 2-category $\cat$ of categories, i.e., a monoidal category.
\enu
However, in this work, we propose a slightly different definition of an enriched monoidal category for simplicity and convenience (see Remark\,\ref{rem:equivalence_of_2_definitions}). It is equivalent to \cite[Definition\ 2.3]{KZ18a} (see also \cite{MP17} for a strict version). Note that the notion of an algebra in $\Algn[1](\cat)$ is equivalent to that of a braided monoidal category. 

\begin{defn} \label{def:emc}
Let $\CA$ be a braided monoidal category. An enriched monoidal category consists of the following data:
\bit
\item an enriched category $\ec[\CA]{\CL}$;

\item a tensor product enriched functor $\ec{\otimes} : \ec[\CA]{\CL} \times \ec[\CA]{\CL} \to \ec[\CA]{\CL}$ in $\ecat$ such that $\hotimes=\otimes_\CA$, where the monoidal structure on $\otimes_\CA$ is given by Convention \ref{conv:monoidal_stru_tensor};

\item a distinguished object $\one_\CL \in \ec[\CA]{\CL}$ called the unit object, or equivalently, an enriched functor $\ec{\one_\CL} : * \to \ec[\CA]{\CL}$ in $\ecat$ (see Example \ref{conv:morphism_to_x});

\item an \emph{associator}: an enriched natural isomorphism $\ec{\alpha} : \ec{\otimes} \circ (\ec{\otimes} \times 1) \Rightarrow \ec{\otimes} \circ (1 \times \ec{\otimes})$ with $\hat \alpha$ given by the associator of the monoidal category $\CA$ (i.e., $\hat{\alpha}=\alpha_\CA$);

\item two \emph{unitors}: two enriched natural isomorphisms $\ec{\lambda} : \ec{\otimes} \circ (\ec{\one_\CL} \times 1) \Rightarrow 1_{\ec[\CA]{\CL}}$ and $\ec{\rho} : \ec{\otimes} \circ (1 \times \ec{\one_\CL}) \Rightarrow 1_{\ec[\CA]{\CL}}$ such that $\hat \lambda$ and $\hat \rho$ are given by the left and right unitors of the monoidal category $\CA$, respectively (i.e., $\hat{\lambda}=\lambda_\CA,\hat{\rho}=\rho_\CA$);
\eit
such that $(\CL,\otimes,\one_\CL,\alpha,\lambda,\rho)$ is a monoidal category, which is called the \textit{underlying monoidal category} of $\ec[\CA]{\CL} = (\ec[\CA]{\CL},\ec{\otimes},\ec{\one_\CL},\ec{\alpha},\ec{\lambda},\ec{\rho})$. The enriched monoidal category $\ec[\CA]{\CL}$ is called \emph{strict} if the underlying monoidal category $\CL$ is strict.
\end{defn}

\begin{conv}\label{conv:monoidal_stru_tensor}
    Let $\CA$ a braided monoidal category with the braiding $c_{a,b}: a \otimes b \to b \otimes a$. In the following, the monoidal structure on the tensor product functor $(a, b) \mapsto a \otimes b: \CA \times \CA \to \CA$ is always understood as the lax-monoidal structure induced by the braiding:
\small \[
\otimes(a_1,b_1) \otimes \otimes(a_2,b_2) = a_1 \otimes b_1 \otimes a_2 \otimes b_2 \xrightarrow{1 \otimes c_{b_1, a_2} \otimes 1} a_1 \otimes a_2 \otimes b_1 \otimes b_2 = \otimes((a_1,b_1) \otimes (a_2,b_2)), 
\] \normalsize
or equivalently, as the oplax-monoidal structure of $\otimes$ is defined by the anti-braiding.  
\end{conv}

\begin{rem} \label{rem:equivalence_of_2_definitions}
An enriched monoidal category can be canonically identified with an $E_1$-algebra in $\ecat$. In general, however, an $E_1$-algebra is not an enriched monoidal category but only isomorphic to an enriched monoidal category as $E_1$-algebras. Indeed, let $(\ec[\CA]{\CL},\ec{\otimes},\ec{\one},\ec{\alpha},\ec{\lambda},\ec{\rho})$ be an $E_1$-algebra in $\ecat$. We use $\one'_\CA$ and $\one_\CL$ to denote $\hat{\one}(\ast)$ and $\one(*)$, respectively. Unwinding the definition of $E_1$-algebra in $\ecat$, we have $(\CA, \hotimes, \one'_\CA, \hat{\alpha}, \hat{\lambda}, \hat{\rho})$ is a monoidal category. And the two monoidal category $(\CA, \otimes, \one_\CA)$ and $(\CA, \hotimes, \one'_\CA)$ together with the isomorphisms, i.e., the monoidal structures of $\hat{\one}$ and $\hotimes$,
    \begin{align*}
        (a_1 \hotimes b_1) \otimes (a_2 \hotimes b_2) \simeq (a_1 \otimes a_2) \hotimes (b_1 \otimes b_2), \quad \one_{\CA} \simeq \one'_\CA 
    \end{align*}
is a strong 2-monoidal category (see \cite{AM10} for the definition of a 2-monoidal category or a duoidal category). By the coherence result for 2-monoidal categories, the identity functor of $\CA$ together with the isomorphism $\one_\CA \simeq \one'_\CA$ and the natural isomorphisms
\begin{align*}
        a \hotimes b \xrightarrow{\sim} (a \otimes \one_\CA) \hotimes (\one_\CA \otimes b) \xrightarrow{\sim} (a \otimes \one'_\CA) \hotimes (\one'_\CA \otimes b) \xrightarrow{\sim} (a \hotimes \one'_\CA) \otimes (\one'_\CA \otimes b) \xrightarrow{\sim} a \otimes b
    \end{align*}
is an isomorphism from $(\CA, \otimes, \one_\CA)$ to $(\CA, \hotimes, \one'_\CA)$, and $(\CA, \otimes, \one_\CA)$ is braided with the braiding given by
\begin{multline*}
a \otimes b \xrightarrow{\sim} (\one'_\CA \hotimes a) \otimes (b \hotimes \one'_\CA) \xrightarrow{\sim} (\one'_\CA \otimes b) \hotimes (a \otimes \one'_\CA) \\
\xrightarrow{\sim} (b \otimes \one'_\CA) \hotimes (\one'_\CA \otimes a) \xrightarrow{\sim} (b \hotimes \one'_\CA) \otimes (\one'_\CA \hotimes a) \xrightarrow{\sim} b \otimes a 
\end{multline*}
    (see \cite[Chapter 6]{AM10} for more details). And $\ec[\CA]{\CL}$ can be promoted to an enriched monoidal category such that the underlying monoidal category is $(\CL, \otimes, \one_\CL, \alpha, \lambda, \rho)$, and the tensor product enriched functor $\ec[\CA]{\CL} \times \ec[\CA]{\CL} \to \ec[\CA]{\CL}$ is defined by $\otimes_\CA: \CA \times \CA \to \CA$ (see Convention \ref{conv:monoidal_stru_tensor}), the map $\otimes: \ob(\CL) \times \ob(\CL) \to \ob(\CL)$, and morphisms
\begin{align*}
    \ec[\CA]{\CL}(x_1, y_1) \otimes \ec[\CA]{\CL}(x_2, y_2) \xrightarrow{\sim} \ec[\CA]{\CL}(x_1, y_1) \hotimes \ec[\CA]{\CL}(x_2, y_2) \to \ec[\CA]{\CL}(x_1 \otimes x_2, y_1 \otimes x_2), 
\end{align*}
where the second arrow is given by the morphism in the definition of the enriched functor $\ec{\otimes}$. Moreover, as algebras in $\ecat$, the enriched monoidal category $\ec[\CA]{\CL}$ and $(\ec[\CA]{\CL},\ec{\otimes},\ec{\one},\ec{\alpha},\ec{\lambda},\ec{\rho})$ are isomorphic. More explicitly, the identity enriched functor on $\ec[\CA]{\CL}$ can be promoted to an algebra isomorphism between these two algebras. 

Based on the above discussion, we require $\hotimes=\otimes_\CA$ and $\hat{\alpha}=\alpha_\CA,\hat{\lambda}=\lambda_\CA,\hat{\rho}=\rho_\CA$ in Definition\,\ref{def:emc}. It turns out that these additional requirements also endow Definition\,\ref{def:emc} with a new interpretation as an algebra in the monoidal category $\ec[\CA]{\ecat}$ with the tensor product $\dtimes$ (recall Example\,\ref{expl:pushforward}). This new interpretation is how the notion was defined in \cite[Definition 2.3]{KZ18a}. 
\end{rem}


\begin{conv}
We use $\overline{\CA}$ to denote the same monoidal category $\CA$ equipped with the anti-braiding. By Convention \ref{conv:monoidal_stru_tensor}, the lax/oplax-monoidal structure of the functor $\otimes: \overline{\CA} \times \overline{\CA} \to \overline{\CA}$ is induced by the anti-braiding/braiding of $\CA$. 
\end{conv}

\begin{expl}
Let $\ec[\CA]{\CL}$ be an enriched monoidal category. We use $\CL^\rev$ to denote the monoidal category obtained from $\CL$ by reversing the tensor product. One can canonically construct an $\overline{\CA}$-enriched monoidal category $\ec[\overline{\CA}]{\CL^\rev}$, whose underlying monoidal category is $\CL^\rev$, as follows. As an enriched category $\ec[\overline{\CA}]{\CL^\rev} = \ec[\CA]{\CL}$. The tensor product enriched functor $\ec{\otimes^\rev}: \ec[\overline{\CA}]{\CL^\rev} \times \ec[\overline{\CA}]{\CL^\rev} \to \ec[\overline{\CA}]{\CL^\rev}$ is defined by the tensor product functor $\overline{\CA} \times \overline{\CA} \to \overline{\CA}$ and the family of morphisms
\[
\ec[\CA]{\CL}(x_1, x_2) \otimes \ec[\CA]{\CL}(y_1, y_2) \xrightarrow{c_{\ec[\CA]{\CL}(x_1, x_2), \ec[\CA]{\CL}(y_1, y_2)}} \ec[\CA]{\CL}(y_1, y_2) \otimes \ec[\CA]{\CL}(x_1, x_2) \xrightarrow{\ec{\otimes}_{(y_1,x_1),(y_2,x_2)}} \ec[\CA]{\CL}(x_1 \otimes^\rev y_1 , x_2 \otimes^\rev y_2).
\]
The associator of $\ec[\overline{\CA}]{\CL^\rev}$ is defined by the associator of $\overline{\CA}$ and the inverse of the associator of $\CL$. The left/right unitor of $\ec[\overline{\CA}]{\CL^\rev}$ is defined by the left/right unitor of $\overline{\CA}$ and the right/left unitor of $\CL$. We refer to $\ec[\overline{\CA}]{\CL^\rev}$ as the reversed category of $\ec[\CA]{\CL}$, i.e., 
$(\ec[\CA]{\CL})^\rev \coloneqq \ec[\overline{\CA}]{\CL^\rev}$. 
\end{expl}

\begin{rem}
In the definition of the enriched functor $\ec{\otimes^\rev}$, we have used the braiding instead of the anti-braiding because $\ec{\otimes^\rev}$ is not well-defined if we use the anti-braiding. 
\end{rem}

\begin{defn} \label{def:enriched_monoidal_functor}
Let $\ec[\CA]{\CL}$ and $\ec[\CB]{\CM}$ be enriched monoidal categories. An enriched monoidal functor $\ec{F} : \ec[\CA]{\CL} \to \ec[\CB]{\CM}$ consists of the following data:
\bit
\item an enriched functor $\ec{F} : \ec[\CA]{\CL} \to \ec[\CB]{\CM}$ in $\ecat$;
\item an enriched natural isomorphism 
\[
\ec{F^2} : \ec{\otimes} \circ (\ec{F} \times \ec{F}) \Longrightarrow \ec{F} \circ \ec{\otimes}
\] 
with the background changing natural transformation $\hat F^2$ given by the monoidal structure of $\hat F$;

\item an enriched natural isomorphism 
\[
\ec{F^0} : \ec{\one_\CM} \Longrightarrow \ec{F} \circ \ec{\one_\CL}
\] 
with the background changing natural transformation $\hat F^0$ given by the monoidal structure of $\hat F$; 
\eit
satisfying the following conditions:
\bnu
\item the background changing functor $\hat F = (\hat F,\hat F^2,\hat F^0) : \CA \to \CB$ is a braided monoidal functor; 
\item the underlying functor $F : \CL \to \CM$, together with the underlying natural transformations $F^2$ and $F^0$, defines a monoidal functor.
\enu
\end{defn}

\begin{defn} \label{def:enriched_mon_nat_tran}
    Let $\ec[\CA]{\CL},\ec[\CB]{\CM}$ be enriched monoidal categories and $\ec{F},\ec{G} : \ec[\CA]{\CL} \to \ec[\CB]{\CM}$ enriched monoidal functors. An \textit{enriched monoidal natural transformation} is an enriched natural transformation $\ec{\xi}: \ec{F} \Rightarrow \ec{G}$ such that the underlying natural transformation $\xi: F \Rightarrow G$ is a monoidal natural transformation. 
\end{defn}


\begin{rem}
    Recall that braided monoidal categories are algebras in $\Algn[1](\cat)$, and the algebra homomorphisms between braided monoidal categories in $\Algn[1](\cat)$ are braided monoidal functors. Then it is routine to check that the algebra homomorphisms between enriched monoidal categories in $\ecat$ are enriched monoidal functors and $2$-morphisms between algebra homomorphisms in $\ecat$ are enriched monoidal natural transformations (see Remark \ref{rem:background_underlying_2-functors}). By Remark \ref{rem:equivalence_of_2_definitions} and the Whitehead theorem for 2-categories, the symmetric monoidal 2-category of enriched monoidal categories, enriched monoidal functors and enriched monoidal natural transformations is a sub-2-category of the 2-category $\Algn[1](\ecat)$ of algebras in $\ecat$, and the inclusion is a 2-equivalence.
\end{rem}

\begin{expl} \label{expl:comm_alg_B_en_monoidal_cat}
Let $(A, m_A, \iota_A)$ be a commutative algebra in a braided monoidal category $\CA$. The $\CA$-enriched category $\ast^{A}$ introduced in Example \ref{expl:enrich_cat_with_one_obj} is a strict enriched monoidal category with the tensor product functor induced by $m_A$. Conversely, every strict $\CA$-enriched monoidal category with one object arises in this way (see also \cite[Example\ 3.5]{KZ18a}).

Let $(B, m_B, \iota_B)$ be a commutative algebra in a braided monoidal category $\CB$ and $\ec{F}: \ast^A \to \ast^B$ be an enriched monoidal functor. The enriched natural isomorphisms $\ec{F^2},\ec{F^0}$ are determined by the underlying natural isomorphisms $F^2,F^0$, and any morphism $(F^2)_*,(F^0)_*$ together with $\hat F^2,\hat F^0$ form an enriched natural transformation respectively since $B$ is commutative. The condition that $(F,F^2,F^0)$ is a monoidal functor is equivalent to say that $(F^2)_*$ is the inverse of $(F^0)_*$ in the underlying category of $\ast^B$. Hence an enriched monoidal functor $\ec{F} : \ast^A \to \ast^B$ is defined by a braided monoidal functor $\hat F : \CA \to \CB$, an algebra homomorphism $\ec{F}_{*,*} : \hat{F}(A) \to B$ (see Example \ref{expl:enrich_cat_with_one_obj}) and an isomorphism $(F^0)_*$ in the underlying category of $\ast^B$.

Let $\ec{G}: \ast^A \to \ast^B$ be another enriched monoidal functor defined by a braided monoidal functor $\hat{G}$, an algebra homomorphism $\ec{G}_{*,*} : \hat{G}(A) \to B$ and an isomorphism $(G^0)_*$. Then an enriched natural transformation $\ec{\xi}: \ec{F} \Rightarrow \ec{G}$ is an enriched monoidal natural transformation if and only if $(G^0)_* = \xi_* \circ (F^0)_*$ in the underlying category of $\ast^B$, which further implies that $\xi_*$ is invertible. If $\xi_*$ is invertible, the commutativity of $B$ implies that the diagram \ref{diag:enrich_cat_with_one_obj_1} is equivalent to
\[
\bigl( \hat{F}(A) \xrightarrow{\ec{F}_{*,*}} B \bigr) = \bigl( \hat{F}(A) \xrightarrow{\hat{\xi}_A} \hat{G}(A) \xrightarrow{\ec{G}_{*,*}} B \bigr) .
\]
As a conclusion, an enriched monoidal natural transformation $\ec{\xi} : \ec{F} \Rightarrow \ec{G}$ is defined by a monoidal natural transformation $\hat \xi$ such that $\ec{F}_{*,*} = \ec{G}_{*,*} \circ \hat \xi_A$ and the underlying natural transformation $\xi_* = (G^0)_* \circ (F^0)_*^{-1}$. 
\end{expl}

\subsection{Canonical construction}

Let $\CA$ be a braided monoidal category viewed as an algebra in $\Alg_{E_1}^{\mathrm{oplax}}(\cat)$ by Convention \ref{conv:monoidal_stru_tensor}.

\begin{defn} \label{def:oplax_monoidal_module}
A \emph{monoidal left $\CA$-\oplax module} is a left $\CA$-\oplax module in $\Alg_{E_1}^{\mathrm{oplax}}(\cat)$. A \emph{monoidal left $\CA$-module} is a left $\CA$-module in $\Algn[1](\cat)$.
\end{defn}

\begin{rem}
More explicitly, a monoidal left $\CA$-\oplax module $\CL$ consists of the following data:
\bit
    \item a monoidal category $\CL$;
    \item a left $\CA$-action given by an oplax-monoidal functor $\odot: \CA \times \CL \to \CL$ (i.e., the functor $\odot$ is equipped with a natural transformation 
\be \label{eq:abxy}
(a \otimes b) \odot (x \otimes y) \to (a \odot x) \otimes (b \odot y)\quad\quad \forall a,b\in\CA,x,y\in\CL
\ee 
and a morphism $\one_\CA \odot \one_\CL \to \one_\CL$ rendering the following diagrams commutative); 
\scriptsize
\begin{gather}
\begin{array}{c}
\xymatrix @R=0.15in @C=0.5in{
                [(a \otimes b) \otimes c] \odot [(x \otimes y)\otimes z] \ar[d] \ar[r] 
                & [a \otimes (b \otimes c)] \odot [x \otimes (y\otimes z)] \ar[d]\\
                [(a \otimes b) \odot (x \otimes y)] \otimes (c \odot z) \ar[d] 
                & (a \odot x) \otimes [(b \otimes c) \odot (y \otimes z)] \ar[d]\\ 
                [(a \odot x) \otimes (b \odot y)] \otimes (c \odot z) \ar[r] 
                & (a \odot x) \otimes [(b \odot y) \otimes (c \odot z)]
            }
\end{array} \label{diag:oplax_monoidal_module_1} \\
\begin{array}{c}
\xymatrix @R=0.15in @C=0.1in{
                a \odot x & (\one_\CA \otimes a) \odot (\one_\CL \otimes x) \ar[l] \ar[d] \\
                \one_\CL \otimes(a \odot x) \ar[u] & (\one_\CA \odot \one_\CL) \otimes(a \odot x) \ar[l]
            }
\end{array} \;
\begin{array}{c}
\xymatrix @R=0.15in @C=0.1in{
                a \odot x & (a \otimes \one_\CA) \odot (x\otimes \one_\CL) \ar[l] \ar[d] \\
                 (a \odot x) \otimes \one_\CL \ar[u] &  (a \odot x) \otimes (\one_\CA \odot \one_\CL) \ar[l]
            }
\end{array} \label{diag:oplax_monoidal_module_2}
\end{gather} \normalsize
        
    \item two oplax-monoidal natural transformation: the associator $(a \otimes b) \odot x \to a \odot (b \odot x)$ and the unitor $\one_\CA \odot x \to x$ (i.e., the following diagrams are commutative\footnote{The diagram \eqref{diag:oplax_monoidal_module_4} and the first diagram in \eqref{diag:1A-1L} are the defining properties of the associator as an oplax-monoidal natural transformation. The other two are those of the unitors.}, 
\scriptsize
\begin{gather}
\begin{array}{c}
\xymatrix @R=0.15in @C=0.2in{
                [(a_1 \otimes a_2) \otimes (b_1 \otimes b_2)] \odot (x \otimes y) \ar[r] \ar[d]
                & (a_1 \otimes a_2) \odot [(b_1 \otimes b_2) \odot (x \otimes y)] \ar[d]\\ 
                [(a_1 \otimes b_1) \otimes (a_2 \otimes b_2)] \odot (x \otimes y) \ar[d]
                & (a_1 \otimes a_2) \odot [(b_1 \odot x) \odot (b_2 \odot y)] \ar[d]\\ 
                [(a_1 \otimes b_1) \odot x] \otimes [(a_2 \otimes b_2) \odot y] \ar[r]
                & [a_1 \odot (b_1 \odot x)] \otimes [a_2 \odot (b_2 \odot y)]
}
\end{array} \label{diag:oplax_monoidal_module_4} \\
\begin{array}{c}
\xymatrix @R=0.3in @C=0.2in{
                \one_\CA \odot (x \otimes y) \ar[r] \ar[d] & (\one_\CA \otimes \one_\CA) \odot (x \otimes y) \ar[d]\\
            x \otimes y & (\one_\CA \odot x) \otimes (\one_\CA \odot y) \ar[l]
            }
\end{array} \label{diag:oplax_monoidal_module_4b} \\
\begin{array}{c}
\xymatrix @R=0.3in @C=0.2in{
        (\one_\CA \otimes \one_\CA) \odot \one_\CL \ar[rr] \ar[d] & & \one_\CA \odot (\one_\CA \odot \one_\CL) \ar[d]^{\text{oplax}} \\
        \one_\CA \odot \one_\CL \ar[r]^-{\text{oplax}} & \one_\CL & \one_\CA \odot \one_\CL \ar[l]_-{\text{oplax}}
        }
\end{array} \quad
\begin{array}{c}
\xymatrix @R=0.3in @C=0.5in{
        \one_\CA \odot \one_\CL \ar[r]_-{\text{unitor}} \ar[dr]_{\text{oplax}} & \one_\CL \ar@{=}[d] \\
         & \one_\CL
        }
\end{array} \label{diag:1A-1L}
\end{gather} \normalsize
        where the the arrow $[(a_1 \otimes a_2) \otimes (b_1 \otimes b_2)] \odot (x \otimes y) \to [(a_1 \otimes b_1) \otimes (a_2 \otimes b_2)] \odot (x \otimes y)$ is defined by the anti-braiding $c^{-1}_{b_1,a_2}: a_2 \otimes b_1 \to b_1 \otimes a_2$ (recall Convention \ref{conv:monoidal_stru_tensor}));
\eit
such that $\CL$, together with the left $\CA$-action functor $\odot$, the associator and the unitor defined above, is a left $\CA$-\oplax module.

If, in addition, the left $\CA$-action functor $\odot$ is a monoidal functor, and the associator and the unitors are natural isomorphisms, then $\CL$ is a monoidal left $\CA$-module. 
\end{rem}

\begin{rem} \label{rem:oplax_monoidal_module_defn_diagram}
If, in addition, $\CL$ is a strongly unital left $\CA$-\oplax module, then the commutativity of the second diagram in \eqref{diag:1A-1L} follows from those of diagram \eqref{diag:oplax_monoidal_module_2} and the diagram \eqref{diag:oplax_monoidal_module_4b}.
Also, the commutativity of the first diagram in \eqref{diag:1A-1L} is a consequence of that of the second diagram in \eqref{diag:1A-1L} and the fact that $\CL$ is an $\CA$-\oplax module.
\end{rem}

\begin{rem} \label{rem:oplax_monoidal_module}
By Definition \ref{def:oplax_monoidal_module}, a monoidal structure on a left $\CA$-\oplax module $\CL$ consists of a monoidal structure on $\CL$ and a natural transformation \eqref{eq:abxy} satisfying proper axioms. All the rest data can be included in the defining data of a left $\CA$-\oplax module structure on $\CL$. 
\end{rem}

In the following, we use $\forget$ to denote the forgetful functor $\FZ_1(\CL) \to \CL$ for any monoidal category $\CL$. If $\varphi: \CA \to \FZ_1(\CL)$ is a braided (oplax) monoidal functor, then $\cf{\varphi} \coloneqq \forget \circ \varphi$ is an \textit{(oplax) central functor} \cite{Bez04}.

\begin{expl} \label{expl:oplax_monoidal_module_functor}
Let $\CA$ be a braided monoidal category and $\CL$ a monoidal category. If $\varphi: \CA \to \FZ_1(\CL)$ is a braided oplax-monoidal functor, then $\CL$ is a monoidal left $\CA$-\oplax module with the module action $\cf{\varphi}(-) \otimes - : \CA \times \CL \to \CL$. The oplax-monoidal structure $\varphi(a \otimes b) \otimes (x \otimes y) \to (\varphi(a) \otimes x) \otimes (\varphi(b) \otimes y)$ is given by
\small
    \begin{multline*}
        \cf{\varphi}(a \otimes b) \otimes (x \otimes y) \to (\cf{\varphi}(a) \otimes \cf{\varphi}(b)) \otimes (x \otimes y) \to (\cf{\varphi}(a) \otimes (\cf{\varphi}(b) \otimes x)) \otimes y \\
            \to (\cf{\varphi}(a) \otimes (x \otimes \cf{\varphi}(b))) \otimes y \to (\cf{\varphi}(a) \otimes x) \otimes (\cf{\varphi}(b) \otimes y),
    \end{multline*} \normalsize
where the third arrow is defined by the half-braiding of $\varphi(b)$.
\end{expl}

\begin{rem} \label{rem:left_unital_action_in_Mod}
There is another definition of a monoidal module \cite{Lur17,AFT16,KZ18,HPT16}. Given a monoidal left $\CA$-module $\CL$ as defined in Definition \ref{def:oplax_monoidal_module}, $\varphi \coloneqq (- \odot \one_\CL)$ defines a braided monoidal functor $\varphi : \CA \to \FZ_1(\CL)$ (recall Example \ref{expl:centers_in_cat}). Two monoidal left $\CA$-modules $(\CL, \odot)$ and $(\CL, \cf{\varphi}(-) \otimes -)$ are isomorphic. Hence, a monoidal left $\CA$-module $\CL$ can be equivalently defined by a monoidal category $\CL$ equipped with a braided monoidal functor $\CA \to \FZ_1(\CL)$. 

When $\CL$ is a monoidal left $\CA$-\oplax module, the morphism \eqref{eq:abxy} is not necessarily invertible, thus does not induce a half-braiding. 
\end{rem}

\begin{prop} \label{prop:monoidal_en_monoidal_oplax_module}
Given a pair $(\CA,\CL) \in \lmod$, let $\bc[\CA]{\CL}$ be the enriched category obtained from the pair $(\CA,\CL)$ via the canonical construction. 
There is a one-to-one correspondence between the monoidal structures on $\bc[\CA]{\CL}$ and the monoidal structures on the  left $\overline{\CA}$-\oplax module $\CL$ as shown by the following two mutually inverse constructions: 
\begin{itemize}

\item[$\Rightarrow$] Given a monoidal structure on $\bc[\CA]{\CL}$, i.e., a sextuple $(\bc[\CA]{\CL},\ec{\otimes},\ec{\one_\CL},\ec{\alpha},\ec{\lambda},\ec{\rho})$, then the sextuple $(\CL,\otimes,\one_\CL,\alpha,\lambda,\rho)$ automatically defines a monoidal structure on $\CL$ by definition. A monoidal structure on the left $\overline{\CA}$-\oplax module $\CL$ is determined (recall Remark \ref{rem:oplax_monoidal_module}) if we further define the morphisms \eqref{eq:abxy} to be 
the one induced from the composed morphism:
\[
(a \otimes  b) \xrightarrow{\coev_x \otimes \coev_y} [x,a \odot x] \otimes [y,b \odot y] \xrightarrow{\ec{\otimes}} [x \otimes y,(a \odot x) \otimes (b \odot y)]. 
\]

\item[$\Leftarrow$] Given a monoidal structure on the left $\overline{\CA}$-\oplax module $\CL$. A monoidal structure on $\bc[\CA]{\CL}$ is determined by defining the morphism $[x_1, x_2] \otimes [y_1, y_2] \to [x_1 \otimes y_1, x_2 \otimes y_2]$ to be the one induced by the following composed morphism
\small \[
([x_1, x_2] \otimes [y_1, y_2]) \odot (x_1 \otimes y_1) \to ([x_1, x_2] \odot x_1) \otimes ([y_1, y_2] \odot y_1) \xrightarrow{\ev_{x_1} \otimes \ev_{y_1}} x_2 \otimes y_2 ,
\] \normalsize
where the first arrow is given by the oplax-monoidal structure of $\odot: \CA \times \CL \to \CL$.
\end{itemize}
\end{prop}

\pf
It is routine to check that the diagrams \eqref{diag:oplax_monoidal_module_4} and \eqref{diag:oplax_monoidal_module_4b} for a monoidal left $\overline{\CA}$-\oplax module $\CL$ commute if and only if $\otimes_\CL$ is a $\otimes_\CA$-lax functor with the $\otimes_\CA$-lax structure given by the oplax-monoidal structure of $\odot$, and the diagrams in \eqref{diag:oplax_monoidal_module_1}, \eqref{diag:oplax_monoidal_module_2} 
commute if and only if $(\hat \alpha,\alpha),(\hat \lambda,\lambda),(\hat \rho,\rho)$ is a 2-morphism in $\lmod$, respectively. Then the assertion follows from Theorem \ref{thm:2-functor_from_A_modules_to_cat_A} (see also Proposition \ref{prop:1_morphism_des}).
\end{proof}

\begin{expl}[\cite{MP17}]\label{expl:canonical_construction_enriched_monoidal_cat}
    Let $\varphi: \overline{\CA} \to \FZ_1(\CL)$ be a strongly unital braided oplax-monoidal functor. Then $\CL$ equipped with the module action $\cf{\varphi}(-)\otimes -: \overline{\CA} \times \CL \to \CL$ is a strongly unital monoidal left $\overline{\CA}$-\oplax module (see Example \ref{expl:oplax_monoidal_module_functor}). If $\CL$ is also enriched in $\CA$, then $\bc[\CA]{\CL}$ is an $\CA$-enriched monoidal category.  
\end{expl}

\begin{expl}
Let $\CL$ be a left $\CA$-module. We use $\Fun_{\CA}(\CL, \CL)$ to denote the category of left $\CA$-module functors and left $\CA$-module natural transformations. There is an obvious braided monoidal functor $\overline{\FZ_1(\CA)} \to \FZ_1(\Fun_{\CA}(\CL, \CL))$ defined by $a \mapsto \forget(a) \odot -$. The left $\CA$-module functor $\forget(a) \odot -$ is equipped with a half-braiding
\[
\gamma_{F,\forget(a)} : F(\forget(a) \odot -) \Rightarrow \forget(a) \odot F(-), \quad\quad \forall F \in \fun_{\CA}(\CL,\CL),  
\]
defined by the left $\CA$-module functor structure on $F$ \cite{Ost03}. It follows that $\Fun_{\CA}(\CL, \CL)$ is a monoidal left $\overline{\FZ_1(\CA)}$-module.
\end{expl}

\begin{defn}\label{def:monoidal_lax_module_functor}
Let $\CA,\CB$ be two braided monoidal categories. Let $\CL$ be a monoidal left $\CA$-\oplax module and $\CM$ a monoidal left $\CB$-\oplax module. Given a braided monoidal functor $\hat{F}: \CA \to \CB$, a \textit{monoidal $\hat{F}$-lax functor} $F: \CL \to \CM$ is a monoidal functor and also an $\hat F$-lax functor such that the $\hat{F}$-lax structure $\hat{F}(a) \odot F(x) \to F(a \odot x)$ is an oplax-monoidal natural transformation (i.e., the following diagrams commute).
\begin{align}
\begin{array}{c}
\xymatrix @R=0.15in @C=0.8in{
            \hat{F}(a \otimes b) \odot F(x \otimes y) \ar[d] \ar[r] & F((a \otimes b) \odot (x\otimes y)) \ar[d] \\
            (\hat{F}(a) \otimes \hat{F}(b)) \odot (F(x) \otimes F(y)) \ar[d] & F((a \odot x) \otimes (b \odot y)) \ar[d]\\
         (\hat{F}(a) \odot F(x)) \otimes (\hat{F}(b) \odot F(y)) \ar[r] & F(a \odot x) \otimes F(b \odot y) 
}
\end{array} \label{diag:monoidal_lax_module_functor_1} \\
\begin{array}{c}
\xymatrix @C=0.7in{
\hat F(\one) \odot F(\one) \ar[rr] \ar[d] & & F(\one \odot \one) \ar[d] \\
\one \odot \one \ar[r] & \one & F(\one) \ar[l]
}
\end{array} \quad \quad \label{diag:monoidal_lax_module_functor_2}
\end{align} 
\end{defn}


\begin{rem}
The diagram \eqref{diag:monoidal_lax_module_functor_2} is always commutative because $F$ is an $\hat F$-lax functor.
\end{rem}

\begin{rem} \label{rem:mon_lax_functor_des}
Let $\varphi_i: \CA_i \to \FZ_1(\CL_i)$ be a braided oplax-monoidal functor, $i=1,2$. Then $\CL_i$ is a monoidal left $\CA_i$-\oplax module. Let $\hat{F}: \CA_1 \to \CA_2$ be a braided monoidal functor and $F: \CL_1 \to \CL_2$ be a monoidal functor. Suppose $\theta_a: \cf{\varphi}_2(\hat{F}(a)) \to F(\cf{\varphi}_1(a))$ is an oplax-monoidal natural transformation rendering the following diagram commutative,
\be \label{diag:mon_lax_functor_des_1}
\begin{array}{c}
\xymatrix @R=0.2in @C=0.4in{
            \cf{\varphi}_2(\hat{F}(a)) \otimes F(x) \ar[r]^-{\theta_a \otimes 1} \ar[d] & F(\cf{\varphi}_1(a)) \otimes F(x) \ar[r]^{\sim} & F(\cf{\varphi}_1(a) \otimes x) \ar[d] \\
            F(x) \otimes \cf{\varphi}_2(\hat{F}(a)) \ar[r]^-{1 \otimes \theta_a} & F(x) \otimes F(\cf{\varphi}_1(a))  \ar[r]^{\sim} & F(x\otimes \cf{\varphi}_1(a))
        }
\end{array}
\ee
where two vertical arrows are the half-braidings of $\varphi_2(\hat{F}(a))$ and $\varphi_1(a)$, respectively. Then $F$ equipped with the natural transformation 
\[
\beta_{a, x} \coloneqq \bigl( \cf{\varphi}_2(\hat{F}(a)) \otimes F(x) \xrightarrow{\theta_a \otimes 1} F(\cf{\varphi}_1(a)) \otimes F(x) \xrightarrow{\sim} F(\cf{\varphi}_1(a) \otimes x) \bigr) 
\]
is a monoidal $\hat{F}$-lax functor. 

    We claim that every monoidal $\hat{F}$-lax structure on $F$ arises in this way. Indeed, suppose $\beta_{a, x}: \cf{\varphi}_2(\hat{F}(a)) \otimes F(x) \to F(\cf{\varphi}_1(a) \otimes x)$ is a monoidal $\hat{F}$-lax structure on $F$. Then  
\[
\theta_a \coloneqq \bigl(\cf{\varphi}_2(\hat{F}(a)) \xrightarrow{\sim} \cf{\varphi}_2(\hat{F}(a)) \otimes F(\one_{\CL_1}) \xrightarrow{\beta_{a, \one_{\CL_1}}} F(\cf{\varphi}_1(a) \otimes \one_{\CL_1}) \xrightarrow{\sim} F(\cf{\varphi}_1(a)) \bigr)
\]
is an oplax-monoidal natural transformation. Taking  $b = \one_{\CA_1}$ and $x= \one_{\CL_2}$ in the diagram \eqref{diag:monoidal_lax_module_functor_1}, then we have
\[
\beta_{a, y} = \bigl( \cf{\varphi}_2(\hat{F}(a)) \otimes F(y) \xrightarrow{\theta_a \otimes 1} F(\cf{\varphi}_1(a)) \otimes F(y) \xrightarrow{\sim} F(\cf{\varphi}_1(a) \otimes y) \bigr) .
\]
By taking $a = \one_{\CA_1}$ and $y = \one_{\CL_2}$ in the diagram \eqref{diag:monoidal_lax_module_functor_1}, we can show that the oplax-monoidal natural transformation $\theta_a$ renders the diagram \eqref{diag:mon_lax_functor_des_1} commutative. Hence, when $\CA_1=\CA_2$ and $\hat{F}=1_{\CA_1}$, Definition \ref{def:monoidal_lax_module_functor} is precisely \cite[Definition\ 2.6.6]{KZ18}.
\end{rem}

\begin{prop} \label{prop:enriched_monoidal_functor_monoidal_lax_functor}
Let $\CA,\CB$ be braided monoidal categories. Suppose $\CL$ is a strongly unital monoidal left $\overline{\CA}$-\oplax module that is enriched in $\CA$, and $\CM$ is a strongly unital monoidal left $\overline{\CB}$-\oplax module that is enriched in $\CB$. Then the canonical construction gives enriched monoidal categories $\bc[\CA]{\CL}$ and $\bc[\CB]{\CM}$. Suppose $\hat F : \CA \to \CB$ is a braided monoidal functor and $F : \CL \to \CM$ is both a monoidal functor and an $\hat F$-lax functor. Then $F$ is a monoidal $\hat{F}$-lax functor if and only if $\hat F$ and $F$ define an enriched monoidal functor $\ec{F}: \bc[\CA]{\CL} \to \bc[\CB]{\CM}$. 
\end{prop}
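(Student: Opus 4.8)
The plan is to reduce everything to the two descriptions already established for the canonical construction, namely Proposition~\ref{prop:1_morphism_des} (enriched functors out of canonically constructed categories $=$ $\hat F$-lax functors) and Proposition~\ref{prop:2_morphism_des} (enriched natural transformations between them $=$ $\hat\xi$-lax natural transformations), so that the monoidal coherence collapses onto the two diagrams of Definition~\ref{def:monoidal_lax_module_functor}. First I would note that by Proposition~\ref{prop:1_morphism_des} the pair $(\hat F, F)$ already determines an enriched functor $\ec{F} : \bc[\CA]{\CL} \to \bc[\CB]{\CM}$, its $\hat F$-lax structure being the $\beta_{a,x}$ of \eqref{eq:beta_ax}. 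By Definition~\ref{def:enriched_monoidal_functor}, upgrading $\ec{F}$ to an enriched monoidal functor amounts to producing two enriched natural isomorphisms $\ec{F^2} : \ec{\otimes}\circ(\ec{F}\times\ec{F})\Rightarrow\ec{F}\circ\ec{\otimes}$ and $\ec{F^0}:\ec{\one_\CM}\Rightarrow\ec{F}\circ\ec{\one_\CL}$ whose background changing data are the monoidal constraints $\hat F^2,\hat F^0$ of $\hat F$ and whose underlying data are the monoidal constraints $F^2, F^0$ of $F$. Since $\hat F$ is braided monoidal and $F$ is monoidal by hypothesis, the two ``such that'' clauses of Definition~\ref{def:enriched_monoidal_functor} hold automatically and $\hat F^2,\hat F^0,F^2,F^0$ are already isomorphisms; hence the entire content of the statement is whether the prescribed pairs $(\hat F^2, F^2)$ and $(\hat F^0, F^0)$ actually assemble into enriched natural transformations.

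Next I would identify the source and target of $\ec{F^2}$ as images of the canonical construction. Using that this 2-functor is symmetric monoidal and locally isomorphic (Theorem~\ref{thm:2-functor_from_A_modules_to_cat_A}), I get the identification $\bc[\CA]{\CL}\times\bc[\CA]{\CL}=\bc[\CA\times\CA]{\CL\times\CL}$, and the two composites $\ec{\otimes}\circ(\ec{F}\times\ec{F})$ and $\ec{F}\circ\ec{\otimes}$ become the images of the $\mathbf{LMod}$-1-morphisms $(\otimes_\CB\circ(\hat F\times\hat F),\,\otimes_\CM\circ(F\times F))$ and $(\hat F\circ\otimes_\CA,\,F\circ\otimes_\CL)$, respectively. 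Here $\otimes_\CL$ carries the $\otimes_\CA$-lax structure coming from the oplax-monoidal structure \eqref{eq:abxy} of $\odot$ (this is exactly the identification used in the proof of Proposition~\ref{prop:monodal_en_monoidal_oplax_module}), while the composite lax structures are assembled from $\beta$ and from $\odot$ on $\CM$ via composition of 1-morphisms in $\mathbf{LMod}$. The background changing transformation $\hat F^2$ is the monoidal constraint of $\hat F$, which is a lax-monoidal natural transformation between the composite lax-monoidal functors $\otimes_\CB\circ(\hat F\times\hat F)$ and $\hat F\circ\otimes_\CA$ precisely because $\hat F$ is braided and by Convention~\ref{conv:monoidal_stru_tensor}. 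Now Proposition~\ref{prop:2_morphism_des} applies verbatim: $(\hat F^2, F^2)$ is an enriched natural transformation if and only if $F^2$ is $\hat F^2$-lax, i.e. iff the square \eqref{diag:morphism_bet_R_lax_functors_con} commutes for this data. Applying the same reasoning over the tensor unit $(\ast,\ast)$ of $\mathbf{LMod}$ shows that $(\hat F^0, F^0)$ is an enriched natural transformation iff $F^0$ is $\hat F^0$-lax.

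Finally I would spell out these two $\hat\xi$-lax conditions. Feeding the explicit lax structures of the composites into \eqref{diag:morphism_bet_R_lax_functors_con} for $(\hat F^2, F^2)$, the square rewrites as the diagram \eqref{diag:monoidal_lax_module_functor_1} asserting that $\beta_{a,x}$ is oplax-monoidal in its multiplicative part, and the condition for $(\hat F^0, F^0)$ rewrites as the unit diagram \eqref{diag:monoidal_lax_module_functor_2}. Together these are exactly the requirement that $\beta$ be an oplax-monoidal natural transformation, that is, that $F$ be a monoidal $\hat F$-lax functor in the sense of Definition~\ref{def:monoidal_lax_module_functor}; this yields the desired equivalence.

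The main obstacle is this last step: one must carefully unwind the composite $\otimes_\CA$- and $\hat F$-lax structures on $\otimes_\CM\circ(F\times F)$ and $F\circ\otimes_\CL$ so as to see that \eqref{diag:morphism_bet_R_lax_functors_con} is literally \eqref{diag:monoidal_lax_module_functor_1}. This is a finite but delicate diagram chase, and it is the one place where the braiding/anti-braiding bookkeeping of Convention~\ref{conv:monoidal_stru_tensor} (the passage between $\CA$ and $\overline{\CA}$, and the comparison cell $a_2\otimes b_1 \to b_1\otimes a_2$ appearing in \eqref{eq:abxy}) must be tracked precisely; everything else is formal once Propositions~\ref{prop:1_morphism_des} and~\ref{prop:2_morphism_des} are invoked.
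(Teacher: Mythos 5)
Your proposal is correct and follows essentially the same route as the paper's proof: the paper likewise invokes the canonical-construction 2-functor (Theorem \ref{thm:2-functor_from_A_modules_to_cat_A}, i.e.\ Propositions \ref{prop:1_morphism_des} and \ref{prop:2_morphism_des}) to reduce the statement to checking that $(\hat F^2,F^2)$ and $(\hat F^0,F^0)$ are 2-morphisms in $\lmod$, and then identifies these $\hat\xi$-lax conditions with the diagrams \eqref{diag:monoidal_lax_module_functor_1} and \eqref{diag:monoidal_lax_module_functor_2}. The only difference is presentational: what the paper dismisses as "routine to check," you correctly flag as the diagram chase unwinding the composite lax structures, which is exactly where the work lies.
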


\begin{proof}
    By Theorem \ref{thm:2-functor_from_A_modules_to_cat_A}, the $\hat F$-lax functor $F$ defines an enriched functor $\ec{F} : \bc[\CA]{\CL} \to \bc[\CB]{\CM}$. Then we only need to show that $F$ is a monoidal $\hat F$-lax functor if and only if $(\hat F^2,F^2)$ and $(\hat F^0,F^0)$ are 2-morphisms in $\lmod$. It is routine to check that $(\hat{F}^2, F^2)$ is a 2-morphism in $\lmod$ if and only if the diagram \eqref{diag:monoidal_lax_module_functor_1} commutes, and $(\hat F^0,F^0)$ is a 2-morphism in $\lmod$ if and only if the diagram \eqref{diag:monoidal_lax_module_functor_2} commutes.
\end{proof}

\begin{defn} \label{defn:monoidal_lax_natural_transformation}
Let $\CA,\CB$ be two braided monoidal categories. Let $\CL$ be a monoidal left $\CA$-\oplax module and $\CM$ be a monoidal left $\CB$-\oplax module. Suppose $\hat F_i : \CA \to \CB$ is a braided monoidal functor and $F_i: \CL \to \CM$ is a monoidal $\hat{F}_i$-lax functor, $i=1,2$. Given a monoidal natural transformation $\hat \xi : \hat F_1 \Rightarrow \hat F_2$, a \textit{monoidal $\hat \xi$-lax natural transformation} $\xi : F_1 \Rightarrow F_2$ is a $\hat \xi$-lax natural transformation and also a monoidal natural transformation.
\end{defn}

\begin{rem}
    For $i = 1,2$, let $\CL_i$ be a monoidal left $\CA_i$-\oplax module induced by a braided oplax-monoidal functor $\varphi_i: \CA_i \to \FZ_1(\CL_i)$. Assume that $\hat{F}_i: \CA_1 \to \CA_2$ is a braided monoidal functor and $F_i: \CL_1 \to \CL_2$ is a monoidal $\hat{F}_i$-lax functor. Let $\hat{\xi}: \hat{F}_1 \to \hat{F}_2$ and $\xi: F_1 \to F_2$ be two monoidal natural transformations. Then $\xi$ is a monoidal $\hat \xi$-lax natural transformation if and only if the following diagram commutes,
\[
        \xymatrix @R=0.2in @C=0.5in{
            \cf{\varphi}_2(\hat{F}_1(a)) \ar[r]^-{\theta_1} \ar[d]^-{\hat{\xi}} & F_1(\cf{\varphi}_1(a)) \ar[d]^-{\xi}\\
            \cf{\varphi}_2(\hat{F}_2(a)) \ar[r]^-{\theta_2} & F_2(\cf{\varphi}_1(a))
        }
\]
where $\theta_i: \cf{\varphi}_2 \circ \hat{F}_i \Rightarrow F_i \circ \cf{\varphi}_1$ is the oplax-monoidal natural transformation that induces the monoidal $\hat{F}_i$-lax module structure on $F_i$ (see Remark \ref{rem:mon_lax_functor_des}). 
\end{rem}
 

\begin{prop} \label{prop:enriched_monoidal_natural_transf_can}
Let $\CA,\CB$ be braided monoidal categories. Suppose $\CL$ is a strongly unital monoidal left $\overline{\CA}$-\oplax module that is enriched in $\CA$, and $\CM$ is a strongly unital monoidal left $\overline{\CB}$-\oplax module that is enriched in $\CB$. For $i=1,2$, let $\hat F_i : \CA \to \CB$ be a braided monoidal functor and $F_i : \CL \to \CM$ be a monoidal $\hat F_i$-lax functor. Then $(\hat{F}_i,F_i)$ defines an enriched monoidal functor $\ec{F_i}: \bc[\CA]{\CL} \to \bc[\CB]{\CM}$. Suppose $\hat \xi : \hat F_1 \Rightarrow \hat F_2$ is a monoidal natural transformation and $\xi : F_1 \Rightarrow F_2$ is a $\hat \xi$-lax natural transformation. Then $\xi$ is a monoidal $\hat \xi$-lax natural transformation if and only if $(\hat \xi ,\xi)$ defines an enriched monoidal natural transformation.
\end{prop}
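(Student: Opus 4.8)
The plan is to observe that, under the stated hypotheses, the two properties to be compared both reduce to the single extra requirement that the underlying natural transformation $\xi$ be monoidal, so that the claimed equivalence is obtained by unwinding definitions on top of Proposition~\ref{prop:2_morphism_des}. This is precisely why the statement can be asserted with the remark that the proof is clear.

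First I would invoke Proposition~\ref{prop:2_morphism_des}. Since $\hat{F}_1,\hat{F}_2$ are braided monoidal and hence lax-monoidal functors, the $F_i$ are $\hat{F}_i$-lax functors, and $\xi$ is by hypothesis a $\hat{\xi}$-lax natural transformation, that proposition guarantees that the pair $(\hat{\xi},\xi)$ already defines an enriched natural transformation $\ec{\xi}:\ec{F_1}\Rightarrow\ec{F_2}$ between the enriched functors furnished by Proposition~\ref{prop:enriched_monoidal_functor_monoidal_lax_functor}. Moreover, by the explicit description of the canonical-construction $2$-functor in Theorem~\ref{thm:2-functor_from_A_modules_to_cat_A}, the background changing natural transformation of $\ec{\xi}$ is $\hat{\xi}$ and its underlying natural transformation is exactly $\xi$.

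With $\ec{\xi}$ in hand, I would simply compare the two remaining conditions. By the definition of a monoidal $\hat{\xi}$-lax natural transformation, $\xi$ is monoidal $\hat{\xi}$-lax precisely when it is $\hat{\xi}$-lax, which holds by hypothesis, and in addition $\xi$ is a monoidal natural transformation; thus this condition is equivalent to ``$\xi$ is monoidal.'' On the other hand, by Definition~\ref{def:enriched_mon_nat_tran}, the enriched natural transformation $\ec{\xi}$ is an \emph{enriched monoidal} natural transformation precisely when its underlying natural transformation is monoidal; since that underlying transformation is $\xi$, this condition is again equivalent to ``$\xi$ is monoidal.'' Hence the two conditions coincide, which yields the asserted equivalence in both directions.

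There is no genuine obstacle in this argument, as all the analytic content was already packaged into Proposition~\ref{prop:2_morphism_des}. The only point requiring care is the identification of the underlying natural transformation of $\ec{\xi}$ with $\xi$, so that the two monoidality conditions concern literally the same arrow. This identification is legitimate because the strong-unitality hypotheses inherited from Proposition~\ref{prop:enriched_monoidal_functor_monoidal_lax_functor} make $\CL$ and $\CM$ canonically the underlying monoidal categories of $\bc[\CA]{\CL}$ and $\bc[\CB]{\CM}$, and because the action of the canonical-construction $2$-functor on $2$-morphisms in Theorem~\ref{thm:2-functor_from_A_modules_to_cat_A} sends $(\hat{\xi},\xi)$ to the enriched natural transformation with underlying data exactly $\xi$.
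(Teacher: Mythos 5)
Your proposal is correct, and it is exactly the argument the paper has in mind when it declares the proof ``clear'': under the standing hypotheses, Proposition~\ref{prop:2_morphism_des} identifies $(\hat{\xi},\xi)$ with an enriched natural transformation whose underlying natural transformation is literally $\xi$, so both conditions in the equivalence reduce to the single requirement that $\xi$ be a monoidal natural transformation (using Definition~\ref{def:enriched_mon_nat_tran} on one side and the definition of a monoidal $\hat{\xi}$-lax natural transformation on the other). Your added care about strong unitality, which guarantees that $\CL$ and $\CM$ are canonically the underlying categories so the two monoidality conditions concern the same arrow with the same monoidal structures $F_i^2,F_i^0$, is the right point to flag.
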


\begin{proof}
By Proposition \ref{prop:2_morphism_des}, $\xi$ is a monoidal $\hat \xi$-lax natural transformation if and only if $\xi$ is a monoidal natural transformation, and $(\hat \xi,\xi)$ defines an enriched monoidal natural transformation. Thus $\xi$ is a monoidal $\hat \xi$-lax natural transformation is amount to the fact that $(\hat \xi ,\xi)$ defines an enriched monoidal natural transformation.
\end{proof}

We use $\ecat^{\otimes}$ to denote the 2-category of enriched monoidal categories, enriched monoidal functors and enriched monoidal natural transformations. It is symmetric monoidal with the tensor product given by the Cartesian product $\times$ and the tensor unit given by $\ast$. 

We define the 2-category $\lmod^{\otimes}$ as follows: 
\bit
    \item The objects are pairs $(\CA, \CL)$, where $\CA$ is a braided monoidal category and $\CL$ is a strongly unital monoidal left $\CA$-\oplax module that is enriched in $\CA$.

    \item A 1-morphism $(\CA, \CL) \to (\CB, \CM)$ is a pair $(\hat{F}, F)$, where $\hat{F}: \CA \to \CB$ is a braided monoidal functor and $F:\CL \to \CM$ is a monoidal $\hat{F}$-lax functor. 

    \item A 2-morphism $(\hat{F}, F) \Rightarrow (\hat{G}, G)$ is a pair $(\hat{\xi},\xi)$, where
    $\hat{\xi}: \hat{F} \Rightarrow \hat{G}$ is a monoidal natural transformation and $\xi:F\Rightarrow G$ is a monoidal $\hat{\xi}$-lax natural transformation. 
\eit
The horizontal/vertical composition is induced by the horizontal/vertical composition of functors and natural transformations. The 2-category $\lmod^{\otimes}$ is symmetric monoidal with the tensor product defined by the Cartesian product and the tensor unit given by $(\ast, \ast)$.

\medskip
By Proposition \ref{prop:monoidal_en_monoidal_oplax_module}, Proposition \ref{prop:enriched_monoidal_functor_monoidal_lax_functor} and Proposition \ref{prop:enriched_monoidal_natural_transf_can} we obtain the following result.


\begin{thm}\label{thm:sym_mono_2_functor_monoidal}
The canonical construction defines a symmetric monoidal 2-functor from $\lmod^{\otimes}$ to $\ecat^{\otimes}$. Moreover, this 2-functor is locally isomorphic.
\end{thm}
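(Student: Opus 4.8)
The plan is to exhibit the canonical construction as a 2-functor $\Phi\colon \lmod^{\otimes}\to\ecat^{\otimes}$ of the same nature as the one in Theorem \ref{thm:2-functor_from_A_modules_to_cat_A}, and then to verify the two extra properties. On objects I set $\Phi(\CA,\CL)=\bc[\CA]{\CL}$, equipped with the monoidal structure attached to the monoidal module structure of $\CL$ by Proposition \ref{prop:monodal_en_monoidal_oplax_module} (reading the action as a monoidal left $\overline{\CA}$-\oplax module via Convention \ref{conv:monoidal_stru_tensor}); on 1-morphisms I set $\Phi(\hat F,F)=\ec F$, the enriched functor of Theorem \ref{thm:2-functor_from_A_modules_to_cat_A} together with the enriched monoidal structure $\ec{F^2},\ec{F^0}$ supplied by Proposition \ref{prop:enriched_monoidal_functor_monoidal_lax_functor}; and on 2-morphisms I set $\Phi(\hat\xi,\xi)=\ec\xi$, the enriched natural transformation with background-changing part $\hat\xi$ and underlying part $\xi$, which is enriched monoidal by Proposition \ref{prop:enriched_monoidal_natural_transf_can}. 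Thus the three propositions are exactly the assertions that $\Phi$ is well defined on the three levels, and in each case the assignment is a \emph{bijection} between the monoidal data on the module side and the enriched monoidal data on the enriched side.

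Next I would verify local isomorphism. Fix objects $(\CA,\CL)$ and $(\CB,\CM)$. Theorem \ref{thm:2-functor_from_A_modules_to_cat_A} already shows the underlying assignment is a locally isomorphic 2-functor, so $(\hat F,F)\mapsto\ec F$ is a bijection on plain 1-morphisms and $(\hat\xi,\xi)\mapsto\ec\xi$ is a bijection on plain 2-morphisms. It then suffices to see that these bijections restrict to bijections on the monoidal refinements, and this is precisely the ``if and only if'' content of Propositions \ref{prop:enriched_monoidal_functor_monoidal_lax_functor} and \ref{prop:enriched_monoidal_natural_transf_can}: a functor $F$ that is both monoidal and $\hat F$-lax is \emph{monoidal} $\hat F$-lax iff the induced $\ec F$ is an enriched monoidal functor, and a $\hat\xi$-lax natural transformation $\xi$ is monoidal iff $\ec\xi$ is enriched monoidal. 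Uniqueness of the monoidal refinement of $\ec F$ (and of $\ec\xi$) is automatic, since an enriched natural transformation is determined by its background-changing and underlying parts; hence the hom-functor $\lmod^{\otimes}((\CA,\CL),(\CB,\CM))\to\ecat^{\otimes}(\bc[\CA]{\CL},\bc[\CB]{\CM})$ is an isomorphism of categories.

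Finally I would address functoriality and symmetric monoidality. Preservation of identities and of horizontal and vertical composition holds because on both sides composites are computed through the underlying functors and natural transformations together with the coherence data of $\hat F$; these are already compatible for the non-monoidal canonical construction by Theorem \ref{thm:2-functor_from_A_modules_to_cat_A}, and since the monoidal refinements of the three propositions are built from the same composite formula \eqref{equ:2_cat_enriched_def_1}, the structure maps $\ec{F^2},\ec{F^0}$ compose correctly. For symmetric monoidality, the key computation is that internal homs of a product module are componentwise, $[(x,y),(x',y')]_{\CA\times\CB}=([x,x']_{\CA},[y,y']_{\CB})$, whence $\bc[\CA\times\CB]{\CL\times\CM}\cong\bc[\CA]{\CL}\times\bc[\CB]{\CM}$ and $\bc[\ast]{\ast}=\ast$; one checks the induced monoidal structures agree under this identification and that the symmetry is transported from the Cartesian product, upgrading the symmetric monoidal structure already present in Theorem \ref{thm:2-functor_from_A_modules_to_cat_A}. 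The main obstacle is organizational rather than conceptual: one must confirm that the three separate bijections assemble coherently, i.e. that the monoidal $\hat F$-lax data are genuinely preserved by \eqref{equ:2_cat_enriched_def_1} and that the product of two canonical constructions really is the canonical construction of the product monoidal module with the matching monoidal structure. Each of these is a diagram chase of the same type as in the proofs of the cited propositions, so no new ideas are required.
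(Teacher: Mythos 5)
Your proposal is correct and takes essentially the same approach as the paper: the paper deduces this theorem directly from Propositions \ref{prop:monodal_en_monoidal_oplax_module}, \ref{prop:enriched_monoidal_functor_monoidal_lax_functor} and \ref{prop:enriched_monoidal_natural_transf_can}, layered on the non-monoidal canonical construction 2-functor of Theorem \ref{thm:2-functor_from_A_modules_to_cat_A}, which is exactly the assembly you describe. The details you spell out (restriction of the local isomorphisms to the monoidal refinements, functoriality via \eqref{equ:2_cat_enriched_def_1}, and componentwise internal homs giving $\bc[\CA\times\CB]{\CL\times\CM}\cong\bc[\CA]{\CL}\times\bc[\CB]{\CM}$) are precisely what the paper leaves implicit.
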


\begin{proof}
By Theorem \ref{thm:2-functor_from_A_modules_to_cat_A}, the canonical construction defines a symmetric monoidal 2-functor from $\mathbf{LMod}$ to $\ec[\Lax]{\ecat}$. Let $\CL$ be a strongly unital monoidal left $\overline{\CA}$-\oplax module. By Proposition \ref{prop:monoidal_en_monoidal_oplax_module}, $\bc[\CA]{\CL}$ is an enriched monoidal category. Given 1-morphisms $(\hat{F}, F): (\overline{\CA}, \CL) \to (\overline{\CB}, \CM)$ and $(\hat{G}, G): (\overline{\CB}, \CM) \to (\overline{\CC}, \CN)$ in $\lmod^{\otimes}$, the monoidal structures of $\ec{G} \circ \ec{F}$ and $\ec{GF}$ are both defined by the monoidal structures of $\hat{G}\hat{F}$ and $GF$, where $\ec{GF}$ is the image of $(\hat{G}\hat{F},GF)$. Therefore $\ec{G} \circ \ec{F} = \ec{GF}$ as monoidal functors. Then Proposition \ref{prop:enriched_monoidal_functor_monoidal_lax_functor} and Proposition \ref{prop:enriched_monoidal_natural_transf_can} imply that canonical construction defines a locally isomorphic 2-functor from $\lmod^{\otimes}$ to $\ecat^{\otimes}$.  

Note that $\ecat^{\otimes}$ inherits the symmetric monoidal structure of $\ec[\Lax]{\ecat}$. Recall that the background changing functor and the underlying functor of the enriched isomorphism $\ec{\chi}_{\bc[\CA]{\CL}, \bc[\CB]{\CM}}: \bc[\CA]{\CL} \times \bc[\CB]{\CM} \to \bc[(\CA \times \CB)]{(\CL \times \CM)}$ defined in the proof of Theorem \ref{thm:2-functor_from_A_modules_to_cat_A} are both identity functors. It is routine to check that $\ec{\chi}_{\bc[\CA]{\CL}, \bc[\CB]{\CM}}$ is a monoidal functor with trivial monoidal structure. Then the canonical construction defines a symmetric monoidal 2-functor from $\lmod^{\otimes}$ to $\ecat^{\otimes}$ by Theorem \ref{thm:2-functor_from_A_modules_to_cat_A}. 
\end{proof}


\subsection{The category of enriched endo-functors} 
In this subsection, we give a construction of an enriched category from enriched endo-functors. We show in Section\,\ref{sec:E0-center} that this construction realizes the $E_0$-centers of certain enriched categories.

\medskip
Let $\ec[\CB]{\CM}$ be a $\CB$-enriched category. We use $\Fun(\ec[\CB]{\CM}, \ec[\CB]{\CM})$ to denote the category of $\CB$-functors from $\ec[\CB]{\CM}$ to itself and $\CB$-natural transformations between them (recall Definition \ref{def:A-functor}). For every $\ec{F}, \ec{G}\in \Fun(\ec[\CB]{\CM}, \ec[\CB]{\CM})$, we define a category $\CP_{(\ec{F}, \ec{G})}$ as follows:
\begin{itemize}
    \item The objects are pairs $(a, \{ a_x\}_{x \in \CM})$, where $a \in \FZ_1(\CB)$ and $a_x: \forget(a) \to \ec[\CB]{\CM}(F(x), G(x))$ are morphisms in $\CB$ such that the following diagram commutes, 
\small \be \label{diag:E_0_center_I_left_center_def}
\begin{array}{c}
\xymatrix@R=0.15in @C=-0.05in{
            \forget(a) \otimes \ec[\CB]{\CM}(x,y) \ar[r]^-{a_y \otimes \ec{F}_{x,y}} & \ec[\CB]{\CM}(F(y),G(y)) \otimes \ec[\CB]{\CM}(F(x),F(y))\ar[dd]^{\circ}\\
            \ec[\CB]{\CM}(x,y) \otimes \forget(a) \ar[d]^-{\ec{G}_{x,y} \otimes a_x} \ar[u] &\\
            \ec[\CB]{\CM}(G(x),G(y)) \otimes \ec[\CB]{\CM}(F(x),G(x)) \ar[r]^-{\circ} & \ec[\CB]{\CM}(F(x),G(y))
        }
\end{array}        
\ee \normalsize
where the unlabeled arrow is given by the half-braiding of $a$. 

\item A morphism $f: (a, \{ a_x\}) \to (b, \{ b_x\})$ is a morphism $f : a \to b$ in $\FZ_1(\CB)$ such that the following diagram commutes for every $x \in \CM$.
\be \label{diag:E_0_center_I_unversal_unique_morphism}
\begin{array}{c}
\xymatrix @R=0.2in{
            \forget(a) \ar[rd]_-{ a_x} \ar[rr]^{\forget(f)} && \forget(b) \ar[ld]^-{b_x} \\
            &\ec[\CB]{\CM}(F(x),G(x)) &
        }
\end{array}
\ee
\end{itemize}

\begin{defn} \label{def:cond_star}
We say $\ec[\CB]{\CM}$ satisfies the \emph{$E_0$-center-existence condition} \eqref{cond:star} if: \small
\begin{equation*} \label{cond:star}
\mbox{for every $\ec{F}, \ec{G}\in \Fun(\ec[\CB]{\CM}, \ec[\CB]{\CM})$, the category $\CP_{(\ec{F}, \ec{G})}$ has a terminal object.} \tag{$\mathrm{CE0}$}
\end{equation*} \normalsize
We denote the terminal object of $\CP_{(\ec{F},\ec{G})}$ by $([\ec{F}, \ec{G}], \{[\ec{F}, \ec{G}]_x\}_{x \in \CM})$. 
\end{defn}

\begin{rem}\label{rem:ast_con_for_one}
    A family of morphisms $\{\xi_x: \one_\CB \to \ec[\CB]{\CM}(F(x), G(x))\}$ renders the diagram \eqref{diag:E_0_center_I_left_center_def} commutative (with $I(a), a_x$ replaced by $\one_\CB,\xi_x$, respectively) if and only if the family of morphisms $\{\xi_x\}$ defines a $\CB$-natural transformation $\ec{\xi} : \ec{F} \Rightarrow \ec{G}$. In particular, if $\ec[\CB]{\CM}$ satisfies the condition \eqref{cond:star}, then the hom set $\FZ_1(\CB)(\one_\CB, [\ec{F}, \ec{G}])$ is isomorphic to the hom set $\Fun(\ec[\CB]{\CM}, \ec[\CB]{\CM})(\ec{F}, \ec{G})$. 
\end{rem}

\begin{rem}
    Let $(A, m, \iota)$ be an algebra in a monoidal category $\CA$. By Example \ref{expl:enrich_cat_with_one_obj}, $\Fun(\ast^{A}, \ast^{A})$ is the category defined as follows:
    \begin{itemize}
        \item The objects are algebra homomorphisms $f: A \to A$.
        \item A morphism $f \to g$ is a morphism $\xi \in \CA(\one, A)$ such that the following diagram commutes.
\[
                \xymatrix @R=0.2in {
                    A \ar[r]^-{\sim} \ar[d]^-{\sim} & \one \otimes A \ar[r]^-{\xi \otimes f} & A \otimes A \ar[d]^{m}\\ 
                    A \otimes \one \ar[r]^-{g \otimes \xi} & A \otimes A \ar[r]^-{m} & A
                }
\]
    \end{itemize}
It is also clear that the object $[1_{\ast^A}, 1_{\ast^A}]$ (if exists) is the full center of $A$ (see \cite{Dav10} for the definition of the full center of an algebra).
\end{rem}

\begin{lem} \label{lem:composition}
There are two well-defined functors. 
\bnu[(1)]
\item $\CP_{(\ec{G},\ec{H})} \times \CP_{(\ec{F},\ec{G})} \to \CP_{(\ec{F},\ec{H})}$ is defined by $((b,\{ b_x\}), (a, \{ a_x \})) \mapsto (b\otimes a, \{ b_x\}\circ \{ a_x\})$, where $\{ b_x\}\circ \{ a_x\}$ is defined by the composed morphism: 
\[
\forget(b\otimes a) = \forget(b) \otimes \forget(a) \xrightarrow{b_x \otimes a_x} \ec[\CB]{\CM}(G(x),H(x)) \otimes \ec[\CB]{\CM}(F(x),G(x)) \xrightarrow{\circ} \ec[\CB]{\CM}(F(x),H(x)). 
\]
Since $[\ec{F}, \ec{H}]$ is terminal in $\CP_{(\ec{F},\ec{H})}$, we obtained a canonical morphism $b\otimes a \to [\ec{F}, \ec{H}]$.  

\item $\CP_{(\ec{F},\ec{G})} \times \CP_{(\ec{F'},\ec{G'})} \to \CP_{(\ec{FF'},\ec{GG'})}$ defined by
$(a, \{ a_x \}) \otimes (a', \{ a'_x \}) \mapsto (a\otimes a', \{ a_x \}\bullet\{ a'_x \})$, where $\{ a_x \}\bullet\{ a'_x \}$ is defined by the composed morphism: 
\[
a\otimes a' \xrightarrow{a_{G'(x)} \otimes \ec{F}_{F'(x),G'(x)}(a'_{x})} \ec[\CB]{\CM}(FG'(x),GG'(x)) \otimes \ec[\CB]{\CM}(FF'(x),FG'(x)) \xrightarrow{\circ} 
\ec[\CB]{\CM}(FF'(x),GG'(x)). 
\]
Since $[\ec{F}\ec{F'}, \ec{G}\ec{G'}]$ is terminal in $\CP_{(\ec{F},\ec{H})}$, we obtain a canonical morphism $a\otimes a' \to [\ec{F}\ec{F'}, \ec{G}\ec{G'}]$. 
\enu
\end{lem}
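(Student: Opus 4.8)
The plan is, for each of the two assignments, to verify separately that it sends objects to objects---i.e.\ that the stated pair satisfies the centrality condition \eqref{diag:E_0_center_I_left_center_def} for the target $\CP$---and that it is functorial on morphisms. Throughout I would use three standing facts: the forgetful functor $I\colon\FZ_1(\CB)\to\CB$ is strong monoidal, so $I(b\otimes a)=I(b)\otimes I(a)$ and $I(f\otimes g)=I(f)\otimes I(g)$; the half-braiding of a product in $\FZ_1(\CB)$ factors, so that its instance on $m=\ec[\CB]{\CM}(x,y)$, namely $\sigma_{b\otimes a}\colon m\otimes I(b)\otimes I(a)\to I(b)\otimes I(a)\otimes m$, equals $(1_{I(b)}\otimes\sigma_a)\circ(\sigma_b\otimes 1_{I(a)})$, where $\sigma_a,\sigma_b$ denote the half-braidings playing the role of the unlabeled arrow in \eqref{diag:E_0_center_I_left_center_def}; and the enriched composition of $\ec[\CB]{\CM}$ is associative \eqref{diag:asso-circ}, while $\ec{F},\ec{G},\ec{H}$ are $\CB$-functors so that \eqref{diag:1_morphism_diag} and the naturality of $\ec{F}_{-,-}$ from Remark~\ref{rem:hom_functor} are available.

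For the functor (1), the object part is a diagram chase. Abbreviating $m=\ec[\CB]{\CM}(x,y)$ and expanding $(\{b_x\}\circ\{a_x\})_y=\circ\,(b_y\otimes a_y)$, I would begin at $m\otimes I(b)\otimes I(a)$, insert the factored half-braiding of $b\otimes a$, and regroup by associativity of $\circ$ so that the defining condition of $(a,\{a_x\})$ on the pair $(\ec{F},\ec{G})$ applies to the ``$a$-strand'', trading $a_y$ and $\ec{F}_{x,y}$ for $\ec{G}_{x,y}$ and $a_x$; a second regrouping then exposes the defining condition of $(b,\{b_x\})$ on $(\ec{G},\ec{H})$, trading $b_y$ and $\ec{G}_{x,y}$ for $\ec{H}_{x,y}$ and $b_x$. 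A final reassociation leaves exactly $\circ\,(\ec{H}_{x,y}\otimes(\{b_x\}\circ\{a_x\})_x)$, which is the desired equality. The morphism part is immediate: for $f\colon b\to b'$ and $g\colon a\to a'$, strong monoidality of $I$ together with the defining triangles \eqref{diag:E_0_center_I_unversal_unique_morphism} (so $b'_x\circ I(f)=b_x$ and $a'_x\circ I(g)=a_x$) give $\circ\,(b'_x\otimes a'_x)\circ(I(f)\otimes I(g))=\circ\,(b_x\otimes a_x)$, so $f\otimes g$ is a morphism in $\CP_{(\ec{F},\ec{H})}$; preservation of identities and composites follows from the corresponding properties of $\otimes$ in $\FZ_1(\CB)$.

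For the functor (2), I would check the centrality condition for $(a\otimes a',\{a_x\}\bullet\{a'_x\})$ with $\ec{F}\ec{F'}$ and $\ec{G}\ec{G'}$ in place of $\ec{F},\ec{G}$, using $(\ec{FF'})_{x,y}=\ec{F}_{F'(x),F'(y)}\circ\ec{F'}_{x,y}$ and $(\ec{GG'})_{x,y}=\ec{G}_{G'(x),G'(y)}\circ\ec{G'}_{x,y}$ from \eqref{equ:2_cat_enriched_def_1}. The chase is longer because the $a'$-strand is whiskered by $\ec{F}$ on one side and by $\ec{G}$ on the other: the idea is to apply the $\CB$-functors $\ec{F}$ and $\ec{G}$ to the centrality condition of $(a',\{a'_x\})$ on $(\ec{F'},\ec{G'})$, using the compatibility of $\ec{F},\ec{G}$ with composition \eqref{diag:1_morphism_diag} and the naturality of $\ec{F}_{-,-},\ec{G}_{-,-}$ (Remark~\ref{rem:hom_functor}) to slide the whiskered $a'_x$ past the relevant composition maps, and then to invoke the centrality condition of $(a,\{a_x\})$ on $(\ec{F},\ec{G})$ for the $a$-strand, with the factored half-braiding of $a\otimes a'$ keeping the two strands aligned. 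The morphism part mirrors that of (1), now using naturality of the structure maps of $\ec{F}$ in the whiskered slot.

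I expect the main obstacle to be the bookkeeping in the object part of (2): one must interleave two centrality conditions living on \emph{different} pairs of functors with the enriched functoriality and naturality of $\ec{F}$ and $\ec{G}$, all while tracking the factored half-braiding of $a\otimes a'$. The cleanest presentation is likely a single large commutative diagram whose interior is tiled by the two centrality squares and the two functoriality/naturality squares, rather than a linear chain of equalities.
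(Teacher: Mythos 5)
The paper gives no proof of this lemma at all: it is stated as a construction whose verification is left implicit (and is then used in the proof of Proposition \ref{lem:E_0_center_I}). So there is nothing to compare your argument against; what matters is whether your verification is correct, and it is. For part (1) the chase goes through exactly as you describe: factor the half-braiding of $b\otimes a$ as $(1_{I(b)}\otimes\sigma_a)\circ(\sigma_b\otimes 1_{I(a)})$, reassociate using \eqref{diag:asso-circ}, apply the defining condition \eqref{diag:E_0_center_I_left_center_def} of $(a,\{a_x\})$, reassociate again, and apply that of $(b,\{b_x\})$; the morphism part is just the triangle condition \eqref{diag:E_0_center_I_unversal_unique_morphism} together with functoriality of $\otimes$ in $\FZ_1(\CB)$.

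One point in part (2) deserves to be made explicit, since it is the only place a careless execution could go wrong. After you use \eqref{diag:1_morphism_diag} for $\ec{F}$ (twice) and the centrality of $(a',\{a'_x\})$ at the pair $(x,y)$, the $a$-strand is left carrying the composite $(1_{I(a)}\otimes \ec{G'}_{x,y})\circ\sigma_a^{(x,y)}$, where $\sigma_a^{(x,y)}$ denotes the half-braiding of $a$ at $\ec[\CB]{\CM}(x,y)$. The centrality of $(a,\{a_x\})$ cannot be applied in this form: you must first use the naturality of the half-braiding $\gamma_{-,a}$ with respect to the morphism $\ec{G'}_{x,y}:\ec[\CB]{\CM}(x,y)\to\ec[\CB]{\CM}(G'(x),G'(y))$ to rewrite it as $\sigma_a^{(G'(x),G'(y))}\circ(\ec{G'}_{x,y}\otimes 1_{I(a)})$, and then invoke the centrality of $(a,\{a_x\})$ instantiated at the pair of objects $(G'(x),G'(y))$, not at $(x,y)$. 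Both facts are available (half-braidings are natural by definition, and condition \eqref{diag:E_0_center_I_left_center_def} quantifies over all pairs of objects), so this is a matter of precision rather than a gap, but neither appears among your standing facts, and your phrase about the factored half-braiding ``keeping the strands aligned'' hides exactly this step. Two smaller remarks: only $\ec{F}$'s compatibility \eqref{diag:1_morphism_diag} with composition is actually needed (for $\ec{G}$ one only uses the formula \eqref{equ:2_cat_enriched_def_1} defining $(\ec{G}\ec{G'})_{x,y}$), and the naturality of $\ec{F}_{-,-}$ you invoke is the content of the unnumbered remark following Definition \ref{def:1_morphism}, not of Remark \ref{rem:hom_functor}.
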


\begin{prop}\label{lem:E_0_center_I}
    If $\ec[\CB]{\CM}$ satisfies the condition \eqref{cond:star}, then $\Fun(\ec[\CB]{\CM}, \ec[\CB]{\CM})$ can be promoted to a strict $\FZ_1(\CB)$-enriched monoidal category $\ec[\FZ_1(\CB)]{\Fun(\ec[\CB]{\CM}, \ec[\CB]{\CM}})$. More explicitly, 
\begin{itemize}
    \item the hom objects are $\ec[\FZ_1(\CB)]{\Fun(\ec[\CB]{\CM}, \ec[\CB]{\CM})}(\ec{F}, \ec{G}) \coloneqq [\ec{F}, \ec{G}]$;
    \item the identity morphism $\one_\CB \to [\ec{F}, \ec{F}]$ is induced by the identity enriched natural transformation of $\ec{F}$ via the universal property of $[\ec{F}, \ec{F}]$;
    \item the composition morphism $[\ec{G}, \ec{H}] \otimes [\ec{F}, \ec{G}] \to [\ec{F}, \ec{H}]$ is defined by Lemma \ref{lem:composition} (1). 
            
    \item the tensor product morphism $[\ec{F}, \ec{G}] \otimes [\ec{F'}, \ec{G'}] \to [\ec{F}\ec{F}', \ec{G}\ec{G}']$ is defined by Lemma \ref{lem:composition} (2). 
            
\end{itemize}
\end{prop}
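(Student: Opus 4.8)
The whole argument runs on a single engine: the universal property of the terminal object $([\ec{F},\ec{G}],\{[\ec{F},\ec{G}]_x\})$ in $\CP_{(\ec{F},\ec{G})}$. Every identity of morphisms in $\FZ_1(\CB)$ that I must establish has a common source $S$ and a target of the form $[\ec{F},\ec{H}]$; the plan is to equip $S$ with a canonical structure of an object of $\CP_{(\ec{F},\ec{H})}$, to check that both morphisms under comparison are morphisms in $\CP_{(\ec{F},\ec{H})}$ out of that object, and then to invoke terminality to conclude that they coincide. No diagram is ever chased directly in the promoted category; every coherence is reduced to the statement that a certain composite lands in $\CP$, which in turn is a consequence of the enriched-category axioms \eqref{diag:asso-circ} and \eqref{diag:right-left-unit} of $\ec[\CB]{\CM}$ together with the naturality of the half-braidings used in \eqref{diag:E_0_center_I_left_center_def}.

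First I would verify that $\ec[\FZ_1(\CB)]{\Fun(\ec[\CB]{\CM},\ec[\CB]{\CM})}$ is a $\FZ_1(\CB)$-enriched category. The composition is the canonical arrow $[\ec{G},\ec{H}]\otimes[\ec{F},\ec{G}]\to[\ec{F},\ec{H}]$ produced by Lemma~\ref{lem:composition}(1), and the identity $\one_\CB\to[\ec{F},\ec{F}]$ is the arrow corresponding under Remark~\ref{rem:ast_con_for_one} to the identity $\CB$-natural transformation $1_{\ec{F}}$. For associativity, iterating Lemma~\ref{lem:composition}(1) endows $[\ec{H},\ec{K}]\otimes[\ec{G},\ec{H}]\otimes[\ec{F},\ec{G}]$ with an object structure in $\CP_{(\ec{F},\ec{K})}$ that is independent of the bracketing precisely because $\circ$ in $\ec[\CB]{\CM}$ is associative; both composites are then morphisms in $\CP_{(\ec{F},\ec{K})}$ out of that object, hence equal. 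Unitality is handled the same way: the two unit composites and the unitors of $\FZ_1(\CB)$ are all morphisms in $\CP_{(\ec{F},\ec{G})}$ out of $\one_\CB\otimes[\ec{F},\ec{G}]$ (resp.\ $[\ec{F},\ec{G}]\otimes\one_\CB$) by the unit axiom \eqref{diag:right-left-unit} of $\ec[\CB]{\CM}$, so they agree with the canonical map to the terminal object, i.e.\ with the structure isomorphism.

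Next I would install the monoidal structure. On objects the tensor product is composition of $\CB$-functors, $\ec{F}\otimes\ec{F'}\coloneqq\ec{F}\ec{F'}$, with unit the identity functor; since composition is strictly associative and unital, the underlying monoidal category is strict, which is what ``strict'' refers to in the statement, so its associator and unitors are identities and the pentagon and triangle coherences are automatic. The background braided category is $\FZ_1(\CB)$, and the tensor product enriched functor $\ec{\otimes}$ has background changing functor $\hat\otimes=\otimes_{\FZ_1(\CB)}$ with the lax-monoidal structure of Convention~\ref{conv:monoidal_stru_tensor}, together with the family $[\ec{F},\ec{G}]\otimes[\ec{F'},\ec{G'}]\to[\ec{F}\ec{F'},\ec{G}\ec{G'}]$ supplied by Lemma~\ref{lem:composition}(2). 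That this data is an enriched functor amounts to the two axioms of Definition~\ref{def:1_morphism}: preservation of identities, which follows by terminality from $1_{\ec{F}}\bullet 1_{\ec{F'}}=1_{\ec{F}\ec{F'}}$, and the compatibility diagram \eqref{diag:1_morphism_diag}, obtained once more by terminality of $[\ec{F}\ec{F'},\ec{H}\ec{H'}]$ upon comparing the two interleavings of the vertical composition $\circ$ with the horizontal composition $\bullet$.

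The main obstacle is exactly this last point, the interchange of $\circ$ and $\bullet$ expressed by \eqref{diag:1_morphism_diag} for $\ec{\otimes}$. Although terminality reduces it to showing that both interleavings are morphisms in $\CP_{(\ec{F}\ec{F'},\ec{H}\ec{H'})}$ out of a common object, checking this requires unwinding the definition of $\bullet$ in Lemma~\ref{lem:composition}(2) — which inserts $\ec{F}_{F'(x),G'(x)}$ and the hom objects $\ec[\CB]{\CM}(FG'(x),GG'(x))$ — and confirming that the half-braidings of the center objects, the naturality carried by the underlying functors $F,G$, and the associativity of $\circ$ together force the compatibility square \eqref{diag:E_0_center_I_left_center_def} to hold for the tensored families. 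Once this interchange is in place and $\hat\otimes=\otimes_{\FZ_1(\CB)}$ is confirmed as the required monoidal background changing functor, Definition~\ref{def:emc} is satisfied with all underlying coherence data trivial, so $\ec[\FZ_1(\CB)]{\Fun(\ec[\CB]{\CM},\ec[\CB]{\CM})}$ is a strict $\FZ_1(\CB)$-enriched monoidal category.
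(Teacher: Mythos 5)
Your proposal is correct and takes essentially the same approach as the paper: every axiom is reduced to the universal property of the terminal objects $[\ec{F},\ec{G}]$ in $\CP_{(\ec{F},\ec{G})}$, and the crux you isolate — the interchange of the composition $\circ$ with the horizontal product $\bullet$, i.e.\ diagram \eqref{diag:1_morphism_diag} for $\ec{\otimes}$ — is exactly what the paper verifies by showing that both interleavings, post-composed with the structure maps $[\ec{F}_1\ec{G}_1,\ec{F}_3\ec{G}_3]_x$, coincide with a common explicit composite, together with the analogous unit identity. The only difference is one of emphasis: the paper dismisses the enriched-category axioms as routine where you spell them out via the same terminality argument.
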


\begin{proof}
It is routine to check that $\ec[\FZ_1(\CB)]{\Fun(\ec[\CB]{\CM}, \ec[\CB]{\CM})}$ is an $\FZ_1(\CB)$-enriched category. Note that the underlying category of $\ec[\FZ_1(\CB)]{\Fun(\ec[\CB]{\CM}, \ec[\CB]{\CM})}$ is precisely $\Fun(\ec[\CB]{\CM}, \ec[\CB]{\CM})$ by Remark \ref{rem:ast_con_for_one}. This explains the notation.

It remains to show that $\ec[\FZ_1(\CB)]{\Fun(\ec[\CB]{\CM}, \ec[\CB]{\CM})}$ is monoidal. We claim the the following equation holds
\begin{gather*}
\begin{multlined}
\biggl( [\ec{F}_2, \ec{F}_3]\otimes [\ec{G}_2, \ec{G}_3] \otimes [\ec{F}_1, \ec{F}_2] \otimes [\ec{G}_1, \ec{G}_2] \to [\ec{F}_2, \ec{F}_3]\otimes [\ec{F}_1, \ec{F}_2] \otimes [\ec{G}_2, \ec{G}_3] \otimes [\ec{G}_1, \ec{G}_2] \\
\xrightarrow{\circ \otimes \circ} [\ec{F}_1, \ec{F}_3] \otimes [\ec{G}_1, \ec{G}_3] \to [\ec{F}_1\ec{G}_1, \ec{F}_3 \ec{G}_3] \xrightarrow{[\ec{F}_1\ec{G}_1, \ec{F}_3\ec{G}_3]_x} \ec[\CB]{\CM}(F_1G_1(x), F_3G_3(x)) \biggr)
\end{multlined} \\
\begin{multlined}
= \biggl( [\ec{F}_2, \ec{F}_3] \otimes [\ec{G}_2, \ec{G}_3] \otimes [\ec{F}_1, \ec{F}_2] \otimes [\ec{G}_1, \ec{G}_2] \to [\ec{F}_2\ec{G}_2, \ec{F}_3 \ec{G}_3] \otimes [\ec{F}_1\ec{G}_1, \ec{F}_2\ec{G}_2] \\
\xrightarrow{\circ} [\ec{F}_1\ec{G}_1, \ec{F}_3\ec{G}_3] \xrightarrow{[\ec{F}_1\ec{G}_1, \ec{F}_3\ec{G}_3]_x} \ec[\CB]{\CM}(F_1G_1(x), F_3G_3(x)) \biggr)
\end{multlined}
\end{gather*}
where the first unlabeled arrow is induced by the half braiding of $[\ec{F_1}, \ec{F_2}]$. Indeed, by the definition of $[\ec{G}, \ec{H}] \otimes [\ec{F}, \ec{G}] \to [\ec{F}, \ec{H}]$ and $[\ec{F}, \ec{G}] \otimes [\ec{F'}, \ec{G'}] \to [\ec{F}\ec{F'}, \ec{G}\ec{G'}]$, it not hard to check that both sides of the above equation are equal to
\tiny
    \begin{multline*}
        [\ec{F}_2, \ec{F}_3]\otimes [\ec{G}_2, \ec{G}_3] \otimes [\ec{F}_1, \ec{F}_2] \otimes [\ec{G}_1, \ec{G}_2] \xrightarrow{1 \otimes [\ec{G}_2,\ec{G}_3]_x \otimes 1 \otimes [\ec{G}_1, \ec{G}_2]_x} \\
        [\ec{F}_2, \ec{F}_3] \otimes \ec[\CB]{\CM}(G_2(x), G_3(x)) \otimes [\ec{F}_1, \ec{F}_2] \otimes \ec[\CB]{\CM}(G_1(x), G_2(x)) \xrightarrow{[\ec{F}_2, \ec{F}_3]_{G_3(x)} \otimes (\ec{F}_2)_{G_2(x), G_3(x)} \otimes [\ec{F}_1, \ec{F}_2]_{G_2(x)}\otimes (\ec{F}_1)_{G_1(x), G_2(x)}} \\
        \ec[\CB]{\CM}(F_2G_3(x), F_3G_3(x)) \otimes \ec[\CB]{\CM}(F_2G_2(x), F_2G_3(x)) \otimes \ec[\CB]{\CM}(F_1G_2(x), F_2G_2(x)) \otimes \ec[\CB]{\CM}(F_1G_1(x), F_1G_2(x)) \to \ec[\CB]{\CM}(F_1G_1(x), F_3G_3(x)).
    \end{multline*} \normalsize
Similarly, we can check that  
\[
1_{FG(x)} = \bigl( \one \simeq \one \otimes \one \xrightarrow{1 \otimes 1} [\ec{F}, \ec{F}] \otimes [\ec{G}, \ec{G}] \to [\ec{F}\ec{G}, \ec{F}\ec{G}] \xrightarrow{[\ec{F}\ec{G}, \ec{F}\ec{G}]_x} \ec[\CB]{\CM}(FG(x), FG(x)) \bigr) . 
\]
Therefore the morphisms $[\ec{F},\ec{G}] \otimes [\ec{F'}, \ec{G'}] \to [\ec{F}\ec{F'}, \ec{G}\ec{G'}]$ define an enriched functor. It is clear that the underlying monoidal category of $\ec[\FZ_1(\CB)]{\Fun(\ec[\CB]{\CM}, \ec[\CB]{\CM}})$ is $\Fun(\ec[\CB]{\CM}, \ec[\CB]{\CM})$. 
\end{proof}

\begin{rem}
The Drinfeld center $\FZ_1(\Set)$ of the category $\Set$ of sets is equivalent to $\Set$ as braided monoidal categories. Indeed, suppose $X \in \Set$ and $\gamma_{-,X} : - \times X \to X \times -$ is a half-braiding, then we have the following commutative diagram for every set $Y$ and map $f : \ast \to Y$:
\[
\xymatrix@R=1ex{
 & \ast \times X \ar[r]^{f \times 1} \ar[dd]^{\gamma_{\ast,X}} & Y \times X \ar[dd]^{\gamma_{Y,X}} \\
X \ar[ur]^{\simeq} \ar[dr]_{\simeq} \\
 & X \times \ast \ar[r]^{1 \times f} & X \times Y
}
\]
Therefore, we have $\gamma_{Y,X}(y,x) = (x,y)$ for every set $Y$ and element $y \in Y$. Thus the inclusion $\Set \to \FZ_1(\Set)$ induced by the braiding structure of $\Set$ is an equivalence of braided monoidal categories.

An ordinary category $\CM$ viewed as a $\Set$-enriched category $\ec[\Set]{\CM}$ always satisfies the condition \eqref{cond:star}. Indeed, in this case, a $\Set$-functor $\ec{F} : \ec[\Set]{\CM} \to \ec[\Set]{\CM}$ is an ordinary functor $F : \CM \to \CM$ and the terminal object $[\ec{F},\ec{G}] \in \FZ_1(\Set) \simeq \Set$ is given by the set of natural transformations $\mathrm{Nat}(F,G)$. As a consequence, $\ec[\FZ_1(\Set)]{\fun(\ec[\Set]{\CM},\ec[\Set]{\CM})}$ is precisely the functor category $\fun(\CM,\CM)$.
\end{rem}

In the rest of this subsection, we discuss the construction of $\ec[\FZ_1(\CB)]{\Fun(\ec[\CB]{\CM}, \ec[\CB]{\CM}})$ when $\ec[\CB]{\CM}=\bc[\CB]{\CM}$ for $(\CB, \CM) \in \lmod$ and $\CM$ is a left $\CB$-module. In particular, in this case, we give an easy-to-check condition that is equivalent to the condition \eqref{cond:star} (see Lemma\ \ref{lem:equivalent-to-ast}).

By Theorem \ref{thm:2-functor_from_A_modules_to_cat_A}, we can identify $\Fun(\bc[\CB]{\CM}, \bc[\CB]{\CM})$ with $\Fun^{\Lax}_{\CB}(\CM, \CM)$, i.e., the category of lax $\CB$-module functors and $\CB$-module natural transformations. There is an obvious monoidal functor from $\overline{\FZ_1(\CB)}$ into $\Fun_{\CB}^{\Lax}(\CM, \CM)$ which maps $a$ to the $\CB$-module functor $\forget(a) \odot -$. It is not hard to check that $\Fun^{\Lax}_{\CB}(\CM, \CM)$ is a strongly unital monoidal left $\overline{\FZ_1(\CB)}$-module with the module action induced by the monoidal functor $\overline{\FZ_1(\CB)} \to \Fun^{\Lax}_{\CB}(\CM, \CM)$ and the natural transformation
    \begin{align}
        \forget(a \otimes b) \odot FG(-) \xrightarrow{\sim} \forget(a) \odot [\forget(b) \odot FG(-)] \to \forget(a) \odot F(\forget(b) \odot G(-)), 
    \end{align}
where the first arrow is induced by the monoidal structure of $\forget$ and the left $\CB$-module structure of $\CM$ and the second arrow is induced by the lax $\CB$-module structure of $F$.

\begin{lem}  \label{lem:equivalent-to-ast}
When $(\CB, \CM) \in \lmod$ and $\CM$ is a left $\CB$-module, $\bc[\CB]{\CM}$ satisfies the condition \eqref{cond:star} if and only if $\Fun^{\Lax}_{\CB}(\CM, \CM)$ is enriched in $\overline{\FZ_1(\CB)}$. In this case, the object $[\ec{F},\ec{G}]$ is given by the internal hom $[F,G] \in \FZ_1(\CB)$, and the morphism $[\ec{F},\ec{G}]_x : [\ec{F},\ec{G}] \to \bc[\CB]{\CM}(F(x),G(x))$ is induced by $\forget([F,G]) \odot F(x) = ([F,G] \odot F)(x) \xrightarrow{(\ev_F)_x} G(x)$. 
\end{lem}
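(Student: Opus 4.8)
The plan is to reduce condition \eqref{cond:star} to the existence of internal homs by reorganizing $\CP_{(\ec{F},\ec{G})}$ into a representability problem for the $\overline{\FZ_1(\CB)}$-module $\Fun^{\Lax}_{\CB}(\CM,\CM)$. Throughout I would use Theorem~\ref{thm:2-functor_from_A_modules_to_cat_A} to pass freely between $\CB$-functors and $\CB$-natural transformations on $\bc[\CB]{\CM}$ and lax $\CB$-module functors and $\CB$-module natural transformations on $\CM$.

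First I would unpack an object $(a,\{a_x\})$ of $\CP_{(\ec{F},\ec{G})}$. Since $\CM$ is a left $\CB$-module enriched in $\CB$ with $\bc[\CB]{\CM}(F(x),G(x)) = [F(x),G(x)]$, the adjunction $(-\odot F(x)) \dashv [F(x),-]$ turns each $a_x : I(a) \to [F(x),G(x)]$ into a morphism $\eta_x : I(a)\odot F(x) \to G(x)$ in $\CM$. I claim that $\{a_x\}$ makes diagram \eqref{diag:E_0_center_I_left_center_def} commute for all $x,y$ if and only if $\{\eta_x\}$ is a $\CB$-module natural transformation $I(a)\odot F(-) \Rightarrow G(-)$, i.e. a morphism $a\odot F \to G$ in $\Fun^{\Lax}_{\CB}(\CM,\CM)$. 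The conceptual reason is that $a\in\FZ_1(\CB)$ makes $I(a)\odot -$ a $\CB$-module functor via its half-braiding, so by Theorem~\ref{thm:2-functor_from_A_modules_to_cat_A} it corresponds to a $\CB$-functor $\ec{L_a}$ with $L_a(x)=I(a)\odot x$; then $\{\eta_x\}$ is a module natural transformation $L_a\circ F \Rightarrow G$ exactly when the associated family $\{\zeta_x : \one_\CB \to [I(a)\odot F(x),G(x)]\}$ satisfies the diagram of Remark~\ref{rem:ast_con_for_one} for the pair $(\ec{L_a}\circ\ec{F},\ec{G})$. Transporting that diagram along the identification $\CB(\one_\CB,[I(a)\odot F(x),G(x)]) \cong \CB(I(a),[F(x),G(x)])$ (both equal to $\CM(I(a)\odot F(x),G(x))$) should reproduce \eqref{diag:E_0_center_I_left_center_def}, with the half-braiding of $a$ inside the module-functor structure of $\ec{L_a}$ matching the half-braiding arrow appearing in \eqref{diag:E_0_center_I_left_center_def}. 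Checking that these two diagrams coincide is the technical heart of the argument and the step I expect to be the main obstacle, since it requires expanding $\ec{F}_{x,y}$, $\ec{G}_{x,y}$ and the composition $\circ$ of $\bc[\CB]{\CM}$ through their defining evaluation diagrams \eqref{diag:internal_hom_unit_mul_def} and matching them term by term.

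Granting the claim, I would note that it is functorial: a morphism $(a,\{a_x\})\to(b,\{b_x\})$ in $\CP_{(\ec{F},\ec{G})}$, i.e. $f:a\to b$ in $\FZ_1(\CB)$ making \eqref{diag:E_0_center_I_unversal_unique_morphism} commute, corresponds under the adjunction exactly to $f$ satisfying $\eta = \eta' \circ (f\odot F)$. Hence $\CP_{(\ec{F},\ec{G})}$ is isomorphic to the category $\CQ_{F,G}$ whose objects are pairs $(a,\eta)$ with $a\in\FZ_1(\CB)$ and $\eta:a\odot F\to G$ in $\Fun^{\Lax}_{\CB}(\CM,\CM)$, and whose morphisms are the evident factorizations. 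By the definition of the internal hom in the module $\Fun^{\Lax}_{\CB}(\CM,\CM)$ over $\overline{\FZ_1(\CB)}$, a terminal object of $\CQ_{F,G}$ is precisely a representing object for $a\mapsto \Fun^{\Lax}_{\CB}(\CM,\CM)(a\odot F,G)$, that is, the internal hom $[F,G]$ together with its evaluation counit $\ev_F : [F,G]\odot F\to G$.

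Finally I would assemble the equivalences: condition \eqref{cond:star} holds (every $\CP_{(\ec{F},\ec{G})}$ has a terminal object) if and only if every $\CQ_{F,G}$ has a terminal object, if and only if the internal hom $[F,G]\in\FZ_1(\CB)$ exists for all $F,G$, which is exactly the assertion that $\Fun^{\Lax}_{\CB}(\CM,\CM)$ is enriched in $\overline{\FZ_1(\CB)}$. Reading off the terminal object of $\CQ_{F,G}$ then yields the explicit description: $[\ec{F},\ec{G}] = [F,G]$, and $[\ec{F},\ec{G}]_x$ is the morphism adjoint to $(\ev_F)_x$, namely the one induced by $I([F,G])\odot F(x) = ([F,G]\odot F)(x) \xrightarrow{(\ev_F)_x} G(x)$, as claimed.
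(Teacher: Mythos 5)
Your proposal is correct and follows essentially the same route as the paper's own proof: both use the adjunction $(-\odot F(x)) \dashv [F(x),-]$ to biject families $\{a_x : I(a) \to [F(x),G(x)]\}$ with families $\{\tilde a_x : I(a)\odot F(x) \to G(x)\}$, identify commutativity of \eqref{diag:E_0_center_I_left_center_def} with $\CB$-module naturality of $\tilde a : a \odot F \Rightarrow G$ (the step the paper dismisses as ``routine to check'' against its diagram \eqref{diag:left_module_1}, and which you correctly flag as the technical heart), and then match the two terminal-object universal properties to conclude $[\ec{F},\ec{G}] = [F,G]$ with evaluation as stated. The only cosmetic difference is that you organize the key diagram comparison through Theorem~\ref{thm:2-functor_from_A_modules_to_cat_A} and Remark~\ref{rem:ast_con_for_one} (viewing $I(a)\odot -$ as a $\CB$-functor $\ec{L_a}$) and make the morphism-level functoriality explicit, whereas the paper verifies the equivalence of diagrams directly.
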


\begin{proof}    
Since both $([\ec{F}, \ec{G}], \{[\ec{F}, \ec{G}]_x\}_{x \in \CM})$ and the internal hom $([F,G],\ev_F=\{ (\ev_F)_x \}_{x\in\CM})$ are terminal objects, it suffices to show that they satisfy the same universal property. Let $F, G \in \Fun^{\Lax}_{\CB}(\CM, \CM)$ and $a \in \overline{\FZ_1(\CB)}$. Given a family of morphisms $\{\alpha_x: \forget(a) \to [F(x), G(x)]\}_{x \in \CM})$, define
\[
            \tilde{\alpha}_x \coloneqq \bigl( \forget(a) \odot F(x) \xrightarrow{\alpha_x \odot 1} [F(x), G(x)] \odot F(x) \xrightarrow{\ev_{F(x)}} G(x) \bigr) .
\]
Conversely,
\[
\alpha_x = \bigl( \forget(a) \xrightarrow{\coev_{F(x)}} [F(x),\forget(a) \odot F(x)] \xrightarrow{[1,\tilde \alpha_x]} [F(x),G(x)] \bigr) .
\] 
It is routine to check that $\{\alpha_x\}$ renders the diagram \eqref{diag:E_0_center_I_left_center_def} commutative if and only if the following diagram commutes,
\be \label{diag:left_module_1}
\begin{array}{c}
\xymatrix{
(b \otimes \forget(a)) \odot F(x) \ar[r]^{\gamma_{b,a} \odot 1} \ar[d]^{\simeq} & (\forget(a) \otimes b) \odot F(x) \ar[r]^{\simeq} & \forget(a) \odot (b \odot F(x)) \ar[d] \\
b \odot (\forget(a) \odot F(x)) \ar[d]^{1 \otimes \tilde \alpha_x} & & \forget(a) \odot F(b \odot x) \ar[d]^{\tilde \alpha_{b \odot x}} \\
b \odot G(x) \ar[rr] & & G(b \odot x)
}
\end{array}
\ee
i.e., $\tilde \alpha \colon a \odot F \Rightarrow G$ is a $\CB$-module natural transformation. 
\end{proof}

    

If $\bc[\CB]{\CM}$ satisfies the condition \eqref{cond:star}, by Proposition \ref{lem:E_0_center_I}, $\Fun_{\CB}^{\Lax}(\CM, \CM)$ can be promoted to a $\FZ_1(\CB)$-enriched monoidal category $\ec[\FZ_1(\CB)]{\Fun_{\CB}^{\Lax}(\CM, \CM)}$. 
We obtain the following result. 
\begin{prop}
When $(\CB, \CM) \in \lmod$ and $\CM$ is a left $\CB$-module, if $\Fun^{\Lax}_{\CB}(\CM, \CM)$ is enriched in $\overline{\FZ_1(\CB)}$, then we obtain a monoidal equivalence:
\[
\ec[\FZ_1(\CB)]{\Fun(\bc[\CB]{\CM}, \bc[\CB]{\CM}}) = \ec[\FZ_1(\CB)]{\Fun_{\CB}^{\Lax}(\CM, \CM)}\simeq \bc[\FZ_1(\CB)]{\Fun_{\CB}^{\Lax}(\CM, \CM)} .
\]  
\end{prop}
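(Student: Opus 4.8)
The plan is to recognize that, under the standing hypothesis, the asserted equivalence is essentially an identity-on-objects isomorphism of $\FZ_1(\CB)$-enriched monoidal categories, and to reduce every verification to the universal property of the internal homs. First I would dispose of the equality $\ec[\FZ_1(\CB)]{\Fun(\bc[\CB]{\CM}, \bc[\CB]{\CM})} = \ec[\FZ_1(\CB)]{\Fun_{\CB}^{\Lax}(\CM, \CM)}$: by Theorem~\ref{thm:2-functor_from_A_modules_to_cat_A} the canonical construction is locally isomorphic, so the assignment $\ec{F} \mapsto F$ is an isomorphism of categories $\Fun(\bc[\CB]{\CM}, \bc[\CB]{\CM}) \simeq \Fun_{\CB}^{\Lax}(\CM, \CM)$. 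This isomorphism identifies the categories $\CP_{(\ec{F}, \ec{G})}$ on both sides, hence identifies the enriched monoidal structures that Proposition~\ref{lem:E_0_center_I} builds from their terminal objects. So the real content is the monoidal equivalence with the canonical construction $\bc[\FZ_1(\CB)]{\Fun_{\CB}^{\Lax}(\CM, \CM)}$.

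The key input is Lemma~\ref{lem:equivalent-to-ast}, which identifies the terminal object $[\ec{F}, \ec{G}]$ of $\CP_{(\ec{F}, \ec{G})}$ with the internal hom $[F, G] \in \FZ_1(\CB)$ of the left $\overline{\FZ_1(\CB)}$-module $\Fun_{\CB}^{\Lax}(\CM, \CM)$, the structure morphism $[\ec{F}, \ec{G}]_x$ being induced by the evaluation $[F,G] \odot F \to G$. Since $\bc[\FZ_1(\CB)]{\Fun_{\CB}^{\Lax}(\CM, \CM)}$ has hom objects $[F, G]$ by the very definition of the canonical construction, the two $\FZ_1(\CB)$-enriched categories share objects and hom objects. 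I would therefore take the comparison to be the identity on objects and hom objects and reduce the proposition to checking that the identity, composition, and tensor-product morphisms of the two structures coincide.

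For the identity and composition I would argue by terminality: the morphisms $\one \to [\ec{F}, \ec{F}]$ and $[\ec{G}, \ec{H}] \otimes [\ec{F}, \ec{G}] \to [\ec{F}, \ec{H}]$ of Proposition~\ref{lem:E_0_center_I} are pinned down by their compatibility with the structure maps $[-, -]_x$, equivalently by diagram~\eqref{diag:E_0_center_I_left_center_def}. I would verify that the unit and composition morphisms of the canonical construction, defined by the diagrams~\eqref{diag:internal_hom_unit_mul_def}, satisfy the same compatibility after composing with the evaluation; terminality of $[F,F]$ and $[F,H]$ then forces equality. The same method handles the tensor morphism $[\ec{F}, \ec{G}] \otimes [\ec{F'}, \ec{G'}] \to [\ec{F}\ec{F'}, \ec{G}\ec{G'}]$: by Proposition~\ref{prop:monodal_en_monoidal_oplax_module} the canonical construction's tensor morphism is induced by the oplax-monoidal structure of $\odot \colon \overline{\FZ_1(\CB)} \times \Fun_{\CB}^{\Lax}(\CM, \CM) \to \Fun_{\CB}^{\Lax}(\CM, \CM)$ displayed before Lemma~\ref{lem:equivalent-to-ast}, while Proposition~\ref{lem:E_0_center_I} uses the horizontal-composition formula of Lemma~\ref{lem:composition}(2). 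Both land in the internal hom $[\ec{F}\ec{F'}, \ec{G}\ec{G'}]$, so by terminality it suffices to compare them after the evaluation $[FF',GG'] \odot FF' \to GG'$.

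The main obstacle is precisely this tensor comparison. Lemma~\ref{lem:composition}(2) phrases its horizontal composition through the enriched-functor hom action $\ec{F}_{F'(x), G'(x)}$, whereas the canonical construction phrases its tensor morphism through the lax $\CB$-module structure of the endofunctors, via the oplax factorization $I(a \otimes b) \odot FG \xrightarrow{\sim} I(a) \odot (I(b) \odot FG) \to I(a) \odot F(I(b) \odot G)$. Reconciling the two requires translating between $\ec{F}_{x,y}$ and the underlying $1_\CB$-lax (i.e. $\CB$-module-functor) structure via the bijection of Proposition~\ref{prop:1_morphism_des}, followed by a diagram chase after evaluation. Because Proposition~\ref{lem:E_0_center_I} already exhibits both sides as \emph{strict} $\FZ_1(\CB)$-enriched monoidal categories, once the tensor morphisms are shown to agree the associators and unitors match automatically, so the identity comparison is an isomorphism of $\FZ_1(\CB)$-enriched monoidal categories, and a fortiori the desired monoidal equivalence.
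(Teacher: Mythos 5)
Your proposal is correct and follows essentially the same route as the paper: it identifies $[\ec{F},\ec{G}]$ with the internal hom $[F,G]$ via Lemma~\ref{lem:equivalent-to-ast}, matches the composition morphisms by terminality/universal property, and isolates the tensor-product comparison as the main computation, resolved by a diagram chase after evaluation that translates between the enriched hom action of Lemma~\ref{lem:composition}(2) and the lax $\CB$-module structure underlying the canonical construction's tensor morphism. This is precisely the structure of the paper's proof, with your opening identification of $\Fun(\bc[\CB]{\CM},\bc[\CB]{\CM})$ with $\Fun^{\Lax}_{\CB}(\CM,\CM)$ matching the paper's appeal to Theorem~\ref{thm:2-functor_from_A_modules_to_cat_A} stated just before Lemma~\ref{lem:equivalent-to-ast}.
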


\begin{proof}
By Lemma\,\ref{lem:equivalent-to-ast}, we can set $[\ec{F}, \ec{G}]=[F,G]$. It suffices to verify (1) and (2).  
\bnu
\item[$(1)$] The composition $[\ec{G}, \ec{H}] \otimes [\ec{F}, \ec{G}]  \to [\ec{F}, \ec{H}]$ defined in Lemma\,\ref{lem:composition} coincides with the canonical morphism $[G,H]\otimes [F,G] \to [F,H]$. It is because both morphisms are induced from the same universal property according to Lemma\,\ref{lem:equivalent-to-ast}. 

\item[$(2)$] The tensor product morphism $[\ec{F}, \ec{G}] \otimes [\ec{F'}, \ec{G'}] \to [\ec{F}\ec{F}', \ec{G}\ec{G}']$ defined in Lemma\,\ref{lem:composition} is induced by
\[
[F,G] \otimes [F',G'] \odot FF' \to [F,G] \odot F \otimes [F',G'] \odot F' \xrightarrow{\ev_F \otimes \ev_{F'}} G G' ,
\]
where the first arrow is induced by the lax $\CB$-module functor structure of $F$. 

In other words, we need to show the outer subdiagram of the following diagram
\tiny
\[
\xymatrix@C=1em{
[F,G] \otimes [F',G'] \odot FF'(x) \ar[rr]^{1 \otimes [F',G']_x \odot 1} \ar[d] & & [F,G] \otimes [F'(x),G'(x)] \odot FF'(x) \ar[d]^{1 \otimes F \odot 1} \ar[dl] \\
[F,G] \odot F([F',G'] \odot F'(x)) \ar[r] \ar[d]^{1 \odot F(\ev_{F'})} & [F,G] \odot F([F'(x),G'(x)] \odot F'(x)) \ar[dl]|{1 \odot F(\ev_{F'(x)})} & [F,G] \otimes [FF'(x),FG'(x)] \odot FF'(x) \ar[d]^{[F,G]_{G'(x)} \odot \ev_{FF'(x)}} \ar[dll]^{1 \odot \ev_{FF'(x)}} \\
[F,G] \odot FG'(x) \ar[r]_{(\ev_F)_{G'(x)}} & GG'(x) & [FG'(x),GG'(x)] \odot FG'(x) \ar[l]^-{\ev_{FG'(x)}}
}
\] \normalsize
commutes for each $x \in \CM$. The upper left quadrangle commutes due to the naturality of the lax $\CB$-module functor structure of $F$, and the other subdiagrams commute by the adjunctions associated to internal homs.
\enu
\end{proof}

When $\CB$ is rigid, we have $\Fun^{\Lax}_{\CB}(\CM,\CM)=\Fun_{\CB}(\CM, \CM)$ \cite[Lemma\,2.10]{DSPS19}, i.e., the category of $\CB$-module functors, because the morphism $a\odot F(x) \to F(a\odot x)$ now has an inverse given by $F(a\odot x) \to (a\otimes a^L) \odot F(a\odot x) \to a \otimes F(a^L \odot (a \odot x)) \to a\odot F(x)$ for $a\in \CB,x\in\CM$. 

\begin{cor} \label{cor:B=rigid-construction}
When $(\CB, \CM) \in \lmod$ and $\CM$ is a left $\CB$-module, if $\CB$ is rigid and $\Fun_{\CB}(\CM, \CM)$ is enriched in $\overline{\FZ_1(\CB)}$, we have
\[
\ec[\FZ_1(\CB)]{\Fun(\bc[\CB]{\CM}, \bc[\CB]{\CM}}) \simeq \bc[\FZ_1(\CB)]{\Fun_{\CB}(\CM, \CM)}. 
\]
\end{cor}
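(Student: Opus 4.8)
The plan is to obtain this corollary as a direct specialization of the preceding (unlabeled) proposition, the only extra input being the coincidence of lax and strong module functors in the rigid case. First I would invoke the observation recorded just above the statement: when $\CB$ is rigid, the lax structure map $a \odot F(x) \to F(a \odot x)$ of any lax $\CB$-module functor $F$ is automatically invertible, the inverse being the composite, built from the duality data of $\CB$, displayed in the paragraph preceding the corollary. Consequently every object of $\Fun^{\Lax}_{\CB}(\CM,\CM)$ is already a (strong) $\CB$-module functor; since $\CB$-module natural transformations are the morphisms on both sides, this yields an identification $\Fun^{\Lax}_{\CB}(\CM,\CM) = \Fun_{\CB}(\CM,\CM)$ of categories.

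Next I would check that this identification is in fact one of monoidal left $\overline{\FZ_1(\CB)}$-modules. The module action $a \mapsto I(a) \odot -$ and the structure natural transformation $I(a\otimes b)\odot FG(-) \to I(a) \odot F(I(b)\odot G(-))$ were defined using only the monoidal structure of $I$, the left $\CB$-module structure of $\CM$, and the (now invertible) module structure of the functors involved; none of this data is altered in passing from the lax to the strong description. Hence the two monoidal left $\overline{\FZ_1(\CB)}$-modules coincide on the nose, so being enriched in $\overline{\FZ_1(\CB)}$ holds for one precisely when it holds for the other, and the corresponding internal homs $[F,G] \in \FZ_1(\CB)$ agree. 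In particular, the corollary's hypothesis that $\Fun_{\CB}(\CM,\CM)$ is enriched in $\overline{\FZ_1(\CB)}$ is literally the hypothesis of the preceding proposition.

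Finally I would apply that proposition. By Lemma \ref{lem:equivalent-to-ast} the enrichment hypothesis is equivalent to $\bc[\CB]{\CM}$ satisfying the condition \eqref{cond:star}, so the proposition furnishes the monoidal equivalence $\ec[\FZ_1(\CB)]{\Fun(\bc[\CB]{\CM}, \bc[\CB]{\CM})} \simeq \bc[\FZ_1(\CB)]{\Fun^{\Lax}_{\CB}(\CM,\CM)}$; substituting $\Fun^{\Lax}_{\CB}(\CM,\CM) = \Fun_{\CB}(\CM,\CM)$ then gives the assertion. I expect no genuine obstacle here: the single point deserving care is the verification in the second paragraph that the invertibility of the lax structure maps is compatible with the $\overline{\FZ_1(\CB)}$-module structure and the internal homs, but this is a routine unwinding of the definitions once the pointwise inverse above is in hand.
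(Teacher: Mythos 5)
Your proposal is correct and follows essentially the same route as the paper: the paper states this corollary as an immediate consequence of the preceding proposition, using exactly the observation that rigidity of $\CB$ makes every lax $\CB$-module functor strong (via the displayed inverse built from duality), so that $\Fun^{\Lax}_{\CB}(\CM,\CM)=\Fun_{\CB}(\CM,\CM)$. Your extra verification that this identification respects the monoidal left $\overline{\FZ_1(\CB)}$-module structure (and hence the internal homs and the condition \eqref{cond:star} via Lemma~\ref{lem:equivalent-to-ast}) is the routine step the paper leaves implicit, and you have handled it correctly.
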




\subsection{The \texorpdfstring{$E_0$}{E0}-centers of enriched categroies in \texorpdfstring{$\ecat$}{ECat}}
\label{sec:E0-center}

In this subsection, we prove that $\ec[\FZ_1(\CB)]{\Fun(\ec[\CB]{\CM}, \ec[\CB]{\CM}})$ is the $E_0$-center of $\ec[\CB]{\CM}$ in $\ecat$ when $\ec[\CB]{\CM}$ satisfies the condition \eqref{cond:star}.

\medskip
Recall that an $E_0$-algebra in $\ecat$ is a pair $(\ec[\CA]{\CL}, \ec{U})$, where $\ec{U}: \ast \to \ec[\CA]{\CL}$ is an enriched functor in $\ecat$. A left unital $(\ec[\CA]{\CL}, \ec{U})$-action on $\ec[\CB]{\CM}$ in $\ecat$ consists of an enriched functor $\ec{\odot}: \ec[\CA]{\CL} \times \ec[\CB]{\CM} \to \ec[\CB]{\CM}$ in $\ecat$ and an enriched natural isomorphism $\ec{\xi}$ as depicted in the following diagram (recall Definition \ref{def:left_unital_action}).
\begin{align}
\xymatrix @R=0.2in{
 & \ec[\CA]{\CL} \times \ec[\CB]{\CM} \ar[dr]^{\ec{\odot}} \\
{*} \times \ec[\CB]{\CM} \ar[ur]^{\ec{U} \times 1} \ar[rr]_{1_{\ec[\CB]{\CM}}} \rrtwocell<\omit>{<-2>\;\ec{\xi}} & & \ec[\CB]{\CM}
}
\end{align}
We set $\one_\CL \coloneqq U(\ast) \in \CL$. There exists a canonical enriched natural isomorphism $\ec{\eta}: \ec{\one_\CL} \Rightarrow \ec{U}$ (recall Example \ref{conv:morphism_to_x}) defined by $\one_\CA \to \hat{U}(\ast)$ (from the monoidal structure of $\hat{U}$) and the identity underlying natural transformation. It defines a left unital $(\ec[\CA]{\CL}, \ec{\one_\CL})$-action as depicted in the following diagram. 
\[
\begin{array}{c}
\xymatrix @C=0.5in @R=0.25in{
    & \ec[\CA]{\CL} \times \ec[\CB]{\CM} \ar@/^3ex/[ddr]^{\ec{\odot}} \\
            \rtwocell<\omit>{\quad \ec{\eta} \times 1}& \ec[\CA]{\CL} \times \ec[\CB]{\CM} \ar[dr]^{\ec{\odot}} \ar[u]|{1_{\ec[\CA]{\CL}} \times 1_{\ec[\CB]{\CM}}}  & \\
            \ast \times \ec[\CB]{\CM} \ar[rr]_{1_{\ec[\CB]{\CM}}} \ar@/^3ex/[uur]^{\ec{\one_{\CL}}\times 1_{\ec[\CB]{\CM}}} \ar[ur]^{\ec{U} \times 1_{\ec[\CB]{\CM}}} \rrtwocell<\omit>{<-2.5> \quad \ec{\xi}}  & & \ec[\CB]{\CM} 
}
\end{array}   
\]
In particular, the left unital $(\ec[\CA]{\CL}, \ec{\one_\CL})$-action is isomorphic to that of $(\ec[\CA]{\CL}, \ec{U})$-action. 
Therefore, in order to show that $\ec[\FZ_1(\CB)]{\Fun(\ec[\CB]{\CM}, \ec[\CB]{\CM}})$ is the $E_0$-center of $\ec[\CB]{\CM}$ in $\ecat$, it is enough to only consider the left unital $(\ec[\CA]{\CL}, \ec{\one_\CL})$-action on $\ec[\CB]{\CM}$ in $\ecat$ as depicted in the following diagram.
\begin{equation} \label{diag:left_unital_action_in_ecat}
\begin{array}{c}
\xymatrix @R=0.2in{
 & \ec[\CA]{\CL} \times \ec[\CB]{\CM} \ar[dr]^{\ec{\odot}} \\
{*} \times \ec[\CB]{\CM} \ar[ur]^{\ec{\one_\CL} \times 1} \ar[rr]_{1_{\ec[\CB]{\CM}}} \rrtwocell<\omit>{<-2>\;\ec{\xi}} & & \ec[\CB]{\CM}
}
\end{array}
\end{equation}

\begin{expl} \label{expl:E0_center_canonical_left_unital_action}
If $\ec[\CB]{\CM}$ satisfies the condition \eqref{cond:star}, then there exists a canonical left unital action  
\begin{align}\label{diag:unital_action_of_fun}
\xymatrix @C=-0.1in @R=0.3in{
    & \ec[\FZ_1(\CB)]{\Fun(\ec[\CB]{\CM}, \ec[\CB]{\CM}}) \times \ec[\CB]{\CM} \ar[dr]^{\ec{\ev}} \\
    \ast \times \ec[\CB]{\CM} \ar[ur]^{\ec{1_{\ec[\CB]{\CM}}} \times 1_{\ec[\CB]{\CM}}} \ar[rr] \rrtwocell<\omit>{<-3> \quad \ec{\alpha}} & & \ec[\CB]{\CM}
}
\end{align}
of the $E_0$-algebra $(\ec[\FZ_1(\CB)]{\Fun(\ec[\CB]{\CM}, \ec[\CB]{\CM}}), \ec{1_{\ec[\CB]{\CM}}})$ on $\ec[\CB]{\CM}$ in $\ecat$ defined as follows:
\begin{itemize}
    \item The enriched functor $\ec{\ev}$ is defined by the monoidal functor $\hat{\ev} \coloneqq \forget(-) \otimes - : \FZ_1(\CB) \times \CB \to \CB$, the map of objects $(\ec{F}, x) \mapsto F(x)$ and the family of morphisms
\begin{multline*}
\ec{\ev}_{(\ec{F}, x), (\ec{G}, y)} \coloneqq \bigl( \forget([\ec{F}, \ec{G}]) \otimes \ec[\CB]{\CM}(x,y) \xrightarrow{[\ec{F}, \ec{G}]_{y} \otimes \ec{F}_{x,y}} \\
\ec[\CB]{\CM}(F(y), G(y)) \otimes \ec[\CB]{\CM}(F(x),F(y)) \xrightarrow{\circ} \ec[\CB]{\CM}(F(x),G(y)) \bigr) ;
\end{multline*}
\item The background changing natural transformation $\hat \alpha$ of $\ec{\alpha}$ is given by the left unitor of $\CB$ and the underlying natural transformation $\alpha$ of $\ec{\alpha}$ is given by the identity natural transformation. 
\end{itemize}
\end{expl}

It is routine to check the following fact. The proof is omitted.

\begin{lem}\label{lem:left_module_induce_functor}
    Let $(\ec{\odot}, \ec{\xi})$ be a left unital $(\ec[\CA]{\CL}, \ec{\one_\CL})$-action of the $E_0$-algebra on $\ec[\CB]{\CM}$ as depicted in the diagram \eqref{diag:left_unital_action_in_ecat}. Then there exists a functor $\Phi: \CL \to \Fun(\ec[\CB]{\CM}, \ec[\CB]{\CM})$ defined as follows: 
    \begin{itemize}
        \item For each $a \in \CL$, $\Phi(a)$ is the $\CB$-functor defined by the map $x \mapsto a \odot x$ and the family of morphisms 
            \[
                \ec[\CB]{\CM}(x,y) \xrightarrow{\hat{\xi}^{-1}} \one_\CA \hodot \ec[\CB]{\CM}(x,y) \xrightarrow{1_{a} \hat{\odot} 1} \ec[\CA]{\CL}(a,a) \hodot \ec[\CB]{\CM}(x,y) \xrightarrow{\ec{\odot}_{(a,x), (a,y)}} \ec[\CB]{\CM}(a \odot x, a \odot y).
            \]
        \item For each morphism $f: \one_\CA \to \ec[\CA]{\CL}(a,b)$ in $\CL$, $\Phi(f)$ is the $\CB$-natural transformation defined by the family of morphisms
            \begin{align*}
                \one_\CB \xrightarrow{\hat{\xi}^{-1}} \one_\CA \hodot \one_\CB \xrightarrow{f \hat{\odot} 1} \ec[\CA]{\CL}(a, b) \hodot \ec[\CB]{\CM}(x, x) \xrightarrow{\ec{\odot}_{(a,x), (b,x)}} \ec[\CB]{\CM}(a\odot x, b\odot x).       
            \end{align*}
    \end{itemize}
\end{lem}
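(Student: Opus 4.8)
The plan is to realize $\Phi$ as the composite of two operations already developed in the paper: partially evaluating the enriched action $\ec{\odot}$ at a point of $\ec[\CA]{\CL}$, and then correcting the resulting background-changing functor back to $1_\CB$ using the unit isomorphism $\ec{\xi}$. Write $R \coloneqq (c \mapsto \one_\CA \hodot c) : \CB \to \CB$ for the background-changing functor of $\ec{\odot} \circ (\ec{\one_\CL} \times 1)$; since the action lives in $\ecat$, $R$ is monoidal and $\hat{\xi} : R \Rightarrow 1_\CB$ is a monoidal natural isomorphism, so $\hat{\xi}^{-1} : 1_\CB \Rightarrow R$ is again monoidal. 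For $a \in \CL$ let $\ec{a} : \ast \to \ec[\CA]{\CL}$ be the enriched functor of Example~\ref{conv:morphism_to_x}, with $\hat{a} = \one_\CA$. First I would whisker to obtain an enriched functor $\ec{G_a} \coloneqq \ec{\odot} \circ (\ec{a} \times 1_{\ec[\CB]{\CM}}) : \ec[\CB]{\CM} \to \ec[\CB]{\CM}$ whose background-changing functor is exactly $R$, whose object map is $x \mapsto a \odot x$, and whose structure morphisms are $(\ec{G_a})_{x,y} = \ec{\odot}_{(a,x),(a,y)} \circ (1_a \hodot 1)$ by the composition rule \eqref{equ:2_cat_enriched_def_1}. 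Precomposing the hom-morphisms of $\ec{G_a}$ with $\hat{\xi}^{-1}$ reproduces precisely the family defining $\Phi(a)$; conceptually this transports $\ec{G_a}$ along the pushforward of $\hat{\xi}^{-1}$, so that $\Phi(a)$ acquires background-changing functor $1_\CB$ and becomes a genuine $\CB$-functor.

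To verify directly that $\Phi(a)$ is a $\CB$-functor one checks the two axioms of Definition~\ref{def:1_morphism} with $\hat{F} = 1_\CB$. Identity preservation follows by combining the identity axiom of $\ec{\odot}$, which gives $1_{a \odot x} = \ec{\odot}_{(a,x),(a,x)} \circ (1_a \hodot 1_x) \circ u$ with $u : \one_\CB \to \one_\CA \hodot \one_\CB$ the unit constraint of $\hodot$ and $(1_a,1_x)$ the identity of $(a,x)$ in the product enriched category, with the unit-compatibility of $\hat{\xi}$, which identifies $u$ with $\hat{\xi}^{-1}_{\one_\CB}$; naturality of $\hat{\xi}^{-1}$ then inserts $1_x$ in the correct place. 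Compatibility with composition, i.e. diagram~\eqref{diag:1_morphism_diag}, is obtained by pasting the corresponding diagram for $\ec{\odot}$ against the square expressing that $\hat{\xi}^{-1}$ intertwines the multiplication constraints of $R = \one_\CA \hodot (-)$, that square being exactly the tensor-compatibility of the monoidal isomorphism $\hat{\xi}$. Alternatively, both axioms come for free once $\Phi(a)$ is recognized as the composite $\CB$-functor $\check{G}_a \circ (\hat{\xi}^{-1})_\ast$, where $(\hat{\xi}^{-1})_\ast : \ec[\CB]{\CM} \to R_\ast(\ec[\CB]{\CM})$ is the pushforward of the lax-monoidal $\hat{\xi}^{-1}$ and $\check{G}_a : R_\ast(\ec[\CB]{\CM}) \to \ec[\CB]{\CM}$ is the $\CB$-functor corresponding to $\ec{G_a}$ under the characterization of an enriched functor as a pair $(\hat{F},\check{F})$ (both constructions from Section~\ref{sec:enriched_categories}).

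For morphisms, a morphism $f : \one_\CA \to \ec[\CA]{\CL}(a,b)$ of $\CL$ is the same datum as an enriched natural transformation $\ec{f} : \ec{a} \Rightarrow \ec{b}$ with trivial background component (Definition~\ref{def:2_morphism}, the naturality square over the single object of $\ast$ being automatically satisfied). Whiskering $\ec{f}$ with $\ec{\odot}$ and applying the same $\hat{\xi}^{-1}$-correction produces a family $\{\Phi(f)_x\}$ agreeing with the formula in the statement; that this family is a $\CB$-natural transformation $\Phi(a) \Rightarrow \Phi(b)$ is the naturality condition \eqref{diag:2_morphism_diag_2}, which again reduces to the composition axiom of $\ec{\odot}$ after inserting $\hat{\xi}^{-1}$. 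Functoriality $\Phi(1_a) = 1_{\Phi(a)}$ and $\Phi(g)\Phi(f) = \Phi(g \circ f)$ then follows either from the $2$-functoriality of whiskering and of the pushforward, or directly from the identity and composition axioms of $\ec{\odot}$ together with the definition \eqref{equ:def_underlying_cat_1} of composition in the underlying category $\CL$.

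I expect the main obstacle to be purely bookkeeping: ensuring the background correction by $\hat{\xi}^{-1}$ threads consistently through every diagram, i.e. that the unit and tensor constraints of $R = \one_\CA \hodot (-)$ are intertwined by $\hat{\xi}$ in exactly the way demanded by the $\CB$-functor and $\CB$-naturality axioms. No new idea is needed beyond the monoidality of $\hat{\xi}$ and the enriched-functor axioms of $\ec{\odot}$; the only care required is to track the several occurrences of the monoidal constraints of $\hodot$ and their cancellation against the coherence of $\hat{\xi}$, which is what makes the verification long but, as the authors note, routine.
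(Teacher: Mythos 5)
Your proposal is correct. Note that the paper offers no argument here at all: the lemma is prefaced by ``It is routine to check the following fact. The proof is omitted,'' so the implied proof is the direct one you sketch in your second paragraph --- checking the two axioms of Definition~\ref{def:1_morphism} (with $\hat F = 1_\CB$) and the $\CB$-naturality condition \eqref{diag:2_morphism_diag_2} by pasting the enriched-functor axioms of $\ec{\odot}$ against the monoidality squares of $\hat\xi$. What you do differently is to make the lemma a corollary of machinery the paper has already built: $\Phi(a)$ is the composite $\CB$-functor $\check G_a \circ (\hat\xi^{-1})_\ast$, where $\ec{G_a} = \ec{\odot}\circ(\ec{a}\times 1)$ comes from whiskering, $\check G_a$ from the $(\hat F,\check F)$-characterization of enriched functors, and $(\hat\xi^{-1})_\ast$ from the pushforward of the lax-monoidal natural isomorphism $\hat\xi^{-1}$ (lax-monoidality of the inverse being automatic); likewise $\Phi(f)$ is a whiskered and then transported 2-morphism, so $\CB$-naturality and the functoriality of $\Phi$ follow from 2-categorical bookkeeping rather than diagram chases. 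The one substantive point, which you correctly isolate, is that matching these composites against the formulas in the statement requires identifying the unit constraint $u : \one_\CB \to \one_\CA \hodot \one_\CB$ of $R = \one_\CA \hodot (-)$ with $\hat\xi^{-1}_{\one_\CB}$ via the unit-compatibility of $\hat\xi$; this identification is invisible in $\Phi(a)_{x,y}$ but essential for $\Phi(f)_x$ and for identity-preservation. Your route buys economy and automatic coherence at the price of leaning on the propositions of Section~\ref{sec:enriched_categories} that the paper itself states without proof; the direct route is self-contained but is exactly the verification the authors chose to omit. Both are sound.
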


\begin{thm}\label{thm:E_0_center_in_ecat}
    If $\ec[\CB]{\CM}$ satisfies the condition \eqref{cond:star}, then $\ec[\FZ_1(\CB)]{\Fun(\ec[\CB]{\CM}, \ec[\CB]{\CM}})$ is the $E_0$-center of $\ec[\CB]{\CM}$ in $\ecat$, i.e., $\FZ_0(\ec[\CB]{\CM})\simeq \ec[\FZ_1(\CB)]{\Fun(\ec[\CB]{\CM}, \ec[\CB]{\CM}})$. 
\end{thm}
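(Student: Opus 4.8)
The goal is to prove that the canonical left unital action of Example~\ref{expl:E0_center_canonical_left_unital_action}, with acting $E_0$-algebra $(\ec[\FZ_1(\CB)]{\Fun(\ec[\CB]{\CM},\ec[\CB]{\CM})},\ec{1_{\ec[\CB]{\CM}}})$, is terminal in the $2$-category of left unital actions on $\ec[\CB]{\CM}$ in $\ecat$. By the discussion preceding diagram~\eqref{diag:left_unital_action_in_ecat}, every left unital action is isomorphic to one of the form $(\ec[\CA]{\CL},\ec{\one_\CL},\ec\odot,\ec\xi)$, so it suffices to show that for each such action the category whose objects are $1$-morphisms of left unital actions $(\ec\odot,\ec\xi)\to(\ec{\ev},\ec\alpha)$ and whose morphisms are the corresponding $2$-morphisms is a contractible groupoid.

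\textbf{Existence of a comparison $1$-morphism.} Given $(\ec\odot,\ec\xi)$, I first build an enriched functor $\ec\Phi\colon\ec[\CA]{\CL}\to\ec[\FZ_1(\CB)]{\Fun(\ec[\CB]{\CM},\ec[\CB]{\CM})}$. Its underlying functor is the functor $\Phi\colon\CL\to\Fun(\ec[\CB]{\CM},\ec[\CB]{\CM})$, $a\mapsto(a\odot-)$, produced in Lemma~\ref{lem:left_module_induce_functor}. Its background changing functor is obtained by applying the symmetric monoidal $2$-functor $\ecat\to\Algn[1](\cat)$ (sending an enriched category to its background) to the action: this yields a left unital $(\CA,\one_\CA)$-action on $\CB$ in $\Algn[1](\cat)$, and by the computation of the $E_0$-center in $\Algn[1](\cat)$ of Example~\ref{expl:centers_in_cat}(2) it determines the monoidal functor $\hat\Phi\coloneqq(-\odot\one_\CB)\colon\CA\to\FZ_1(\CB)$, with half-braiding read off from $\hat\xi$ and the monoidal structure of $\hat\odot$. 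Finally I define the structure morphisms $\ec\Phi_{a,b}\colon\hat\Phi(\ec[\CA]{\CL}(a,b))\to[\ec{\Phi(a)},\ec{\Phi(b)}]$ through the universal property of the terminal object $[\ec{\Phi(a)},\ec{\Phi(b)}]$ of $\CP_{(\ec{\Phi(a)},\ec{\Phi(b)})}$: I exhibit $\hat\Phi(\ec[\CA]{\CL}(a,b))$ as an object of $\CP_{(\ec{\Phi(a)},\ec{\Phi(b)})}$ with the family of morphisms
\[
I(\hat\Phi(\ec[\CA]{\CL}(a,b)))\xrightarrow{\ \hat\xi^{-1}\ }\ec[\CA]{\CL}(a,b)\hodot\ec[\CB]{\CM}(x,x)\xrightarrow{\ \ec\odot_{(a,x),(b,x)}\ }\ec[\CB]{\CM}(a\odot x,b\odot x),
\]
its half-braiding and the commutativity of~\eqref{diag:E_0_center_I_left_center_def} being supplied by $\hat\xi$ and the fact that $\ec\odot$ is an enriched functor. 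That $\ec\Phi$ preserves identities and composition then follows from the terminality of the internal homs together with the enriched-functor axioms for $\ec\odot$. The comparison $2$-isomorphisms are now easy: $\ec\rho\colon\ec{\ev}\circ(\ec\Phi\times1)\Rightarrow\ec\odot$ is the canonical isomorphism, since $\ec{\ev}\circ(\ec\Phi\times1)$ sends $(a,x)$ to $\Phi(a)(x)=a\odot x$, while $\ec\sigma\colon\ec{1_{\ec[\CB]{\CM}}}\Rightarrow\ec\Phi\circ\ec{\one_\CL}$ encodes the identification $\one_\CL\odot-\cong 1_{\ec[\CB]{\CM}}$ coming from $\ec\xi$; the required coherence between $\ec\sigma,\ec\rho,\ec\xi,\ec\alpha$ holds by construction.

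\textbf{Terminality and the main obstacle.} It remains to show that any $1$-morphism $(\ec\Psi,\ec\sigma',\ec\rho')\colon(\ec\odot,\ec\xi)\to(\ec{\ev},\ec\alpha)$ admits a unique $2$-isomorphism to $(\ec\Phi,\ec\sigma,\ec\rho)$, which I handle on backgrounds and on enriched data separately. On backgrounds, the constraint $\ec\rho'$ forces $\hat\Psi\colon\CA\to\FZ_1(\CB)$ to be compatible with the $(\CA,\one_\CA)$-action on $\CB$; since $\FZ_1(\CB)$ is the $E_0$-center of $\CB$ in $\Algn[1](\cat)$ (Example~\ref{expl:centers_in_cat}(2)), there is a unique invertible monoidal natural transformation $\hat\Psi\Rightarrow\hat\Phi$. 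On the enriched side, $\ec\rho'$ identifies $\Psi(a)$ with $a\odot-=\Phi(a)$, and then the morphisms $\ec\Psi_{a,b}$ and $\ec\Phi_{a,b}$ both realize their source as objects of $\CP_{(\ec{\Phi(a)},\ec{\Phi(b)})}$ inducing the same families into $\ec[\CB]{\CM}(a\odot x,b\odot x)$; by terminality of $[\ec{\Phi(a)},\ec{\Phi(b)}]$ together with Remark~\ref{rem:ast_con_for_one} (identifying $\CB$-natural transformations with morphisms out of $\one_\CB$), this pins down the underlying natural transformation of the $2$-morphism uniquely and shows it is invertible and $\hat\xi$-compatible. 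Assembling the two components yields the unique $2$-isomorphism, so the comparison category is a contractible groupoid and $\FZ_0(\ec[\CB]{\CM})\simeq\ec[\FZ_1(\CB)]{\Fun(\ec[\CB]{\CM},\ec[\CB]{\CM})}$. The delicate step is the construction and verification of $\ec\Phi_{a,b}$ — checking that the displayed family lands in $\CP_{(\ec{\Phi(a)},\ec{\Phi(b)})}$ (commutativity of~\eqref{diag:E_0_center_I_left_center_def}, which hinges on the compatibility of $\hat\xi$ with the half-braidings) and that $\ec\Phi$ then satisfies the enriched-functor axioms. Here the condition~\eqref{cond:star}, i.e.\ the existence of the terminal objects $[\ec F,\ec G]$, is used essentially, both to define $\ec\Phi$ and to force the $2$-morphisms in the terminality argument.
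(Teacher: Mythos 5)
Your proposal is correct and follows essentially the same route as the paper's proof: the comparison functor $\ec{\Phi}$ is built from Lemma~\ref{lem:left_module_induce_functor} with background changing functor $-\hodot\one_\CB$, the hom-morphisms $\ec{\Phi}_{a,b}$ are defined via the universal property of the terminal objects $[\Phi(a),\Phi(b)]$, and uniqueness is split into the background part (using that $\FZ_1(\CB)$ is the $E_0$-center of $\CB$ in $\Algn[1](\cat)$) and the underlying part (using terminality of the internal homs), exactly as in the paper. The only deviations are cosmetic: the paper establishes contractibility by comparing two arbitrary comparison triples rather than comparing each to the constructed one, and the first arrow in your family defining $\ec{\Phi}_{a,b}$ should be $1\,\hat{\odot}\,1_x$ rather than $\hat{\xi}^{-1}$ (the latter belongs in the definition of the $\CB$-functor $\Phi(a)$ itself, as in Lemma~\ref{lem:left_module_induce_functor}).
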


\begin{proof}
    Let $(\ec{\odot}, \ec{\xi})$ be a left unital action of an $E_0$-algebra $(\ec[\CA]{\CL}, \ec{\one_\CL})$ on $\ec[\CB]{\CM}$ as depicted in the diagram \eqref{diag:left_unital_action_in_ecat}. We first show that there exist an enriched functor $\ec{\Phi}: \ec[\CA]{\CL} \to \ec[\FZ_1(\CB)]{\Fun(\ec[\CB]{\CM}, \ec[\CB]{\CM}})$ in $\ecat$ and two enriched natural isomorphisms $\ec{\sigma}$ and $\ec{\rho}$ such that the following pasting diagrams 
\scriptsize
\be \label{diag:E_0_center_in_ecat_1}
        \begin{array}{c}
\xymatrix @C=0.05in @R=0.25in{
    & \ec[\FZ_1(\CB)]{\Fun(\ec[\CB]{\CM}, \ec[\CB]{\CM}}) \times \ec[\CB]{\CM} \ar@/^3ex/[ddr]^{\ec{\ev}} \\
            \rtwocell<\omit>{\quad \ec{\sigma} \times 1}& \ec[\CA]{\CL} \times \ec[\CB]{\CM} \ar[dr]^{\ec{\odot}} \ar[u]|{\ec{\Phi} \times 1_{\ec[\CB]{\CM}}} \rtwocell<\omit>{\ec{\rho}} & \\
            \ast \times \ec[\CB]{\CM} \ar[rr] \ar@/^3ex/[uur]^{\ec{1_{\ec[\CB]{\CM}}}\times 1_{\ec[\CB]{\CM}}} \ar[ur]^{\ec{\one_\CL} \times 1_{\ec[\CB]{\CM}}} \rrtwocell<\omit>{<-2.5> \ec{\xi}}  & & \ec[\CB]{\CM}
}
\end{array}
=
\begin{array}{c}
\xymatrix @C=-0.1in{
    & \ec[\FZ_1(\CB)]{\Fun(\ec[\CB]{\CM}, \ec[\CB]{\CM}}) \times \ec[\CB]{\CM} \ar[dr]^{\ec{\ev}} \\
    \ast \times \ec[\CB]{\CM} \ar[ur]^{\ec{1_{\ec[\CB]{\CM}}} \times 1_{\ec[\CB]{\CM}}} \ar[rr] \rrtwocell<\omit>{<-3> \ec{\alpha}} & & \ec[\CB]{\CM}
}
\end{array}
\ee \normalsize
are equal in $\ecat$, where the right hand side of the equation is the canonical left unital action of $(\ec[\FZ_1(\CB)]{\Fun(\ec[\CB]{\CM}, \ec[\CB]{\CM}}), \ec{1_{\ec[\CB]{\CM}}})$ on $\ec[\CB]{\CM}$ defined in Example \ref{expl:E0_center_canonical_left_unital_action}.
\bit
\item Let $\Phi: \CL \to \Fun(\ec[\CB]{\CM}, \ec[\CB]{\CM})$ be the functor described in Lemma \ref{lem:left_module_induce_functor}. Then $\Phi$ can be promoted to an enriched functor $\ec{\Phi} : \ec[\CA]{\CL} \to \ec[\FZ_1(\CB)]{\fun(\ec[\CB]{\CM},\ec[\CB]{\CM})}$ in $\ecat$ with the background changing functor $\hat{\Phi}: \CA \to \FZ_1(\CB)$ given by $- \hodot \one_\CB$ (see Example \ref{expl:centers_in_cat}) and the family of morphisms $\ec{\Phi}_{a,b}: \ec[\CA]{\CL}(a,b) \hodot \one_\CB \to [\Phi(a), \Phi(b)]$ given by the unique morphisms rendering the following diagrams commutative.
    \begin{align*}
        \xymatrix @C=0.8in @R=0.2in{
            \forget(\ec[\CA]{\CL}(a,b) \hodot \one_\CB) \ar[d]^-{1 \hat{\odot} 1_x} \ar[r]^-{I(\ec{\Phi}_{a,b})} & \forget([\Phi(a), \Phi(b)]) \ar[d]^-{[\Phi(a), \Phi(b)]_x}\\
            \ec[\CA]{\CL}(a,b) \hodot \ec[\CB]{\CM}(x,x) \ar[r]^-{\ec{\odot}_{(a,x), (b, x)}} & \ec[\CB]{\CM}(a \odot x, b \odot x)
        }
    \end{align*}

\item The enriched natural isomorphism $\ec{\sigma}$ is defined by the background changing natural transformation $\hat{\sigma}_* = \hat \xi_{\one_\CB}^{-1} : \one_\CB \to \one_\CA \hodot \one_\CB$ and the underlying natural transformation given by the unique morphism $\sigma_* : \one_\CB \to [1_{\ec[\CB]{\CM}}, \Phi(\one_\CL)]$ rendering the following diagram commutative.
    \begin{align*}
        \xymatrix @R=0.2in @C=0.6in{
            \forget(\one_\CB) \ar[r]^-{\forget(\sigma_*)} \ar[dr]_-{\xi_x^{-1}} & \forget([1_{\ec[\CB]{\CM}}, \Phi(\one_\CL)]) \ar[d]^-{[1_{\ec[\CB]{\CM}}, \Phi(\one_\CL)]_x}\\
            & \ec[\CB]{\CM}(x, \one_\CL \odot x)
        }
    \end{align*}

\item The enriched natural isomorphism $\ec{\rho}$ is defined by the background changing natural transformation
\[
\forget(\hat{\Phi}(-)) \otimes - = (- \hodot \one_\CB) \otimes - \xRightarrow{1 \otimes \hat{\xi}^{-1}} (- \hodot \one_\CB) \otimes (\one_\CA \hodot -) \xRightarrow{\sim} (- \otimes \one_\CA) \hodot {(\one_\CB \otimes -)} \xRightarrow{\sim} - \hodot - 
\]
and the identity underlying natural transformation.
\eit
Then it is not hard to check that the equation \eqref{diag:E_0_center_in_ecat_1} holds.

\smallskip
Let $(\ec{\Phi_i}, \ec{\sigma_i}, \ec{\rho_i})$, $i=1,2$, be two triples such that the similar equations as depicted by \eqref{diag:E_0_center_in_ecat_1} hold. We only need to show that there exists a unique enriched natural isomorphism $\ec{\beta}: \ec{\Phi_1} \Rightarrow \ec{\Phi_2}$ such that
\small
\begin{gather}
           \begin{array}{c}
        \xymatrix @R=0.3in @C=0.3in{
            \rtwocell<\omit>{<5> \;\ec{\sigma_1}}& \ec[\FZ_1(\CB)]{\Fun(\ec[\CB]{\CM}, \ec[\CB]{\CM}})\\
               \ast \ar@/^2ex/[ur]^-{\ec{1_{\ec[\CB]{\CM}}}} \ar[r]_-{\ec{\one_\CL}} & \ec[\CA]{\CL}  \utwocell^{\ec{\Phi_1}}_{\ec{\Phi_2}}{\ec{\beta}}
        }
   \end{array} =
    \begin{array}{c}
    \xymatrix @R=0.3in @C=0.3in{
        \rtwocell<\omit>{<5> \;\ec{\sigma_2}}& \ec[\FZ_1(\CB)]{\Fun(\ec[\CB]{\CM}, \ec[\CB]{\CM}})\\
        \ast \ar@/^2ex/[ur]^-{\ec{1_{\ec[\CB]{\CM}}}} \ar[r]_-{\ec{\one_\CL}} & \ec[\CA]{\CL} \ar[u]_-{\ec{\Phi_2}}
    }
    \end{array}, \label{diag:E_0_center_in_ecat_2} \\
   \begin{array}{c}
       \xymatrix @R=0.3in @C=0.3in{
           \ec[\FZ_1(\CB)]{\Fun(\ec[\CB]{\CM}, \ec[\CB]{\CM}}) \times \ec[\CB]{\CM} \ar@/^2ex/[dr]^-{\ec{\ev}} \rtwocell<\omit>{<5> \;\ec{\rho_2}} & \\
        \ec[\CA]{\CL} \times \ec[\CB]{\CM} \utwocell<4.5>^{\ec{\Phi_1} \times 1 \quad}_{\quad \ec{\Phi_2} \times 1}{\ec{\beta} \times 1} \ar[r]_-{\ec{\odot}}& \ec[\CB]{\CM}
    }
    \end{array} = \hspace{-3em}
    \begin{array}{c}
       \xymatrix @R=0.3in @C=0.3in{
        \ec[\FZ_1(\CB)]{\Fun(\ec[\CB]{\CM}, \ec[\CB]{\CM}}) \times \ec[\CB]{\CM} \ar@/^2ex/[dr]^-{\ec{\ev}} \rtwocell<\omit>{<5> \;\ec{\rho_1}} & \\
        \ec[\CA]{\CL} \times \ec[\CB]{\CM} \ar[u]^-{\ec{\Phi_1} \times 1} \ar[r]_-{\ec{\odot}}& \ec[\CB]{\CM}
    }
    \end{array}. \label{diag:E_0_center_in_ecat_3}
\end{gather} \normalsize

Since $\FZ_1(\CB)$ is the $E_0$-center of $\CB$ in $\Alg_{E_1}(\cat)$ (see Example \ref{expl:centers_in_cat}), there exists a unique monoidal natural isomorphism $\hat{\beta}: \hat{\Phi}_1 \Rightarrow \hat{\Phi}_2$ such that  
\begin{gather*}
            \begin{array}{c}
        \xymatrix @R=0.3in @C=0.5in{
            \rtwocell<\omit>{<4.5> \;\hat{\sigma}_1}& \FZ_1(\CB)\\
                \ast \ar@/^2ex/[ur]^-{\one_\CB} \ar[r]_-{\one_\CA} & \CA  \utwocell^{\hat{\Phi}_1}_{\hat{\Phi}_2}{\hat{\beta}}
        }
   \end{array}=
    \begin{array}{c}
    \xymatrix @R=0.3in @C=0.4in{
        \rtwocell<\omit>{<4.5> \;\hat{\sigma}_2}& \FZ_1(\CB)\\
        \ast \ar@/^2ex/[ur]^-{\one_\CB} \ar[r]_-{\one_\CA} & \CA \ar[u]_-{\hat{\Phi}_2}
    }
    \end{array}, \\
\begin{array}{c}
       \xymatrix @R=0.3in @C=0.5in{
           \FZ_1(\CB) \times \CB \ar@/^2ex/[dr]^-{\hat{\ev}} \rtwocell<\omit>{<4.5> \;\hat{\rho}_2} & \\
        \CA \times \CB \utwocell<4.5>^{\hat{\Phi}_1 \times 1 \quad}_{\quad \hat{\Phi}_2 \times 1}{\hat{\beta} \times 1} \ar[r]_-{\hat{\odot}}& \CB
    }
    \end{array} =
    \begin{array}{c}
       \xymatrix @R=0.3in @C=0.4in{
        \FZ_1(\CB) \times \CB \ar@/^2ex/[dr]^-{\hat{\ev}} \rtwocell<\omit>{<4.5> \;\hat{\rho}_1} & \\
        \CA \times \CB \ar[u]^-{\hat{\Phi}_1 \times 1} \ar[r]_-{\hat{\odot}}& \CB
    }
    \end{array}.
\end{gather*}
Define $\beta_a: \one_\CB \to [\Phi_1(a), \Phi_2(a)]$ to be the unique morphism rendering the following diagram 
\begin{align*}
    \begin{array}{c}
    \xymatrix @R=0.2in{
        \one_\CB= \forget(\one_\CB)  \ar[r]^-{\forget(\beta_a)} \ar[rd]_(0.4){(\rho_2)_{(a,x)}^{-1} \circ (\rho_1)_{(a,x)}} &  \forget([\Phi_1(a), \Phi_2(a)]) \ar[d]^-{[\Phi_1(a), \Phi_2(a)]_x}\\
        & \ec[\CB]{\CM}(\Phi_1(a)(x), \Phi_2(a)(x)) 
    }
    \end{array}
\end{align*}
commutative. Then the enriched natural isomorphism $\ec{\beta}$ define by $(\hat{\beta}, \{\beta_a\})$ is the unique enriched natural isomorphism such that the equations \eqref{diag:E_0_center_in_ecat_2} and \eqref{diag:E_0_center_in_ecat_3} hold.
\end{proof}

\begin{rem}
It is straightforward to check that the usual monoidal structure on $\ec[\FZ_1(\CB)]{\Fun(\ec[\CB]{\CM}, \ec[\CB]{\CM}})$ induced by the composition of $\CB$-functors coincides with the $E_1$-algebra structure of the $E_0$-center of $\ec[\CB]{\CM}$ in $\ecat$ (see Proposition \ref{prop:E_0_center_is_E_1_algebra}).
\end{rem}

The following result follows immediately from Corollary  \ref{cor:B=rigid-construction} and Theorem \ref{thm:E_0_center_in_ecat}. 

\begin{cor} \label{cor:B=rigid-E0-center}
When $(\CB, \CM) \in \lmod$ and $\CM$ is a left $\CB$-module, if $\CB$ is rigid and $\Fun_{\CB}(\CM, \CM)$ is enriched in $\overline{\FZ_1(\CB)}$, we have
\[
\FZ_0(\bc[\CB]{\CM}) \simeq \bc[\FZ_1(\CB)]{\Fun_{\CB}(\CM, \CM)}. 
\]
\end{cor}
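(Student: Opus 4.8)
The plan is to obtain the statement by chaining Theorem~\ref{thm:E_0_center_in_ecat} with Corollary~\ref{cor:B=rigid-construction}; the single step that is not purely formal is checking that $\bc[\CB]{\CM}$ satisfies the condition \eqref{cond:star}, which is precisely the hypothesis needed to invoke Theorem~\ref{thm:E_0_center_in_ecat}.

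First I would use rigidity of $\CB$. As recorded in the paragraph preceding Corollary~\ref{cor:B=rigid-construction}, when $\CB$ is rigid every lax $\CB$-module functor is automatically a strong one, since the structure morphism $a \odot F(x) \to F(a \odot x)$ acquires an explicit inverse built from the duals in $\CB$; hence $\Fun^{\Lax}_{\CB}(\CM, \CM) = \Fun_{\CB}(\CM, \CM)$. Consequently the hypothesis that $\Fun_{\CB}(\CM, \CM)$ is enriched in $\overline{\FZ_1(\CB)}$ coincides with the hypothesis that $\Fun^{\Lax}_{\CB}(\CM, \CM)$ is enriched in $\overline{\FZ_1(\CB)}$. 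By Lemma~\ref{lem:equivalent-to-ast}, for $(\CB,\CM) \in \lmod$ with $\CM$ a left $\CB$-module this is equivalent to $\bc[\CB]{\CM}$ satisfying \eqref{cond:star}. Thus the hypotheses of both Theorem~\ref{thm:E_0_center_in_ecat} and Corollary~\ref{cor:B=rigid-construction} hold simultaneously.

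It then remains only to compose the two equivalences. Applying Theorem~\ref{thm:E_0_center_in_ecat} to the enriched category $\ec[\CB]{\CM} = \bc[\CB]{\CM}$ yields $\FZ_0(\bc[\CB]{\CM}) \simeq \ec[\FZ_1(\CB)]{\Fun(\bc[\CB]{\CM}, \bc[\CB]{\CM}})$, and Corollary~\ref{cor:B=rigid-construction} identifies the right-hand side with $\bc[\FZ_1(\CB)]{\Fun_{\CB}(\CM, \CM)}$, giving
\[
\FZ_0(\bc[\CB]{\CM}) \simeq \ec[\FZ_1(\CB)]{\Fun(\bc[\CB]{\CM}, \bc[\CB]{\CM}}) \simeq \bc[\FZ_1(\CB)]{\Fun_{\CB}(\CM, \CM)} .
\]
I expect no serious obstacle here: the only point demanding attention is bookkeeping, namely confirming that the phrase ``enriched in $\overline{\FZ_1(\CB)}$'' in the hypothesis refers to the same module action $a \mapsto I(a) \odot -$ used in Lemma~\ref{lem:equivalent-to-ast}, and that rigidity is exactly what removes the need for a separate lax condition. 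Since both cited results concern the same pair $(\CB,\CM)$ and the same canonically constructed enriched category, the two equivalences compose with no further compatibility to verify.
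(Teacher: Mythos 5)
Your proposal is correct and follows exactly the paper's route: the paper derives this corollary by combining Theorem~\ref{thm:E_0_center_in_ecat} with Corollary~\ref{cor:B=rigid-construction}, with the condition \eqref{cond:star} being supplied (via Lemma~\ref{lem:equivalent-to-ast} and the rigidity identification $\Fun^{\Lax}_{\CB}(\CM,\CM)=\Fun_{\CB}(\CM,\CM)$) just as you describe. The only difference is that the paper states this as "follows immediately," whereas you spell out the hypothesis-checking explicitly, which is a faithful expansion rather than a different argument.
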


\begin{rem}
Corollary \ref{cor:B=rigid-E0-center} has important applications in physics. In particular, it provides a mathematical foundation to Definition 3.18 and Remark 3.19 in \cite{KZ21}. More precisely, the macroscopic observables of a 0+1D boundary phase of a 1+1D quantum liquid $\SX$ form an enriched category, the $E_0$-center of which is the precisely the enriched monoidal category summarizing macroscopic observables of $\SX$. This fact is precisely a manifestation of the so-called boundary-bulk relation \cite{KWZ15,KWZ17}. Examples of this fact can be found in 1+1D CFT's \cite{KZ18b,KZ20,KZ21} and 1+1D lattice models \cite{KWZ22,XZ22}. 

We give an illustrating example from \cite{KWZ22}. The symmetric phase realized in the 1+1D Ising model can be described by the enriched monoidal category $\bc[\FZ_1(\mathrm{Rep}(\Zb_2))]{\mathrm{Rep}(\Zb_2)}$, where $\rep(\Zb_2)$ is the category of finite dimensional $\Zb_2$-representations. Its has two types of boundaries. The symmetric boundary can be described the enriched category $\bc[\mathrm{Rep}(\Zb_2)]{\mathrm{Rep}(\Zb_2)}$; the symmetry-breaking boundary can be described by $\bc[\mathrm{Vec}_{\Zb_2}]{\mathrm{Vec}}$, where $\mathrm{Vec}_{\Zb_2}$ is the category of $\Zb_2$-graded finite dimensional vector spaces. In this case, we have $\bc[\FZ_1(\mathrm{Rep}(\Zb_2))]{\mathrm{Rep}(\Zb_2)}\simeq \FZ_0(\bc[\mathrm{Rep}(\Zb_2)]{\mathrm{Rep}(\Zb_2)}) \simeq \FZ_0(\bc[\mathrm{Vec}_{\Zb_2}]{\mathrm{Vec}})$ and $\mathrm{Rep}(\Zb_2) \simeq \Fun_{\mathrm{Vec}_{\Zb_2}}(\mathrm{Vec}, \mathrm{Vec})$. 
\end{rem}






\section{Enriched braided monoidal categories} \label{sec:enriched_braided_monoidal_categories}

In this section, we study enriched braided monoidal categories, canonical construction and the $E_1$-centers of enriched monoidal categories.

\subsection{Definitions and examples}

For any two enriched categories $\ec[\CA]{\CL}$ and $\ec[\CB]{\CM}$, there is an obvious switching enriched functor $\ec{\Sigma}: \ec[\CA]{\CL} \times \ec[\CB]{\CM} \to \ec[\CB]{\CM} \times \ec[\CA]{\CL}$ (i.e., the braiding in $\ecat$). It is clear that $\ec{\Sigma}$ has an obvious monoidal structure if $\ec[\CA]{\CL}$ and $\ec[\CB]{\CM}$ are enriched monoidal categories.  

\begin{defn}
An \textit{enriched braided monoidal category} consists of the following data:
\bit
\item an enriched monoidal category $\ec[\CA]{\CL}$;
\item an enriched natural isomorphism $\ec{c}: \ec{\otimes} \to \ec{\otimes} \circ \ec{\Sigma}$;
\eit
such that    
   \begin{enumerate}
       \item The background category $\CA$ is symmetric.
       \item The background changing natural transformation $\hat c$ is equal to the braiding of $\CA$.
       \item The underlying monoidal category $\CL$ equipped with the underlying natural transformation $c$ of $\ec{c}$ is a braided monoidal category (called the underlying braided monoidal category of $\ec[\CA]{\CL}$).
   \end{enumerate}
\end{defn}


\begin{defn}
    An enriched monoidal functor $\ec{F}: \ec[\CA]{\CL} \to \ec[\CB]{\CM}$ between two enriched braided monoidal categories is an \textit{enriched braided monoidal functor} if the underlying functor $F: \CL \to \CM$ is a braided monoidal functor.
\end{defn}

\begin{expl}
Let $\ec[\CA]{\CL}$ be an enriched braided monoidal category. There is an $\CA$-enriched braided monoidal category $\ec[\CA]{\overline{\CL}}$ whose underlying braided monoidal category equals to $\overline{\CL}$, defined as follows. As an enriched monoidal category $\ec[\CA]{\overline{\CL}} = \ec[\CA]{\CL}$, but the braiding of $\ec[\CA]{\overline{\CL}}$ is given by the anti-braiding $\bar c_{x,y} \coloneqq c_{y,x}^{-1}$ of $\CL$ and the braiding of $\CA$. Since $\CA$ is symmetric, $(\hat c,c)$ is an enriched natural transformation if and only if $(\hat c,\bar c)$ is.
\end{expl}

\begin{expl}
Let $A$ be a commutative algebra in a symmetric monoidal category $\CA$. Then the $\CA$-enriched monoidal category $\ast^A$ introduced in Example \ref{expl:comm_alg_B_en_monoidal_cat} is an enriched braided monoidal category with the braiding given by $c_{*,*} = 1_*$. 
\end{expl}

\subsection{The canonical construction}

Let $\CA$ be a symmetric monoidal category viewed as an algebra in $\Alg_{E_2}^{\mathrm{oplax}}(\cat)$.

\begin{defn}\label{def:oplax_braided_monoidal_module}
A \textit{braided monoidal left $\CA$-\oplax module} is a left $\CA$-\oplax module in $\Alg_{E_2}^{\mathrm{oplax}}(\cat)$.
\end{defn}

\begin{rem}
More explicitly, a braided monoidal left $\CA$-\oplax module $\CL$ is both a braided monoidal category and a monoidal left $\CA$-\oplax module such that the action $\odot \colon \CA \times \CL \to \CL$ is a braided oplax-monoidal functor (i.e., the following diagram commutes, 
\be \label{diag:oplax_braided_module}
\begin{array}{c}
\xymatrix{
(a \otimes b) \odot (x \otimes y) \ar[r] \ar[d]^{\hat c_{a,b} \odot c_{x,y}} & (a \odot x) \otimes (b \odot y) \ar[d]^{c_{a \odot x,b \odot y}} \\
(b \otimes a) \odot (y \otimes x) \ar[r] & (b \odot y) \otimes (a \odot x)
}
\end{array}
\ee
where $c$ is the braiding of $\CL$ and $\hat c$ is the braiding of $\CA$).

If, in addition, $\CL$ is a monoidal left $\CA$-module, then $\CL$ is called a \textit{braided monoidal left $\CA$-module}.
\end{rem}

\begin{expl} \label{expl:oplax_braided_monoidal_module_functor}
 Let $\CA$ be a symmetric monoidal category and $\CL$ be a braided monoidal category. If $\varphi: \CA \to \FZ_2(\CL)$ is a symmetric oplax-monoidal functor, then $\CL$ is a braided monoidal left $\CA$-\oplax module with the module action $\varphi(-) \otimes - : \CA \times \CL \to \CL$. The monoidal left $\CA$-\oplax module structure of $\CL$ is induced by the composite functor $\CA \to \FZ_2(\CL) \to \FZ_1(\CL)$.
\end{expl}

\begin{rem}
If $(\CL,\odot:\CA \times \CL \to\CL)$ is a braided monoidal left $\CA$-module, the functor $\varphi \coloneqq (- \odot \one_\CL)$ is a symmetric monoidal functor $\varphi : \CA \to \FZ_2(\CL)$ (see Example \ref{expl:centers_in_cat}). 
A braided monoidal left $\CA$-module $\CL$ can be equivalently defined by a braided monoidal category $\CL$ equipped with a symmetric monoidal functor $\CA \to \FZ_2(\CL)$.
\end{rem}

\begin{prop} \label{prop:braided_en_braided_oplax_module}
Let $\CA$ be a symmetric monoidal category and $\CL$ a braided monoidal category. Let $\hat{c}$ and $c$ be the braidings of $\CA$ and $\CL$, respectively. If $\CL$ is also a strongly unital monoidal left $\CA$-\oplax module that is enriched in $\CA$, then the pair $(\hat c,c)$ defines a braiding structure on $\bc[\CA]{\CL}$ if and only if $\CL$ is a braided monoidal left $\CA$-\oplax module.  
\end{prop}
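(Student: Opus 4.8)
The plan is to recognize a braiding on $\bc[\CA]{\CL}$ as nothing more than the assertion that the pair $(\hat c, c)$ assembles into an enriched natural transformation, and then to translate that assertion, through the dictionary of the canonical construction, into the single diagram \eqref{diag:oplax_braided_module}. First I would unwind the definition of an enriched braided monoidal category in the present situation. Under the standing hypotheses ($\CA$ symmetric with braiding $\hat c$, and $\CL$ braided with braiding $c$), conditions (1)--(3) of that definition hold by assumption. Hence a braiding structure on $\bc[\CA]{\CL}$ is \emph{exactly} the datum of an enriched natural transformation $\ec{c} : \ec{\otimes} \Rightarrow \ec{\otimes} \circ \Sigma$ whose background changing natural transformation is $\hat c$ and whose underlying natural transformation is $c$. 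So the proposition reduces to the claim that $(\hat c, c)$ is an enriched natural transformation if and only if the diagram \eqref{diag:oplax_braided_module} commutes.

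Next I would bring $\ec{\otimes}$ and $\ec{\otimes} \circ \Sigma$ into the form to which Proposition \ref{prop:2_morphism_des} applies. Since the canonical construction is symmetric monoidal (Theorem \ref{thm:2-functor_from_A_modules_to_cat_A}) and the tensor products on both sides are Cartesian products, there is a canonical identification $\bc[\CA]{\CL} \times \bc[\CA]{\CL} \simeq \bc[\CA \times \CA]{\CL \times \CL}$, where $\CL \times \CL$ is the product left $(\CA \times \CA)$-\oplax module (enriched in $\CA \times \CA$, with componentwise internal homs). Under this identification both $\ec{\otimes}$ and $\ec{\otimes} \circ \Sigma$ are enriched functors $\bc[\CA \times \CA]{\CL \times \CL} \to \bc[\CA]{\CL}$ arising from the canonical construction: the former has background changing functor $\hat F_1 = \otimes_\CA$ and underlying functor $F_1 = \otimes_\CL$, the latter $\hat F_2 = \otimes_\CA \circ \mathrm{swap}$ and $F_2 = \otimes_\CL \circ \mathrm{swap}$. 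In each case the relevant $\hat F_i$-lax structure is the oplax-monoidal structure \eqref{eq:abxy} of $\odot$ (with the two slots swapped in the second case), exactly as recorded in the proof of Proposition \ref{prop:monodal_en_monoidal_oplax_module}.

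Then I would invoke Proposition \ref{prop:2_morphism_des}. One first checks that $\hat c$ is a lax-monoidal natural transformation $\hat F_1 \Rightarrow \hat F_2$; this is where the symmetry of $\CA$ enters, and it is a direct coherence consequence of the hexagon axioms applied to the lax structures of Convention \ref{conv:monoidal_stru_tensor}. Granting this, Proposition \ref{prop:2_morphism_des} asserts that $(\hat c, c)$ is an enriched natural transformation if and only if $c$ is a $\hat c$-lax natural transformation, i.e.\ if and only if the diagram \eqref{diag:morphism_bet_R_lax_functors_con} of Definition \ref{def:morphism_bet_R_lax_functors} commutes. Substituting $a$ by $(a,b)$ and $x$ by $(x,y)$, taking $\hat\xi = \hat c$ and $\xi = c$, and inserting the two $\hat F_i$-lax structures described above, this diagram becomes \emph{verbatim} the diagram \eqref{diag:oplax_braided_module}, which is precisely the condition that $\odot$ be a braided oplax-monoidal functor, i.e.\ that $\CL$ be a braided monoidal left $\CA$-\oplax module. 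This closes the equivalence in both directions simultaneously.

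The only genuinely non-formal points are the two coherence verifications flagged above: the identification of the $\hat F_2$-lax structure of $\ec{\otimes} \circ \Sigma$ as the swapped oplax structure of $\odot$, and the verification that $\hat c$ is lax-monoidal. Both are routine diagram chases relying only on the symmetry of $\CA$ and the naturality of the oplax structure of $\odot$, so I expect no real obstacle; the entire substance of the statement has already been absorbed into the characterizations supplied by Propositions \ref{prop:1_morphism_des} and \ref{prop:2_morphism_des}, and the proof amounts to observing that \eqref{diag:morphism_bet_R_lax_functors_con} and \eqref{diag:oplax_braided_module} are the same square.
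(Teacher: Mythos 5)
Your proof is correct and follows essentially the same route as the paper's: the paper's one-line proof (``the pair $(\hat c,c)$ is a 2-morphism in $\lmod$ if and only if the diagram \eqref{diag:oplax_braided_module} commutes'') is precisely your reduction via Proposition \ref{prop:2_morphism_des}, left implicit. You have merely spelled out the identifications (the product of canonical constructions, the $\hat F_i$-lax structures on $\otimes_\CL$ and its swap, and the lax-monoidality of $\hat c$, which uses the symmetry of $\CA$) that the paper's appeal to the locally isomorphic 2-functor of Theorem \ref{thm:2-functor_from_A_modules_to_cat_A} takes for granted.
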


\begin{proof}
The pair $(\hat c,c)$ is a 2-morphism in $\lmod$ if and only if the diagram \eqref{diag:oplax_braided_module} commutes. 
\end{proof}

\begin{expl}
    Let $\varphi: \CA \to \FZ_2(\CL)$ be a braided oplax-monoidal functor, where $\CA$ is a symmetric monoidal category and $\CL$ is a braided monoidal category. Then $(\CL, \varphi)$ is a monoidal left $\CA$-\oplax module (see Example \ref{expl:oplax_braided_monoidal_module_functor}). If $(\CL, \varphi)$ is strongly unital and $\CL$ is enriched in $\CA$, then the enriched monoidal category $\bc[\CA]{\CL}$ constructed in Example \ref{expl:canonical_construction_enriched_monoidal_cat} (note that $\overline{\CA} = \CA$), together with the braiding in $\CA$ and $\CL$, is an enriched braided monoidal category.
\end{expl}

\begin{defn} \label{def:braided_monoidal_lax_module_functor}
    Let $\CL$ be a braided monoidal left $\CA$-\oplax module and $\CM$ be a braided monoidal left $\CB$-\oplax module. Given a symmetric monoidal functor $\hat{F}: \CA \to \CB$, a monoidal $\hat{F}$-lax functor $F: \CL \to \CM$ is called a \textit{braided monoidal $\hat{F}$-lax functor} if $F$ is braided monoidal.
\end{defn}

The proof of the following proposition is clear.

\begin{prop} \label{prop:enriched_braided_functor_braided_lax_functor}
Let $\CA,\CB$ be symmetric monoidal categories. Suppose $\CL$ is a strongly unital braided monoidal left $\CA$-\oplax module that is enriched in $\CA$, and $\CM$ is a strongly unital oplax braided monoidal left $\CB$-module that is enriched in $\CB$. Then the canonical construction gives enriched braided monoidal categories $\bc[\CA]{\CL}$ and $\bc[\CB]{\CM}$. Suppose $\hat F : \CA \to \CB$ is a symmetric monoidal functor and $F : \CL \to \CM$ is a monoidal $\hat F$-lax functor. Then $F$ is a braided monoidal $\hat{F}$-lax functor if and only if $\hat F$, together with $F$, defines an enriched braided monoidal functor $\ec{F}: \bc[\CA]{\CL} \to \bc[\CB]{\CM}$.
\end{prop}

We use $\ecat^{\mathrm{br}}$ to denote the 2-category of enriched braided monoidal categories, enriched braided monoidal functors and enriched monoidal natural transformations. The 2-category $\ecat^{\mathrm{br}}$ is symmetric monoidal with the tensor product given by the Cartesian product and the tensor unit given by $\ast$. 

We define the 2-category $\lmod^{\mathrm{br}}$ as follows: 
\bit
    \item The objects are pairs $(\CA, \CL)$, where $\CA$ is a symmetric monoidal category and $\CL$ is a strongly unital braided monoidal left $\CA$-\oplax module that is enriched in $\CA$.

    \item A 1-morphism $(\CA, \CL) \to (\CB, \CM)$ is a pair $(\hat{F}, F)$, where $\hat{F}: \CA \to \CB$ is a symmetric monoidal functor and $F:\CL \to \CM$ is a braided monoidal $\hat{F}$-lax functor. 

    \item A 2-morphism $(\hat{F}, F) \Rightarrow (\hat{G}, G)$ is a pair $(\hat{\xi},\xi)$, where
    $\hat{\xi}: \hat{F} \Rightarrow \hat{G}$ is a monoidal natural transformation and $\xi:F\Rightarrow G$ is a monoidal $\hat{\xi}$-lax natural transformation. 
\eit
The horizontal/vertical composition is induced by the horizontal/vertical composition of functors and natural transformations. The 2-category $\lmod^{\mathrm{br}}$ is symmetric monoidal with the tensor product defined by the Cartesian product and the tensor unit given by $(\ast, \ast)$. 


\medskip



\begin{thm} \label{thm:canonical_construction_2-functor_braided}
The canonical construction induces a symmetric monoidal 2-functor $\lmod^{\mathrm{br}} \to \ecat^{\mathrm{br}}$. Moreover, this 2-functor is locally isomorphic. 
\end{thm}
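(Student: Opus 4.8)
The plan is to assemble the statement directly from the three cited propositions, in exact parallel with the proof of the analogous monoidal theorem for $\lmod^{\otimes} \to \ecat^{\otimes}$, exploiting the fact that the canonical construction is already a symmetric monoidal locally isomorphic 2-functor at the level of plain enriched categories by Theorem \ref{thm:2-functor_from_A_modules_to_cat_A}. Since $\lmod^{\mathrm{br}}$ and $\ecat^{\mathrm{br}}$ sit over $\lmod^{\otimes}$ and $\ecat^{\otimes}$ by forgetting the braiding data, the whole argument reduces to checking that the braiding is matched on objects, $1$-morphisms, and $2$-morphisms, and then that local isomorphism and symmetric monoidality are inherited.

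First I would verify well-definedness on objects: for $(\CA,\CL)\in\lmod^{\mathrm{br}}$, the symmetry of $\CA$ and the braiding of $\CL$ equip $\bc[\CA]{\CL}$ with a braiding by Proposition \ref{prop:braided_en_braided_oplax_module}, so $\bc[\CA]{\CL}\in\ecat^{\mathrm{br}}$. On $1$-morphisms, a pair $(\hat F, F)$ with $\hat F$ symmetric monoidal and $F$ a braided monoidal $\hat F$-lax functor is, by Proposition \ref{prop:enriched_braided_functor_briaded_lax_functor}, carried precisely to an enriched braided monoidal functor $\ec{F}:\bc[\CA]{\CL}\to\bc[\CB]{\CM}$, and conversely. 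On $2$-morphisms the braiding imposes no additional condition, so a pair $(\hat\xi,\xi)$ is sent to an enriched monoidal natural transformation exactly as in the monoidal case by Proposition \ref{prop:enriched_monoidal_natural_transf_can}. Compatibility with horizontal and vertical composition and with identities is inherited verbatim from the monoidal-level $2$-functor, since composition in $\lmod^{\mathrm{br}}$ and $\ecat^{\mathrm{br}}$ is computed in the respective ambient $2$-categories.

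Next I would argue local isomorphism. For fixed braided objects, the hom-category $\lmod^{\mathrm{br}}((\CA,\CL),(\CB,\CM))$ is the \emph{full} subcategory of $\lmod^{\otimes}((\CA,\CL),(\CB,\CM))$ cut out by requiring the underlying functor $F$ to be braided (the requirement that $\hat F$ be symmetric is automatic, since a braided monoidal functor between symmetric backgrounds is symmetric); the target hom-category $\ecat^{\mathrm{br}}(\bc[\CA]{\CL},\bc[\CB]{\CM})$ is the corresponding full subcategory on the other side, with $2$-morphisms unchanged. The monoidal-level hom-functor is already an isomorphism of categories, and Proposition \ref{prop:enriched_braided_functor_briaded_lax_functor} shows that this isomorphism carries the one braided full subcategory onto the other object-for-object; hence its restriction is again an isomorphism of categories, which is exactly local isomorphism. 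Finally I would transport the symmetric monoidal structure from the monoidal theorem: on both sides the tensor product is the Cartesian product and the unit is $(\ast,\ast)$ resp.\ $\ast$, and one checks that the braiding of a Cartesian product of braided modules is componentwise, matching the braiding of the product enriched braided monoidal category under the canonical construction, so the coherence data of the symmetric monoidal $2$-functor restricts without change.

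The step requiring the most care, though it remains routine given the groundwork, is confirming that \emph{locally isomorphic} (an isomorphism, not merely an equivalence, of hom-categories) survives the passage to the braided subcategories; this hinges on the correspondences in Propositions \ref{prop:enriched_braided_functor_briaded_lax_functor} and \ref{prop:enriched_monoidal_natural_transf_can} being strict bijections of morphism sets rather than equivalences, together with the componentwise compatibility of the braidings with the Cartesian product. None of these points involves new computation beyond unwinding diagrams already verified in the cited propositions.
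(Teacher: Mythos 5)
Your proposal is correct and follows essentially the same route as the paper: the paper's entire proof consists of citing Propositions \ref{prop:braided_en_braided_oplax_module}, \ref{prop:enriched_braided_functor_briaded_lax_functor} and \ref{prop:enriched_monoidal_natural_transf_can}, with the 2-functoriality, local isomorphism and symmetric monoidal structure inherited from the unbraided case exactly as you describe. Your additional remarks (e.g.\ that symmetry of $\hat{F}$ is automatic and that the braided hom-categories are full subcategories preserved by the monoidal-level isomorphism) simply make explicit what the paper leaves implicit.
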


\begin{proof}
    Let $(\CA, \CL) \in \lmod^{\mathrm{br}}$. By Proposition \ref{prop:braided_en_braided_oplax_module}, $\bc[\CA]{\CL}$ is an enriched braided monoidal category. Since the canonical construction induces a locally isomorphic 2-functor from $\lmod^{\otimes}$ to $\ecat^{\otimes}$, the canonical construction also defines a locally isomorphic 2-functor from $\lmod^{\mathrm{br}} \to \ecat^{\mathrm{br}}$ (see Proposition \ref{prop:enriched_braided_functor_braided_lax_functor} and Proposition \ref{prop:enriched_monoidal_natural_transf_can}). Since the background changing functor and the underlying functor of the enriched isomorphism $\ec{\chi}_{\bc[\CA]{\CL}, \bc[\CB]{\CM}}: \bc[\CA]{\CL} \times \bc[\CB]{\CM} \to \bc[(\CA \times \CB)]{(\CL \times \CM)}$ defined in the proof of Theorem \ref{thm:2-functor_from_A_modules_to_cat_A} are both identity functors, $\ec{\chi}_{\bc[\CA]{\CL}, \bc[\CB]{\CM}}$ is an enriched braided monoidal functor. Then the canonical construction 2-functor is a  symmetric monoidal 2-functor by Theorem \ref{thm:sym_mono_2_functor_monoidal}.
\end{proof}


\subsection{The Drinfeld center of an enriched monoidal category} \label{sec:E1_center_enriched}
In this subsection, we provide an explicit construction of enriched braided monoidal category from an enriched monoidal category imitating the usual construction of Drinfeld center.

\begin{defn}
Let $\ec[\CA]{\CL}$ be an enriched monoidal category. A half-braiding for an object $x \in \ec[\CA]{\CL}$ is an $\CA$-natural isomorphism $\ec{\beta}_{-,x} : - \otimes x \to x \otimes -$ such that the underlying natural isomorphism $\beta_{-,x}$ is a usual half-braiding.
\end{defn}

In other words, the half-braiding $\ec{\beta}_{-,x}$ is a half-braiding $\beta_{-,x}$ on the underlying monoidal category such that the following diagram commutes. 
\[
\xymatrix{
\ec[\CA]{\CL}(y,z) \ar[r]^-{1 \otimes 1_x} \ar[d]^{1_x \otimes 1} & \ec[\CA]{\CL}(y,z) \otimes \ec[\CA]{\CL}(x,x) \ar[r]^-{\ec{\otimes}} & \ec[\CA]{\CL}(y \otimes x,z \otimes x) \ar[dd]^{\ec[\CA]{\CL}(1,\beta_{z,x})} \\
\ec[\CA]{\CL}(x,x) \otimes \ec[\CA]{\CL}(y,z) \ar[d]^{\ec{\otimes}} \\
\ec[\CA]{\CL}(x \otimes y,x \otimes z) \ar[rr]^{\ec[\CA]{\CL}(\beta_{y,x},1)} & & \ec[\CA]{\CL}(y \otimes x,x \otimes z)
}
\]

\begin{defn}
Let $\ec[\CA]{\CL}$ be an enriched monoidal category. Suppose $x,y \in \ec[\CA]{\CL}$ and $\ec{\beta_{-,x}},\ec{\beta_{-,y}}$ are half-braidings. Define an object $[(x,\ec{\beta_{-,x}}),(y,\ec{\beta_{-,y}})] \in \FZ_2(\CA)$ to be the terminal one (if exists) among all pairs $(a,\zeta)$, where $a \in \FZ_2(\CA)$ and $\zeta : a \to \ec[\CA]{\CL}(x,y)$ is a morphism in $\CA$ such that the following diagram commutes,
\be \label{diag:enriched_half_braiding}
\begin{array}{c}
\xymatrix@C=2em{
a \ar[r]^-{1_z \otimes \zeta} \ar[d]_{\zeta \otimes 1_z} & \ec[\CA]{\CL}(z,z) \otimes \ec[\CA]{\CL}(x,y) \ar[r]^{\ec{\otimes}} & \ec[\CA]{\CL}(z \otimes x,z \otimes y) \ar[d]_{\ec[\CA]{\CL}(1,\beta_{z,y})} \\
\ec[\CA]{\CL}(x,y) \otimes \ec[\CA]{\CL}(z,z) \ar[r]_{\ec{\otimes}} & \ec[\CA]{\CL}(x \otimes z,y \otimes z) \ar[r]_{\ec[\CA]{\CL}(\beta_{z,x},1)} & \ec[\CA]{\CL}(z \otimes x,y \otimes z)
}
\end{array}
\ee
(i.e., $a \xrightarrow{\zeta} \ec[\CA]{\CL}(x,y)$ equalizes two morphisms $\ec[\CA]{\CL}(x,y) \rightrightarrows \ec[\CA]{\CL}(z \otimes x,y \otimes z)$, for every $z \in \ec[\CA]{\CL}$).
\end{defn}


\begin{rem}
By the universal property, the morphism $[(x,\ec{\beta_{-,x}}),(y,\ec{\beta_{-,y}})] \to \ec[\CA]{\CL}(x,y)$ is monic. Thus $[(x,\ec{\beta_{-,x}}),(y,\ec{\beta_{-,y}})]$ is the maximal subobject $a$ of $\ec[\CA]{\CL}(x,y)$ in $\FZ_2(\CA)$ such that the diagram \eqref{diag:enriched_half_braiding} commutes.
\end{rem}

\begin{defn} \label{defn:cond_star_2}
We say that an enriched monoidal category $\ec[\CA]{\CL}$ satisfies the \emph{$E_1$-center-existence} condition \eqref{cond:star_2} if:
\small
\begin{equation*} \label{cond:star_2}
\mbox{for all objects $x,y \in \ec[\CA]{\CL}$ and half-braidings $\ec{\beta_{-,x}},\ec{\beta_{-,y}}$, the object $[(x,\ec{\beta_{-,x}}),(y,\ec{\beta_{-,y}})]$ exists.} \tag{$\mathrm{CE1}$}
\end{equation*} \normalsize
\end{defn}

\begin{defn}
Let $\ec[\CA]{\CL}$ be an enriched monoidal category that satisfies the condition \eqref{cond:star_2}. Then we define an $\FZ_2(\CA)$-enriched braided monoidal category $\ec[\FZ_2(\CA)]{\hbcat_1(\ec[\CA]{\CL})}$ as follows:
\bit
\item The objects are pairs $(x,\ec{\beta_{-,x}})$, where $x \in \ec[\CA]{\CL}$ and $\ec{\beta_{-,x}}$ is a half-braiding.
\item The hom objects are $\ec[\FZ_2(\CA)]{\hbcat_1(\ec[\CA]{\CL})}((x,\ec{\beta_{-,x}}),(y,\ec{\beta_{-,y}})) \coloneqq [(x,\ec{\beta_{-,x}}),(y,\ec{\beta_{-,y}})] \in \FZ_2(\CA)$.
\item The composition and identity morphisms are induced by those of $\ec[\CA]{\CL}$.
\item The monoidal structure is induced by that of $\ec[\CA]{\CL}$.
\item The braiding is given by $(x,\ec{\beta_{-,x}}) \otimes (y,\ec{\beta_{-,y}}) \xrightarrow{\beta_{x,y}} (y,\ec{\beta_{-,y}}) \otimes (x,\ec{\beta_{-,x}})$.
\eit
This enriched braided monoidal category $\ec[\FZ_2(\CA)]{\hbcat_1(\ec[\CA]{\CL})}$ is called the Drinfeld center of $\ec[\CA]{\CL}$. Its underlying braided monoidal category is denoted by $\hbcat_1(\ec[\CA]{\CL})$.  
\end{defn}

\begin{rem}
An ordinary monoidal category $\CL$ viewed as a $\Set$-enriched monoidal category $\ec[\Set]{\CL}$ always satisfies the condition \eqref{cond:star_2}. Indeed, in this case, a half-braiding $\ec{\beta_{-,x}} : - \otimes x \to x \otimes -$ is a usual half-braiding $\beta_{-,x}$ and the terminal object
\[
[(x,\ec{\beta_{-,x}}),(y,\ec{\beta_{-,y}})] \in \FZ_2(\Set) \simeq \Set
\]
is precisely the hom set $\FZ_1(\CL)((x,\beta_{-,x}),(y,\beta_{-,y}))$. As a consequence, $\ec[\FZ_2(\Set)]{\hbcat_1(\ec[\Set]{\CL})}$ is precisely the usual Drinfeld center \cite{Maj91,JS91} $\FZ_1(\CL)$.
\end{rem}

\begin{rem}
This construction is different from the Drinfeld center defined in \cite[Definition\ 4.2]{KZ18a}. The hom spaces of the Drinfeld center defined in \cite{KZ18a} are maximal subobjects in $\CA$, not $\FZ_2(\CA)$. When $\CA$ is a non-degenerate braided fusion category and $\CL$ is an indecomposable multi-fusion left $\CA$-module, these two definitions coincide (see Remark \ref{rem:E1_center_physical}). The physical meaning of the Drinfeld center defined in \cite{KZ18a} is given in \cite[Section 4.1]{KZ21}.
\end{rem}

\begin{lem}
Under the condition \eqref{cond:star_2}, the Drinfeld center $\ec[\FZ_2(\CA)]{\hbcat_1(\ec[\CA]{\CL})}$ is a well-defined enriched braided monoidal category.
\end{lem}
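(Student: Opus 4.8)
The plan is to produce every structure morphism of $\ec[\FZ_2(\CA)]{\hbcat_1(\ec[\CA]{\CL})}$ from the universal property that defines the hom objects, and to verify every axiom by exploiting the fact (noted in the remark just before Definition \ref{defn:cond_star_2}) that each canonical morphism $[(x,\ec{\beta_{-,x}}),(y,\ec{\beta_{-,y}})] \to \ec[\CA]{\CL}(x,y)$ is monic. Throughout I will use that $\FZ_2(\CA)$ is a symmetric monoidal subcategory of $\CA$, closed under $\otimes$, with braided monoidal inclusion $\FZ_2(\CA) \to \CA$; this is exactly the symmetric background datum demanded of an enriched braided monoidal category, so condition (1) (the background is symmetric) and condition (2) (the background-changing transformation of the braiding is the braiding of $\FZ_2(\CA)$) are built in from the start.

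First I would construct the composition. Given objects $(x,\ec{\beta_{-,x}}),(y,\ec{\beta_{-,y}}),(z,\ec{\beta_{-,z}})$, the object $a \coloneqq [(y,\ec{\beta_{-,y}}),(z,\ec{\beta_{-,z}})] \otimes [(x,\ec{\beta_{-,x}}),(y,\ec{\beta_{-,y}})]$ lies in $\FZ_2(\CA)$, and composing the two canonical monics with the composition morphism $\circ$ of $\ec[\CA]{\CL}$ yields a morphism $\zeta : a \to \ec[\CA]{\CL}(x,z)$ in $\CA$. The essential point is to check that the pair $(a,\zeta)$ satisfies the defining diagram \eqref{diag:enriched_half_braiding} for $[(x,\ec{\beta_{-,x}}),(z,\ec{\beta_{-,z}})]$; granting this, its universal property supplies the unique composition morphism, automatically living in $\FZ_2(\CA)$. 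This check is a diagram chase combining the equalizing property of the two factors (each respects its half-braidings against every $w \in \ec[\CA]{\CL}$), the associativity of $\circ$, the interchange law relating $\circ$ with the tensor-product enriched functor $\ec{\otimes}$, and the compatibility of $\ec{\beta_{-,y}}$ with composition. The identity $\one_{\FZ_2(\CA)} = \one_\CA \to [(x,\ec{\beta_{-,x}}),(x,\ec{\beta_{-,x}})]$ is obtained in the same way from the pair $(\one_\CA, 1_x)$, whose compatibility with \eqref{diag:enriched_half_braiding} follows from the unit axioms of $\ec[\CA]{\CL}$ and the naturality of the half-braiding.

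With these maps in hand, associativity and unitality of the composition, together with the monoidal coherence axioms, hold automatically: since the canonical maps into the hom objects of $\ec[\CA]{\CL}$ are monic, any identity of parallel morphisms between hom objects of the center may be tested after post-composition with such a monic, whereupon it reduces to the corresponding identity already known in $\ec[\CA]{\CL}$. For the monoidal structure I would set $(x,\ec{\beta_{-,x}}) \otimes (y,\ec{\beta_{-,y}}) \coloneqq (x \otimes y, \ec{\beta_{-,x\otimes y}})$, with $\ec{\beta_{-,x\otimes y}}$ the standard half-braiding on a tensor product assembled from $\ec{\beta_{-,x}}$ and $\ec{\beta_{-,y}}$ (one verifies its underlying natural isomorphism is again a half-braiding, i.e. satisfies the hexagon, exactly as in the ordinary Drinfeld center, carried out one categorical level up). The effect of $\ec{\otimes}$ on hom objects is again produced by the universal property, applied to the image under $\ec{\otimes}$ of the tensor product of the two canonical monics, and the associator, unitors and remaining monoidal coherences descend from those of $\ec[\CA]{\CL}$ by the same monicity argument.

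Finally, the braiding $\ec{c}_{(x,\ec{\beta_{-,x}}),(y,\ec{\beta_{-,y}})}$ is given on the underlying level by $\beta_{x,y}$, with background-changing datum the (symmetric) braiding of $\FZ_2(\CA)$; enriched naturality of $\ec{c}$ reduces, through the equalizer description of the hom objects, precisely to the commutativity of \eqref{diag:enriched_half_braiding}, and the two hexagon identities for the underlying braiding follow from the half-braiding axioms defining the objects of the center. I expect the main obstacle to be the verification that the composition datum $\zeta$ (and, analogously, the tensor datum) genuinely satisfies \eqref{diag:enriched_half_braiding}: this is the one place where the half-braiding conditions, the associativity of $\circ$, and the compatibility of $\ec{\otimes}$ with $\circ$ must be woven together coherently, rather than reduced formally by monicity. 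Once that factorization is secured, everything else is bookkeeping forced by the universal properties.
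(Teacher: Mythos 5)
Your proposal is correct and follows essentially the same route as the paper's proof: construct the identity, composition, tensor, and braiding morphisms by checking that the relevant composite data equalize the two morphisms in \eqref{diag:enriched_half_braiding} and then factoring through the terminal object, while using monicity of the canonical maps $[(x,\ec{\beta_{-,x}}),(y,\ec{\beta_{-,y}})] \to \ec[\CA]{\CL}(x,y)$ to reduce the coherence and enriched-naturality checks to diagram chases in $\ec[\CA]{\CL}$. You also correctly locate where the real work lies, namely in the factorization checks for the composition and tensor data, which is exactly what occupies the bulk of the paper's argument.
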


\begin{proof}
For any $(x,\ec{\beta_{-,x}}) \in \ec[\FZ_2(\CA)]{\hbcat_1(\ec[\CA]{\CL})}$ the diagram
\[
\xymatrix{
\one \ar[r]^-{1_z \otimes 1_x} \ar[d]_{1_x \otimes 1_z} & \ec[\CA]{\CL}(z,z) \otimes \ec[\CA]{\CL}(x,x) \ar[r]^{\ec{\otimes}} & \ec[\CA]{\CL}(z \otimes x,z \otimes x) \ar[d]^{\ec[\CA]{\CL}(1,\beta_{z,x})} \\
\ec[\CA]{\CL}(x,x) \otimes \ec[\CA]{\CL}(z,z) \ar[r]_{\ec{\otimes}} & \ec[\CA]{\CL}(x \otimes z,x \otimes z) \ar[r]_{\ec[\CA]{\CL}(\beta_{z,x},1)} & \ec[\CA]{\CL}(z \otimes x,x \otimes z)
}
\]
commutes because both two composite morphisms are equal to $\beta_{z,x}$. Thus the identity morphism $1_x : \one \to \ec[\CA]{\CL}(x,x)$ factors through $[(x,\ec{\beta_{-,x}}),(x,\ec{\beta_{-,x}})]$.

For any $(x,\ec{\beta_{-,x}}) , (y,\ec{\beta_{-,y}}) , (z,\ec{\beta_{-,z}}) \in \ec[\FZ_2(\CA)]{\hbcat_1(\ec[\CA]{\CL})}$, the diagram
\scriptsize\[
\xymatrix{
[(y,\ec{\beta_{-,y}}),(z,\ec{\beta_{-,z}})] [(x,\ec{\beta_{-,x}}),(y,\ec{\beta_{-,y}})] \ar[r] \ar[d] & \ec[\CA]{\CL}(y,z) \ec[\CA]{\CL}(x,y) \ar[d] \ar[dr]^{\circ} \\
\ec[\CA]{\CL}(y,z) \ec[\CA]{\CL}(x,y) \ar[d] \ar@/_15ex/[dd]_(0.7){\circ} & \ec[\CA]{\CL}(wy,wz) \ec[\CA]{\CL}(wx,wy) \ar[d]_{\ec[\CA]{\CL}(\beta_{w,y}^{-1},\beta_{w,z}) \ec[\CA]{\CL}(1,\beta_{w,y})} \ar[dr]^{\circ} & \ec[\CA]{\CL}(x,z) \ar[d] \\
\ec[\CA]{\CL}(yw,zw) \ec[\CA]{\CL}(xw,yw) \ar[r]_{1 \ec[\CA]{\CL}(\beta_{w,x},1)} \ar[dr]_{\circ} & \ec[\CA]{\CL}(yw,zw) \ec[\CA]{\CL}(wx,yw) \ar[dr]^{\circ} & \ec[\CA]{\CL}(wx,wz) \ar[d]^{\ec[\CA]{\CL}(1,\beta_{w,z})} \\
\ec[\CA]{\CL}(x,z) \ar[r] & \ec[\CA]{\CL}(xw,zw) \ar[r]_{\ec[\CA]{\CL}(\beta_{w,x},1)} & \ec[\CA]{\CL}(wx,zw)
}
\]\normalsize
commutes: the upper left rectangle commutes by the definition of $[(x,\ec{\beta_{-,x}}),(y,\ec{\beta_{-,y}})]$, and the other subdiagrams commute due to the functoriality of $\ec{\otimes}$ and the associativity of the composition. Thus the morphism $[(y,\ec{\beta_{-,y}}),(z,\ec{\beta_{-,z}})] \otimes [(x,\ec{\beta_{-,x}}),(y,\ec{\beta_{-,y}})] \to \ec[\CA]{\CL}(y,z) \otimes \ec[\CA]{\CL}(x,y) \xrightarrow{\circ} \ec[\CA]{\CL}(x,z)$ factors through $[(x,\ec{\beta_{-,x}}),(z,\ec{\beta_{-,z}})]$. This shows that the identity and composition morphisms in $\ec[\CA]{\CL}$ induce those of $\ec[\FZ_2(\CA)]{\hbcat_1(\ec[\CA]{\CL})}$.

For the monoidal structure, consider the following diagram.
\scriptsize\[
\xymatrix{
[(x,\ec{\beta_{-,x}}),(z,\ec{\beta_{-,z}})] [(y,\ec{\beta_{-,y}}),(w,\ec{\beta_{-,w}})] \ar[r] \ar[d] \ar[ddr] & \ec[\CA]{\CL}(x,z) \ec[\CA]{\CL}(y,w) \ar[r]^-{\ec{\otimes}} & \ec[\CA]{\CL}(xy,zw) \ar[d] \\
\ec[\CA]{\CL}(x,z) \ec[\CA]{\CL}(y,w) \ar[d]_{\ec{\otimes}} & & \ec[\CA]{\CL}(axy,azw) \ar[d]^{\ec[\CA]{\CL}(1,\beta_{a,z} 1)} \\
\ec[\CA]{\CL}(xy,zw) \ar[d] & \ec[\CA]{\CL}(xaz,yaw) \ar[r]^{\ec[\CA]{\CL}(\beta_{a,x} 1,1)} \ar[d]_{\ec[\CA]{\CL}(1,1 \beta_{a,w})} & \ec[\CA]{\CL}(axy,zaw) \ar[d]^{\ec[\CA]{\CL}(1,1 \beta_{a,w})} \\
\ec[\CA]{\CL}(xya,zwa) \ar[r]_{\ec[\CA]{\CL}(1 \beta_{a,y},1)} & \ec[\CA]{\CL}(xay,zwa) \ar[r]_{\ec[\CA]{\CL}(\beta_{a,x} 1,1)} & \ec[\CA]{\CL}(axy,zwa)
}
\]\normalsize
The upper quadrangle commutes by the definitions of $[(x,\ec{\beta_{-,x}}),(z,\ec{\beta_{-,z}})]$, the left quadrangle commutes by the definition of $[(y,\ec{\beta_{-,y}}),(w,\ec{\beta_{-,w}})]$, and the lower right rectangle is obviously commutative. It follows that the morphism
\small\[
[(x,\ec{\beta_{-,x}}),(z,\ec{\beta_{-,z}})] \otimes [(y,\ec{\beta_{-,y}}),(w,\ec{\beta_{-,w}})] \to \ec[\CA]{\CL}(x,z) \otimes \ec[\CA]{\CL}(y,w) \xrightarrow{\ec{\otimes}} \ec[\CA]{\CL}(x \otimes y,z \otimes w)
\]\normalsize
factors through $[(x \otimes y,\ec{\beta_{-,x \otimes y}}),(z \otimes w,\ec{\beta_{-,z \otimes w}})]$. It is not hard to see that this defines the monoidal structure of $\ec[\FZ_2(\CA)]{\hbcat_1(\ec[\CA]{\CL})}$.

For the braiding structure, note that the morphism $\one \xrightarrow{\beta_{x,y}} \ec[\CA]{\CL}(x \otimes y,y \otimes x)$ equalizes two morphisms $\ec[\CA]{\CL}(x \otimes y,y \otimes x) \rightrightarrows \ec[\CA]{\CL}(a \otimes x \otimes y,y \otimes x \otimes a)$ because $\beta_{-,y}$ is a half-braiding in the underlying category. Therefore, $\one \xrightarrow{\beta_{x,y}} \ec[\CA]{\CL}(x \otimes y,y \otimes x)$ factors through $[(x \otimes y,\ec{\beta_{-,x \otimes y}}),(y \otimes x,\ec{\beta_{-,y \otimes x}})]$ and thus gives the braiding morphism in $\ec[\FZ_2(\CA)]{\hbcat_1(\ec[\CA]{\CL})}$, denoted by
\[
(x \otimes y,\ec{\beta_{-,x \otimes y}}) \xrightarrow{\beta_{x,y}} (y \otimes x,\ec{\beta_{-,y \otimes x}}) .
\]
We need to verify that this braiding $\ec{\beta_{-,-}}$ is an enriched natural isomorphism, i.e., the diagram
\scriptsize\[
\xymatrix{
[(x,\ec{\beta_{-,x}}),(z,\ec{\beta_{-,z}})] \otimes [(y,\ec{\beta_{-,y}}),(w,\ec{\beta_{-,w}})] \ar[r]^{c} \ar[dd]_{\ec{\otimes}} & [(y,\ec{\beta_{-,y}}),(w,\ec{\beta_{-,w}})] \otimes [(x,\ec{\beta_{-,x}}),(z,\ec{\beta_{-,z}})] \ar[d]^{\ec{\otimes}} \\
 & [(y \otimes x,\ec{\beta_{-,y \otimes x}}),(w \otimes z,\ec{\beta_{-,w \otimes z}})] \ar[d]^{[\beta_{x,y},1]} \\
[(x \otimes y,\ec{\beta_{-,x \otimes y}}),(z \otimes w,\ec{\beta_{-,z \otimes w}})] \ar[r]^{[1,\beta_{z,w}]} & [(x \otimes y,\ec{\beta_{-,x \otimes y}}),(w \otimes z,\ec{\beta_{-,w \otimes z}})]
}
\]\normalsize
commutes, where $c$ is the braiding of $\CA$. Since the morphism
\[
[(x \otimes y,\ec{\beta_{-,x \otimes y}}),(w \otimes z,\ec{\beta_{-,w \otimes z}})] \to \ec[\CA]{\CL}(x \otimes y,w \otimes z)
\]
is monic, it suffices to show that the diagram
\scriptsize\[
\xymatrix@C=2em{
[(x,\ec{\beta_{-,x}}),(z,\ec{\beta_{-,z}})] [(y,\ec{\beta_{-,y}}),(w,\ec{\beta_{-,w}})] \ar[r] \ar[d] & \ec[\CA]{\CL}(x,z) \ec[\CA]{\CL}(y,w) \ar[r]^{c} \ar[d] & \ec[\CA]{\CL}(y,w) \ec[\CA]{\CL}(x,z) \ar[d]^{\ec{\otimes}} \\
\ec[\CA]{\CL}(x,z) \ec[\CA]{\CL}(y,w) \ar[d] \ar@/_12ex/[dd]_(0.7){\ec{\otimes}} & \ec[\CA]{\CL}(wx,wz) \ec[\CA]{\CL}(yx,wx) \ar[r]^-{\circ} \ar[d]^{\ec[\CA]{\CL}(1,\beta_{z,w}^{-1}) \ec[\CA]{\CL}(\beta_{x,y},1)} & \ec[\CA]{\CL}(yx,wz) \ar[dd]^{\ec[\CA]{\CL}(\beta_{x,y},1)} \\
\ec[\CA]{\CL}(xw,zw) \ec[\CA]{\CL}(xy,xw) \ar[r]^{\ec[\CA]{\CL}(\beta_{x,w}^{-1},1) \ec[\CA]{\CL}(1,\beta_{x,w})} \ar[d]^{\circ} & \ec[\CA]{\CL}(wx,zw) \ec[\CA]{\CL}(xy,wx) \ar[dl]^{\circ} \\
\ec[\CA]{\CL}(xy,zw) \ar[rr]^{\ec[\CA]{\CL}(1,\beta_{z,w})} & & \ec[\CA]{\CL}(xy,wz)
}
\]\normalsize
commutes. The upper left rectangle commutes by the definition of $[(y,\ec{\beta_{-,y}}),(w,\ec{\beta_{-,w}})]$, the lower right pentagon and the lower middle triangle commute due to the associativity of the composition $\circ$, and the other two subdiagrams commtes due to the functoriality of $\ec{\otimes}$. Hence, we conclude that $\ec[\FZ_2(\CA)]{\hbcat_1(\ec[\CA]{\CL})}$ is a well-defined enriched braided monoidal category.
\end{proof}

\begin{rem} \label{rem:enriched_forgetful_functor}
Let $\ec[\CA]{\CL}$ be an enriched monoidal category that satisfies the condition \eqref{cond:star_2}. The forgetful functor $\ec{\forget} : \ec[\FZ_2(\CA)]{\hbcat_1(\ec[\CA]{\CL})} \to \ec[\CA]{\CL}$ is an enriched monoidal functor whose background changing functor is the inclusion $\FZ_2(\CA) \hookrightarrow \CA$.
\end{rem}

\begin{prop} \label{prop:Drinfeld_center_underlying}
The underlying category $\hbcat_1(\ec[\CA]{\CL})$ is a full subcategory of $\FZ_1(\CL)$.
\end{prop}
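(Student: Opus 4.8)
The plan is to realize $\hbcat_1(\ec[\CA]{\CL})$ as a full subcategory of $\FZ_1(\CL)$ by exhibiting a fully faithful, injective-on-objects functor that forgets the enrichment of the half-braiding. On objects it sends $(x,\ec{\beta_{-,x}})$ to $(x,\beta_{-,x})$, where $\beta_{-,x}$ is the underlying natural isomorphism of the enriched half-braiding. This is well-defined into $\FZ_1(\CL)$ precisely because, by definition, the underlying datum $\beta_{-,x}$ of an enriched half-braiding is an ordinary half-braiding, so $(x,\beta_{-,x}) \in \FZ_1(\CL)$. It is injective on objects since, as noted in the remark following the definition of a half-braiding, an enriched half-braiding \emph{is} an ordinary half-braiding $\beta_{-,x}$ subject to one extra $\CA$-naturality condition; hence it is completely determined by its underlying datum, and distinct objects of $\hbcat_1(\ec[\CA]{\CL})$ have distinct images. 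Functoriality is immediate: both the composition in $\hbcat_1(\ec[\CA]{\CL})$ (induced from $\ec[\CA]{\CL}$) and the composition in $\FZ_1(\CL)$ are computed by the composition of the underlying category $\CL$.

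First I would make the hom-sets of $\hbcat_1(\ec[\CA]{\CL})$ explicit. By the definition of the underlying category of an enriched category, $\hbcat_1(\ec[\CA]{\CL})((x,\ec{\beta_{-,x}}),(y,\ec{\beta_{-,y}})) = \FZ_2(\CA)(\one, [(x,\ec{\beta_{-,x}}),(y,\ec{\beta_{-,y}})])$. Using that the structural map $[(x,\ec{\beta_{-,x}}),(y,\ec{\beta_{-,y}})] \to \ec[\CA]{\CL}(x,y)$ is monic in $\FZ_2(\CA)$, together with the faithfulness of the forgetful functor $\FZ_2(\CA)\to\CA$, I would show that postcomposition with this monic yields a bijection between $\FZ_2(\CA)(\one, [(x,\ec{\beta_{-,x}}),(y,\ec{\beta_{-,y}})])$ and the set of morphisms $\zeta : \one_\CA \to \ec[\CA]{\CL}(x,y)$ in $\CA$, i.e. morphisms $f \in \CL(x,y)$, that satisfy the defining diagram \eqref{diag:enriched_half_braiding} with $a=\one$ and $\zeta=f$. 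Surjectivity comes from the universal property of the terminal pair (any such morphism $g$ produces an equalizing $\zeta$, because the structural monic already equalizes the two relevant morphisms), and injectivity from faithfulness of the forgetful functor.

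The computational heart, and the step I expect to require the most care, is to unwind diagram \eqref{diag:enriched_half_braiding} specialized to $a=\one$, $\zeta=f$ into an equation in $\CL$. Reading off the two composites using Remark \ref{rem:hom_functor} (which gives the action of $\ec[\CA]{\CL}(1,\beta_{z,y})$ and $\ec[\CA]{\CL}(\beta_{z,x},1)$ as post- and pre-composition) and the fact that $\ec{\otimes}$ induces the tensor product of morphisms on the underlying category, I would identify the top composite with $\beta_{z,y}\circ(1_z\otimes f)$ and the bottom composite with $(f\otimes 1_z)\circ\beta_{z,x}$, both as morphisms $z\otimes x \to y\otimes z$ in $\CL$. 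Thus the diagram commutes for all $z$ if and only if $\beta_{z,y}\circ(1_z\otimes f) = (f\otimes 1_z)\circ\beta_{z,x}$ for all $z$, which is exactly the condition that $f$ be a morphism $(x,\beta_{-,x})\to(y,\beta_{-,y})$ in $\FZ_1(\CL)$. Together with the object-level injectivity, this proves the functor is fully faithful and injective on objects, so $\hbcat_1(\ec[\CA]{\CL})$ is a full subcategory of $\FZ_1(\CL)$.
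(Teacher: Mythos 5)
Your proposal is correct and follows essentially the same route as the paper's proof: identify the hom-set of the underlying category with $\FZ_2(\CA)(\one,[(x,\ec{\beta_{-,x}}),(y,\ec{\beta_{-,y}})])$, use the universal property of the terminal pair to identify this with the set of morphisms $f:\one\to\ec[\CA]{\CL}(x,y)$ making diagram \eqref{diag:enriched_half_braiding} commute for $a=\one$, and unwind that diagram to the half-braiding compatibility $\beta_{z,y}\circ(1_z\otimes f)=(f\otimes 1_z)\circ\beta_{z,x}$ in $\CL$. Your extra care with the monic structural map and faithfulness of the forgetful functor just makes explicit what the paper compresses into ``by the universal property.''
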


\begin{proof}
Suppose $(x,\ec{\beta_{-,x}}) , (y,\ec{\beta_{-,y}}) \in \ec[\FZ_2(\CA)]{\FZ_1(\ec[\CA]{\CL})}$. By definition we have
\[
(x,\ec{\beta_{-,x}}),(y,\ec{\beta_{-,y}}) \in \FZ_1(\CL).
\]
It suffices to show that
\[
\FZ_2(\CA)(\one,[(x,\ec{\beta_{-,x}}),(y,\ec{\beta_{-,y}})]) \simeq \FZ_1(\CL)((x,\beta_{-,x}),(y,\beta_{-,y})) .
\]
By the universal property, the left hand side is isomorphic to the set of morphisms $f : \one \to \ec[\CA]{\CL}(x,y)$ rendering the following diagram commutative for every $z \in \ec[\CA]{\CL}$.
\[
\xymatrix{
\one \ar[r]^-{1_z \otimes f} \ar[d]_{f \otimes 1_z} & \ec[\CA]{\CL}(z,z) \otimes \ec[\CA]{\CL}(x,y) \ar[r]^{\ec{\otimes}} & \ec[\CA]{\CL}(z \otimes x,z \otimes y) \ar[d]^{\ec[\CA]{\CL}(1,\beta_{z,y})} \\
\ec[\CA]{\CL}(x,y) \otimes \ec[\CA]{\CL}(z,z) \ar[r]_{\ec{\otimes}} & \ec[\CA]{\CL}(x \otimes z,y \otimes z) \ar[r]_{\ec[\CA]{\CL}(\beta_{z,x},1)} & \ec[\CA]{\CL}(z \otimes x,y \otimes z)
}
\]
It means that the equation
\[
\beta_{z,y} \circ (1_z \otimes f) = (f \otimes 1_z) \circ \beta_{z,x}
\]
holds in the underlying category $\CL$. In other words, $f$ is exactly a morphism $f : (x,\beta_{-,x}) \to (y,\beta_{-,y})$ in the Drinfeld center $\FZ_1(\CL)$.
\end{proof}

\begin{rem}
An ordinary monoidal category $\CL$ viewed as a $\Set$-enriched monoidal category always satisfies the condition \eqref{cond:star_2}. Indeed, in this case an enriched half-braiding $(x,\ec{\beta_{-,x}})$ is simply an ordinary half-braiding $(x,\beta_{-,x}) \in \FZ_1(\CL)$, and the hom object $[(x,\ec{\beta_{-,x}}),(y,\ec{-,y})] \in \FZ_2(\Set) = \Set$ is the hom set $\FZ_1(\CL)((x,\beta_{-,x}),(y,\beta_{-,y}))$. As a conclusion, the Drinfeld center $\ec[\FZ_2(\Set)]{\hbcat_1(\ec[\Set]{\CL})}$ is precisely the ordinary Drinfeld center $\FZ_1(\CL)$.
\end{rem}

In the rest of this subsection, we compute the Drinfeld center of an enriched monoidal category $\bc[\CC]{\CM}$ obtained via the canonical construction from the pair $(\CC,\CM) \in \lmod^{\otimes}$ such that $\CM$ is a monoidal left $\overline{\CC}$-module defined by a braided monoidal functor (recall Remark \ref{rem:left_unital_action_in_Mod})
\[
\varphi : \overline{\CC} \to \FZ_1(\CM).
\]


It is straightforward to see that an enriched half-braiding $(x,\ec{\beta_{-,x}})$ is precisely an object $(x,\beta_{-,x})$ in $\FZ_1(\CM)$ transparent to $\varphi(c) \in \FZ_1(\CM)$ for all $c \in \CC$, i.e., an object in $\FZ_2(\varphi)$, which denotes the M\"{u}ger centralizer of $\overline{\CC}$ in $\FZ_1(\CM)$ via $\varphi$. This M\"{u}ger centralizer $\FZ_2(\varphi)$ is also called the relative Drinfeld center of $\CM$. Note that the composite functor $\FZ_2(\CC) \hookrightarrow \overline{\CC} \xrightarrow{\varphi} \FZ_1(\CM)$ factors through $\FZ_2(\CC) \xrightarrow{\phi}\FZ_2(\varphi) \hookrightarrow \FZ_1(\CM)$. Moreover, the image of $\phi$ is contained in the M\"{u}ger center $\FZ_2(\FZ_2(\varphi))$ of $\FZ_2(\varphi)$. Thus $\FZ_2(\varphi)$ is a braided monoidal left $\FZ_2(\CC)$-module with the braided module structure defined by $\phi : \FZ_2(\CC) \to \FZ_2(\FZ_2(\varphi))$.

\begin{lem}
Let $\CC$ be a braided monoidal category and $\CM$ be a monoidal left $\overline{\CC}$-module that is enriched in $\overline{\CC}$. Assume that the monoidal left $\overline{\CC}$-module structure of $\CM$ is defined by a braided monoidal functor $\varphi : \overline{\CC} \to \FZ_1(\CM)$. Then the enriched braided monoidal category $\bc[\CC]{\CM}$ satisfies the condition \eqref{cond:star_2} if and only if $\FZ_2(\varphi)$ is enriched in $\FZ_2(\CC)$. Moreover, in this case the object $[(x,\ec{\beta_{-,x}}),(y,\ec{\beta_{-,y}})]$ is precisely the internal hom in $\FZ_2(\CC)$. 
\end{lem}

\pf 
It is enough to show that $[(x,\ec{\beta_{-,x}}),(y,\ec{\beta_{-,y}})]$ satisfies the same universal property as the internal hom in $\FZ_2(\CC)$. Given an object $a \in \FZ_2(\CC)$ and a morphism $\zeta : a \to [x,y]_\CC$, we can define a morphism $\tilde \zeta : \cf{\varphi}(a) \otimes x \to y$ in $\CM$ by
\[
\tilde \zeta \coloneqq \bigl( \cf{\varphi}(a) \otimes x \xrightarrow{\cf{\varphi}(\zeta) \otimes 1} \cf{\varphi}([x,y]_\CC) \otimes x \xrightarrow{\ev_x} y \bigr) .
\]
Conversely, we have
\[
\zeta = \bigl( a \xrightarrow{\coev_x} [x,\cf{\varphi}(a) \otimes x]_\CC \xrightarrow{[1,\tilde \zeta]} [x,y]_\CC \bigr) .
\]
It is routine to check that $\zeta$ renders the diagram \eqref{diag:enriched_half_braiding} commutative if and only if the following diagram commutes,
\[
\xymatrix{
z \otimes \cf{\varphi}(a) \otimes x \ar[r]^{\gamma_{z,a} \otimes 1} \ar[d]^{1 \otimes \tilde{\zeta}} & \cf{\varphi}(a) \otimes z \otimes x \ar[r]^{1 \otimes \beta_{z,x}} & \cf{\varphi}(a) \otimes x \otimes z \ar[d]^{\tilde{\zeta} \otimes 1} \\
z \otimes y \ar[rr]^{\beta_{z,y}} & & y \otimes z
}
\]
(i.e., $\tilde \zeta$ is a morphism $\tilde \zeta : a \odot (x,\ec{\beta_{-,x}}) \to (y,\ec{\beta_{-,y}})$ in $\FZ_2(\varphi)$).
\end{proof}

\begin{cor} \label{cor:ZHCM}
Let $\CC$ be a braided monoidal category and $\CM$ be a monoidal left $\overline{\CC}$-module that is enriched in $\overline{\CC}$. Assume that the monoidal left $\overline{\CC}$-module structure of $\CM$ is defined by a braided monoidal functor $\varphi : \overline{\CC} \to \FZ_1(\CM)$. If $\FZ_2(\varphi)$ is enriched in $\FZ_2(\CC)$, we have $\ec[\FZ_2(\CC)]{\hbcat_1(\bc[\CC]{\CM})} = \bc[\FZ_2(\CC)]{\FZ_2(\varphi)}$.
\end{cor}

\subsection{The \texorpdfstring{$E_1$}{E1}-centers of enriched monoidal categories in \texorpdfstring{$\ecat$}{ECat}}   \label{sec:E1-center}

In this subsection we prove that the Drinfeld center of an enriched monoidal category is the $E_1$-center in $\ecat$.

\medskip

Let $\ec[\CC]{\CM}$ be an enriched monoidal category that satisfies the condition \eqref{cond:star_2}. Then the Drinfeld center $\ec[\FZ_2(\CC)]{\hbcat_1(\bc[\CC]{\CM})}$ is a well-defined enriched braided monoidal category.

Suppose $\ec[\CA]{\CL}$ is an enriched monoidal category. A left unital action of $\ec[\CA]{\CL}$ on $\ec[\CC]{\CM}$ is an enriched monoidal functor $\ec{\odot}$ and an enriched monoidal natural isomorphism $\ec{u}$ as depicted in the following diagram:
\be \label{diag:E1_left_unital_action}
\begin{array}{c}
\xymatrix{
 & \ec[\CA]{\CL} \times \ec[\CC]{\CM} \ar[dr]^{\ec{\odot}} \\
{*} \times \ec[\CC]{\CM} \ar[ur]^{\ec{\one} \times 1} \ar[rr] \rrtwocell<\omit>{<-3>\;\ec{u}} & & \ec[\CC]{\CM}
}
\end{array} .
\ee

\begin{expl}
There is an obvious left unital action of the Drinfeld center $\ec[\FZ_2(\CC)]{\hbcat_1(\ec[\CC]{\CM})}$ on $\ec[\CC]{\CM}$:
\[
\xymatrix{
 & \ec[\FZ_2(\CC)]{\hbcat_1(\ec[\CC]{\CM})} \times \ec[\CC]{\CM} \ar[dr]^{\ec{\circledast}} \\
{*} \times \ec[\CC]{\CM} \ar[ur]^{\ec{\one} \times 1} \ar[rr] \rrtwocell<\omit>{<-3>\;\ec{v}} & & \ec[\CC]{\CM}
}
\]
The enriched monoidal functor $\ec{\circledast} \coloneqq \ec{\otimes} \circ (\ec{\forget} \times 1)$, where $\ec{\forget} : \ec[\FZ_2(\CC)]{\hbcat_1(\ec[\CC]{\CM})} \to \ec[\CC]{\CM}$ is the forgetful functor (see Remark \ref{rem:enriched_forgetful_functor}). The monoidal structure of $\ec{\circledast}$ is induced by the braided monoidal structure of $\FZ_2(\CC) \times \CC \to \CC$ and the monoidal structure of $\FZ_1(\CM) \times \CM \to \CM$ (recall that $\hbcat_1(\ec[\CC]{\CM})$ is a full subcategory of $\FZ_1(\CM)$). The background changing natural transformation of $\ec{v}$ is given by the left unitor of $\CC$ and the underlying natural transformation of $\ec{v}$ is given by the left unitor of $\CM$.
\end{expl}

\begin{thm} \label{thm:Drinfeld_center=E1_center}
The Drinfeld center is the $E_1$-center in $\ecat$, i.e., we have an equivalence of enriched braided monoidal categories $\FZ_1(\ec[\CC]{\CM})\simeq \ec[\FZ_2(\CC)]{\hbcat_1(\ec[\CC]{\CM})}$. 
\end{thm}

\begin{proof}
Suppose $(\ec{\odot},\ec{u})$ is a left unital action of an enriched monoidal category $\ec[\CA]{\CL}$ on $\ec[\CC]{\CM}$ as depicted in the diagram \eqref{diag:E1_left_unital_action}. First we show that there is an enriched monoidal functor $\ec{P} : \ec[\CA]{\CL} \to \ec[\FZ_2(\CC)]{\hbcat_1(\ec[\CC]{\CM})}$ and an enriched monoidal natural isomorphism $\ec{\rho}$ such that the following pasting diagrams
{\small
\be\label{diag:E_1_center_in_ecat_1}
        \begin{array}{c}
\xymatrix @C=0.3in @R=0.25in{
    & \ec[\FZ_2(\CC)]{\hbcat_1(\ec[\CC]{\CM})} \times \ec[\CC]{\CM} \ar@/^3ex/[ddr]^{\ec{\circledast}} \\
            \rtwocell<\omit>{\quad \ec{P^0} \times 1}& \ec[\CA]{\CL} \times \ec[\CC]{\CM} \ar[dr]^{\ec{\odot}} \ar[u]|{\ec{P} \times 1_{\ec[\CC]{\CM}}} \rtwocell<\omit>{\ec{\rho}} & \\
            \ast \times \ec[\CC]{\CM} \ar[rr] \ar@/^3ex/[uur]^{\ec{\one}\times 1_{\ec[\CC]{\CM}}} \ar[ur]^{\ec{\one_\CL} \times 1_{\ec[\CC]{\CM}}} \rrtwocell<\omit>{<-2.5> \ec{u}}  & & \ec[\CC]{\CM}
}
\end{array}
=
\begin{array}{c}
\xymatrix @C=-0.2in{
    & \ec[\FZ_2(\CC)]{\hbcat_1(\ec[\CC]{\CM})} \times \ec[\CC]{\CM} \ar[dr]^{\ec{\circledast}} \\
    \ast \times \ec[\CC]{\CM} \ar[ur]^{\ec{\one} \times 1_{\ec[\CC]{\CM}}} \ar[rr] \rrtwocell<\omit>{<-3> \ec{v}} & & \ec[\CC]{\CM}
}
\end{array}.
\ee}
are equal.
\bit
\item Since the underlying functor $\odot$ of $\ec{\odot}$ is a monoidal functor, for any $x \in \ec[\CA]{\CL}$, the natural isomorphism
\begin{multline*}
\beta_{m,x} : m \otimes (x \odot \one_\CM) \xrightarrow{u_m^{-1} \otimes 1} (\one_\CL \odot m) \otimes (x \odot \one_\CM) \simeq x \odot m \\
\simeq (x \odot \one_\CM) \otimes (\one_\CL \odot m) \xrightarrow{1 \otimes u_m} (x \odot \one_\CM) \otimes m
\end{multline*}
is a half-braiding on $x \odot \one_\CM \in \FZ_1(\CM)$ (see Example \ref{expl:centers_in_cat}). It is clear that $\ec{\beta_{-,x}}$ is a $\CC$-natural isomorphism because it is the composition of 2-isomorphisms in $\ecat$ and the background changing natural transformation is identity. Hence $x \odot \one_\CM$ together with $\ec{\beta_{-,x}}$ is an object in $\ec[\FZ_2(\CC)]{\hbcat_1(\ec[\CC]{\CM})}$. Then we define an enriched monoidal functor $\ec{P} : \ec[\CA]{\CL} \to \ec[\FZ_2(\CC)]{\hbcat_1(\ec[\CC]{\CM})}$ as follows:
\bit
\item The background changing functor $\hat P$ is given by $- \hodot \one_\CC : \CA \to \FZ_2(\CC)$, which is a braided monoidal functor (see Example \ref{expl:centers_in_cat}).
\item The object $P(x) \in \ec[\FZ_2(\CC)]{\hbcat_1(\ec[\CC]{\CM})}$ is defined by the object $x \odot \one_\CM$ together with the half-braiding $\ec{\beta_{-,x}}$.
\item The morphism $\ec{P}_{x,y} : \ec[\CA]{\CL}(x,y) \hodot \one_\CC \to [P(x),P(y)]$ is induced by the morphism
\[
\ec[\CA]{\CL}(x,y) \hodot \one_\CC \xrightarrow{1 \hat{\odot} 1_{\one_\CM}} \ec[\CA]{\CL}(x,y) \hodot \ec[\CC]{\CM}(\one_\CM,\one_\CM) \xrightarrow{\ec{\odot}} \ec[\CC]{\CM}(x \odot \one_\CM,y \odot \one_\CM)
\]
and the universal property of $[P(x),P(y)]$. In other words, $\ec{P}_{x,y}$ is the unique morphism rendering the following diagram
\[
\xymatrix{
\ec[\CA]{\CL}(x,y) \hodot \one_\CC \ar[r]^{\ec{P}_{x,y}} \ar[d]^{1 \hat{\odot} 1_{\one_\CM}} & [P(x),P(y)] \ar[d] \\
\ec[\CA]{\CL}(x,y) \hodot \ec[\CC]{\CM}(\one_\CM,\one_\CM) \ar[r]^{\ec{\odot}} & \ec[\CC]{\CM}(x \odot \one_\CM,y \odot \one_\CM)
}
\]
commutative.
\item The monoidal structure is induced by that of $\ec{\odot}$.
\eit
\item The enriched monoidal natural isomorphism $\ec{\rho} : \ec{\circledast} \circ (\ec{P} \times 1) \Rightarrow \ec{\odot}$ is defined by the background changing natural transformation
\[
\hat \rho_{a,c} : (a \hodot \one_\CC) \hotimes c \xrightarrow{1 \otimes \hat u_c^{-1}} (a \hodot \one_\CC) \hotimes {(\one_\CA \hodot c)} \xrightarrow{\simeq} a \hodot c
\]
and the underlying natural transformation
\[
\rho_{x,m} : (x \odot \one_\CM) \otimes m \xrightarrow{1 \otimes u_m^{-1}} (x \odot \one_\CM) \otimes (\one_\CL \odot m) \xrightarrow{\simeq} x \odot m .
\]
\eit
It is not hard to check that the equation \eqref{diag:E_1_center_in_ecat_1} holds.

\smallskip

Let $(\ec{Q_i}, \ec{\rho_i})$, $i=1,2$, be two pairs such that the similar equations as depicted by \eqref{diag:E_1_center_in_ecat_1} hold. We only need to show that there exists a unique enriched monoidal natural isomorphism $\ec{\alpha}: \ec{Q_1} \Rightarrow \ec{Q_2}$ such that
    \begin{align}
   \begin{array}{c}
       \xymatrix @R=0.3in @C=0.3in{
           \ec[\FZ_2(\CC)]{\hbcat_1(\ec[\CC]{\CM})} \times \ec[\CC]{\CM} \ar@/^2ex/[dr]^-{\ec{\circledast}} \rtwocell<\omit>{<5> \;\ec{\rho_2}} & \\
        \ec[\CA]{\CL} \times \ec[\CC]{\CM} \utwocell<4.5>^{\ec{Q_1} \times 1 \quad}_{\quad \ec{Q_2} \times 1}{\ec{\alpha} \times 1} \ar[r]_-{\ec{\odot}}& \ec[\CC]{\CM}
    }
    \end{array} & =
    \begin{array}{c}
       \xymatrix @R=0.3in @C=0.3in{
        \ec[\FZ_2(\CC)]{\hbcat_1(\ec[\CC]{\CM})} \times \ec[\CC]{\CM} \ar@/^2ex/[dr]^-{\ec{\circledast}} \rtwocell<\omit>{<5> \;\ec{\rho_1}} & \\
        \ec[\CA]{\CL} \times \ec[\CC]{\CM} \ar[u]^-{\ec{Q_1} \times 1} \ar[r]_-{\ec{\odot}}& \ec[\CC]{\CM}
    }
    \end{array}. \label{diag:E_1_center_in_ecat_3}
\end{align}

Since $\FZ_2(\CC)$ is the $E_0$-center of $\CC$ in $\Algn[2](\cat)$ (see Example \ref{expl:centers_in_cat}), there exists a unique monoidal natural isomorphism $\hat{\alpha}: \hat{Q}_1 \Rightarrow \hat{Q}_2$ such that  
\begin{align*}
\begin{array}{c}
       \xymatrix @R=0.3in @C=0.5in{
           \FZ_2(\CC) \times \CC \ar@/^2ex/[dr]^-{\hat{\circledast}} \rtwocell<\omit>{<4.5> \;\hat{\rho}_2} & \\
        \CA \times \CC \utwocell<4.5>^{\hat{Q}_1 \times 1 \quad}_{\quad \hat{Q}_2 \times 1}{\hat{\alpha} \times 1} \ar[r]_-{\hat{\odot}}& \CC
    }
    \end{array} =
    \begin{array}{c}
       \xymatrix @R=0.3in @C=0.4in{
        \FZ_2(\CC) \times \CC \ar@/^2ex/[dr]^-{\hat{\circledast}} \rtwocell<\omit>{<4.5> \;\hat{\rho}_1} & \\
        \CA \times \CC \ar[u]^-{\hat{Q}_1 \times 1} \ar[r]_-{\hat{\odot}}& \CC
    }
    \end{array}.
\end{align*}
Define $\alpha_a: \one_\CC \to [Q_1(a), Q_2(a)]$ to be the unique morphism rendering the following diagram 
\begin{align*}
    \begin{array}{c}
    \xymatrix @R=0.2in{
        \one_\CC  \ar[r]^-{\alpha_a} \ar[d]_{(\rho_2)_{(a,\one)}^{-1} \circ (\rho_1)_{(a,\one)}} &  [Q_1(a), Q_2(a)] \ar[d]\\
        \ec[\CC]{\CM}(Q_1(a) \otimes \one_\CM,Q_2(a) \otimes \one_\CM) \ar[r]^-{\simeq} & \ec[\CC]{\CM}(Q_1(a), Q_2(a)) 
    }
    \end{array}
\end{align*}
commutative. Then the enriched natural isomorphism $\ec{\alpha}$ define by $(\hat{\alpha}, \{\alpha_a\})$ is the unique enriched natural isomorphism such that the equation \eqref{diag:E_1_center_in_ecat_3} holds.

It is routine to check that the braiding structure of $\ec[\FZ_2(\CC)]{\hbcat_1(\ec[\CC]{\CM})}$ coincides with the $E_2$-algebra structure of the $E_1$-center of $\ec[\CC]{\CM}$ in $\ecat$ induced from the universal property. 
\end{proof}

Combining Corollary\,\ref{cor:ZHCM} and Theorem\,\ref{thm:Drinfeld_center=E1_center}, we obtain a corollary. 
\begin{cor} \label{cor:E1_center=Z2Z2}
Let $\CC$ be a braided monoidal category and $\CM$ a strongly unital monoidal left $\overline{\CC}$-module that is enriched in $\overline{\CC}$.  If $\bc[\CC]{\CM}$ satisfies the condition \eqref{cond:star_2}, we have 
\[
\FZ_1(\bc[\CC]{\CM})=\bc[\FZ_2(\CC)]{\FZ_2(\varphi)}.
\]
\end{cor}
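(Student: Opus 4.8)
The plan is to obtain this corollary by chaining the two results that precede it, both applied to the single enriched monoidal category $\bc[\CC]{\CM}$; there is no genuinely new argument, so the work is really just bookkeeping with the hypotheses.

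First I would record that, under the stated assumptions, $\bc[\CC]{\CM}$ is a bona fide $\CC$-enriched monoidal category. Indeed, $\CM$ is a strongly unital monoidal left $\overline{\CC}$-module enriched in $\CC$, given by a braided monoidal functor $\varphi : \overline{\CC} \to \FZ_1(\CM)$, so the canonical construction of Example \ref{expl:canonical_construction_enriched_monodial_cat} supplies $\bc[\CC]{\CM}$ with its enriched monoidal structure. By hypothesis it also satisfies the condition \eqref{cond:star_2}, which is exactly the running assumption needed both to define the Drinfeld center in Section \ref{sec:E1_center_enriched} and to invoke Corollary \ref{cor:ZHCM}. Thus both Theorem \ref{thm:Drinfeld_center=E1_center} and Corollary \ref{cor:ZHCM} are available for this object.

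Next I would instantiate Theorem \ref{thm:Drinfeld_center=E1_center} with $\ec[\CC]{\CM} \coloneqq \bc[\CC]{\CM}$, which identifies the $E_1$-center with the Drinfeld center,
\[
\FZ_1(\bc[\CC]{\CM}) \simeq \ec[\FZ_2(\CC)]{\hbcat_1(\bc[\CC]{\CM})},
\]
as enriched braided monoidal categories. Corollary \ref{cor:ZHCM} then computes the right-hand side explicitly as $\ec[\FZ_2(\CC)]{\hbcat_1(\bc[\CC]{\CM})} = \bc[\FZ_2(\CC)]{\FZ_2(\varphi)}$, where $\FZ_2(\varphi)$ is the M\"uger centralizer of $\overline{\CC}$ in $\FZ_1(\CM)$, regarded as a braided monoidal left $\FZ_2(\CC)$-module via $\phi : \FZ_2(\CC) \to \FZ_2(\FZ_2(\varphi))$. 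Composing the two identifications gives $\FZ_1(\bc[\CC]{\CM}) = \bc[\FZ_2(\CC)]{\FZ_2(\varphi)}$, as claimed.

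The only points that merit attention are the matching of hypotheses and the nature of the identification. The main obstacle, such as it is, lies entirely in the cited results rather than in their combination; here I would simply verify that Theorem \ref{thm:Drinfeld_center=E1_center} and Corollary \ref{cor:ZHCM} are being applied to the same enriched monoidal category and that \eqref{cond:star_2} is their common hypothesis. Note that Corollary \ref{cor:ZHCM} is a genuine equality of enriched categories (its hom objects coincide with the internal homs in $\FZ_2(\CC)$ computed for the module $\FZ_2(\varphi)$), whereas Theorem \ref{thm:Drinfeld_center=E1_center} supplies an equivalence characterized by a universal property; accordingly the conclusion is most precisely read as an equivalence $\FZ_1(\bc[\CC]{\CM}) \simeq \bc[\FZ_2(\CC)]{\FZ_2(\varphi)}$ of enriched braided monoidal categories.
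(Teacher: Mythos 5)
Your proposal is correct and matches the paper's proof exactly: the paper derives this corollary precisely by combining Theorem~\ref{thm:Drinfeld_center=E1_center} (Drinfeld center $=$ $E_1$-center) with Corollary~\ref{cor:ZHCM} (Drinfeld center of $\bc[\CC]{\CM}$ equals $\bc[\FZ_2(\CC)]{\FZ_2(\varphi)}$), applied to the same object under the common hypothesis \eqref{cond:star_2}. Your added remarks on hypothesis-matching and on reading the conclusion as an equivalence of enriched braided monoidal categories are sound refinements of the same argument, not a different route.
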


\begin{rem} \label{rem:E1_center_physical}
Corollary \ref{cor:E1_center=Z2Z2} has important applications in physics. When $\CC$ is a non-degenerate braided fusion category and $\CM$ is an indecomposable multi-fusion left $\CC$-module defined by a braided monoidal functor $\varphi: \overline{\CC} \to \FZ_1(\CM)$ \cite{KZ18}, the condition \eqref{cond:star_2} is satisfied automatically. In this case, we obtain $\FZ_1(\bc[\CC]{\CM}) \simeq \FZ_2(\varphi)$. This recovers Corollary 5.4 in \cite{KZ18a}. This result has important applications in the theory of gapless boundaries of 2+1D topological orders \cite{KZ18b,KZ20,KZ21}. Moreover, for an indecomposable multi-fusion category $\CA$ and a finite semisimple left $\CA$-module $\CL$, then we have 
\[
\FZ_1(\FZ_0(\bc[\CA]{\CL}))\simeq \bk, 
\] 
i.e., the center of a center is trivial. This is the mathematical manifestation of a physical fact: ``the bulk of a bulk is trivial''.
\end{rem}


\section{Enriched symmetric monoidal categories} \label{sec:enriched_symmetric_monoidal_categories}

In this section, we study enriched symmetric monoidal categories, canonical construction and the $E_2$-centers of enriched braided monoidal categories.

\subsection{Definitions and canonical construction}

\begin{defn}
An enriched braided monoidal category $\ec[\CA]{\CL}$ is called an \textit{enriched symmetric monoidal category} if the underlying braided monoidal category $\CL$ is symmetric. 
\end{defn}

\begin{defn}
An enriched braided monoidal functor $\ec{F} \colon \ec[\CA]{\CL} \to \ec[\CB]{\CM}$ between two enriched symmetric monoidal categories $\ec[\CA]{\CL}$ and $\ec[\CB]{\CM}$ is also called an \textit{enriched symmetric monoidal functor}.
\end{defn}

\begin{defn}
Let $\CA$ be a symmetric monoidal category and $\CL$ be a braided monoidal left $\CA$-\oplax module. If $\CL$ is symmetric, we say $\CL$ is a \textit{symmetric monoidal left $\CA$-\oplax module}.
\end{defn}

\begin{defn}
A braided monoidal $\hat F$-lax functor $F \colon \CL \to \CM$ between two symmetric monoidal categories $\CL$ and $\CM$ is also called a \textit{symmetric monoidal $\hat F$-lax functor}.
\end{defn}

Let $\ecat^{\mathrm{sym}}$ be a symmetric monoidal full sub-2-category of $\ecat^{\mathrm{br}}$ consisting of enriched symmetric monoidal categories. Let $\lmod^{\mathrm{sym}}$ be the symmetric monoidal full sub-2-category of $\lmod^{\mathrm{br}}$ consisting of pairs $(\CA,\CL)$ such that $\CL$ is strongly unital symmetric monoidal left $\CA$-\oplax module that is enriched in $\CA$.
We obtain a corollary of Theorem \ref{thm:canonical_construction_2-functor_braided}.


\begin{cor} 
The canonical construction induces a symmetric monoidal 2-functor $\lmod^{\mathrm{sym}} \to \ecat^{\mathrm{sym}}$. Moreover, this 2-functor is locally isomorphic. 
\end{cor}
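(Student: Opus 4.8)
The plan is to obtain the asserted 2-functor simply as the restriction of the canonical construction 2-functor $\Phi : \lmod^{\mathrm{br}} \to \ecat^{\mathrm{br}}$ supplied by Theorem \ref{thm:canonical_construction_2-functor_braided}, and to check that it carries the symmetric full sub-2-categories into one another. Since $\lmod^{\mathrm{sym}}$ and $\ecat^{\mathrm{sym}}$ are, by definition, \emph{full} sub-2-categories of $\lmod^{\mathrm{br}}$ and $\ecat^{\mathrm{br}}$, the only genuine content is an object-level verification; fullness then takes care of $1$- and $2$-morphisms automatically.

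First I would check that $\Phi$ sends objects of $\lmod^{\mathrm{sym}}$ into $\ecat^{\mathrm{sym}}$. Given $(\CA,\CL) \in \lmod^{\mathrm{sym}}$, Theorem \ref{thm:canonical_construction_2-functor_braided} already produces the enriched braided monoidal category $\bc[\CA]{\CL}$, whose braiding is the pair $(\hat c, c)$ with $c$ the braiding of $\CL$ (Proposition \ref{prop:braided_en_braided_oplax_module}). Because $\CL$ is strongly unital, its underlying braided monoidal category is $(\CL, c)$ itself; as $\CL$ is symmetric by hypothesis, $(\CL, c)$ is symmetric, so $\bc[\CA]{\CL}$ is an enriched symmetric monoidal category, i.e. an object of $\ecat^{\mathrm{sym}}$.

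Next, for a $1$-morphism $(\hat F, F) : (\CA,\CL) \to (\CB,\CM)$ between objects of $\lmod^{\mathrm{sym}}$, fullness identifies it with a $1$-morphism of $\lmod^{\mathrm{br}}$, so $\hat F$ is symmetric monoidal and $F$ is a braided monoidal $\hat F$-lax functor between symmetric monoidal categories. By Proposition \ref{prop:enriched_braided_functor_briaded_lax_functor}, $\Phi(\hat F, F)$ is an enriched braided monoidal functor $\ec{F} : \bc[\CA]{\CL} \to \bc[\CB]{\CM}$ whose underlying functor is $F$; being braided monoidal between symmetric monoidal categories, $F$ is automatically symmetric monoidal, whence $\ec{F}$ is an enriched symmetric monoidal functor, a $1$-morphism of $\ecat^{\mathrm{sym}}$. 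The $2$-morphisms of the symmetric sub-2-categories are literally those of $\lmod^{\mathrm{br}}$ and $\ecat^{\mathrm{br}}$, so $\Phi$ restricts to a $2$-functor $\lmod^{\mathrm{sym}} \to \ecat^{\mathrm{sym}}$.

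Finally, both symmetric sub-2-categories are full symmetric monoidal sub-2-categories (the Cartesian product of symmetric objects and morphisms is again symmetric, and the unit $(\ast,\ast)$ lies in each), so the restricted $2$-functor inherits the symmetric monoidal structure of $\Phi$. For local isomorphism, fullness gives, for $(\CA,\CL),(\CB,\CM) \in \lmod^{\mathrm{sym}}$, an equality of hom-categories $\lmod^{\mathrm{sym}}((\CA,\CL),(\CB,\CM)) = \lmod^{\mathrm{br}}((\CA,\CL),(\CB,\CM))$ and likewise on the target side; hence the component $\Phi_{(\CA,\CL),(\CB,\CM)}$, already an isomorphism of categories by Theorem \ref{thm:canonical_construction_2-functor_braided}, is the required local isomorphism. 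There is no substantial obstacle: the argument is a routine restriction to full sub-2-categories, and the only point deserving care is the remark that a braided monoidal functor between symmetric monoidal categories is automatically symmetric, which is what makes the symmetric $1$-morphisms coincide with the braided ones.
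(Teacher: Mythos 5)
Your proposal is correct and matches the paper's intent exactly: the paper states this result as an immediate corollary of Theorem \ref{thm:canonical_construction_2-functor_braided}, relying on precisely the restriction-to-full-sub-2-categories argument you spell out (note that by the paper's Definitions the ``symmetric'' functors are literally the braided ones between symmetric objects, so even your one point of care is definitional). Your write-up simply makes the routine verification explicit.
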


\begin{proof}
    Let $(\CA, \CL) \in \lmod^{\mathrm{br}}$. Then $\bc[\CA]{\CL}$ is an enriched monoidal category if and only if $\CL$ is a symmetric monoidal left $\CA$-\oplax module. Then Theorem \ref{thm:canonical_construction_2-functor_braided} implies that the canonical construction induces a locally isomorphic symmetric monoidal 2-functor from $\lmod^{\mathrm{sym}}$ to $\ecat^{\mathrm{sym}}$.
\end{proof}

\begin{defn}
Let $\ec[\CA]{\CL}$ be an enriched braided monoidal category. The M\"{u}ger center $\ec[\CA]{\hbcat_2(\ec[\CA]{\CL})}$ of $\ec[\CA]{\CL}$ is defined by the subcategory of $\ec[\CA]{\CL}$ consisting of the transparent objects in $\CL$ and the same hom spaces as those in $\ec[\CA]{\CL}$ (i.e., $\ec[\CA]{\hbcat_2(\ec[\CA]{\CL})} \coloneqq \ec[\CA]{\FZ_2(\CL)}$).
\end{defn}

\begin{rem}
The M\"{u}ger center $\ec[\Set]{\hbcat_2(\ec[\Set]{\CL})}$ of a $\Set$-enriched braided monoidal category $\ec[\Set]{\CL}$ is precisely the usual M\"{u}ger center $\FZ_2(\CL)$ \cite{Mueg03a} of the braided monoidal category $\CL$.
\end{rem}

\begin{thm} \label{thm:mueger-center-canonical-construction}
Let $\CA$ be a symmetric monoidal category and $\CL$ be a braided monoidal left $\CA$-module that is enriched in $\CA$. Then $\FZ_1(\CL)$ is also enriched in $\CA$ and we have $\ec[\CA]{\hbcat_2(\bc[\CA]{\CL})} = \bc[\CA]{\FZ_2(\CL)}$. 
\end{thm}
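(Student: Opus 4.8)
The plan is to show that the two enriched symmetric monoidal categories $\ec[\CC]{\hbcat_2(\bc[\CC]{\CM})}$ and $\bc[\CC]{\FZ_2(\CM)}$ coincide on the nose---same objects, same hom objects, and same structural morphisms. I would begin at the level of objects. Since $\CM$ is a strongly unital left $\CC$-module, the underlying braided monoidal category of $\bc[\CC]{\CM}$ is canonically identified with $\CM$ via the isomorphism determined by \eqref{diag:can_con_identify_with_origin_rem_diag_I}. Therefore the transparent objects of the underlying category of $\bc[\CC]{\CM}$ are precisely the objects of the M\"uger center $\FZ_2(\CM)$, so $\ec[\CC]{\hbcat_2(\bc[\CC]{\CM})}$ has the same set of objects as $\bc[\CC]{\FZ_2(\CM)}$. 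By definition $\ec[\CC]{\hbcat_2(\bc[\CC]{\CM})}$ inherits its hom objects, identities, compositions, tensor product and braiding from $\bc[\CC]{\CM}$, so the task reduces to checking that the canonical construction of $\FZ_2(\CM)$ reproduces exactly those restricted data.

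Next I would verify that $\FZ_2(\CM)$ is itself a strongly unital symmetric monoidal left $\CC$-module enriched in $\CC$, so that $\bc[\CC]{\FZ_2(\CM)}$ makes sense. Because $\CM$ is a braided monoidal left $\CC$-module, the functor $\varphi \coloneqq (-\odot\one_\CM) : \CC \to \FZ_2(\CM)$ is symmetric monoidal and the action satisfies $a \odot x \simeq \cf{\varphi}(a)\otimes x$. As $\FZ_2(\CM)$ is a full symmetric monoidal subcategory of $\CM$ that is closed under tensor product, and both $\cf{\varphi}(a)$ and $x$ lie in $\FZ_2(\CM)$ when $x \in \FZ_2(\CM)$, it follows that $a \odot x \in \FZ_2(\CM)$; hence $\FZ_2(\CM)$ is closed under the $\CC$-action. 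Since $\FZ_2(\CM)$ is symmetric we have $\FZ_2(\FZ_2(\CM)) = \FZ_2(\CM)$, so $\varphi$ upgrades $\FZ_2(\CM)$ to a symmetric monoidal left $\CC$-module.

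The main step---and the place where I expect the real work to be---is the comparison of internal homs. For $x,y \in \FZ_2(\CM)$ and $a \in \CC$, the closure under the action together with the fullness of $\FZ_2(\CM) \hookrightarrow \CM$ gives natural isomorphisms
\[
\FZ_2(\CM)(a \odot x, y) = \CM(a \odot x, y) \simeq \CC(a, [x,y]_\CC),
\]
so the same object $[x,y]_\CC \in \CC$ represents the functor $a \mapsto \FZ_2(\CM)(a\odot x, y)$. Thus $\FZ_2(\CM)$ is enriched in $\CC$ with identical internal homs $[x,y]_\CC$. Moreover, because the terminal pair $([x,y]_\CC, \ev_x)$ is unique and every object appearing in its defining data (such as $[x,y]_\CC \odot x$) already lies in the full subcategory $\FZ_2(\CM)$, the evaluation $\ev_x$ and coevaluation $\coev_x$ for $\FZ_2(\CM)$ are literally the restrictions of those for $\CM$.

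Finally I would assemble the structures. By \eqref{diag:internal_hom_unit_mul_def} the identity morphisms $1_x$ and the composition morphisms of a canonical construction are uniquely determined by $\ev$ and $\coev$; since these agree, $\bc[\CC]{\FZ_2(\CM)}$ and $\ec[\CC]{\hbcat_2(\bc[\CC]{\CM})}$ have the same identities and compositions as well as the same hom objects $[x,y]_\CC$. The tensor product and braiding of $\bc[\CC]{\FZ_2(\CM)}$ are produced, via Proposition~\ref{prop:monodal_en_monoidal_oplax_module} and Proposition~\ref{prop:braided_en_braided_oplax_module}, from the monoidal left $\CC$-module structure on $\FZ_2(\CM)$, which is exactly the restriction of the structure on $\CM$ that yields the tensor product and braiding of $\bc[\CC]{\CM}$. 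Hence all structural morphisms match and $\ec[\CC]{\hbcat_2(\bc[\CC]{\CM})} = \bc[\CC]{\FZ_2(\CM)}$. The only genuinely nontrivial point is the internal-hom identification, which rests on the fullness of the M\"uger center together with its stability under the $\CC$-action; the remaining verifications are bookkeeping with the definitions of the canonical construction.
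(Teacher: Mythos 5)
Your proof is correct. Note that the paper itself states this theorem with no proof at all (it is evidently regarded as routine), so there is nothing to compare against; your argument supplies exactly the verification the paper leaves implicit. You also correctly isolate the one point with real content: since $\FZ_2(\CM)$ is a full subcategory of $\CM$ closed under the $\CC$-action (because $a\odot x\simeq \cf{\varphi}(a)\otimes x$ with $\cf{\varphi}(a)$ transparent, and the M\"uger center is a monoidal subcategory closed under isomorphism), one has $\FZ_2(\CM)(a\odot x,y)=\CM(a\odot x,y)\simeq\CC(a,[x,y]_\CC)$, so the internal homs, evaluations and coevaluations of $\FZ_2(\CM)$ can be taken to be literally those of $\CM$; all remaining structure (identities, composition, tensor product morphisms, braiding) is then forced to agree by the uniqueness clauses in the universal properties used in the canonical construction (diagrams \eqref{diag:internal_hom_unit_mul_def} and Propositions \ref{prop:monodal_en_monoidal_oplax_module} and \ref{prop:braided_en_braided_oplax_module}). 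This is the natural and, as far as I can see, the intended argument.
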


\subsection{The \texorpdfstring{$E_2$}{E2}-centers of enriched braided monoidal categories in \texorpdfstring{$\ecat$}{ECat}}

In this subsection we prove that the M\"{u}ger center $\ec[\CC]{\hbcat_2(\ec[\CC]{\CM})}$ of an enriched braided monoidal category $\ec[\CC]{\CM}$ is its $E_2$-center in $\ecat$.

\medskip

Suppose $\ec[\CE]{\CL}$ is an enriched braided monoidal category. A left unital action of $\ec[\CA]{\CL}$ on $\ec[\CC]{\CM}$ is an enriched braided monoidal functor $\ec{\odot}$ and an enriched monoidal natural isomorphism $\ec{u}$ as depicted in the following diagram:
\be \label{diag:E2_left_unital_action}
\begin{array}{c}
\xymatrix{
 & \ec[\CE]{\CL} \times \ec[\CC]{\CM} \ar[dr]^{\ec{\odot}} \\
{*} \times \ec[\CC]{\CM} \ar[ur]^{\ec{\one} \times 1} \ar[rr] \rrtwocell<\omit>{<-3>\;\ec{u}} & & \ec[\CC]{\CM}
}
\end{array} .
\ee

\begin{expl}
There is an obvious left unital action of the M\"{u}ger center $\ec[\CC]{\hbcat_2(\ec[\CC]{\CM})}$ on $\ec[\CC]{\CM}$:
\[
\xymatrix{
 & \ec[\CC]{\hbcat_2(\ec[\CC]{\CM})} \times \ec[\CC]{\CM} \ar[dr]^{\ec{\otimes}} \\
{*} \times \ec[\CC]{\CM} \ar[ur]^{\ec{\one} \times 1} \ar[rr] \rrtwocell<\omit>{<-3>\;\ec{v}} & & \ec[\CC]{\CM}
}
\]
The enriched braided monoidal functor $\ec{\otimes}$ is given by the tensor product of $\ec[\CC]{\CM}$. The background changing natural isomorphism of $\ec{v}$ is given by the left unitor of $\CC$ and the underlying natural isomorphism of $\ec{v}$ is given by the left unitor of $\CM$.
\end{expl}

\begin{thm} \label{thm:Mueger_center=E2_center}
The M\"{u}ger center $\ec[\CC]{\hbcat_2(\ec[\CC]{\CM})}$ is the $E_2$-center of $\ec[\CC]{\CM}$ in $\ecat$.
\end{thm}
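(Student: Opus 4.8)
The plan is to follow the strategy of the proof of Theorem~\ref{thm:Drinfeld_center=E1_center} almost verbatim, replacing the Drinfeld center by the M\"uger center throughout. By definition the $E_2$-center of $\ec[\CC]{\CM}$ is its $E_0$-center in the symmetric monoidal 2-category $\ecat^{\mathrm{br}}=\Alg_{E_2}(\ecat)$, i.e.\ the terminal left unital action on $\ec[\CC]{\CM}$. The canonical action $(\ec{\otimes},\ec{v})$ of $\ec[\CC]{\hbcat_2(\ec[\CC]{\CM})}$ exhibited just above the statement supplies the candidate, and it remains to prove that it is terminal. The structural fact special to this case is that, because the background category $\CC$ is now symmetric, its $E_0$-center in $\Algn[3](\cat)$ is $\CC$ itself (Example~\ref{expl:centers_in_cat}(4)); consequently the background changing functors of all comparison data take values in $\CC$.

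First I would produce, for an arbitrary left unital action $(\ec{\odot},\ec{u})$ of an enriched braided monoidal category $\ec[\CE]{\CL}$ on $\ec[\CC]{\CM}$ as in~\eqref{diag:E2_left_unital_action}, an enriched braided monoidal functor $\ec{P}:\ec[\CE]{\CL}\to\ec[\CC]{\hbcat_2(\ec[\CC]{\CM})}$ together with an enriched monoidal natural isomorphism $\ec{\rho}:\ec{\otimes}\circ(\ec{P}\times 1)\Rightarrow\ec{\odot}$ making the two pasting diagrams equal, in the exact shape of~\eqref{diag:E_1_center_in_ecat_1}. Since the underlying functor $\odot$ is braided monoidal, Example~\ref{expl:centers_in_cat}(3) shows that $x\mapsto x\odot\one_\CM$ is a braided monoidal functor $\CL\to\FZ_2(\CM)=\hbcat_2(\ec[\CC]{\CM})$; in particular every $x\odot\one_\CM$ is transparent, which is precisely what places $P(x)$ in the M\"uger center. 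The background changing functor is $\hat P=-\hodot\one_\CC:\CE\to\CC$, and the hom morphisms
\[
\ec{P}_{x,y}:\ec[\CE]{\CL}(x,y)\hodot\one_\CC\to\ec[\CC]{\CM}(x\odot\one_\CM,y\odot\one_\CM)
\]
are induced by $1\hodot 1_{\one_\CM}$ followed by $\ec{\odot}$. Here, because the M\"uger center keeps the hom objects of $\ec[\CC]{\CM}$ unchanged (rather than passing to a subobject, as the Drinfeld center does), these morphisms are defined directly, with no intervening universal property. The isomorphism $\ec{\rho}$ is given by $\hat\rho_{a,c}:(a\hodot\one_\CC)\hotimes c\to a\hodot c$ on backgrounds and $\rho_{x,m}:(x\odot\one_\CM)\otimes m\to x\odot m$ on underlying categories, both built from $\ec{u}$ exactly as in the $E_1$ case.

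For terminality I would then take two comparison pairs $(\ec{Q_i},\ec{\rho_i})$, $i=1,2$, and construct the unique enriched monoidal natural isomorphism $\ec{\alpha}:\ec{Q_1}\Rightarrow\ec{Q_2}$ compatible with $\ec{\rho_1},\ec{\rho_2}$. Because $\CC$ is the $E_0$-center of $\CC$ in $\Algn[3](\cat)$, there is a unique monoidal natural isomorphism $\hat\alpha:\hat Q_1\Rightarrow\hat Q_2$ compatible with $\hat\rho_1,\hat\rho_2$, and the components $\alpha_a:\one_\CC\to\ec[\CC]{\CM}(Q_1(a),Q_2(a))$ are then forced by $(\rho_2)^{-1}_{(a,\one)}\circ(\rho_1)_{(a,\one)}$, exactly as in the proof of Theorem~\ref{thm:Drinfeld_center=E1_center}. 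A final routine check confirms that the symmetric structure of $\ec[\CC]{\hbcat_2(\ec[\CC]{\CM})}$ agrees with the $E_3$-algebra structure the $E_2$-center automatically carries.

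I expect the main obstacle to be of a bookkeeping nature: confirming that the data $\ec{P},\ec{\rho},\ec{\alpha}$ assemble into \emph{braided} enriched morphisms and not merely monoidal ones, i.e.\ checking compatibility with the braidings at each stage, and above all verifying that $x\odot\one_\CM$ is transparent so that $\ec{P}$ indeed lands in the M\"uger center. That transparency is exactly the content of Example~\ref{expl:centers_in_cat}(3), and once it is secured the remaining coherence verifications are formally identical to, and in fact lighter than, those of the $E_1$-center proof, since the M\"uger center inherits its hom objects from $\ec[\CC]{\CM}$ without any passage to subobjects.
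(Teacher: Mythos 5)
Your proposal is correct and follows essentially the same route as the paper's own proof: an arbitrary left unital action $(\ec{\odot},\ec{u})$ of $\ec[\CE]{\CL}$ yields $\ec{P}\coloneqq\ec{(-\odot\one_\CM)}$ landing in the M\"uger center by the transparency argument of Example\,\ref{expl:centers_in_cat}, $\ec{\rho}$ is built from $\ec{u}$ as in the $E_1$ case, and terminality is reduced to the fact that $\CC$ is its own $E_0$-center in $\Algn[3](\cat)$, with $\alpha_a$ forced by $(\rho_2)^{-1}_{(a,\one)}\circ(\rho_1)_{(a,\one)}$. Even your side observations (no universal property needed for $\ec{P}_{x,y}$ since the M\"uger center inherits hom objects, and the final check matching the symmetric structure with the induced higher algebra structure) mirror the paper's treatment.
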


\pf
Suppose $(\ec{\odot},\ec{u})$ is a left unital action of an enriched braided monoidal category $\ec[\CE]{\CL}$ on $\ec[\CC]{\CM}$ as depicted in the diagram \eqref{diag:E2_left_unital_action}. First we show that there is an enriched braided monoidal functor $\ec{P} \colon \ec[\CE]{\CL} \to \ec[\CC]{\hbcat_2(\ec[\CC]{\CM})}$ and an enriched monoidal natural isomorphism $\ec{\rho}$ such that the following pasting diagrams
{\small
\be\label{diag:E_2_center_in_ecat_1}
        \begin{array}{c}
\xymatrix @C=0.4in @R=0.25in{
    & \ec[\CC]{\hbcat_2(\ec[\CC]{\CM})} \times \ec[\CC]{\CM} \ar@/^3ex/[ddr]^{\ec{\otimes}} \\
            \rtwocell<\omit>{\quad \ec{P^0} \times 1}& \ec[\CE]{\CL} \times \ec[\CC]{\CM} \ar[dr]^{\ec{\odot}} \ar[u]|{\ec{P} \times 1_{\ec[\CC]{\CM}}} \rtwocell<\omit>{\ec{\rho}} & \\
            \ast \times \ec[\CC]{\CM} \ar[rr] \ar@/^3ex/[uur]^{\ec{\one}\times 1_{\ec[\CC]{\CM}}} \ar[ur]^{\ec{\one_\CL} \times 1_{\ec[\CC]{\CM}}} \rrtwocell<\omit>{<-2.5> \ec{u}}  & & \ec[\CC]{\CM}
}
\end{array}
=
\begin{array}{c}
\xymatrix @C=-0.1in{
    & \ec[\CC]{\hbcat_2(\ec[\CC]{\CM})} \times \ec[\CC]{\CM} \ar[dr]^{\ec{\otimes}} \\
    \ast \times \ec[\CC]{\CM} \ar[ur]^{\ec{\one} \times 1_{\ec[\CC]{\CM}}} \ar[rr] \rrtwocell<\omit>{<-3> \ec{v}} & & \ec[\CC]{\CM}
}
\end{array}.
\ee}
are equal.
\bit
\item Since the underlying functor $\odot$ of $\ec{\odot}$ is a braided monoidal functor, $x \odot \one_\CM \in \FZ_2(\CM)$ is transparent for any $x \in \ec[\CE]{\CL}$ (see Example \ref{expl:centers_in_cat}). Then we define an enriched monoidal functor $\ec{P} \colon \ec[\CE]{\CL} \to \ec[\CC]{\hbcat_2(\ec[\CC]{\CM})}$ by $\ec{P} \coloneqq \ec{(- \odot \one_\CM)}$. More explicitly:
\bit
\item The background changing functor $\hat P$ is given by $- \hodot \one_\CC \colon \CE \to \CC$.
\item The object $P(x) \in \ec[\CC]{\hbcat_2(\ec[\CC]{\CM})}$ is defined by the object $x \odot \one_\CM \in \FZ_2(\CM)$.
\item The morphism $\ec{P}_{x,y} \colon \ec[\CE]{\CL}(x,y) \hodot \one_\CC \to \ec[\CC]{\CM}(P(x),P(y))$ is induced by the morphism
\[
\ec[\CE]{\CL}(x,y) \hodot \one_\CC \xrightarrow{1 \hat{\odot} 1_{\one_\CM}} \ec[\CE]{\CL}(x,y) \hodot \ec[\CC]{\CM}(\one_\CM,\one_\CM) \xrightarrow{\ec{\odot}} \ec[\CC]{\CM}(x \odot \one_\CM,y \odot \one_\CM) .
\]
\item The braided monoidal structure is induced by that of $\ec{\odot}$.
\eit
\item The enriched monoidal natural isomorphism $\ec{\rho} \colon \ec{\otimes} \circ (\ec{P} \times 1) \Rightarrow \ec{\odot}$ is defined by the background changing natural transformation
\[
\hat \rho_{a,c} \colon (a \hodot \one_\CC) \hotimes c \xrightarrow{1 \otimes \hat u_c^{-1}} (a \hodot \one_\CC) \hotimes {(\one_\CA \hodot c)} \xrightarrow{\simeq} a \hodot c
\]
and the underlying natural transformation
\[
\rho_{x,m} \colon (x \odot \one_\CM) \otimes m \xrightarrow{1 \otimes u_m^{-1}} (x \odot \one_\CM) \otimes (\one_\CL \odot m) \xrightarrow{\simeq} x \odot m .
\]
\eit
It is not hard to check that the equation \eqref{diag:E_2_center_in_ecat_1} holds.

\smallskip

Let $(\ec{Q_i}, \ec{\rho_i})$, $i=1,2$, be two pairs such that the similar equations as depicted by \eqref{diag:E_2_center_in_ecat_1} hold. We only need to show that there exists a unique enriched monoidal natural isomorphism $\ec{\alpha} \colon \ec{Q_1} \Rightarrow \ec{Q_2}$ such that
    \begin{align}
   \begin{array}{c}
       \xymatrix @R=0.3in @C=0.3in{
           \ec[\CC]{\hbcat_2(\ec[\CC]{\CM})} \times \ec[\CC]{\CM} \ar@/^2ex/[dr]^-{\ec{\otimes}} \rtwocell<\omit>{<5> \;\ec{\rho_2}} & \\
        \ec[\CE]{\CL} \times \ec[\CC]{\CM} \utwocell<4.5>^{\ec{Q_1} \times 1 \quad}_{\quad \ec{Q_2} \times 1}{\ec{\alpha} \times 1} \ar[r]_-{\ec{\odot}}& \ec[\CC]{\CM}
    }
    \end{array} & =
    \begin{array}{c}
       \xymatrix @R=0.3in @C=0.3in{
        \ec[\CC]{\hbcat_2(\ec[\CC]{\CM})} \times \ec[\CC]{\CM} \ar@/^2ex/[dr]^-{\ec{\otimes}} \rtwocell<\omit>{<5> \;\ec{\rho_1}} & \\
        \ec[\CE]{\CL} \times \ec[\CC]{\CM} \ar[u]^-{\ec{Q_1} \times 1} \ar[r]_-{\ec{\odot}}& \ec[\CC]{\CM}
    }
    \end{array}. \label{diag:E_2_center_in_ecat_3}
\end{align}

Since $\CC$ itself is the $E_0$-center of $\CC$ in $\Algn[3](\cat)$ (see Example \ref{expl:centers_in_cat}), there exists a unique monoidal natural isomorphism $\hat{\alpha} \colon \hat{Q}_1 \Rightarrow \hat{Q}_2$ such that  
\begin{align*}
\begin{array}{c}
       \xymatrix @R=0.3in @C=0.7in{
           \CC \times \CC \ar@/^2ex/[dr]^-{\hat{\otimes}} \rtwocell<\omit>{<4.5> \;\hat{\rho}_2} & \\
        \CE \times \CC \utwocell<4.5>^{\hat{Q}_1 \times 1 \quad}_{\quad \hat{Q}_2 \times 1}{\hat{\alpha} \times 1} \ar[r]_-{\hat{\odot}}& \CC
    }
    \end{array} =
    \begin{array}{c}
       \xymatrix @R=0.3in @C=0.5in{
        \CC \times \CC \ar@/^2ex/[dr]^-{\hat{\otimes}} \rtwocell<\omit>{<4.5> \;\hat{\rho}_1} & \\
        \CE \times \CC \ar[u]^-{\hat{Q}_1 \times 1} \ar[r]_-{\hat{\odot}}& \CC
    }
    \end{array}.
\end{align*}
Define
\[
\alpha_a \coloneqq \bigl( Q_1(a) \simeq Q_1(a) \otimes \one_\CM \xrightarrow{(\rho_1)_{a,\one}} a \odot \one \xrightarrow{(\rho_2)_{a,\one}^{-1}} Q_2(a) \otimes \one \simeq Q_2(a) \bigr) .
\]
Then the enriched natural isomorphism $\ec{\alpha}$ define by $(\hat{\alpha}, \{\alpha_a\})$ is the unique enriched natural isomorphism such that the equation \eqref{diag:E_2_center_in_ecat_3} holds. 
\end{proof}

%
%

By Theorem\ \ref{thm:mueger-center-canonical-construction}, we obtain the following corollary. 
\begin{cor} \label{cor:E2_center_of_canonical_construction}
Let $\CA$ be a symmetric monoidal category and $\CL$ be a braided monoidal left $\CA$-module that is enriched in $\CA$. Then we have $\FZ_2(\bc[\CA]{\CL})=\bc[\CA]{\FZ_2(\CL)}$. 
\end{cor}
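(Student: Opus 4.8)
The plan is to reduce the whole statement to a single geometric fact about the Müger center: the fully faithful inclusion $\iota : \FZ_2(\CM) \hookrightarrow \CM$ is $\CC$-linear and preserves internal homs. Writing $\CC$ for the symmetric monoidal background and $\CM$ for the strongly unital braided monoidal left $\CC$-module enriched in $\CC$, recall from Example~\ref{expl:centers_in_cat} that the $\CC$-action has the form $c \odot m = \cf{\varphi}(c) \otimes m$ for a symmetric monoidal functor $\varphi : \CC \to \FZ_2(\CM)$, i.e. $\varphi$ lands in the Müger center.

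First I would check that $\FZ_2(\CM)$ is closed under the $\CC$-action. For $m \in \FZ_2(\CM)$ and $c \in \CC$, the object $\cf{\varphi}(c)$ is transparent because $\varphi$ takes values in $\FZ_2(\CM)$, and the tensor product of two transparent objects is again transparent; hence $c \odot m = \cf{\varphi}(c) \otimes m \in \FZ_2(\CM)$. Equipped with the restricted braiding and tensor product, $\FZ_2(\CM)$ becomes a strongly unital symmetric monoidal left $\CC$-module, and $\iota$ is a strict $\CC$-linear braided monoidal embedding.

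The central step is the comparison of internal homs. For $x, y \in \FZ_2(\CM)$, since $c \odot x$ again lies in $\FZ_2(\CM)$ and $\iota$ is fully faithful, I obtain natural isomorphisms
\[
\FZ_2(\CM)(c \odot x, y) = \CM(c \odot x, y) \cong \CC(c, [x,y]_\CC) ,
\]
so the object $[x,y]_\CC$ computed in $\CM$ also represents the internal-hom functor of $\FZ_2(\CM)$. Therefore $\FZ_2(\CM)$ is enriched in $\CC$, and its internal homs, together with the evaluation $\ev_x$ and coevaluation $\coev_x$, coincide with those of $\CM$ restricted to transparent objects. With this in hand I would compare the two sides object by object: $\bc[\CC]{\FZ_2(\CM)}$ and $\ec[\CC]{\hbcat_2(\bc[\CC]{\CM})}$ have the same objects (the transparent objects of $\CM$) and the same hom objects $[x,y]_\CC$; moreover the identities $1_x$ and the compositions produced by the canonical construction are the unique morphisms solving the diagrams in \eqref{diag:internal_hom_unit_mul_def}, which are built from the very same evaluation morphisms, so they agree. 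Finally the monoidal data match by Proposition~\ref{prop:monodal_en_monoidal_oplax_module}, since the monoidal left $\CC$-module structure on $\FZ_2(\CM)$ is literally the restriction of that on $\CM$, and the braiding matches by Proposition~\ref{prop:braided_en_braided_oplax_module}; on the Müger-center side all of this structure is by definition inherited from $\bc[\CC]{\CM}$, so the two enriched braided monoidal categories coincide.

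I expect the only genuine content to lie in the internal-hom comparison, which hinges essentially on $\varphi$ landing in $\FZ_2(\CM)$ (so that the $\CC$-action preserves transparency) and on the fullness of the center embedding; once this is established, the agreement of composition, identity, monoidal, and braiding structures is routine, being forced by the shared universal properties defining the canonical construction.
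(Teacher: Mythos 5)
Your argument, as written, does not prove the corollary as the paper intends it; it proves a different (and easier) statement. In the paper's conventions, the left-hand side $\FZ_2(\bc[\CC]{\CM})$ denotes the \emph{$E_2$-center} of $\bc[\CC]{\CM}$, defined by a universal property in $\ecat$: it is terminal among left unital actions $(\ec{\odot},\ec{u})$ of arbitrary enriched braided monoidal categories $\ec[\CE]{\CL}$ on $\bc[\CC]{\CM}$. It is \emph{not} by definition the M\"{u}ger center $\ec[\CC]{\hbcat_2(\bc[\CC]{\CM})}$; that these two agree is precisely Theorem \ref{thm:Mueger_center=E2_center}, whose proof requires constructing, for every such action, the factorizing enriched braided monoidal functor $\ec{P}=\ec{(-\odot\one_\CM)}$ and the comparison isomorphism $\ec{\rho}$, and establishing the uniqueness of the 2-isomorphism $\ec{\alpha}$. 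Your proof never touches this universal property: at the point where you ``compare the two sides,'' you silently substitute $\ec[\CC]{\hbcat_2(\bc[\CC]{\CM})}$ for $\FZ_2(\bc[\CC]{\CM})$, i.e. you assume the very identification that carries the real content. What you actually prove is $\ec[\CC]{\hbcat_2(\bc[\CC]{\CM})}=\bc[\CC]{\FZ_2(\CM)}$, which is Theorem \ref{thm:mueger-center-canonical-construction}; the paper obtains the corollary exactly by combining that theorem with Theorem \ref{thm:Mueger_center=E2_center}.

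On the positive side, the part you do prove is correct, and it is the part the paper states without proof: since $\varphi=(-\odot\one_\CM)$ is symmetric monoidal into $\FZ_2(\CM)$, the $\CC$-action preserves transparency; fullness of $\FZ_2(\CM)\hookrightarrow\CM$ then gives $\FZ_2(\CM)(c\odot x,y)=\CM(c\odot x,y)\simeq\CC(c,[x,y]_\CC)$, so the internal homs of $\FZ_2(\CM)$ exist and coincide with those of $\CM$; and the identities, compositions, monoidal data and braiding agree because they solve the same universal diagrams \eqref{diag:internal_hom_unit_mul_def}, via Propositions \ref{prop:monodal_en_monoidal_oplax_module} and \ref{prop:braided_en_braided_oplax_module}. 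To close the gap, either cite Theorem \ref{thm:Mueger_center=E2_center} to identify $\FZ_2(\bc[\CC]{\CM})$ with the M\"{u}ger center, or reprove that universal property directly — but the latter is a substantial argument (it occupies the bulk of Section \ref{sec:enriched_symmetric_monoidal_categories}), not a routine consequence of your internal-hom comparison.
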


\begin{expl}
Let $\CE$ be a symmetric fusion category, i.e., $\CE=\rep(G)$ or $\rep(G,z)$ for a finite group $G$. Then the canonical construction $\bc[\CE]{\CE}$ is an example of enriched symmetric monoidal category. We have $\FZ_2(\bc[\CE]{\CE})\simeq \bc[\CE]{\CE}$. 
\end{expl}

\begin{rem}
When $\CA$ is a non-degenerate braided fusion category and $\CL$ is an indecomposable multi-fusion left $\overline{\CA}$-module \cite{KZ18}, we have 
$\FZ_2(\FZ_1(\bc[\CA]{\CL}))\simeq \bk$.  
\end{rem}

\begin{rem} \label{rem:En_centers}
In this work, we compute the $E_n$-centers of $E_n$-monoidal enriched categories for $n=0,1,2$. These results should generalize to much more general context, for example, for enriched $E_n$-monoidal higher categories (see \cite[Section\,5.2]{KZ22b}). 
\end{rem}

\bibliography{Top}

\end{document}